\newtheorem{theorem}{Theorem}[section]
\newtheorem{proposition}[theorem]{Proposition}
\newtheorem{lemma}[theorem]{Lemma}
\newtheorem{corollary}[theorem]{Corollary}
\theoremstyle{definition}
\newtheorem{definition}[theorem]{Definition}
\newtheorem{example}[theorem]{Example}
\newtheorem{remark}[theorem]{Remark}
\newtheorem{question}[theorem]{Question}
\newtheorem{problem}[theorem]{Problem}
\DeclareMathOperator*{\EEE}{\scalerel*{\mathbb{E}}{\textstyle\sum}}
\renewcommand{\Re}{\operatorname{Re}}
\renewcommand{\Im}{\operatorname{Im}}
\newcommand{\id}{\operatorname{id}}
\newcommand{\spanZ}{\operatorname{span}_{\ZZ}}
\newcommand{\spanlin}{\operatorname{span}}
\newcommand{\vol}{\operatorname{vol}}
\newcommand{\covol}{\operatorname{covol}}
\newcommand{\ord}{\operatorname{ord}}
\newcommand{\ab}{\operatorname{ab}}
\newcommand{\IP}{\mathrm{IP}}
\newcommand{\norm}[1]{\left\lVert #1 \right\rVert}
\newcommand{\normbig}[1]{\big\lVert #1 \big\rVert}
\newcommand{\normsml}[1]{\lVert #1 \rVert}
\newcommand{\normLip}[1]{\left\lVert #1 \right\rVert_{\mathrm{Lip}}}
\newcommand{\fp}[1]{\left\{ #1 \right\} }
\newcommand{\fpnormal}[1]{\{ #1 \} }
\newcommand{\ip}[1]{\left\lfloor #1 \right\rfloor }
\newcommand{\ipnormal}[1]{\lfloor #1 \rfloor }
\newcommand{\nint}[1]{\left\lfloor #1 \right\rceil}
\newcommand{\nintnormal}[1]{\lfloor #1 \rceil}
\newcommand{\fpa}[1]{\left\lVert #1 \right\rVert}\newcommand{\floor}[1]{\left\lfloor #1 \right\rfloor}
\newcommand{\bra}[1]{\left( #1 \right)}
\newcommand{\brabig}[1]{\big( #1 \big)}
\newcommand{\braif}[1]{\left\llbracket #1 \right\rrbracket}
\newcommand{\braBig}[1]{\Big( #1 \Big)}
\renewcommand{\tilde}{\widetilde}
\renewcommand{\bar}{\overline}
\newcommand{\abs}[1]{\left|#1\right|}
\newcommand{\absBig}[1]{\Big|#1\Big|}
\newcommand{\set}[2]{\left\{ #1 \ \middle| \ #2 \right\} }
\newcommand{\ceil}[1]{\left\lceil #1 \right\rceil}
\newcommand{\cmp}{\mathrm{ht}}
\newcommand{\pvec}[1]{#1}
\renewcommand{\vec}[1]{#1}
\newcommand{\e}{\varepsilon}
\renewcommand{\a}{\alpha}
\renewcommand{\b}{\beta}
\renewcommand{\c}{\gamma}
\newcommand{\NN}{\mathbb{N}}
\newcommand{\QQ}{\mathbb{Q}}
\newcommand{\PP}{\mathbb{P}}
\newcommand{\EE}{\mathbb{E}}
\newcommand{\ZZ}{\mathbb{Z}}
\newcommand{\RR}{\mathbb{R}}
\newcommand{\CC}{\mathbb{C}}
\DeclareMathAlphabet{\mathpzc}{OT1}{pzc}{m}{it}
\newcommand{\cO}{\mathcal{O}}
\newcommand{\cG}{\mathcal{G}}
\newcommand{\cH}{\mathcal{H}}
\newcommand{\cF}{\mathcal{F}}
\newcommand{\cS}{\mathcal{S}}
\newcommand{\cR}{\mathcal{R}}
\newcommand{\cP}{\mathcal{P}}
\newcommand{\fS}{\mathsf{P}}
\newcommand{\JK}[1]{}
\definecolor{checked}{RGB}{200,200,200}
\definecolor{final}{RGB}{ 255, 255, 255}
\definecolor{fresh}{RGB}{ 0, 128, 0}
\definecolor{wrong}{RGB}{ 255, 0, 0	}
\definecolor{Red}{RGB}{128,0,0}
\newcommand{\freq}{\operatorname{freq}}\newcommand{\cnt}{\pi}\newcommand{\bb}{\mathbf}
\newcommand{\fword}[1]{#1}
\renewcommand{\subset}{\subseteq}
\newcommand*\patchAmsMathEnvironmentForLineno[1]{\expandafter\let\csname old#1\expandafter\endcsname\csname #1\endcsname
  \expandafter\let\csname oldend#1\expandafter\endcsname\csname end#1\endcsname
  \renewenvironment{#1}{\linenomath\csname old#1\endcsname}{\csname oldend#1\endcsname\endlinenomath}}\newcommand*\patchBothAmsMathEnvironmentsForLineno[1]{\patchAmsMathEnvironmentForLineno{#1}\patchAmsMathEnvironmentForLineno{#1*}}\AtBeginDocument{\patchBothAmsMathEnvironmentsForLineno{equation}\patchBothAmsMathEnvironmentsForLineno{align}\patchBothAmsMathEnvironmentsForLineno{flalign}\patchBothAmsMathEnvironmentsForLineno{alignat}\patchBothAmsMathEnvironmentsForLineno{gather}\patchBothAmsMathEnvironmentsForLineno{multline}}
\newcommand{\fH}{\mathsf{H}}
\newcommand{\gp}{GP}\newcommand{\genpoly}{generalised polynomial}
\newcommand{\gpword}{bracket word}
\newcommand{\gpwords}{\gpword{}s}
\newcommand{\word}{word}
\newcommand{\pgp}{parametric GP}
\begin{document}

\author[B.\ Adamczewski]{Boris Adamczewski}

\author[J.\ Konieczny]{Jakub Konieczny}

\title[Bracket words]{Bracket words: a generalisation of Sturmian words arising from generalised polynomials}

\begin{abstract}
Generalised polynomials are maps constructed by applying the floor function, addition, and multiplication to polynomials. 
Despite superficial similarity, generalised polynomials exhibit many phenomena which are impossible for polynomials. 
In particular, there exist generalised polynomial sequences which take only finitely many values without being periodic; 
examples of such sequences include the Sturmian words, as well as more complicated sequences like $\ip{ 2\fp{ \pi n^2 + \sqrt{2}n\ip{\sqrt{3}n} }}$. 

The purpose of this paper is to investigate letter-to-letter codings of finitely-valued generalised polynomial sequences, which we call \emph{bracket words}, 
from the point of view of combinatorics on words. We survey existing results on generalised polynomials and their corollaries in terms of bracket words, 
and also prove several new results. Our main contribution is a polynomial bound on the subword complexity of bracket words.
\end{abstract}

\keywords{generalised polynomial; Sturmian word; subword complexity}
\subjclass[2010]{}

\maketitle 

\setcounter{tocdepth}{1}
\tableofcontents
\newpage

\section{Introduction}

Generalised polynomial sequences are expressions built up from the usual polynomials with the use of addition, 
multiplication, and the floor function, such as
\[
	g(n) = 2n \ip{ \sqrt{2}n \ipnormal{\sqrt{3}n }^2+\sqrt{5}n^3} -  7 n^2\,.
\]
They have been investigated by many authors, with particular emphasis placed on problems involving uniform distribution and taking their source in 
Weyl's equidistribution theorem for classical polynomials. 
See, for instance, \cite{Haland-1993,Haland-1994, BergelsonHaland-1993,  BergelsonLeibman-2007,Leibman-2012,BergelsonHalandSon-2020} and the references therein.

An important feature that distinguishes generalised polynomials from ordinary polynomials is that they can be bounded without being constant. 
As a basic example, for any polynomial $p(x)\in\mathbb R[x]$, the sequence of fractional parts $\bra{\fp{p(n)}}_{n=0}^\infty$ is a generalised polynomial 
sequence with values in $[0,1)$, which is non-constant for most choices of $p$. 
In fact, it is also possible for non-constant generalised polynomials sequences to take only  finitely many values.  
A notable example of a class of such finitely-valued generalised polynomial sequences is provided by Sturmian words. 
Indeed, every Sturmian word $\bb a = (a_n)_{n=0}^\infty$ over $\{0,1\}$ can be defined by a generalised polynomial formula 
\begin{equation}\label{eq:intro-1}
a_n = {\ip{\a n+\b} - \ip{\a (n-1) + \b}} 
\end{equation}
 for some $\a \in [0,1) \setminus \QQ$ and $\beta \in [0,1)$ (possibly with the floor function $\ip{\cdot}$ replaced by the ceiling function $\ceil{\cdot}$). 

 Motivated by this fundamental example, we endeavour to investigate finitely-valued generalised polynomials from the perspective of combinatorics on words. 
 More precisely, we study letter-to-letter codings of finitely-valued generalised polynomial sequences, which we dub \emph{bracket words} 
 (cf.{} Definition \ref{def:def:bra-word}). Thus, a {\gpword} $\bb a = (a_n)_{n=0}^\infty$ over an alphabet $\Sigma$ takes the form $a_n = c(g(n))$, were 
 $g\colon \NN_0 \to \RR$ is  a finitely-valued generalised polynomial sequence and $c$ is a map from the finite set $g(\NN_0)$ to $\Sigma$. Throughout the paper, we let $\NN = \{1,2,\dots\}$ denote the set of positive integers and put $\NN_0 = \NN \cup \{0\}$. 
 Bracket words are thus first thought of as a broad arithmetical generalisation of Sturmian words.  

Among the several equivalent definitions of Sturmian words, one involves codings of circle rotations. 
Indeed, the word $\bb a$ defined by \eqref{eq:intro-1} can be described with the help of the rotation $R_\alpha \colon \RR/\ZZ \to \RR/\ZZ$, $x \mapsto x+\alpha$, as 
\begin{equation}\label{eq:intro-2}
	a_n = 
	\begin{cases}
		1 &\text{if } R_\alpha^n(\beta) \in [0,\alpha)\,,\\
		0 &\text{otherwise.}
	\end{cases}
\end{equation}
A seminal paper of Bergelson and Leibman \cite{BergelsonLeibman-2007} provides a dynamical representation of any bounded generalised polynomial sequence  
in terms of nilrotations, which are translations on nilmanifolds, hence linking the theory of generalised polynomials with nilpotent dynamics. 
As a consequence, each bracket word can be obtained as a coding of a nilrotation with respect to a semialgebraic partition.  
Conversely, each coding of a nilrotation which involves a semialgebraic partition gives rise to a bracket word. 

Nilsystems have received a considerable amount of attention in the past two decades. A major source of interest stems from their significance in the study 
of multiple ergodic averages, as demonstrated by Host and Kra \cite{HostKra-2005} and Ziegler \cite{Ziegler-2007}. 
Nilsystems also play a key role in additive combinatorics, specifically in the relatively modern branch of higher order Fourier analysis, 
initiated by Gowers in his work on an alternative proof of {S}zemer\'{e}di's theorem \cite{Gowers-2001}. 
The importance of nilsystems was revealed by Green, Tao, and Ziegler, who characterised Gowers uniformity in terms of correlations 
with nilsequences \cite{GreenTaoZiegler-2012}, see also \cite{GreenTao-2010}. For an introduction to higher order Fourier analysis, we refer to \cite{Tao-book}. 
Thus, the characterisation of bracket words mentioned above, which we discuss in more detail in Section \ref{sec:nil}, 
allows us to take advantage of deep pre-existing results.

A great number of combinatorial results concerning Sturmian words can be derived from 
the continued fraction expansion of the irrational parameter $\alpha$ and some related expansion, the so-called Ostrowski expansion, of the real parameter 
$\beta$ (these parameters being defined as in \eqref{eq:intro-1} or \eqref{eq:intro-2}).   
In contrast, the study of bracket words involves much more general Diophantine problems related to 
simultaneous rational approximations of real numbers. 
For this reason, one cannot expect to always obtain results as accurate as for Sturmian words.  
For instance,  Sturmian words can be charaterised in terms of their subword complexity. Recall that for an infinite word $\bb a$, 
the subword complexity $p_{\bb a}(N)$ is the count of distinct length-$N$ subwords which appear in $\bb a$, and that $\bb a$ is Sturmian 
if and only if $p_{\bb a}(N) = N+1$ for each $N \geq 1$, which is the slowest rate of growth possible for a word which is not eventually periodic. 
Contrary to Sturmian words, bracket words cannot be characterised in terms of their subword complexity, but we still prove the following  
polynomial upper bound, which is our main new result. It is also shown in Section \ref{sec:subword}  that this result is essentially the best possible 
(c.f. Propositions \ref{prop: comp1} and \ref{prop: comp2}).

\begin{restatable}{alphatheorem}{thmsubword}
\label{thm:A}Let $\bb a$ be a {\gpword}. Then there exists a constant $C > 0$ such that $p_{\bb a}(N) = O(N^C)$ for all $N \in \NN$.
\end{restatable}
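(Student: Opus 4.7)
My plan is to exploit the representation of bracket words as codings of nilrotations with respect to semialgebraic partitions alluded to in the introduction. Given a {\gpword} $\bb a$ over an alphabet $\Sigma$, I would start from a nilmanifold $X=G/\Gamma$ of dimension $d$, a translation $T\colon x\mapsto gx$, a point $x_0\in X$, and a semialgebraic partition $\mathcal{P}=\{A_s\}_{s\in\Sigma}$ of $X$ with $a_n=s$ if and only if $T^n x_0\in A_s$. The key observation is that the number of distinct length-$N$ factors of $\bb a$ is bounded above by the number of nonempty atoms of the refined partition
\[
\mathcal{P}_N \;=\; \bigvee_{j=0}^{N-1} T^{-j}\mathcal{P},
\]
because two positions $n,n'$ produce the same length-$N$ factor as soon as $T^n x_0$ and $T^{n'} x_0$ lie in a common atom of $\mathcal{P}_N$.

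The next step is to control the semialgebraic complexity of $\mathcal{P}_N$. Working in Malcev coordinates and in a bounded fundamental domain for $\Gamma$, left translation $L_h\colon x\mapsto hx$ on $G$ is a polynomial map whose degree is bounded by a constant $D$ depending only on the nilpotency class and dimension of $G$, \emph{uniformly} in $h\in G$. Consequently, for every $j\in\NN_0$, the preimage $T^{-j}A_s=g^{-j}A_s$ is a semialgebraic set defined by polynomial inequalities of degree at most $D\cdot D'$, where $D'$ is the maximal degree appearing in the defining polynomials of $\mathcal{P}$. Therefore $\mathcal{P}_N$ is the common refinement of at most $|\Sigma|\cdot N$ semialgebraic sets in $\RR^d$ cut out by polynomials of degree bounded by a constant $K=DD'$ independent of $N$.

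At this point I would invoke a classical bound from real algebraic geometry, namely the Milnor--Thom theorem (or Warren's refinement), which asserts that the number of connected components of the arrangement of $m$ polynomial equations of degree at most $K$ in $\RR^d$ is $O((mK)^d)$. Applied with $m=O(N)$ and $K$ constant, this gives at most $O(N^d)$ atoms in $\mathcal{P}_N$, whence $p_{\bb a}(N)=O(N^d)$, which is the desired polynomial bound with $C=d$.

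The main technical hurdle I anticipate is the passage between $G$ and the quotient $G/\Gamma$: one must ensure that the translates $T^{-j}A_s$ remain semialgebraic of bounded degree after reduction modulo $\Gamma$, which involves choosing a good fundamental domain (for instance a polyhedral one arising from Malcev coordinates) and tracking how the orbit wraps around it. A second, subtler point is the uniform bound on the polynomial degree of $L_h$ in $h$; this is essentially built into the Malcev embedding but needs to be stated precisely so that iteration does not inflate degrees. Once these structural points are settled, the polynomial bound follows immediately from the Milnor--Thom estimate, and the optimality claims referenced as Propositions~\ref{prop: comp1} and \ref{prop: comp2} indicate that the exponent $d$ cannot in general be replaced by a function of the ``arithmetic complexity'' of $\bb a$ alone.
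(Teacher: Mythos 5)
Your reduction of $p_{\bb a}(N)$ to counting nonempty atoms of $\bigvee_{j=0}^{N-1}T^{-j}\mathcal{P}$ is sound, but the step that drives the whole count --- that each $T^{-j}A_s$ is semialgebraic ``of degree at most $DD'$'' uniformly in $j$, so that $\mathcal{P}_N$ is an arrangement of $O(N)$ polynomials of bounded degree and Warren/Milnor--Thom gives $O(N^d)$ atoms --- has a genuine gap, and it sits exactly at the point you flag but do not resolve. Left translation by $g^{-j}$ is indeed polynomial of uniformly bounded degree on $G$ in Mal'cev coordinates, but its \emph{coefficients} grow polynomially in $j$, and after reducing modulo $\Gamma$ the set $T^{-j}A_s$, viewed inside the coordinate cube $[0,1)^{\dim G}$, is not cut out by a bounded number of inequalities: the translate $g^{-j}\mathcal{F}$ of a fundamental domain wraps around and meets a number of $\Gamma$-translates of $\mathcal{F}$ that grows with $j$. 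Already in the Heisenberg example of Section \ref{sec:nil}, the third coordinate of $g^{j}x$ contains a term of the form $j\a\, y$ with $y$ ranging over $[0,1)$, so describing $T^{-j}A_s$ in the cube requires on the order of $j$ distinct integer shifts, i.e.\ $\Theta(j)$ polynomial pieces. Consequently the arrangement defining $\mathcal{P}_N$ involves $N^{O(1)}$, not $O(N)$, polynomials, and your exponent $C=d$ is not justified. The polynomial bound itself can be salvaged along your lines --- one must show that the coordinates of $g^{j}$ grow polynomially in $j$, that $g^{-j}\mathcal{F}$ meets only polynomially many translates $\mathcal{F}\gamma$, and that each resulting piece is defined by boundedly many polynomials of bounded degree, after which the sign-condition (Warren) bound still yields $N^{O(1)}$ atoms, albeit with a larger exponent --- but none of this is in the proposal, and the disconnectedness of $G$ (handled in the paper by passing to arithmetic progressions) also needs a word.

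For comparison, the paper's proof avoids arguing on the nilmanifold altogether. It converts subwords into prefixes of \emph{parametric} generalised polynomial maps (Lemma \ref{lem:subword->prefix}, via Lemma \ref{lem:nil:shift}), runs an induction on the height of the generalised polynomial (Proposition \ref{prop:gen-poly-induction}), and in the key step (Proposition \ref{prop:inductive}) counts sequences $\ip{\sum_i \a_i h_i(n)}$ by approximating the set of near-vanishing integer combinations by a lattice (Proposition \ref{prop:lattice-approx}, via the discrete John theorem) and bounding hyperplane cuts of that lattice inside a box (Theorem \ref{thm:partition-cell}). So even if you repair the quantitative semialgebraic bookkeeping, your route is genuinely different from the paper's, and the repair is the substantive missing piece.
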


In another direction, Sturmian words can also be characterised in terms of balance: a word $\bb a$ over $\{0,1\}$ is Sturmian if and only if it is not eventually periodic and for each $N \geq 1$ and each pair $u,v$ of length-$N$ subwords of $\bb a$, the number occurrences of the symbol $1$ in $u$ and in $v$ is either the same or differs by $1$. 
For bracket words, we have weaker estimates on frequencies of symbols, which we explore in Sections \ref{sec:facts} and \ref{sec:growth}.

Since Sturmian words are {\gpwords}, all of our results apply in particular to Sturmian words. In all instances, the special case involving Sturmian words was shown earlier than the general case, or follows quickly from earlier results. However, there are several facts which we suspect might not be very widely known. These facts include existence of frequencies along the primes (Theorem \ref{thm:facts:prime-freq-exists}) and the $\mathrm{IP}^*$ recurrence (Theorem \ref{thm:nil:IP*-rec}).

A different source from which we draw inspiration is the theory of automatic sequences, and computation theory in general. Recall that a word $\bb a= (a_n)_{n=0}^\infty$ is $k$-automatic if there exists a deterministic finite automaton which, given the base-$k$ expansion of $n$ as input, produces $a_n$ as output (for extensive introduction, see \cite{AlloucheShallit-book}). 
Although $k$-automatic words are generally not {\gpwords} (cf.\ Remark \ref{rmk:NOT-auto} below), we find this comparison helpful because the classes of $k$-automatic sequences and the {\gpwords} enjoy similar closure properties with respect to Cartesian products, codings, finite modifications and certain types of rearrangements, as discussed at length in Section \ref{sec:cl}. 
For instance, bracket words taking values in a ring, equipped with coordinatewise addition and multiplication, form a ring. In contrast, the class of Sturmian sequences is considerably too small for analogous closure properties to be true. 

Access to the closure properties mentioned above allows us to look at bracket words from a computational perspective. For example, given three bracket words 
$\bb a^{(0)},\bb a^{(1)} \in \Sigma^\infty$, and $\bb b \in \{0,1\}^\infty$, we can construct a new bracket word $\bb a$ given by 
\[
	a_n =
	\begin{cases}
		a^{(0)}_n &\text{if } b_n = 0\,,\\
		a^{(1)}_n &\text{if } b_n = 1\,.		
	\end{cases}
\]
As a consequence, bracket words can encode any finite computation including real constants, the basic arithmetic operations $+,\times$, the operation of taking the integer part $\ip{\cdot}$ and conditional statements involving $=$ and $<$ (c.f.{} Proposition \ref{prop:cl:g-in-I}).

\begin{remark}\label{rmk:NOT-auto}
	We stress that, even though we use automatic sequences as a motivating example, automatic words in general are not bracket words. In fact, there are no automatic words that are also bracket words, except for eventually periodic ones (see Theorem \ref{thm:BK}). Rather, our interest in automatic sequences stems from the fact that they share with bracket words certain properties, as discussed above. Other classes of sequences which  we discuss in this context are morphic sequences and regular sequences (in the sense of Allouche and Shallit, \cite{AlloucheShallit-1992}). 
\end{remark}

In light of the discussion above, it is natural to ask which properties of bracket words can be tested algorithmically. We will say that a problem is \emph{decidable} if there exists an algorithm which solves it on all inputs, and \emph{undecidable} otherwise. Many properties are known to be decidable for $k$-automatic sequences, see e.g., \cite{CharlierRampersadShallit-2012} and references therein; a practical implementation is discussed in \cite{GocHenshallShallit-2013} and \cite{Mousavi-2016}. Similarly, there are many results on decidability for $k$-regular sequences (see e.g.\ \cite{AlloucheShallit-1992}, \cite{KrennShallit-2022}) and for morphic sequences (see e.g.\ \cite{Durand-2013,Durand-2013b}).

For problems involving generalised polynomial sequences, we assume that the algorithm is provided with a formula involving only polynomials, addition, multiplication and the floor function which represents the sequence. In \cite{Leibman-2012}, Leibman constructed a ``canonical'' representation of a bounded generalised polynomial, which is essentially unique. As a consequence, the problem of determining if a given generalised polynomial is zero almost everywhere is decidable. Likewise, it is decidable whether two given bracket words are equal almost everywhere. Here, a statement $\varphi(n)$, involving a parameter $n \in \NN_0$, is said to hold almost everywhere if the set $\set{n \in \NN_0}{\neg \varphi(n)}$ of positions where it is false has asymptotic density zero. Somewhat surprisingly, the problem of verifying equality everywhere turns out to be undecidable, as shown in Section \ref{sec:comp}. (For terminology used, see Section \ref{ssec:GP-def} and Remark \ref{rmk:alg-coeff}.)

\begin{restatable}{alphatheorem}{thmdecide}
\label{thm:comp}
It is undecidable if two given bracket words $\bb a$ and $\bb b$ with algebraic coefficients
	defined over a finite alphabet $\Sigma$ are equal. 
\end{restatable}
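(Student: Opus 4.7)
The plan is to reduce from Hilbert's tenth problem over $\NN_0$, whose undecidability is Matiyasevich's theorem. Given a polynomial $P(x_1,\ldots,x_k) \in \ZZ[x_1,\ldots,x_k]$, I will produce bracket words $\bb a$ and $\bb b$ with algebraic coefficients such that $\bb a = \bb b$ holds if and only if $P$ has no zero in $\NN_0^k$; an algorithm deciding equality of bracket words would then decide Hilbert's tenth problem. I take $\bb b$ to be the constant-zero word $0^\omega$ and design $\bb a$ so that $\bb a(n) = 1$ precisely when $n$ encodes a zero of $P$ under a generalised-polynomial enumeration $(\phi_1,\ldots,\phi_k) \colon \NN_0 \to \NN_0^k$ that I will construct.

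To realise the zero-indicator as a bracket word, I use the following identity: for any integer-valued generalised polynomial $Q$,
\[
	\bb{1}[Q(n)=0] \;=\; 1 + \ip{-\fp{\sqrt{2}\, Q(n)}}.
\]
Its validity uses only the irrationality of $\sqrt{2}$: if $Q(n) = m \ne 0$ is an integer, then $\fp{\sqrt{2}\,m} \in (0,1)$ and hence $\ip{-\fp{\sqrt{2}\,m}} = -1$, whereas $m = 0$ yields $0$. The right-hand side is a finitely-valued generalised polynomial with algebraic coefficients (only $\sqrt{2}$ is introduced), so it defines a bracket word over $\{0,1\}$. Applying the identity with $Q(n) = P(\phi_1(n),\ldots,\phi_k(n))$ and invoking the closure properties of bracket words collected in Section~\ref{sec:cl}, notably Proposition~\ref{prop:cl:g-in-I}, realises $\bb a$ as a bracket word. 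It is then immediate that $\bb a = \bb b = 0^\omega$ is equivalent to $P(\phi_1(n),\ldots,\phi_k(n)) \ne 0$ for all $n$, which, once $(\phi_1,\ldots,\phi_k)$ is surjective onto $\NN_0^k$, is equivalent to $P$ having no zero at all.

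The main technical obstacle is constructing the enumeration $(\phi_1,\ldots,\phi_k)$ with $\{(\phi_1(n),\ldots,\phi_k(n)) : n \in \NN_0\} = \NN_0^k$. The Cantor pairing is ruled out because its inverse requires the square-root function, which is not a generalised polynomial, and naive digit extraction would need sums of unbounded length, also outside the class of finite-depth formulas. I plan to produce the enumeration iteratively from a suitable surjection $\NN_0 \to \NN_0^2$, leveraging the closure properties of bracket words under Cartesian products, rearrangements, and codings developed in Section~\ref{sec:cl}, and possibly exploiting bespoke algebraic irrationals to spread the image across both coordinates. Once the enumeration is in hand, the reduction outlined above together with Matiyasevich's theorem delivers the undecidability, and since the only irrational introduced is $\sqrt{2}$, both bracket words have algebraic coefficients, as required.
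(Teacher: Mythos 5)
Your overall reduction is the same as the paper's: compare against the constant word $\bb b = 0^\omega$, encode a Diophantine equation through a surjective generalised-polynomial enumeration of tuples, and invoke Matiyasevich. Your zero-indicator identity $\braif{Q(n)=0} = 1+\ip{-\fp{\sqrt{2}\,Q(n)}}$ is correct for integer-valued $Q$ and is a simpler variant of the trick in Lemma~\ref{lem:cl:[g=0]}; reducing from solvability over $\NN_0^k$ rather than $\ZZ^d$ is also harmless. So the frame is sound.

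The genuine gap is the step you defer: the existence of a surjective {\gp} map $(\phi_1,\dots,\phi_k)\colon \NN_0 \to \NN_0^k$ (equivalently, a surjection $\NN_0\to\NN_0^2$ to iterate) is not something you can get from the closure properties of Section~\ref{sec:cl} --- those results manipulate finitely-valued words via products, codings and rearrangements, whereas here you need an \emph{unbounded} integer-valued {\gp} map with a pointwise hitting property (every pair $(k,l)$ attained), and "bespoke algebraic irrationals to spread the image" is a hope, not an argument. This is precisely the technical heart of the paper's proof: Proposition~\ref{prop:comp:surjective-Z^d} is established by taking $g_A(n)=\bra{\ip{\fpnormal{\sqrt{2}n}^A n},\ \ip{\fpnormal{\sqrt{3}n}^A n}}$ for large $A$ (Lemma~\ref{lem:comp:surjective}), and surjectivity requires a quantitative, power-saving inhomogeneous equidistribution estimate for $\bra{\fpnormal{\sqrt{2}n},\fpnormal{\sqrt{3}n}}$ on short windows (Lemma~\ref{lem:comp:equidist-linear}, resting on lower bounds for $\fpa{n\sqrt{2}+m\sqrt{3}}$ via norms or the subspace theorem), followed by a delicate choice of the window $[M,M+N)$ with $M=N^2$ and targets $x=(k/M)^{1/A}$, $y=(l/M)^{1/A}$. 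Without constructing such a map and proving it hits every tuple, your $\bb a$ is not defined and the reduction does not go through; everything else in your proposal is routine once this surjection is supplied.
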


Finally, we discuss examples of ``naturally occurring'' words for which we can show that they are, or that they are not, bracket words. In the positive direction, we note that the characteristic word $\bb 1_{F} = 1111010010010001\cdots$ of the Fibonacci numbers is a bracket word, which can be traced back to the observation that the golden ratio 
$(1+\sqrt{5})/2$ is a Pisot unit and the group of units of the field it generates has rank $1$. In Section \ref{sec:pisot} we discuss generalisations of this example, corresponding to other Pisot and Salem numbers. Similar but slightly weaker results were obtained in \cite{ByszewskiKonieczny-2018-TAMS}. 

In the negative direction, several criteria for proving that an infinite word is not a {\gpword} follow from results discussed in the reminder of the paper. Several other techniques were developed in a series of papers by Byszewski and the second-named author \cite{ByszewskiKonieczny-2018-TAMS,ByszewskiKonieczny-2020-CJM,Konieczny-2021-JLM}, leading to a proof that automatic sequences which are not eventually periodic are not bracket words. As explicit applications of the aforementioned methods, we mention the characteristic words of primes and of squares are not bracket words. The same applies to many words coming from number theory, such as $(\varphi(n) \bmod q)_{n=0}^\infty$, where $\varphi$ denotes the totient function and $q \geq 3$.

We point out that a significant part of the paper is devoted to a survey of known results concerning generalised polynomial sequences and their interpretation in terms of {\gpwords}.  Beyond that, we prove several new results, including Theorems \ref{thm:A} and \ref{thm:comp}. The paper is organised as follows. 

\subsection*{Organisation of the paper} 

Sections \ref{sec:def} and \ref{sec:ex} are concerned with setting up the terminology and providing examples of {\gpwords}.
In Section \ref{sec:nil}, we discuss the connection between dynamics on nilmanifolds and generalised polynomials, which is one of the key tools used in subsequent sections. 
For the sake of readability, we delegate some related material to Appendix \ref{app:nil}.
In Sections \ref{sec:constr} and \ref{sec:cl}, we discuss closure properties of {\gpwords} and other ways in which {\gpwords} can be constructed. These results allow us to perform many basic operations on {\gpwords} later in the paper. 
Sections \ref{sec:facts}--\ref{sec:comp} each concern a different facet of {\gpwords} and can mostly be read independently from one another. In Sections \ref{sec:facts} and \ref{sec:growth}, we discuss frequencies of symbols and subwords in {\gpwords}; qualitative results are included in \ref{sec:facts} and quantitative --- in  \ref{sec:growth}. 
In Section \ref{sec:comp}, we discuss the canonical representation of {\gpwords}, based on \cite{Leibman-2012}, and its consequences in terms of decidability; next, we prove several undecidability results, including Theorem \ref{thm:comp}. 
In Section \ref{sec:pisot}, we consider a class of {\gpwords}, consisting of characteristic words  of certain sets of integers described by a linear recurrence. In Section \ref{sec:neg}, we consider the problem of proving that a given word is not a {\gpword}. We collect several criteria from previous sections and from \cite{ByszewskiKonieczny-2018-TAMS,Konieczny-2021-JLM}, and give a number of new examples and applications.
In Section \ref{sec:subword}, we discuss subword complexity of {\gpwords} and lay the lay the groundwork for Theorem \ref{thm:A}, which we prove in Sections \ref{sec:sc-induction}--\ref{sec:sc-proof}.

\subsection*{Acknowledgements} 
The second-named author works within the framework of the LABEX MILYON (ANR-10-LABX-0070) of Universit\'e de Lyon, within the program "Investissements d'Avenir" (ANR-11-IDEX- 0007) operated by the French National Research Agency (ANR). We are also grateful to Val\'{e}rie Berth\'{e}, Jakub Byszewski, and Sam Chow for helpful comments, and to the anonymous referee for careful reading of our paper and valuable corrections.

\section{Definitions and notation}\label{sec:def}

\subsection{Combinatorics on words} 

An alphabet $\Sigma$ is a finite set of symbols, also called letters. A finite word over $\Sigma$ is a 
finite sequence of letters in $\Sigma$  or, equivalently, an element of $\Sigma^*= \bigcup_{\ell = 0}^\infty \Sigma^{\ell}$, 
the free monoid  generated by $\Sigma$ 
with respect to the concatenation of finite words.  
The length of a finite word $w$, that is, the number 
of symbols in $w$, is denoted by $\vert w\vert$. 
We let $\epsilon$ denote the empty word, the neutral element of $\Sigma^*$. 
An infinite word $\bb a= (a_n)_{n = 0}^\infty$ over $\Sigma$ is an element of $\Sigma^{\infty}$, or, equivalently, an infinite sequence with values in $\Sigma$, 
({\it i.e.}, a map from $\NN_0$ to $\Sigma$). It is sometimes represented as $\bb a=a_0a_1\cdots$. 
Throughout, we use bold symbols $\bb a, \bb b, \dots$ to denote infinite words.

Let $\Sigma$ and $\Pi$ be two alphabets. A \emph{morphism} 
is a map  $\sigma \colon \Sigma^* \to \Pi^*$ that obeys the identity $\sigma(\fword{u} \fword{v}) = \sigma(\fword{u})\sigma(\fword{v})$ for 
all words $\fword{u},\fword{v}\in \Sigma^*$. Note that a morphism $\sigma$ is uniquely determined by the knowledge of $\sigma(x)$ 
for all $x \in \Sigma$. 
A map from $\Sigma$ to $\Pi^*$ naturally (and uniquely) extends as a morphism from $\Sigma^*$ to $\Pi^*$.   
A morphism is  said to be \emph{non-erasing} if $\sigma(x)\not= \epsilon$ for all $x\in \Sigma$. 
A morphism $\sigma$ over $\Sigma^*$ is said to be $k$-\emph{uniform} if 
$\vert \sigma(a)\vert =k$ for every letter $a$ in~$\Sigma$, and just 
\emph{uniform} if it is $k$-uniform for some~$k$. A $1$-uniform morphism is called a $\emph{coding}$. 
Furthermore, there is a natural way to extend the action of a non-erasing morphism to infinite words, that is, as a map 
from $\Sigma^\infty$ to  $\Pi^\infty$ defined by $\sigma(\bb a) = \sigma(a_0)\sigma(a_1) \cdots$ for $\bb a \in \Sigma^\infty$. 
Such a map is still called a morphism and denoted by $\sigma$. 

\subsection{Generalised polynomials and bracket words}\label{ssec:GP-def}
For $x \in \RR$, we let $\ip{x} \in \ZZ$ denote the integer part of $x$ (also known as the floor), which is the unique integer with $\ip{x} \leq x < \ip{x}+1$. Similarly, we let $\fp{x} = x - \ip{x} \in [0,1)$ denote the fractional part, $\ceil{x} = -\ip{-x} \in \ZZ$ --- the ceiling, $\nint{x} = \ip{x+1/2} \in \ZZ$ --- the nearest integer, and $\fpa{x} = \abs{ x - \nint{x}} \in [0,1/2]$ --- the distance to the nearest integer. 
We extend the notions introduced above to  $x = \bra{x_i}_{i=1}^d \in \RR^d$ for $d \geq 2$ coordinate-wise, meaning that $\ip{x} = \bra{\ip{x_i}}_{i=1}^d \in \ZZ^d$, etc. Similarly, for a map $f \colon X \to \RR$ (where $X$ is any set) we define $\ip{f} \colon X \to \ZZ$ by $\ip{f}(x) = \ip{f(x)}$.

Let $d \in \NN$. We define \emph{{\gp} maps} (or {\genpoly} maps) $\RR^d \to \RR$ as the smallest family such that
\begin{enumerate}
\item\label{it:def:i} each polynomial map $\RR^d \to \RR$ is a {\gp} map;
\item\label{it:def:ii} if $g,h \colon \RR^d \to \RR$ are {\gp} maps then $g+h$ and $g \cdot h$ are {\gp} maps;
\item\label{it:def:iii} if $g \colon \RR^d \to \RR$ is a {\gp} map then $\ip{g}$ is a {\gp} map.
\end{enumerate}
Note that $x \mapsto \fp{x},\ceil{x},\nint{x}$ are {\gp} maps, and the definition of a {\gp} map does not change if in \ref{it:def:iii} we replace $\ip{\cdot}$ with $\fp{\cdot}$, $\ceil{\cdot}$ or $\nint{\cdot}$. The distance for the nearest integer can also be expressed by a generalised polynomial formula, such as $\fpa{x} = \bra{x-\nint{x}}\cdot\bra{2\ceil{x-\nint{x}}-1}$. (Since all maps $\RR^d \to \RR$ obtained from polynomials using addition, multiplication and $\fpa{\cdot}$ is continuous, replacing $\ip{\cdot}$ with $\fpa{\cdot}$ in \ref{it:def:iii} yields a strictly smaller family of maps.)

\begin{remark}\label{rem:composition}
If $g:\RR \to \RR$ and $h: \RR^d \to \RR$ are {\gp} maps, then $g\circ h$ is also a {\gp} map.
\end{remark}

A {\gp} map on a domain $\Omega \subset \RR^d$ (e.g.{} $\Omega = \ZZ^d$ or $\NN_0^d$) is simply the restriction of a {\gp} map on $\RR^d$. In particular, each {\gp} map on $\Omega$ can be extended to $\RR^d$, but the extensions is usually not unique. In this paper, we are particularly interested in finitely-valued {\gp} sequences (i.e.{}, {\gp} maps $\NN_0 \to \RR$), as seen from the perspective of combinatorics on words. This motivates us to pose the following definition.\footnote{Several authors have expressed the sentiment that a better name for ``generalised polynomials'' would have been ``bracket polynomials'', and the main reason to not adopt the latter name is that it is already used in knot theory (see e.g.{} \cite{Leibman-2012}). Fortunately, similar considerations do not apply to the term ``{\gpword}''.}

\begin{definition}\label{def:def:bra-word}
	A \emph{\gpword} over a (finite) alphabet $\Sigma$ is an infinite word $\bb a = (a_n)_{n=0}^\infty \in \Sigma^\infty$ of the form $a_n = c(g(n))$ for all $n \in \NN_0$, where $g \colon \NN_0 \to \RR$ is a finitely-valued {\gp} map and $c \colon g(\NN_0) \to \Sigma$ is an arbitrary map. \end{definition}

\begin{remark}\label{rmk:coding}
	The inclusion of the coding in Definition \ref{def:def:bra-word} does not significantly increase the level of generality. Indeed, if $\bb a = (a_n)_{n=0}^\infty$ is a bracket word over an alphabet $\Sigma$ and $\phi \colon \Sigma \to \RR$ is an arbitrary map then the map $g \colon \NN_0 \to \RR$ given by $g(n) = \phi(a_n)$ is a {\gp} map as a consequence of Corollary \ref{cor:constr:gpword-nice}. However, from the perspective of combinatorics on words, it would be unnatural to restrict our attention to words over alphabets contained in $\RR$. We also point out that in Section \ref{sec:cl} it is frequently more natural to consider words over more general alphabets, especially when it comes to closure under Cartesian products in Proposition \ref{prop:cl:prod}. 
\end{remark}

\begin{remark}\label{rmk:bw-multidim}
	In analogy with Definition \ref{def:def:bra-word}, one can define \emph{$d$-dimensional {\gpwords}} over a finite alphabet $\Sigma$ to be $d$-dimensional infinite words \[ \bb a = (a_{n_1,n_2,\dots,n_d})_{n_1,n_2,\dots,n_d=0}^\infty \in \Sigma^\infty \times \Sigma^\infty \times \cdots \times\Sigma^\infty\]
	of the form $a_{n_1,n_2,\dots,n_d} = c(g(n_1,n_2,\dots,n_d))$ where $g \colon \NN_0^d \to \RR$ is a finitely-valued {\gp} map and $c \colon g(\NN_0^d) \to \Sigma$ is an arbitrary map. We limit the discussion to the $1$-dimensional case for the sake of clarity, but many of the results have their multidimensional analogues. In particular, we point out that multidimensional Sturmian sequences (as defined in \cite[Def.\ 4]{Fernique-2006}), and more generally rotation sequences (as defined in \cite[Def.\ 5]{Fernique-2006}) are multidimensional {\gpwords} (cf.\ \cite[Prop.\ 2]{Fernique-2006}). 
\end{remark}

{
\begin{remark}\label{rmk:alg-coeff}
For a ring $\ZZ \subset A \subset \RR$ we define \emph{{\gp} maps with coefficients in $A$} in a fully analogous way, except that in \ref{it:def:i} we only include polynomial maps with coefficients in $A$. We will be especially interested in {\gp} maps with algebraic coefficients, i.e., $A = \bar\QQ$. One can also define the set of coefficients of a {\genpoly} (see e.g.{} \cite{Haland-1994}), but we avoid using this notion since it depends on the choice of a representation, which is usually not unique (see Section \ref{sec:comp} for further discussion). Slightly informally, we will say that an infinite word $\bb a$ over $\Sigma$ is a \emph{{\gpword} arising from a {\gp} map with algebraic coefficients}, or simply a  \emph{{\gpword} with algebraic coefficients}, if $a_n = c(g(n))$ for every $n \in \NN_0$, where $g \colon \NN_0 \to \RR$ is a finitely-valued {\gp} map with coefficients in $\bar{\QQ}$ and $c \colon g(\NN_0) \to \Sigma$ is a map. 
\end{remark}
}

For $k \geq 2$, a map $ g = (g_i)_{i=1}^k \colon \RR^{d} \to \RR^k$ is {\gp} if for each $i$, $1 \leq i \leq k$, the coordinate map $g_i$ is {\gp}. In most cases, we find it simpler to speak of $k$-tuples of {\gp} maps instead.

A \emph{{\gp} subset of $\Omega \subset \RR^d$} (or simply \emph{a {\gp} set}, if $\Omega$ is clear from the context) is the zero locus of a {\gp} map, that is, a set $E \subset \Omega$ which takes the form 
\(E = \set{\vec x \in \Omega}{ g(\vec x) = 0}\)
for some {\gp} map $g \colon \Omega \to \RR$. As we will see in Section \ref{sec:constr}, $E \subset \NN_0$ is a {\gp} set if and only if $\bb 1_E$ is a {\gpword}, cf.\ Proposition \ref{prop:cl:g-in-I}. (Here and elsewhere, $\bb{1}_E \in \{0,1\}^\infty$ is given by $(\bb{1}_E)_n = 1$ if $n \in E$ and $(\bb{1}_E)_n = 0$ otherwise).  We stress that this notion depends on $\Omega$; in particular, a {\gp} subset of $\NN_0$ will usually not be a {\gp} subset of $\ZZ$.

\subsection{Other notation}\label{sec:Notation}

We briefly summarise some other pieces of notation we use. 

For $N \in \NN_0$, we let $[N] = \{0,1,\dots,N-1\}$. For a quantity $X$, we let $O(X)$ denote any quantity bounded in absolute value by $CX$, where $C$ is a constant. When the $C$ is additionally allowed to depend on a parameter $Y$, we write $O_Y(X)$ instead. Similarly, assuming that $X > 0$, we let $\Omega(X)$ denote any quantity bounded from below by $cX$, where $c > 0$ is a constant. If $X = O(Y)$ and $Y = \Omega(X)$, we write $Y = \Theta(X)$. We occasionally also use the notation $Y \ll X$ when $Y = O(Y)$.
Lastly, we write $o_{n\to\infty}(X)$ for any quantity $Y$ with $\lim_{n \to \infty} Y/X = 0$; if the parameter $n$ is clear from the context, we write $o(X)$ instead. 

In what follows, we will use the Iverson bracket notation. 
For a sentence $\varphi$ we put $\braif{\varphi} = 1$ if $\varphi$ is a true and $\braif{\varphi} = 0$ otherwise. 
By a slight abuse of notation, we also use the Iverson bracket to define infinite words over the alphabet $\{0,1\}$. 
For instance, if $X,Y$ are sets with $Y \subset X$ and $f \colon \NN_0 \to X$ is a map then 
$\braif{f \in Y} \in \{0,1\}^\infty$ is given by $\braif{f \in Y}_n = \braif{f(n) \in Y}$ for all $n \in \NN_0$.

\color{black} 
\section{Examples}\label{sec:ex}

Let us now present several examples of {\gpwords}. In some cases, the fact that the sequence under consideration 
indeed is a {\gpword} will follow directly from the definition, while in other cases it may be more surprising.

\begin{example}\label{ex:ex:1_0}
	Let $\alpha \in \RR \setminus \QQ$ and let $\bb a$ be the {\gpword} defined by $a_n = \ip{1-\fp{n\a}}$ for $n \in \NN_0$. 
	Then $\bb a = \bb{1}_{\{0\}} = 100\cdots$.  More generally, all eventually constant sequences are {\gpwords}. 
	Explicitly, if $\bb a$ is a word over $\Sigma$ with $a_n = b$ for all $n \geq N$ then $a_n = c(g(n))$, 
	where $g \colon \ZZ \to \{-1,0,1,\dots,N-1\}$ is the {\gp} map given by
	\[
		g(n) = -1+ \sum_{m = 0}^{N-1} (m+1) \ip{1-\fp{(n-m)\a}},
	\]
	and $c \colon \{-1,0,1,\dots,N-1\} \to \Sigma$ is given by $c(n) = a_n$ if $n \neq -1$ and $c(-1) = b$.
\end{example}

\begin{example}\label{ex:ex:per}
	Let $Q \in \NN$. Then $\bb a \in [Q]^\infty$ defined by $a_n = n \bmod Q = Q\fp{n/Q}$ is a {\gpword}. 
	More generally, all eventually periodic words are {\gpwords}. For details, see Section \ref{sec:cl}.
\end{example}

\begin{example}\label{ex:ex:sturm}
	One way to define Sturmian words is by an explicit formula. Namely, an infinite word $\bb a$ over $\{0,1\}$ is Sturmian if it is the discrete derivative of a Beatty sequence, meaning that it takes one of the following forms:
	\begin{align}
\label{eq:802:1}		a_n &= \ip{n\a + \b} - \ip{\a (n-1) + \b}, \quad  \text{or}\\
\label{eq:802:2}		a_n &= \ceil{n\a + \b} - \ceil{\a (n-1) + \b}, 
	\end{align}
	for some $\a,\b \in [0,1)$ with $\a$ irrational. (Note that \eqref{eq:802:1} and \eqref{eq:802:2} differ for at most one value of $n$.) 
	Thus, Sturmian words are {\gpwords} (while Beatty sequences are unbounded {\gp} sequences). We also point out that Sturmian words 
	arise from codings of rotations, which gives another way to see that they are {\gpwords}; we explore this point of view further in Section \ref{sec:nil}.
	
	 More concretely, setting $\a = \frac{\sqrt{5}-1}{2}$ and $\b = 0$ we obtain the Fibonacci word, whose initial values are:
	\begin{align*}
1 0 1 0 1 1 0 1 0 1 1 0 1 1 0 1 0 1 1 0 1 0 1 1 0 1 1 0 1 0 1 1 0 1 1 0 1 0 1 1 0 1 0 1 1 0 1 1 0 1 0 1 1 0 1 0 \cdots
	\end{align*}

\end{example}

\begin{example}\label{ex:ex:poly}
	As a generalisation of Example \ref{ex:ex:sturm}, let $p \colon \RR \to \RR$ be a polynomial and let $I \subset [0,1)$ 
	be an interval (or a finite union thereof). Let $\bb a$ be the infinite word over $\{0,1\}$ defined as	
	\[
		a_n =
		\begin{cases}
			1 & \text{ if } \fp{p(n)} \in I,\\
			0 & \text{ otherwise}.
		\end{cases} 
	\]
	Then $\bb a$ is a {\gpword}. As a concrete illustration, let $p(x) = \varphi x^2$, where $\varphi = \frac{\sqrt{5}+1}{2}$ is the golden ratio, 
	and $I = [0,1/4) \cup (3/4,1)$. Then 
	\[
		a_n =
		\begin{cases}
			1 & \text{ if } \fpa{ \varphi n^2} < 1/4,\\
			0 & \text{ if } \fpa{ \varphi n^2} \geq 1/4 .
		\end{cases} 
	\]
The initial values of $\bb a$ are:
	\begin{align*}
&1000101001111011101110111010011111011101111000111110111\cdots
	\end{align*}
\end{example}

\begin{example}\label{ex:ex:rec}
	Let $F = \{0,1,2,3,5,8,13,\dots\}$ be the set of all Fibonacci numbers. Then $\bb{1}_F$ is a {\gpword}.	
	Similarly, let $(t_i)_{i=0}^\infty$ be the sequence given by $t_0 = 0$, $t_1= t_2 =1$ and 
	$t_{i+3} = t_{i+2} + t_{i+1} + t_{i}$ for all $i \in \NN_0$, sometimes called the Tribonacci numbers, 
	and let $T = \set{t_i}{i \in \NN_0}$. Then $\bb{1}_T$ is a {\gpword}. These are special cases of Proposition \ref{prop:pisot:rec}. 
	In an upcoming preprint by the second-named author and Byszewski \cite{ByszewskiKonieczny-upcoming}, it is shown that, more generally, for each $E \subset F$, $\bb{1}_E$ is a {\gpword}.
\end{example}

\begin{example}\label{ex: littlewood}
	Let $\a,\b \in \RR$ and $\e > 0$. Then the infinite word $\bb a$ over $\{0,1\}$ defined by
	\[
		a_n =
		\begin{cases}
			1 & \text{ if } n \neq 0 \text{ and } \fpa{\a n} \cdot \fpa{\b n} < \e/n,\\
			0 & \text{ otherwise},
		\end{cases} 
	\]
	is a {\gpword}, as follows from Proposition \ref{prop:cl:g-in-I}.  
	We point out that a famous conjecture in Diophantine approximation, the Littlewood conjecture, 
	is equivalent to the statement that, for each choice of $\a,\b,\e$, the {\gpword} $\bb a$ defined above is not identically zero. 
	Indeed, in its usual formulation, Littlewood's conjecture asserts that
\begin{equation}\label{eq:232:1}
	\liminf_{n \to \infty} n \cdot \fpa{\a n} \cdot \fpa{\b n} = 0
\end{equation}
for all $\a,\b \in [0,1)$.  A landmark result toward its resolution is due to  Einsiedler, Katok and Lindenstrauss \cite{EinsiedlerKatokLindenstrauss-2006}: 
the set of possible exceptions ({\it i.e.}, the set of pairs $(\a,\b)$ for which \eqref{eq:232:1} is false) has Hausdorff dimension zero. 
\end{example}

\begin{example}
	Let $(n_i)_{i=0}^\infty$ be a sequence of positive integers with $n_{i+1} \geq n_{i}^2$ for all $i$, such as $n_i = 2^{2^i}$. 
	Put $E = \set{n_i}{i \in \NN_0}$. Then $\bb{1}_E$ is a {\gpword}. This is a special case of Proposition \ref{prop:constr:super-sparse}.
\end{example}

\section{Dynamical representation}\label{sec:nil}

In this section, we discuss a dynamical description of bounded {\gp} sequences and {\gpwords}. 
Specifically, we briefly introduce basic facts about nilmanifolds and nilsystems and explain their relation to generalised polynomials, 
as established by Bergelson and Leibman \cite{BergelsonLeibman-2007}.

\subsection{Nilsystems and generalized polynomials}

Classical theory of nilpotent dynamics can be found in \cite{AuslanderGreenHahn-book}. 
In order to maintain the introductory nature of this section, we delegate some of the more technical results to Appendix \ref{app:nil}.  
We also refer to cited references, such as \cite{BergelsonLeibman-2007}, for precise definitions and a more detailed discussion. 

\subsubsection{Nilpotent Lie groups}

Let $G$ be a group. The \emph{lower central series} $(G_i)_{i\geq 0}$ is the chain of subgroups of $G$ inductively defined by 
$G_0= G_1 = G$ and $G_{i+1} = [G,G_i]$ for $i\geq 1$. Here, we let $[G,H]$ denote the group generated by the commutators $[g,h] = ghg^{-1}h^{-1}$ for $g \in G, h \in H$. 
The group $G$ is nilpotent if there exists $s$ such that $G_{s+1}=\{\id_G\}$. 
The smallest such $s$ is called the nilpotency class of $G$ and $G$ is said to be nilpotent of class $s$ or a $s$-step nilpotent group. 
We recall that a Lie group is a smooth manifold obeying the group properties and that satisfies the additional condition that the group operations are differentiable. 
A nilpotent Lie group is a Lie group that is nilpotent. 

\subsubsection{Nilrotations}
{
A \emph{nilmanifold} is a quotient space $G/\Gamma$ where $G$ is a nilpotent Lie group and $\Gamma$ is a discrete cocompact subgroup. 
A \emph{nilsystem} is a dynamical system of the form $(G/\Gamma, T_g),$ where $G/\Gamma$ is a nilmanifold and $T_g$ is a \emph{nilrotation}. 
That is,  there exists some $g \in G$ such that $T_g(h\Gamma) = gh\Gamma$ for all $h \in G$.  
In general, there is no guarantee that $G$ is connected, and we let $G^{\circ}$ denote the connected component of $\id_G$. 
We may assume without loss of generality that $G^{\circ}$ is simply connected.
The simplest example of a nilsystem is the aforementioned rotation on the torus, where we take $G = \RR$, $\Gamma = \ZZ$ (see Section \ref{ssec:nil:sturm}). 

\subsubsection{Mal'cev basis}

Let $G$ be a connected and simply connected $s$-step nilpotent Lie group and let $\Gamma < G$ be a discrete cocompact subgroup. 
In this case, for $g \in G$ and $t \in \RR$, one can use the Lie algebra of $G$ to define $g^t \in G$.
 We let $\dim G$ denote the dimension of $G$ as a Lie group.  
A \emph{Mal'cev basis} of $G$ is a sequence $h_1,h_2, \dots, h_d \in \Gamma$ satisfying the following conditions. 
\begin{enumerate}
\item Every $g \in G$ has a unique representation $h_1^{t_1} h_2^{t_2} \cdots h_d^{t_d}$ with $t_i \in \RR$. 

\item There exists an increasing sequence of natural numbers $$1 = k_1 < k_2 < \dots < k_{s} = d+1$$ such that for each $1 \leq j \leq s$, 
the quotient $G_{j}/G_{j+1}$ is spanned by $h_{k_j}, \dots, h_{k_{j+1}-1}$. 
\end{enumerate}
Existence of such bases was established by Mal'cev  \cite{Malcev-1949,Malcev-1949-tran}.
Given a Mal'cev basis, we let $\tilde \tau \colon G \to \RR^d$ denote the coordinate map, characterised by the property that 
\begin{equation}\label{eq:Malcev-G}
	\tilde\tau\bra{ h_1^{t_1} h_2^{t_2} \cdots h_d^{t_d} } = (t_1,t_2,\dots, t_d)\,, \qquad t_i \in \RR\,.
\end{equation}
This induces also the coordinate map $\tau \colon G/\Gamma \to [0,1)^d$, similarly characterised by
\begin{equation}\label{eq:Malcev-X}
	\tau\bra{ h_1^{t_1} h_2^{t_2} \cdots h_d^{t_d} \Gamma} = (t_1,t_2,\dots, t_d)\,, \qquad t_i \in [0,1)\,.
\end{equation}

Thus, the nilmanifold $G/\Gamma$ can be identified with a cube $[0,1)^{\dim G}$ via 
\emph{Mal'cev coordinates} $\tau \colon G/\Gamma \to [0,1)^{\dim G}$. The coordinate map $\tau$ is a bijection; $\tau^{-1}$ is continuous, 
its restriction to $ (0,1)^{\dim G}$ is a diffeomorphism. The nilmanifold $G/\Gamma$ carries a natural probability measure, the Haar measure,  
which we denote by $\mu_{G/\Gamma}$.  

\subsubsection{Semialgebraic sets}\label{ssec:semialgebraic}

An algebraic variety in $\RR^d$ is a set defined by a finite number of polynomial equations. More generally, a semialgebraic set is a set defined by a finite number of polynomial equations and inequalities, or a finite union of sets of this form. 
 A map $f \colon \RR^d \to \RR$ is \emph{piecewise polynomial} if there exists a partition $\RR^d = S_1 \cup S_2 \cup \cdots \cup S_r$ 
 into semialgebraic pieces such that, for every $i$,  $1 \leq i \leq r$, the restriction $f|_{S_i}$ is a polynomial map. 
  A map $f \colon G/\Gamma \to \RR$ is piecewise polynomial if it takes the form $f = \tilde f \circ \tau$ for a piecewise polynomial 
  map $\tilde f \colon \RR^{\dim G} \to \RR$.
The notion of a piecewise polynomial map is independent of the choice of Mal'cev basis (see \cite[Sec.{} 0.18]{BergelsonLeibman-2007}). 
 
\subsubsection{Connectedness}  
Let us now return to the general case, where $G$ may be disconnected. 
If $G/\Gamma$ is connected, then it remains true that $G/\Gamma = G^{\circ}/\Gamma \cap G^{\circ}$, 
and the previous discussion applies verbatim (note, however, that not every translation $T_g$, with $g \in G$, 
can be represented as $T_h$, with $h \in G^{\circ}$). If $G/\Gamma$ is disconnected, then it can be decomposed as a finite union 
of connected components which again are nilmanifolds, and we can apply the discussion above to each component separately. 
A map $f \colon G/\Gamma \to \RR$ is piecewise polynomial if its restriction to each connected component of $G/\Gamma$ is piecewise polynomial.

Finally, we recall that a topological dynamical system $(X,T)$ is \emph{minimal} if there is no closed subset $Y \subset X$ with $T(Y) \subset Y$. 
We have now introduced all the terminology which is needed to state the $1$-dimensional case of the main result of \cite{BergelsonLeibman-2007}.

\begin{theorem}[{\cite[Thm.\ A]{BergelsonLeibman-2007}}]\label{thm:BL-mini}
	Any bounded {\gp} map $g \colon \ZZ \to \RR$ admits a representation $g(n) = f(T^n(x))$, where $(X,T)$ is a minimal nilsystem, 
	$f \colon X \to \RR$ is piecewise polynomial, and $x \in X$. Conversely, for any nilsystem $(X,T)$, any piecewise polynomial map
	$f \colon X \to \RR$, and any $x \in X$, the map $n \mapsto f(T^n(x))$ from $\ZZ$ to $\RR$ is a bounded {\gp} map.
\end{theorem}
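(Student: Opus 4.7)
My plan is to treat the two directions separately, since they rely on quite different techniques. The converse direction (nilsystem orbits yield GP maps) is essentially a calculation, while the forward direction (every bounded GP map arises from a nilsystem) is the substantive content and will require an inductive unpacking of the GP formula together with an enlargement procedure on the acting group.

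For the converse direction, I would fix a Mal'cev basis $h_1,\dots,h_d$ of the connected component $G^\circ$ and, writing $g = h_1^{s_1}\cdots h_d^{s_d}$, compute $\tilde{\tau}(g^n)$ using the Baker--Campbell--Hausdorff formula. Because $G$ is $s$-step nilpotent, BCH terminates and the resulting coordinates are polynomial in $n$ of bounded degree. To pass from $\tilde\tau$ to $\tau$ and obtain coordinates in $[0,1)^d$, one reduces modulo $\Gamma$, which in Mal'cev coordinates is a triangular sequence of operations of the form $s \mapsto \fp{s}$ applied from the bottom level up. Consequently, $\tau(T_g^n x)$ is a $d$-tuple of bounded GP maps. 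Composing with a piecewise polynomial $f$ is then a matter of observing that membership in a semialgebraic piece (a finite Boolean combination of conditions $p \geq 0$) is itself expressible as a bounded GP map via indicator-like identities such as $\braif{p(x) \geq 0} = \ip{1 + \fp{-p(x)/(1+\abs{p(x)})}}$ and their products; this could be isolated as a short preliminary lemma.

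For the forward direction, I would proceed by structural induction on the formula defining $g$. The base case handles $g(n) = \fp{p(n)}$ for a polynomial $p$ of degree $k$, which is realised on the standard Furstenberg $k$-step skew product on $\TT^k$: the orbit of $0$ under the map $(x_1,\dots,x_k) \mapsto (x_1+\alpha_k,\, x_2 + x_1,\, \dots,\, x_k + x_{k-1})$ produces, up to lower order polynomial corrections, the required fractional parts. Closure under sums and products is straightforward: if $g_i(n) = f_i(T_i^n x_i)$ for $i = 1,2$, then $g_1 + g_2$ and $g_1 g_2$ are piecewise polynomial images of the orbit of $(x_1, x_2)$ under $T_1 \times T_2$ on the product nilmanifold $X_1 \times X_2$, and piecewise polynomiality is preserved by $+$ and $\cdot$.

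The main obstacle is closure under $\ip{\cdot}$. If the argument $g$ is already bounded and represented as $f \circ T^n x$ with $f$ piecewise polynomial, then $\ip{f}$ is also piecewise polynomial on the same nilsystem, so nothing new is needed. The difficulty arises when $\ip{\cdot}$ is applied inside the inductive construction to an unbounded intermediate expression, since the resulting integer jumps must be absorbed into the group. The cleanest remedy is to enlarge $G$ by an auxiliary unipotent factor whose Mal'cev coordinates track those integer jumps; BCH ensures the enlargement remains nilpotent of bounded step, and the piecewise polynomial $f$ extends accordingly. This bookkeeping is where the proof becomes technically heavy and is essentially the content of the PET-style argument in \cite{BergelsonLeibman-2007}. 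Finally, to upgrade to a \emph{minimal} nilsystem, I would replace $(X,T)$ by the orbit closure $Y = \overline{\{T^n x : n \in \ZZ\}}$, which by the Parry--Leibman structure theorem is a sub-nilmanifold on which $T$ acts minimally, and observe that the restriction of $f$ to $Y$ remains piecewise polynomial in the inherited Mal'cev coordinates.
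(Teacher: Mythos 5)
A preliminary remark: the paper does not prove this statement at all --- it is imported verbatim as Theorem~A of \cite{BergelsonLeibman-2007} --- so your proposal has to be measured against the original Bergelson--Leibman argument rather than against anything in this text. Measured that way, there is a genuine gap at exactly the point where the theorem lives. In the forward direction, everything up to closure under $\ip{\cdot}$ is routine (polynomial base case, product systems for sums and products), and you correctly identify that the real problem is taking the integer part of an \emph{unbounded} intermediate expression such as $\sqrt{2}\,n\ip{\sqrt{3}\,n}$, which the inductive hypothesis, phrased for bounded maps read off a compact nilmanifold, cannot reach. But your remedy --- ``enlarge $G$ by an auxiliary unipotent factor whose Mal'cev coordinates track those integer jumps; \dots this bookkeeping is essentially the content of the PET-style argument in \cite{BergelsonLeibman-2007}'' --- is not an argument: it names the difficulty and then defers it wholesale to the paper whose theorem you are meant to be proving. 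Which group to adjoin, why the unbounded bracket expression reappears as a bounded Mal'cev coordinate after reduction modulo $\Gamma$, and how the induction is organised so that it closes (Bergelson and Leibman run it over a class of bracket expressions realised inside suitable nilpotent groups, and for precisely this reason work with polynomial maps into $G$ rather than single translations, cf.\ Theorem~\ref{thm:BL-poly}) is the entire content of the theorem; without it the proposal is a plan, not a proof. The final minimality upgrade via orbit closures is fine.

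There is also a smaller but concrete error in the converse direction: the identity $\braif{p(x)\geq 0}=\ip{1+\fp{-p(x)/(1+\abs{p(x)})}}$ is false --- for $p(x)<0$ the right-hand side still equals $1$ (indeed it equals $1$ identically) --- and in any case division is not among the operations permitted in a generalised polynomial. The point you want can be repaired: the Mal'cev coordinates of $T^n x$ are GP sequences taking values in $[0,1)$, and indicators of semialgebraic conditions in \emph{bounded} GP quantities are again GP (compare Proposition~\ref{prop:cl:g-in-I} and the remark following it, which shows that the boundedness hypothesis is not cosmetic when the domain is $\ZZ$). But the preliminary lemma as you state it is wrong, so this step needs to be redone before the converse direction is complete.
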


For future reference, we record the following special case of Theorem \ref{thm:BL-mini} applicable to {\gpwords}.

\begin{theorem}\label{thm:BL-word}
	For any {\gpword} $\bb a$ over an alphabet $\Sigma$, there exists a minimal nilsystem $(X,T)$, a point $x \in X$, and a partition 
	$X = \bigcup_{i \in \Sigma} S_i$ into pairwise disjoint semialgebraic pieces, such that for each $n \in \NN$ and $i \in \Sigma$, 
	\begin{align}\label{eq:418:1}
		a_n = i && \text{ if and only if }&& T^n(x) \in S_i \,.
	\end{align}
Conversely, for any nilsystem $(X,T)$, any point $x \in X$ and any partition $X = \bigcup_{i \in \Sigma} S_i$ into pairwise disjoint semialgebraic pieces, 
\eqref{eq:418:1} defines a {\gpword}.
\end{theorem}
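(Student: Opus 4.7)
The plan is to deduce both directions from Theorem \ref{thm:BL-mini}, combined with the elementary observation that indicator functions of semialgebraic sets are piecewise polynomial. Roughly, in the forward direction I would replace the piecewise polynomial map $f$ produced by Theorem \ref{thm:BL-mini} by its fibres over the (finitely many) values assumed by the underlying {\gp} map, while in the converse I would bundle the indicators of the pieces $S_i$ into a single real-valued piecewise polynomial map with finite image.

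For the forward direction, I would begin by writing $a_n = c(g(n))$, where $g \colon \NN_0 \to \RR$ is a finitely-valued {\gp} map and $c \colon g(\NN_0) \to \Sigma$ is the associated coding. Applying Theorem \ref{thm:BL-mini} to $g$ supplies a minimal nilsystem $(X,T)$, a point $x \in X$, and a piecewise polynomial map $f \colon X \to \RR$ such that $g(n) = f(T^n(x))$ for every $n \in \NN_0$. For each symbol $i \in \Sigma$ other than a fixed ``default'' symbol $i_0$, I would set $S_i := f^{-1}(c^{-1}(i))$; this set is semialgebraic because $c^{-1}(i) \subset \RR$ is finite and $f$ is piecewise polynomial, so $S_i$ is a finite union of zero loci of polynomials restricted to semialgebraic pieces. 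The remaining set $S_{i_0} := X \setminus \bigcup_{i \neq i_0} S_i$ is then automatically semialgebraic, since complements of semialgebraic subsets of $X$ are semialgebraic. The required equivalence \eqref{eq:418:1} follows by tracing through the definitions, using that $g(n) = f(T^n(x))$ always lies in $g(\NN_0)$.

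For the converse, given $(X,T)$, $x \in X$, and a semialgebraic partition $X = \bigcup_{i \in \Sigma} S_i$, I would fix any injection $\phi \colon \Sigma \hookrightarrow \RR$ and form the piecewise polynomial map $f := \sum_{i \in \Sigma} \phi(i) \bb{1}_{S_i}$. The converse half of Theorem \ref{thm:BL-mini} then implies that $g(n) := f(T^n(x))$ is a bounded {\gp} map; since it takes values only in the finite set $\phi(\Sigma)$, composing with $\phi^{-1}$ exhibits $\bb a$ as a {\gpword}.

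The only real obstacle, and a mild one, is a bookkeeping verification that the notion of piecewise polynomial from Section \ref{ssec:semialgebraic} is stable under the operations used above, namely finite sums of functions that are constant on semialgebraic pieces, together with the fact that taking complements within $X$ (possibly spread across disconnected components of $G/\Gamma$) preserves semialgebraicity. Both points reduce via the Mal'cev coordinate map $\tau$ to standard facts about semialgebraic subsets of $\RR^{\dim G}$, so once this is dispatched there is no deeper obstruction; all of the substance is already contained in Theorem \ref{thm:BL-mini}.
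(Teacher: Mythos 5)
Your argument is correct and follows essentially the same route as the paper: both directions are reduced to Theorem \ref{thm:BL-mini}, with $S_i = f^{-1}(c^{-1}(i))$ in the forward direction and a piecewise polynomial combination of the indicators $\bb{1}_{S_i}$ in the converse. Your extra care with a ``default'' symbol $i_0$ absorbing $X \setminus \bigcup_{i \neq i_0} f^{-1}(c^{-1}(i))$ is a harmless (and slightly more scrupulous) bookkeeping step that the paper's terser proof glosses over, but it does not change the substance.
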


\begin{proof}
	Pick a representation $a_n = c(g(n))$  ($n \in \NN_0$), where $g$ is a finitely-valued {\gp} map and $c$ is a coding. Let $g(n) = f(T^n(x))$ be the representation of $g$ as in Theorem \ref{thm:BL-mini}. Then, for each $i \in \Sigma$, $S_i = f^{-1}(c^{-1}(i))$ is a semialgebraic set such that \eqref{eq:418:1} holds.
	The converse implication follows along similar lines. 
\end{proof}

Now, we give two emblematic examples of nilsystems and the corresponding {\gp} maps. While we do not use them elsewhere in the paper, we believe they help to illustrate Theorem \ref{thm:BL-mini}.

\subsection{One dimensional torus and Sturmian words}\label{ssec:nil:sturm}

As already mentioned in the introduction,  Sturmian words can be dynamically represented as codings of irrational translations on the one-dimensional torus. 
In this case, we simply take $G=\mathbb R$, $\Gamma=\mathbb Z$, and the nilrotation $T=T_\a$ is just an irrational translation on $\RR/\ZZ$.  More precisely, 
let $\bb a$ be the Sturmian word defined by 
\eqref{eq:802:1} (or, respectively, by \eqref{eq:802:2}).  
Let $T_\a \colon \RR/\ZZ \to \RR/\ZZ$ denote the translation by $\a$ on $\RR/\ZZ$, 
meaning that $T_\a(x) = x+\a$ ($x \in \RR/\ZZ$). Let $I \subset \RR/\ZZ$ be the interval $[0,\a)$ (resp.{} $(0,\a]$). 
Then, \eqref{eq:802:1} (resp. \eqref{eq:802:2}) is equivalent to:
\[
	a_n = 1_I(T_\a^n(\beta))
	= \begin{cases}
	1 &\text{ if } T_\a^n(\beta) \in I\, ,\\
	0 &\text{ otherwise.}
	\end{cases}	
\]

\subsection{Heisenberg group}
Another helpful example to keep in mind is the Heisenberg nilsystem. It is a standard example, and appears e.g.\ in 
\cite[Sec.\ 0.14]{BergelsonLeibman-2007},
\cite[Sec.\ 5]{GreenTao-2012}, 
\cite[Sec.\ 1]{GreenTao-2010},
\cite[p.\ 555]{GreenTao-2012-Mobius},
\cite{GreenTaoZiegler-2012} (as a running example).
 Pick any $\a,\b,\c \in \RR$, and set
\[
	G =
	\begin{bmatrix}
	1 & \RR & \RR \\
	0 & 1 & \RR \\
	0 & 0 & 1
	\end{bmatrix}, \;\;\;
	\Gamma =
	\begin{bmatrix}
	1 & \ZZ & \ZZ \\
	0 & 1 & \ZZ \\
	0 & 0 & 1
	\end{bmatrix}, \;\;\mbox{ and }\;\;
	h =	\begin{bmatrix}
	1 & \b & \c + \a\b/2\\
	0 & 1 & \a \\
	0 & 0 & 1
	\end{bmatrix} \,.
\]
One possible choice for Mal'cev coordinates is given by
\[
	\tau\bra{
	\begin{bmatrix}
	1 & y & z \\
	0 & 1 & x \\
	0 & 0 & 1
	\end{bmatrix}
	} = (x,y,z)\, .
\]
Then we can compute that
\begin{align*}
	h^n \Gamma &= 
	\begin{bmatrix}
	1 & n\b & n\c + n^2 \a\b/2 \\
	0 & 1 & n\a \\
	0 & 0 & 1
	\end{bmatrix}\Gamma\\& = 
	\begin{bmatrix}
	1 & \fp{n\b} & \fp{ n\b \fp{n\a} - n^2 \a\b/2 + \c n} \\
	0 & 1 & \fp{n\a} \\
	0 & 0 & 1
	\end{bmatrix}\Gamma \,.
\end{align*}
Hence, using the nilsystem $(G/\Gamma,T_h)$ and taking $f = \tau_3$ (\emph{i.e.}, the third entry of the coordinate map $\tau$), 
we obtain a representation of the bounded {\gp} map
\[ g(n) = \fp{ n\b \fp{n\a} - n^2 \a\b/2 + \c n}.\]
A slightly more complicated but similar construction involving matrices in dimension $4$ discussed in \cite[Sec.{} 0.14]{BergelsonLeibman-2007} 
provides a dynamical representation of the sequence $\fp{ n\b \fp{n\a}}$.

\section{Representations and constructions}\label{sec:constr}

In this section we discuss methods by which {\gpwords} can be represented and constructed.

\subsection{Representations} It is clear from the definition that {\gp} maps $\colon \NN_0 \to \RR$ are precisely those maps which can be expressed using (classical) polynomials, addition, multiplication and the floor function. For finitely-valued {\gp} maps and {\gpwords}, the situation becomes more complicated. As we will see in Section \ref{sec:comp}, it is not always possible to decide if a given {\gp} map $g \colon \NN_0 \to \RR$ is finitely-valued 
(and hence relevant to the study of {\gpwords}) or not. Thus, it is not always possible to see if a given formula represents a {\gpword}. On the other hand, for suitably constructed {\gp} maps, it is easy to see that they must be finitely-valued. 
For instance, if $g(n) = \ip{2\fp{h(n)}}$ for some {\gp} map $h \colon \NN_0 \to \RR$, then evidently $g(\NN_0) \subset \{0,1\}$. In general, one can always represent a finitely-valued {\gp} map in a form which makes it easy to estimate the cardinality of the image. We stress that Proposition \ref{lem:constr:reg-rep} provides a concrete way to generate 
all finitely valued \gp{} maps from $\NN_0$ to $\RR$, and hence all bracket words. 

\begin{proposition}\label{lem:constr:reg-rep}
	Let $g \colon \NN_0 \to \RR$ be a finitely-valued {\gp} map. Then $g$ can be written in the form $g(n) = f\bra{ \ip{N { \fp{h(n)}}}}$, where 
	$f$ and $h$ are {\gp} maps from $\NN_0$ to $\RR$ and $N = \abs{g(\NN_0)}$. Conversely, any map $g \colon \NN_0 \to \RR$ of the form 
	$g(n) = f\bra{ \ip{N { \fp{h(n)}}}}$, where $f$ and $h$ are {\gp} maps from $\NN_0$ to $\RR$ and $N \in \NN$, is a {\gp} map which
	takes at most $N$ distinct values.
\end{proposition}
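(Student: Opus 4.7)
The plan is to handle the two directions separately, since the converse is essentially immediate while the forward direction carries the content.

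For the converse, I would observe that if $g(n) = f(\ip{N\fp{h(n)}})$, then $\fp{h(n)} \in [0,1)$ forces $\ip{N\fp{h(n)}} \in \{0,1,\dots,N-1\}$, so the image of $g$ has cardinality at most $N$. To see that $g$ is {\gp}, note that $\ip{N\fp{h}}$ is a {\gp} map into $\NN_0$, so the composition with $f$ is again {\gp} by Remark \ref{rem:composition}.

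For the forward direction, enumerate the image $g(\NN_0) = \{v_0, v_1, \dots, v_{N-1}\}$. The strategy has two ingredients: (i) build {\gp} indicator maps $\psi_i(n) = \braif{g(n) = v_i}$, which label the value of $g(n)$ by its index, and (ii) build a {\gp} ``lookup'' $f \colon \NN_0 \to \RR$ satisfying $f(i) = v_i$ for $i < N$.

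Step (i) is the main obstacle, and it is where the finiteness of $g(\NN_0)$ is crucially used. Setting $\delta_i = \min_{j \neq i}\abs{v_j - v_i}$ and $M_i = \max_j \abs{v_j - v_i}$, the quantity $(g(n)-v_i)^2$ takes the value $0$ exactly when $g(n)=v_i$ and otherwise lies in $[\delta_i^2, M_i^2]$; this gap allows me to write
\[
\psi_i(n) = \ip{1 - (g(n)-v_i)^2/K_i}
\]
for any chosen constant $K_i > M_i^2$, which is then manifestly a {\gp} map taking values in $\{0,1\}$. Step (ii) is standard along the lines of Example \ref{ex:ex:1_0}: fixing an irrational $\alpha$,
\[
f(n) = \sum_{i=0}^{N-1} v_i \cdot \ip{1 - \fp{(n-i)\alpha}}
\]
is a {\gp} map with $f(i) = v_i$ on $\{0,\dots,N-1\}$, because $\fp{(n-i)\alpha} = 0$ iff $n = i$ for integers $n, i$.

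It then remains to assemble the pieces. I would put $h(n) = \frac{1}{N}\sum_{i=0}^{N-1} i\, \psi_i(n)$, which is a {\gp} map taking values in $\{0,1/N,\dots,(N-1)/N\} \subset [0,1)$; exactly one $\psi_i(n)$ is nonzero, namely the one indexed by the unique $i$ with $g(n)=v_i$. Hence $\fp{h(n)} = h(n) = i/N$ and $\ip{N\fp{h(n)}} = i$, so $f(\ip{N\fp{h(n)}}) = v_i = g(n)$, completing the representation.
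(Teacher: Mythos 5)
Your proof is correct, and its skeleton matches the paper's: both arguments encode the value of $g(n)$ by an index $i \in [N]$ through a {\gp} map $h$ taking values in $\{0, 1/N, \dots, (N-1)/N\}$, and then decode with a lookup map $f$ satisfying $f(i) = v_i$; the converse direction is handled identically. The only real difference is how the two coding steps are certified to be {\gp}: the paper uses polynomial (Lagrange) interpolation — via Lemma \ref{lem:constr:gp-word-vs-fun}, a polynomial agreeing with the bijection $v_i \mapsto i$ on the finite set $g(\NN_0)$ shows $h = (c\circ g)/N$ is {\gp}, and another interpolating polynomial serves as $f$ — whereas you build both pieces by explicit floor-function gadgets, namely $\psi_i(n)=\ip{1-(g(n)-v_i)^2/K_i}$ for the labelling and the Example \ref{ex:ex:1_0}-style sum $\sum_i v_i\ip{1-\fp{(n-i)\alpha}}$ for the lookup. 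Your gadgets work (note the gap $\delta_i$ plays no role; all you need is $0<(g(n)-v_i)^2<K_i$ whenever $g(n)\neq v_i$), and they have the mild virtue of being fully explicit, but interpolation accomplishes the same in one line and is exactly what the paper's preceding lemma already supplies.
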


In the course of the proof of Proposition \ref{lem:constr:reg-rep}, we will need the following simple fact, which clarifies the relation between {\gpwords} and finitely-valued {\gp} maps.

\begin{lemma}\label{lem:constr:gp-word-vs-fun}
	Let $f \colon \NN_0 \to \RR$ be a map taking finitely many values. Then the two following properties are equivalent. 
	\begin{enumerate}
	\item\label{it:685:a} The word $\bb f=\bra{f(n)}_{n=0}^\infty$ is a {\gpword}. 
	\item\label{it:685:b} The map $f \colon \NN_0 \to \RR$ is a {\gp} map.
	\end{enumerate}
\end{lemma}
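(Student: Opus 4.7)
The direction (ii) $\Rightarrow$ (i) is immediate from the definition: taking $\Sigma = f(\NN_0) \subset \RR$ (which is finite by hypothesis) together with the identity coding $c = \id_\Sigma$, the word $\bb f$ fits Definition \ref{def:def:bra-word} with the GP map $g = f$ itself.

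The substance is in the converse (i) $\Rightarrow$ (ii). By hypothesis, there exist a finitely-valued GP map $g \colon \NN_0 \to \RR$ and an arbitrary map $c \colon g(\NN_0) \to \Sigma$ (where $\Sigma$ is some alphabet containing $f(\NN_0) \subset \RR$) such that $f(n) = c(g(n))$ for all $n \in \NN_0$. The plan is to replace the arbitrary coding $c$ by a genuine polynomial using Lagrange interpolation. Write $g(\NN_0) = \{v_1,\dots,v_k\}$ with the $v_i$ pairwise distinct real numbers, and form
\[
P(x) \;=\; \sum_{i=1}^{k} c(v_i)\prod_{\substack{1 \le j \le k \\ j \ne i}} \frac{x - v_j}{v_i - v_j} \;\in\; \RR[x].
\]
By construction $P(v_i) = c(v_i)$ for each $i$, so $f(n) = P(g(n))$ for all $n \in \NN_0$. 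Since $g$ is GP and $P$ is an ordinary polynomial (hence a GP map), Remark \ref{rem:composition} gives that $P \circ g = f$ is GP.

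The only potential obstacle is handling the arbitrariness of $c$, which a priori is not built from polynomials, addition, multiplication and floor in any way. This is resolved by the key observation that once $g$ takes only finitely many real values, any prescription of outputs on those values can be realised by a real polynomial in $g$; the noncanonical coding disappears into Lagrange interpolation. Note that this argument relies crucially on the values being real numbers, which is why the statement is phrased for maps $f \colon \NN_0 \to \RR$ rather than for arbitrary abstract alphabets.
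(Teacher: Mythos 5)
Your proposal is correct and follows essentially the same route as the paper: the forward direction by taking $g=f$ with the identity coding, and the converse by replacing the arbitrary coding $c$ with an interpolating polynomial $p$ on the finite set $g(\NN_0)$ so that $f = p \circ g$ is a {\gp} map. Spelling out the Lagrange formula is just an explicit form of the paper's appeal to polynomial interpolation.
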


\begin{proof}
	It is clear that \ref{it:685:b} implies \ref{it:685:a}: in Definition \ref{def:def:bra-word}, we can take $g=f$ and $c = \mathrm{id}$.  
	For the converse implication, suppose that $f(n) = c(g(n))$, where $g \colon \NN_0 \to \RR$ has finite image and 
	$c \colon g(\NN_0) \to \RR$. There exists a polynomial map $p \colon \RR \to \RR$ such that $p(x) = c(x)$ for all $x \in g(\NN_0)$. 
	Hence, $f = p \circ g$ is a {\gp} map.
\end{proof}

\begin{proof}[Proof of Proposition \ref{lem:constr:reg-rep}]
	Let $c \colon g(\NN_0) \to [N]$ be any bijective map. Then the word $\bb a = (c(g(n))_{n=0}^\infty$ is a {\gpword} taking values in $[N]$. Hence, by Lemma \ref{lem:constr:gp-word-vs-fun}, $c \circ g$ is a {\gp} map. Let $h$ be the {\gp} map defined by $h(n) = c(g(n))/N$, and let $f \colon \NN_0 \to \RR$ be a polynomial map such that $f(c(x)) = x$ for each $x \in g(\NN_0)$ (such $f$ exists by polynomial interpolation). It is straightforward to check that 
	\[ f(\ip{N\fp{h(n)}}) = f(c(g(n))) = g(n) \]
	for all $n \in \NN_0$, as needed.
	The converse direction holds trivially. 
\end{proof}

As a consequence of Proposition \ref{lem:constr:reg-rep}, for each {\gpword} we can construct a particularly convenient representation.

\begin{corollary}\label{cor:constr:gpword-nice}
	Let $\bb a$ be a {\gpword} defined over a finite alphabet $\Sigma$ and let $N = \abs{\Sigma}$. 
	Then there exist a {\gp} map $g \colon \NN_0 \to [N]$ and a map $c \colon [N] \to \Sigma$ such that $a_n = c(g(n))$ for all $n \in \NN_0$.
\end{corollary}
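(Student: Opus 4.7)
The plan is to combine Proposition~\ref{lem:constr:reg-rep} with Lemma~\ref{lem:constr:gp-word-vs-fun} after first passing from $\Sigma$ to a numerical alphabet.

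First, I would reduce to the case where the alphabet is a finite subset of $\RR$. Let $\Sigma' = \set{a_n}{n \in \NN_0} \subseteq \Sigma$ and set $N' = \abs{\Sigma'} \leq N$. Fix any injection $\phi \colon \Sigma' \hookrightarrow \RR$. Since $\bb a = (c_0 \circ g_0(n))_{n=0}^\infty$ for some finitely-valued GP map $g_0$ and some coding $c_0 \colon g_0(\NN_0) \to \Sigma$, the sequence $g_1(n) := \phi(a_n) = (\phi \circ c_0)(g_0(n))$ exhibits $(\phi(a_n))_{n=0}^\infty$ as a bracket word with the same underlying GP map $g_0$ and coding $\phi \circ c_0$. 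Therefore Lemma~\ref{lem:constr:gp-word-vs-fun} applies and $g_1 \colon \NN_0 \to \RR$ is a finitely-valued GP map with $\abs{g_1(\NN_0)} = N'$.

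Next, I would invoke Proposition~\ref{lem:constr:reg-rep} on $g_1$ to produce GP maps $f, h \colon \NN_0 \to \RR$ such that
\[
	g_1(n) = f\bra{\ip{N' \fp{h(n)}}} \qquad \text{for all } n \in \NN_0.
\]
Setting $g(n) := \ip{N' \fp{h(n)}}$ yields a GP map taking values in $[N'] \subseteq [N]$, as required.

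Finally, I would define the coding $c \colon [N] \to \Sigma$ by $c(k) = \phi^{-1}(f(k))$ whenever $k \in g(\NN_0)$ (so that $f(k) = g_1(n)$ for some $n$, which lies in $\phi(\Sigma')$ and hence admits a preimage under $\phi$), and assign $c$ arbitrary values in $\Sigma$ on the remaining points of $[N]$. Then for every $n \in \NN_0$,
\[
	c(g(n)) = \phi^{-1}(f(g(n))) = \phi^{-1}(g_1(n)) = \phi^{-1}(\phi(a_n)) = a_n,
\]
completing the proof. There is no serious obstacle here: all the real work is already packaged into Proposition~\ref{lem:constr:reg-rep}, and the only minor bookkeeping is the shift from a possibly abstract alphabet $\Sigma$ to a numerical one via $\phi$, together with the padding from $[N']$ to $[N]$.
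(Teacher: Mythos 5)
Your proof is correct and takes essentially the same route as the paper: both invoke Proposition \ref{lem:constr:reg-rep} on a finitely-valued {\gp} representation of $\bb a$, take $g$ to be the inner term $\ip{N\fp{h(n)}}$, and obtain $c$ by composing with $f$. Your preliminary recoding via $\phi$ (justified through Lemma \ref{lem:constr:gp-word-vs-fun}) is in fact a touch more careful than the paper's argument, since it ensures the {\gp} map fed into the proposition takes at most $\abs{\Sigma}$ values, whereas the paper applies the proposition directly to $g'$, whose image could a priori be larger than $\Sigma$.
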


\begin{proof}
	Let $a_n = c'(g'(n))$, where $g' \colon \NN_0 \to \RR$ is a finitely-valued {\gp} map and $c' \colon g'(\NN_0) \to \Sigma$. Let $g'(n) = f(\ip{N\fp{h(n)}})$ be the representation of $g'$ from Proposition \ref{lem:constr:reg-rep}. It remains to define $g$ and $c$ by $g(n) = \ip{N\fp{h(n)}}$ and $c(x) = c'(f(x))$.
\end{proof}

\subsection{Constructions}

Next, we discuss a basic tool, which can be used to construct potentially interesting examples of {\gpwords}. In what follows, we will use the Iverson bracket notation, where $\braif{g \in I}$ denotes the word over the alphabet $\{0,1\}$ which records the positions $n$ such that $g(n) \in I$ (see Section \ref{sec:Notation} for exact definition). 

\begin{proposition}\label{prop:cl:g-in-I}
	Let $g \colon \NN_0 \to \RR$ be a {\gp} map and $I \subset \RR$ be an interval (possibly infinite or degenerate). Then $\braif{g \in I}$ is a {\gpword}.
\end{proposition}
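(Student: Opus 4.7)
By Lemma~\ref{lem:constr:gp-word-vs-fun}, since $\braif{g\in I}$ is $\{0,1\}$-valued, the claim amounts to showing that $n\mapsto\braif{g(n)\in I}$ is itself a \gp{} map. Every interval is an intersection or complement of half-lines, so closure of \gp{} maps under $+$ and $\times$ reduces the problem to half-lines, and the affine shift $g\mapsto g-a$ further reduces it to producing a \gp{} formula for $\braif{g\geq 0}$. Using finally that $x\geq 0$ is equivalent to $\ip{x}\geq 0$, we may replace $g$ by the integer-valued \gp{} map $m:=\ip{g}$ and are left with the task of expressing $\braif{m\geq 0}$ as a \gp{} map.

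\textbf{Key identity and polynomial bound.}
A routine induction on the generation of a \gp{} map (using $|\ip{h}|\leq|h|+1$) shows that every \gp{} map is polynomially bounded; in particular, $|m(n)|\leq N(n)$ for some polynomial $N\geq 1$. For any integer $m$ with $|m|\leq N$, one has the identity
\[
\braif{m\geq 0}\;=\;1+\ip{m/(N+1)},
\]
since the floor evaluates to $0$ for $m\geq 0$ and to $-1$ for $m<0$ within the allowed range. This identity is the combinatorial core of the argument.

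\textbf{Obstacle and strategy.}
The main obstacle is that substituting $N=N(n)$ requires dividing by the polynomial $N(n)+1$, and division by a non-constant polynomial in $n$ is not a primitive \gp{} operation. I would resolve this by a case split. For bounded $I=[a,b]$, the rewriting $\braif{g\in I}=\braif{(g-a)(b-g)\geq 0}$ produces a \gp{} map bounded above by the \emph{constant} $((b-a)/2)^2$; its integer part is then bounded above by an integer constant $M$, and
\[
\braif{m\geq 0}=\sum_{k=0}^{M}\braif{m=k},\qquad \braif{m=k}=1+\ip{\alpha(m-k)}+\ip{-\alpha(m-k)}
\]
(for any irrational $\alpha$) is a finite sum of \gp{} maps with a \emph{constant} number of terms. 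For bounded $g$, Theorem~\ref{thm:BL-mini} gives a representation $g(n)=f(T^nx)$ on a nilmanifold with $f$ piecewise polynomial, whence $f^{-1}(I)$ is semialgebraic and the converse direction of Theorem~\ref{thm:BL-word} produces $\braif{g\in I}$ as a \gpword. The delicate remaining case --- unbounded $g$ paired with an unbounded half-line $I$ --- is what I expect to be the main technical hurdle; here I would write $g$ in the structural normal form $g(n)=P(n,y_1(n),\ldots,y_k(n))$ with $P$ polynomial and $y_i$ bounded \gp{} maps (a decomposition obtained by the same induction that bounds $g$), reduce $\braif{g(n)\geq 0}$ to a semialgebraic condition on $(n,y_1(n),\ldots,y_k(n))$, and invoke the converse of Theorem~\ref{thm:BL-word} for a suitably enlarged nilmanifold accommodating the polynomial dependence on $n$.
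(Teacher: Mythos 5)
Your bounded-interval case is correct and complete: the rewriting $\braif{g\in[a,b]}=\braif{(g-a)(b-g)\ge 0}$, the passage to the integer part, and the identity $\braif{m=k}=1+\ip{\alpha(m-k)}+\ip{-\alpha(m-k)}$ with $\alpha$ irrational give a genuine generalised polynomial formula with a constant number of terms; this is essentially the same device as the paper's Lemma \ref{lem:cl:[g=0]}, and it reproves in a self-contained way the half of the proposition that the paper itself covers only by citing \cite[Lemma 1.2]{ByszewskiKonieczny-2018-TAMS}. Note also that your "bounded $g$" case is redundant: if $g$ is bounded and $I$ unbounded, replace $I$ by $I\cap[-C-1,C+1]$ and you are back in the bounded-interval case; this also sidesteps the unaddressed point that a gp map bounded on $\NN_0$ need not extend to a bounded gp map on $\ZZ$, which you would have to fix before invoking Theorem \ref{thm:BL-mini}.

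The genuine gap is the remaining case, unbounded $g$ against an unbounded half-line, which is exactly the case the paper covers by citing \cite[Lemma B.3]{Konieczny-2021-JLM}. Your normal form $g(n)=P(n,y_1(n),\dots,y_k(n))$ with bounded gp maps $y_i$ is fine, but the proposed conclusion --- reduce to a semialgebraic condition on $(n,y_1(n),\dots,y_k(n))$ and invoke the converse of Theorem \ref{thm:BL-word} on a ``suitably enlarged nilmanifold accommodating the polynomial dependence on $n$'' --- does not work as stated. The converse directions of Theorems \ref{thm:BL-mini}/\ref{thm:BL-word} convert semialgebraic conditions on points of a \emph{compact} nilmanifold into bracket words; the unbounded integer $n$ cannot be encoded as a coordinate of a compact nilmanifold, so a condition such as $P(n,\vec y(n))\ge 0$ (take $g(n)=(\fp{\sqrt2\,n}-\tfrac12)n+1$, where the truth value depends on whether $\fp{\sqrt2\,n}$ exceeds $\tfrac12$ up to an error of order $1/n$) is not a semialgebraic condition on the nilmanifold point alone. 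There is also a structural reason no sketch of this shape can suffice: the statement is false with $\ZZ$ in place of $\NN_0$ (see the remark following the proposition; $\braif{n\in(0,\infty)}$ is not a gp map on $\ZZ$), while every ingredient you invoke --- the normal form, the semialgebraic reformulation, the nilmanifold representation --- is insensitive to the sign of $n$. Any correct argument must use $n\ge 0$ in an essential way, and yours never does; the ``main technical hurdle'' you flag is thus not a technicality but precisely the content of the cited Lemma B.3, and it remains unproved in your write-up.
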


\begin{proof}
The case where $I$ is bounded is covered by {\cite[Lemma 1.2]{ByszewskiKonieczny-2018-TAMS}}, while the case where $I$ is unbounded follows from \cite[Lemma B.3]{Konieczny-2021-JLM}. 
\end{proof} 

\begin{remark}
We point out that the analogous result with $\NN_0$ replaced by $\ZZ$ is true 
for bounded $I$ and false for unbounded $I$. For instance, $1_{\NN}(n) = \braif{n \in (0,\infty)}$ is not a {\gp} map on $\ZZ$ (c.f.\ \cite[Ex.\ B.2]{Konieczny-2021-JLM}). This is one of the reasons why we focus on one-sided {\gpwords} $(a_n)_{n=0}^\infty$, 
rather than on their two-sided analogues $(a_n)_{n \in \ZZ}$.
\end{remark} 

We will use later the following slight refinement of Proposition \ref{prop:cl:g-in-I} in the case $I = \{0\}$. 

\begin{lemma}\label{lem:cl:[g=0]}
	Let $g \colon \NN_0 \to \RR$ be a {\gp} map with coefficients in some field $K$ with $\QQ \subsetneq K \subseteq \RR$. Then $\braif{g = 0}$ is a {\gpword} with coefficients in $K$.
\end{lemma}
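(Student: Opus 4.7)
The plan is to write down an explicit {\gp} formula for $\braif{g(n)=0}$ whose coefficients all live in $K$, rather than to run the proof of Proposition \ref{prop:cl:g-in-I} and track coefficients through it. The key observation is that
\[
	g(n) = 0 \iff g(n) \in \ZZ \ \text{and}\ \ip{g(n)} = 0,
\]
so it suffices to express each of the two indicators on the right as a {\gp} with coefficients in $K$ and take their product.

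For the first condition, I would use the identity $\braif{y\in\ZZ} = \ip{1-\fp{y}}$, valid for all $y\in\RR$: indeed $\fp{y} \in [0,1)$, so $1-\fp{y} \in (0,1]$ and the floor equals $1$ exactly when $\fp{y}=0$. Specialising to $y = g(n)$ gives a {\gp} map in $g$ built only from the operations in \ref{it:def:i}--\ref{it:def:iii} applied with $K$-coefficients, hence a {\gp} with coefficients in $K$.

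For the second condition I would use the hypothesis $K\supsetneq\QQ$ to pick some $\alpha \in K\setminus\QQ$. Because $\alpha$ is irrational, for any $m\in\ZZ$ we have $\alpha m \in \ZZ$ iff $m=0$, so $\braif{m=0} = \ip{1-\fp{\alpha m}}$. Applying this with $m = \ip{g(n)}$, which is itself a {\gp} map with $K$-coefficients, yields $\braif{\ip{g(n)}=0}$ as a {\gp} in $g$ with coefficients in $K$. Multiplying the two indicators gives
\[
	\braif{g(n)=0} \;=\; \ip{1-\fp{\alpha\ip{g(n)}}} \cdot \ip{1-\fp{g(n)}},
\]
which is a $\{0,1\}$-valued {\gp} map with coefficients in $K$; by Lemma \ref{lem:constr:gp-word-vs-fun} it realises $\braif{g=0}$ as a {\gpword} with coefficients in $K$. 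I do not anticipate any serious obstacle: the only role played by the hypothesis $K\supsetneq\QQ$ is to supply the irrational $\alpha$ that separates $0$ from the other integers via the fractional part.
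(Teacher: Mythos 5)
Your proof is correct and uses essentially the same idea as the paper: choose $\alpha \in K\setminus\QQ$ and detect $g(n)=0$ via vanishing fractional parts, the paper packaging this as the single formula $\braif{g(n)=0}=\ip{1-\tfrac12\fp{g(n)}-\tfrac12\fp{\alpha g(n)}}$ while you split it into the product $\ip{1-\fp{\alpha\ip{g(n)}}}\cdot\ip{1-\fp{g(n)}}$. Both are valid {\gp} formulas with coefficients in $K$, so the difference is only cosmetic.
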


\begin{proof}
	Pick any $\a \in K \setminus \QQ$. Note that the only solution to $\fpnormal{x} = \fpnormal{\a x} = 0$ in $\RR$ is $x = 0$. Hence,
	\[
		\ip{1 - \frac{1}{2} \fpnormal{x} - \frac{1}{2} \fpnormal{\a x}  } =  \braif{ x = 0}\,, \qquad (x \in \RR)\,.
	\]
	It follows that 
	\[
		\braif{g(n) = 0} =  \ip{1 - \frac{1}{2} \fpnormal{g(n)} - \frac{1}{2} \fpnormal{\a g(n)}  }\,, \qquad n \in \NN_0\,. \qedhere
	\]
\end{proof}

One of the main reasons for interest in Lemma \ref{lem:cl:[g=0]} is that it gives an alternative definition of {\gpwords}, 
phrased in terms of fibres.

\begin{corollary}\label{cor:constr:fibre}
	Let $\bb a$ be an infinite word defined over a finite alphabet $\Sigma$. Then the two following properties are equivalent.
	\begin{enumerate}
	\item The word $\bb a$ is a {\gpword}. 
	\item For every $x \in \Sigma$, the fibre $\set{n \in \NN_0}{a_n = x}$ is a {\gp} subset of $\NN_0$.
	\end{enumerate}
	In particular, for a set $E \subset \NN_0$, $E$ is a {\gp} set if and only if $\bb{1}_E$ is a {\gpword}.
\end{corollary}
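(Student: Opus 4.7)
The plan is to prove (1) in both directions using the tools established earlier in this section, and then deduce (2) as the special case $\Sigma = \{0,1\}$. The forward direction will use Corollary \ref{cor:constr:gpword-nice} to normalise to a GP map into $[N]$ and then polynomially pick out each letter; the backward direction will produce the per-letter indicator GP maps via Proposition \ref{prop:cl:g-in-I} and Lemma \ref{lem:constr:gp-word-vs-fun} and then recombine them. No substantial difficulty is expected, as the corollary is essentially a repackaging of these earlier results.

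For the forward direction of (1), I would first apply Corollary \ref{cor:constr:gpword-nice} to write $a_n = c(g(n))$ with $g \colon \NN_0 \to [N]$ a GP map (where $N = |\Sigma|$) and $c \colon [N] \to \Sigma$. For each letter $x \in \Sigma$, the preimage $c^{-1}(x) \subset [N]$ is finite, so the fibre $\{n \in \NN_0 : a_n = x\}$ is precisely the zero locus of the GP map $h_x(n) := \prod_{j \in c^{-1}(x)}(g(n) - j)$, and is therefore a GP subset of $\NN_0$.

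For the backward direction of (1), suppose that each fibre $E_x := \{n \in \NN_0 : a_n = x\}$ is a GP subset of $\NN_0$, say the zero locus of a GP map $h_x$. Applying Proposition \ref{prop:cl:g-in-I} with the degenerate interval $I = \{0\}$ shows that $\bb 1_{E_x} = \braif{h_x = 0}$ is a bracket word over $\{0,1\}$, and hence by Lemma \ref{lem:constr:gp-word-vs-fun} the indicator $\bb 1_{E_x} \colon \NN_0 \to \{0,1\} \subset \RR$ is a GP map. Enumerating $\Sigma = \{x_1, \dots, x_k\}$ and using that the $E_{x_i}$ partition $\NN_0$, the sum $g(n) := \sum_{i=1}^{k} i \cdot \bb 1_{E_{x_i}}(n)$ is a GP map satisfying $g(n) = i$ precisely when $a_n = x_i$; setting $c(i) := x_i$ then gives $a_n = c(g(n))$, showing that $\bb a$ is a bracket word.

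Finally, part (2) is the specialisation of (1) to the alphabet $\{0,1\}$ with $\bb a = \bb 1_E$: the only nontrivial fibre coincides with $E$ itself, so $E$ is a GP set if and only if $\bb 1_E$ is a bracket word. Alternatively, the implication ``$E$ is a GP set $\Rightarrow \bb 1_E$ is a bracket word'' follows directly from Proposition \ref{prop:cl:g-in-I} applied with $I = \{0\}$.
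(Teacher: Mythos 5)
Your proof is correct and follows essentially the same route as the paper's: the forward direction forms the product $\prod_{j\in c^{-1}(x)}(g-j)$ and reads off the fibre as its zero locus, and the converse builds $g=\sum_i i\,\bb 1_{E_{x_i}}$ with the coding $c(i)=x_i$. The only difference is cosmetic — you make explicit (via Proposition \ref{prop:cl:g-in-I} with $I=\{0\}$ and Lemma \ref{lem:constr:gp-word-vs-fun}) the step that an indicator of a {\gp} set is a {\gp} map, which the paper leaves implicit.
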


\begin{proof}
Let us assume that  $\bb a$ is a {\gpword} defined over $\Sigma$ and let $x\in \Sigma$. 
Let $a_n=c(g(n))$ be a representation of $\bb a$, where $g: \NN_0\to \RR$ is a finitely valued \gp{} map and $c$ is a map from $g(\NN_0)$ to $\Sigma$. 
Then $h= \prod_{a\in c^{-1}(x)}(g-a)$ is a \gp{} map and it follows from Lemma \ref{lem:cl:[g=0]} that $\braif{h(n) = 0}=\{n\in \NN_0 \mid a_n=x\}$ is a \gp{} subset of $\NN_0$.

Conversely, let us assume that, for every $x \in \Sigma$, the fibre $F_x=\set{n \in \NN_0}{a_n = x}$ is a {\gp} subset of $\NN_0$. Hence $\bb{1}_{F_x}$ is a \gp{} map. 
Let us assume that $\vert \Sigma\vert=N$ and let $x_1,\ldots,x_N$ denote an enumeration of the elements of $\Sigma$. Set $h= \sum_{i=1}^N i \bb{1}_{F_{x_i}}$ and let $c$ be the map defined by $c(i)=x_i$, $1\leq i\leq N$. Then $h$ is a \gp{} map and thus $a_n=c(h(n))$ is a {\gpword}. 
\end{proof}

With some basic algebraic manipulations, one can extend Proposition \ref{prop:cl:g-in-I} to apparently more complicated conditions, 
as shown by the following example.

\begin{example}
	Let $g$ and $h$ be {\gp} maps from $\NN_0$ to $\RR$ and assume that $h(n) > 0$ for all $n \in \NN_0$. Then $\braif{g < 1/h} = \braif{gh < 1}$ is a {\gpword}. More generally, for each rational exponent $\lambda = p/q \in \QQ_{>0}$, also 
	$\braif{g < 1/h^{\lambda}} = \braif{g^q h^p < 1}$ is a {\gpword}.
	As an explicit application, for every pair $(\a,c) \in \RR^2$ and $\lambda \in \QQ$, with $c$ and $\lambda$ positive, 
	the formula $a_n = \braif{ \fpa{\a n} < c/n^{\lambda} }$ defines a {\gpword} $\bb a$ which detects denominators of good rational approximations to $\a$.
\end{example}

\section{Closure properties}\label{sec:cl}

We will now discuss ways in which known instances of {\gpwords} can be used to construct new ones. 
Compared to the earlier section, the results discussed here have a more computational flavour. 
For instance, we point out that, with the sole exception of Proposition \ref{prop:cl:orb-cl}, all results in this section 
are analogues of standard results about automatic sequences, in the sense that they remain true if the term ``{\gpword}'' is replaced with ``$k$-automatic sequence'' and the term ``{\gp} set'' is replaced with ``$k$-automatic set'' for fixed $k \geq 2$ (see \cite[Sec. 5]{AlloucheShallit-book}).

\subsection{Codings and products} 

It is an almost immediate consequence of Definition \ref{def:def:bra-word} that {\gpwords} are preserved under coding.

\begin{lemma}\label{lem:cl:code}
	Let $\bb a$ be a {\gpword} defined over a finite alphabet $\Sigma$ and let $\varphi \colon \Sigma \to \Pi$ be a map to some other finite alphabet $\Pi$. 
	Then $\bra{\varphi(a_n)}_{n=0}^\infty$ is a {\gpword} over $\Pi$.
\end{lemma}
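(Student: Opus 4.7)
The plan is to observe that Lemma \ref{lem:cl:code} follows immediately by unwinding Definition \ref{def:def:bra-word}. First, I would invoke the definition to express $\bb a$ as $a_n = c(g(n))$ for some finitely-valued {\gp} map $g \colon \NN_0 \to \RR$ and some map $c \colon g(\NN_0) \to \Sigma$. Composing with $\varphi$ then gives
\[
    \varphi(a_n) \;=\; \varphi\bra{c(g(n))} \;=\; (\varphi \circ c)(g(n))\,,
\]
which exhibits $\bra{\varphi(a_n)}_{n=0}^\infty$ in precisely the form required by Definition \ref{def:def:bra-word}, with the same finitely-valued {\gp} map $g$ and the new map $\varphi \circ c \colon g(\NN_0) \to \Pi$.

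There is really no obstacle here: the statement is essentially a tautology once one notices that Definition \ref{def:def:bra-word} imposes no structural restriction on the coding $c$ beyond being an arbitrary map out of the finite set $g(\NN_0)$, so that $\varphi \circ c$ automatically qualifies. The only remark worth making is that the underlying {\gp} map $g$ is left unchanged; in particular, if $\bb a$ arises from a {\gp} map with coefficients in some ring $A$ (in the sense of Remark \ref{rmk:alg-coeff}), then so does $\bra{\varphi(a_n)}_{n=0}^\infty$, since only the coding is modified.
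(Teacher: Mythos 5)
Your argument is correct and coincides with the paper's own proof: both simply take the representation $a_n = c(g(n))$ from Definition \ref{def:def:bra-word} and replace the coding $c$ by $\varphi \circ c$, leaving the {\gp} map $g$ untouched. Nothing further is needed.
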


\begin{proof}
	If $a_n = c( g(n))$  is the representation of $\bb a$ as in Definition \ref{def:def:bra-word}, 
	then $\varphi(a_n)= c'( g(n))$ where $c' = \varphi \circ c$.
\end{proof}

Next, we note that the direct product of two {\gpwords} is again a {\gpword}.

\begin{proposition}\label{prop:cl:prod} 
Let $\bb a$ and $\bb a'$ be {\gpwords} respectively defined over some finite alphabets $\Sigma$ and $\Sigma'$. 
Then $\bb a \times \bb a' = \bra{(a_n,a_n')}_{n=0}^\infty$ is a {\gpword} over $\Sigma \times \Sigma'$.
\end{proposition}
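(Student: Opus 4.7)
The plan is to reduce the statement to the fact that a finitely-valued \gp{} map $\NN_0 \to \RR$ combined with a coding gives a {\gpword}, so the whole issue is to package the pair $(a_n, a'_n)$ as a single \gp{} map into $\RR$, followed by a coding. The natural obstruction is only notational: \gp{} maps are required to take values in $\RR$ rather than $\RR^2$, so we cannot simply pair $g$ and $g'$ coordinatewise.

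To get around this, I would first apply Corollary \ref{cor:constr:gpword-nice} to both $\bb a$ and $\bb a'$, which allows me to assume representations $a_n = c(g(n))$ with $g \colon \NN_0 \to [N]$ a \gp{} map and $c \colon [N] \to \Sigma$ an arbitrary map (where $N = |\Sigma|$), and analogously $a_n' = c'(g'(n))$ with $g' \colon \NN_0 \to [N']$ and $c' \colon [N'] \to \Sigma'$ (where $N' = |\Sigma'|$). The key point is that $g$ and $g'$ now take integer values in prescribed finite ranges.

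Next, define
\[
    G \colon \NN_0 \to [NN'], \qquad G(n) = N' \cdot g(n) + g'(n).
\]
Since sums and products of \gp{} maps are \gp{} and constants are \gp{}, the map $G$ is a \gp{} map, and by the choice of ranges it takes values in the finite set $[NN']$. Moreover, from an integer $m \in [NN']$ one recovers the pair $(g(n), g'(n))$ uniquely via $(\ip{m/N'}, m - N'\ip{m/N'})$. Hence the map $C \colon [NN'] \to \Sigma \times \Sigma'$ defined by $C(m) = \bra{c(\ip{m/N'}),\, c'(m - N'\ip{m/N'})}$ satisfies $C(G(n)) = (a_n, a_n')$ for every $n \in \NN_0$.

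This exhibits $\bb a \times \bb a'$ in the form required by Definition \ref{def:def:bra-word}, with $G$ a finitely-valued \gp{} map and $C$ an arbitrary coding, completing the proof. No step is truly difficult; the only content is the encoding trick $(x,y) \mapsto N'x + y$ for integer arguments, which is the standard way to push a pair of integer \gp{} maps back into a single one.
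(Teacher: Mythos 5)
Your proof is correct and follows essentially the same strategy as the paper's: both encode the pair $(g(n),g'(n))$ into a single finitely-valued {\gp} map via a linear combination chosen so that the pair can be uniquely recovered, and then apply a suitable coding. The only cosmetic difference is that the paper achieves injectivity by replacing $g$ with $Cg$ for a large real constant $C$ and decoding from the sumset $A+A'$, whereas you first normalise both words to integer-valued representations via Corollary \ref{cor:constr:gpword-nice} and use the base-$N'$ encoding $N'g+g'$, which makes the decoding map explicit.
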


\begin{proof}
	Recall that, by definition of a {\gpword}, $\bb a$ has a representation $a_n = c( g(n))$, where $g \colon \NN_0 \to A$ is a 
	{\gp} map taking values in some finite set $A \subset \RR$, and $c \colon A \to \Sigma$ is an arbitrary map. Let  $a_n' = c'( g'(n))$ 
	be an analogous representation of $\bb a'$.
	
	Replacing $g$ with $C g$ for a sufficiently large positive real number $C > 0$ (and modifying $A$ and $c$ accordingly), 
	we may assume that the only solutions to $x+x' = y+y'$ with $(x,y) \in A^2$ and $(x',y)' \in A'^2$ are the trivial ones: $x = x'$ and $y = y'$. 
	Let $B$ denote the sumset $A+A' = \set{x+x'}{x \in A,\ x' \in A'}$, let $h$ denote the {\gp} map  $g+g' \colon \NN_0 \to B$, and let 
	$d \colon B \to \Sigma \times \Sigma'$ denote the unique map such that $d(x+x') = \bra{ c(x),c'(x') }$ for all $x \in A$ and $x' \in A'$. 
	Then $(a_n, a'_n) = d(h(n))$ for all $n \in \NN_0$.
\end{proof}

\begin{remark}
	It follows that replacing, in Definition \ref{def:def:bra-word},  {\gp} maps $\NN_0 \to \RR$ with {\gp} maps $\NN_0 \to \RR^d$ 
	for arbitrary $d \in \NN$, leaves unchanged the set of words so defined. 
\end{remark}

In practice, Proposition \ref{prop:cl:prod} is mostly used via the following corollary.

\begin{corollary}\label{cor:cl:prod}
Let $\bb a$ and $\bb a'$ be {\gpwords} respectively defined over some finite alphabets $\Sigma$ and $\Sigma'$, 
and let $f \colon \Sigma \times \Sigma' \to \Pi$ be a map to some other finite alphabet $\Pi$. 
Then $\bra{f(a_n,a_n')}_{n=0}^\infty$ is a {\gpword} over $\Pi$.
\end{corollary}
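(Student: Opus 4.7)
The plan is essentially to chain together the two preceding results. By Proposition \ref{prop:cl:prod}, the product word $\bb b = \bra{(a_n, a_n')}_{n=0}^\infty$ is a {\gpword} over the finite alphabet $\Sigma \times \Sigma'$. Then $f$ is a map from this finite alphabet to the finite alphabet $\Pi$, so by Lemma \ref{lem:cl:code} the word $\bra{f(b_n)}_{n=0}^\infty = \bra{f(a_n,a_n')}_{n=0}^\infty$ is a {\gpword} over $\Pi$, which is exactly what we want.

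There is no real obstacle here: both building blocks are already proven, and the only thing to check is that the composition $f$ applied to the pair $(a_n, a_n')$ matches the setup of Lemma \ref{lem:cl:code} with $\Sigma$ there taken to be $\Sigma \times \Sigma'$. Since $\Sigma \times \Sigma'$ is finite whenever $\Sigma$ and $\Sigma'$ are, and a map between finite sets is exactly what a coding requires, the application is immediate. Thus the corollary follows in two lines with no computation.
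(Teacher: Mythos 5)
Your proof is correct and is exactly the argument the paper gives: apply Proposition \ref{prop:cl:prod} to see that $\bra{(a_n,a_n')}_{n=0}^\infty$ is a {\gpword} over $\Sigma \times \Sigma'$, then apply Lemma \ref{lem:cl:code} with the coding $f$. Nothing further is needed.
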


\begin{proof}
	This follows directly from Proposition \ref{prop:cl:prod} and Lemma \ref{lem:cl:code}.
\end{proof}

\begin{remark}
	In particular, for any ring $R$, {\gpwords} taking values in $R$, equipped with coordinatewise addition and multiplication, form a ring. 
\end{remark}

As a consequence of the two previous results, we see that {\gpwords} can be defined in a ``case-by-case'' manner.

\begin{proposition}\label{prop:cl:cases}
	Let $\Sigma$ be a finite alphabet, $\NN_0 = \bigcup_{i=1}^r S_i$ be a partition of $\NN_0$ into pairwise disjoint {\gp} subsets, 
	and $\bb a^{(i)}$, $1 \leq i \leq r$, be {\gpwords} over $\Sigma$. Let $\bb a$ be defined by $a_n = a^{(i)}_n$ if $n \in S_i$.  
	Then $\bb a$ is a {\gpword}.
\end{proposition}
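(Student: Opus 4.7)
The plan is to build $\bb{a}$ by taking a suitable product of the inputs and then applying a coding, using exclusively the closure properties established earlier in the section.

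First, I would invoke Corollary \ref{cor:constr:fibre} to express each set $S_i$ as the fibre of a {\gpword}: since $S_i$ is a {\gp} subset of $\NN_0$, the indicator $\bb{1}_{S_i}$ is a {\gpword} over $\{0,1\}$. Simultaneously, each $\bb{a}^{(i)}$ is a {\gpword} over $\Sigma$ by hypothesis. I now have $2r$ {\gpwords} in hand: namely $\bb{1}_{S_1},\dots,\bb{1}_{S_r}$ and $\bb{a}^{(1)},\dots,\bb{a}^{(r)}$.

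Next, I would iterate Proposition \ref{prop:cl:prod} to form the single {\gpword}
\[
	\bb{b} = \bra{(\bb{1}_{S_1}(n),\dots,\bb{1}_{S_r}(n),\ a^{(1)}_n,\dots,a^{(r)}_n)}_{n=0}^\infty
\]
over the finite alphabet $\Pi = \{0,1\}^r \times \Sigma^r$. Finally, I would define a map $f \colon \Pi \to \Sigma$ as follows: on a tuple of the form $(e_i, x_1,\dots,x_r) \in \Pi$, where $e_i \in \{0,1\}^r$ is the $i$-th standard basis vector (meaning the only coordinate equal to $1$ is the $i$-th), set $f(e_i, x_1,\dots,x_r) = x_i$; on all other tuples in $\Pi$, set $f$ to an arbitrary default value. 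Since the sets $S_i$ partition $\NN_0$, for every $n \in \NN_0$ exactly one indicator $\bb{1}_{S_i}(n)$ equals $1$, so the tuple $\bb{b}(n)$ always lies in the ``standard basis'' case; and if $n \in S_i$ then $f(\bb{b}(n)) = a^{(i)}_n = a_n$. Applying Lemma \ref{lem:cl:code} (or equivalently the already-packaged Corollary \ref{cor:cl:prod}) yields that $\bb{a} = (f(\bb{b}(n)))_{n=0}^\infty$ is a {\gpword}, as required.

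There is no real obstacle here: the statement is essentially a bookkeeping corollary of closure under products and codings, together with the characterisation of {\gp} subsets via Corollary \ref{cor:constr:fibre}. The only point requiring a little care is making sure the coding $f$ is well-defined on all of $\Pi$, which is resolved by the observation that only the ``standard basis'' tuples can ever occur as values of $\bb{b}$.
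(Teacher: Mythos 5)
Your proposal is correct and takes essentially the same route as the paper: the paper also forms the product word $(\bb{1}_{S_1},\dots,\bb{1}_{S_r},\bb a^{(1)},\dots,\bb a^{(r)})$ over $\{0,1\}^r\times\Sigma^r$ via repeated use of Proposition \ref{prop:cl:prod} and then applies Corollary \ref{cor:cl:prod} with a map $f$ that outputs $x_i$ exactly when the indicator block is the $i$-th standard basis vector. Your explicit appeal to Corollary \ref{cor:constr:fibre} to see that each $\bb{1}_{S_i}$ is a {\gpword} is the only step the paper leaves implicit, and it is available since that corollary precedes this proposition.
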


\begin{proof}
We first infer from a recursive use of Proposition \ref{prop:cl:prod} that the word $$(\bb{1}_{S_1},\ldots,\bb{1}_{S_r},\bb a^{(1)},\ldots,\bb a^{(r)})$$ 
defined over the alphabet 
$\{0,1\}^r\times \Sigma^r$ is a bracket word. Now, letting $f$ be any function from $\{0,1\}^r\times \Sigma^r$ to $\Sigma$ such that 
$$
f(\varepsilon_1,\ldots,\varepsilon_r,x_1,\ldots,x_r)=x_i \mbox{ if } \varepsilon_i=1 \mbox{ and } \varepsilon_j=0 \mbox { when } j\not=i \,,
$$
we infer from Corollary \ref{cor:cl:prod} that  $f(\bb{1}_{S_1},\ldots,\bb{1}_{S_r},\bb a^{(1)},\ldots,\bb a^{(r)})$ is a bracket word. 
It remains to see that  $\bb a=f(\bb{1}_{S_1},\ldots,\bb{1}_{S_r},\bb a^{(1)},\ldots,\bb a^{(r)})$.
\end{proof}

\begin{example}
	Let $\bb a$ be the infinite word over the alphabet $\{-2,-1,0,\dots,10\}$ given by
	\[
		a_n = 
		\begin{cases}
			\ip{n\fp{\sqrt{2}n}} &\text{if } {n\fp{\sqrt{2}n} } \leq 10\,,\\
			-1 &\text{if } {n\fp{\sqrt{2}n} } > 10 \text{ and } n^2 \fp{\sqrt{2}n\ip{\sqrt{3}n}} - n \fp{\sqrt{5}n} + 7 > 0\,,\\
			-2 &\text{if } {n\fp{\sqrt{2}n} } > 10 \text{ and } n^2 \fp{\sqrt{2}n\ip{\sqrt{3}n}} - n \fp{\sqrt{5}n} + 7 \leq 0\,.
		\end{cases}
	\]
	Then $\bb a$ is a {\gpword}.
\end{example}

Since each eventually constant sequence is a {\gpword}, it follows that {\gpwords} are also closed under finite modifications.
\begin{corollary}
	Let $\bb a,\bb a'$ be infinite words over a finite alphabet. If $\bb a$ is a {\gpword} and $a'_n = a_n$ for all but finitely many $n \in \NN_0$ then $\bb a'$ is a {\gpword}.
\end{corollary}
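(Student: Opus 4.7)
The plan is to reduce the claim to Proposition \ref{prop:cl:cases} together with Example \ref{ex:ex:1_0}. Let $F = \set{n \in \NN_0}{a_n \neq a'_n}$, which is finite by hypothesis, and set $S_1 = \NN_0 \setminus F$ and $S_2 = F$, so that $\NN_0 = S_1 \cup S_2$ is a partition into two disjoint pieces.

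First, I would verify that $S_1$ and $S_2$ are both {\gp} subsets of $\NN_0$. The indicator word $\bb 1_{S_2} = \bb 1_F$ is eventually zero, hence eventually constant, hence a {\gpword} by Example \ref{ex:ex:1_0}; similarly $\bb 1_{S_1}$ is eventually equal to $1$ and so also a {\gpword}. Applying Corollary \ref{cor:constr:fibre} (which identifies {\gp} subsets of $\NN_0$ with sets whose indicator is a {\gpword}), both $S_1$ and $S_2$ are {\gp} subsets of $\NN_0$.

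Next I would produce two {\gpwords} $\bb a^{(1)}$ and $\bb a^{(2)}$ over $\Sigma$ such that $\bb a'$ coincides with $\bb a^{(i)}$ on $S_i$. For $\bb a^{(1)}$ the natural choice is $\bb a$ itself, which is a {\gpword} by assumption and agrees with $\bb a'$ on $S_1 = \NN_0 \setminus F$ by construction. For $\bb a^{(2)}$, I would fix any symbol $b \in \Sigma$ and define $a^{(2)}_n = a'_n$ for $n \in F$ and $a^{(2)}_n = b$ for $n \notin F$; since $F$ is finite, $\bb a^{(2)}$ is eventually equal to $b$, and so Example \ref{ex:ex:1_0} shows it is a {\gpword}.

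Finally I would apply Proposition \ref{prop:cl:cases} to the data $(S_1, S_2, \bb a^{(1)}, \bb a^{(2)})$: the resulting glued word agrees with $\bb a = \bb a^{(1)}$ on $S_1$ and with $\bb a'$ on $S_2 = F$, so it is precisely $\bb a'$. This immediately yields that $\bb a'$ is a {\gpword}. There is no real obstacle here: the argument is a direct composition of closure results already established in this section with the basic fact that eventually constant sequences are {\gpwords}.
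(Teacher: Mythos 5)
Your argument is correct and is essentially the paper's own proof: the paper also deduces the corollary directly from Proposition \ref{prop:cl:cases} together with the fact that finite subsets of $\NN_0$ are {\gp} sets (equivalently, that eventually constant words are {\gpwords}). You simply spell out the partition $\NN_0 = (\NN_0 \setminus F) \cup F$ and the auxiliary word on $F$ in more detail, which is fine.
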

\begin{proof}
	The result follows directly from Proposition \ref{prop:cl:cases} and the fact that all finite subsets of $\NN_0$ are {\gp}.
\end{proof}

\subsection{Rearrangements and morphisms}

Many natural operations on infinite words can be described in terms of extracting or inserting entries in a regular manner. We record a simple observation, which can be used to find examples of operations of the aforementioned type which preserve {\gpwords}.

\begin{lemma}\label{lem:cl:subseq}
	Let $\bb a$ be a {\gpword} and let $h \colon \NN_0 \to \NN_0$ be a {\gp} map. Then $\bra{ a_{h(n)}}_{n=0}^\infty$ is a {\gpword}.
\end{lemma}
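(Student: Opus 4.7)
The plan is to reduce the problem directly to the definition of a {\gpword} combined with Remark \ref{rem:composition} on the composition of {\gp} maps. First I would write $\bb a$ in the canonical form $a_n = c(g(n))$, where $g \colon \NN_0 \to \RR$ is a finitely-valued {\gp} map and $c \colon g(\NN_0) \to \Sigma$ is a map, as guaranteed by Definition \ref{def:def:bra-word}. Then the sequence whose {\gpword}-ness we want to establish is $a_{h(n)} = c(g(h(n)))$, so it suffices to exhibit a finitely-valued {\gp} map whose image under some coding equals $(a_{h(n)})_{n=0}^\infty$.

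The natural candidate is $g \circ h \colon \NN_0 \to \RR$. Its image is contained in $g(\NN_0)$, hence finite, so the only real point is to verify that $g \circ h$ is itself a {\gp} map on $\NN_0$. For this I would invoke Remark \ref{rem:composition}: extend $g$ and $h$ to {\gp} maps $\tilde g, \tilde h \colon \RR \to \RR$ (possible since {\gp} maps on $\NN_0$ are by definition restrictions of {\gp} maps on $\RR$), observe that $\tilde g \circ \tilde h \colon \RR \to \RR$ is a {\gp} map, and restrict back to $\NN_0$ to obtain $g \circ h$. Once this is in place, setting $\bb a' = (c((g \circ h)(n)))_{n=0}^\infty$ gives a representation of $(a_{h(n)})_{n=0}^\infty$ in the form required by Definition \ref{def:def:bra-word}, completing the argument.

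There is essentially no obstacle here: the only delicate point is the routine observation that composition of {\gp} maps is a {\gp} map, which is already recorded in Remark \ref{rem:composition}. The assumption $h(\NN_0) \subseteq \NN_0$ is used only implicitly, to ensure that $a_{h(n)}$ makes sense; it plays no role in the verification that $g \circ h$ is a {\gp} map, since {\gp}-ness is preserved under restriction to any subset of the domain.
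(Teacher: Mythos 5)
Your proposal is correct and follows essentially the same route as the paper: take the representation $a_n = c(g(n))$, observe that $g' = g \circ h$ is a {\gp} map by Remark \ref{rem:composition}, and note that $a_{h(n)} = c(g'(n))$ with $g'$ finitely valued. The extra remarks about extending to $\RR$ and restricting back are just an unpacking of the same composition argument the paper invokes.
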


\begin{proof}
	If $a_n = c( g(n))$ is the representation of $\bb a$ as in Definition \ref{def:def:bra-word}, then $a_{h(n)} = c( g'(n))$, 
	where $g' = g \circ h$ is a {\gp} map (see Remark \ref{rem:composition}).
\end{proof}

Below, we list some applications of this result. 
Recall that, for $A \in \NN$, the map $\NN_0 \to \NN_0$ given by $n \mapsto n \bmod A = A\fp{n/A}$ is a {\gp} map.

\begin{corollary}\label{cor:cl:rearrange}
	Let $\bb a$ be a {\gpword} defined over a finite alphabet $\Sigma$, let $A \in \NN$, $B \in \NN_0$, let $\diamondsuit$ be a symbol not belonging to $\Sigma$, and let $\pi \colon [A] \to [A]$ be a map. 
	Then the following infinite words are also {\gpwords}. 
	\begin{itemize}
	\item[{\rm (i)}] $\bra{ a_{An+B}}_{n=0}^\infty$.
	\item[{\rm (ii)}] $\bb a'$ defined over $\Sigma \cup \{\diamondsuit\}$  by $a'_n = a_{n/A}$ if $A \mid n$ and $a'_n = \diamondsuit$ otherwise.
	\item[{\rm (iii)}] $\bra{ a_{ \floor{n/A} + \pi(n \bmod A) }}_{n=0}^\infty$.
	\end{itemize}
\end{corollary}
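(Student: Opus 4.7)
My plan is to treat the three items as progressively less direct applications of Lemma \ref{lem:cl:subseq}, filling in the extra ingredients (Proposition \ref{prop:cl:cases} and polynomial interpolation) as needed.

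For (i), I would simply observe that $h(n) = An+B$ is a polynomial map from $\NN_0$ to $\NN_0$, hence a {\gp} map, and invoke Lemma \ref{lem:cl:subseq} directly. For (iii), the plan is to show that $h(n) = \ip{n/A} + \pi(n \bmod A)$ is itself a {\gp} map $\NN_0 \to \NN_0$, and again apply Lemma \ref{lem:cl:subseq}. The map $n \mapsto \ip{n/A}$ is {\gp} by construction, and $n \mapsto n \bmod A = A\fp{n/A}$ is {\gp}. The function $\pi \colon [A] \to [A]$ is not {\gp} a priori, but by Lagrange interpolation there exists a polynomial $P \in \RR[x]$ with $P(k) = \pi(k)$ for all $k \in [A]$; then $\pi(n \bmod A) = P(A\fp{n/A})$, so the composition is {\gp}. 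Adding two {\gp} maps yields a {\gp} map, and the values lie in $\NN_0$.

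The only item requiring extra care is (ii), since $\bb a'$ inserts a new symbol $\diamondsuit$ and thus does not literally arise from a single re-indexing. Here the plan is to appeal to Proposition \ref{prop:cl:cases} applied to the partition $\NN_0 = S_1 \sqcup S_2$ with
\[
S_1 = \set{n \in \NN_0}{A \mid n} = \set{n \in \NN_0}{\fp{n/A} = 0}, \qquad S_2 = \NN_0 \setminus S_1.
\]
Both are {\gp} subsets of $\NN_0$: by Proposition \ref{prop:cl:g-in-I} applied with the {\gp} map $n \mapsto \fp{n/A}$ and the degenerate interval $I = \{0\}$, the word $\bb 1_{S_1}$ is a {\gpword}, so $S_1$ is {\gp} by Corollary \ref{cor:constr:fibre}; and $S_2$ is its complement. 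On $S_1$, the desired word agrees with $\bra{a_{\ip{n/A}}}_{n=0}^\infty$, which is a {\gpword} over $\Sigma \cup \{\diamondsuit\}$ by Lemma \ref{lem:cl:subseq} (since $n \mapsto \ip{n/A}$ is {\gp} and $\NN_0$-valued). On $S_2$, it agrees with the constant word $\diamondsuit\diamondsuit\cdots$, which is trivially a {\gpword}. Proposition \ref{prop:cl:cases} then glues the two pieces into the required {\gpword} $\bb a'$.

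I do not anticipate any serious obstacle; the mildest subtlety is verifying that the partition in (ii) really consists of {\gp} sets, which reduces to recognising $A \mid n$ as the vanishing of a {\gp} expression. The rest is bookkeeping, and nothing new beyond Lemma \ref{lem:cl:subseq}, Proposition \ref{prop:cl:cases}, Proposition \ref{prop:cl:g-in-I}, Corollary \ref{cor:constr:fibre}, and polynomial interpolation is required.
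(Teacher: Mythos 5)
Your proposal is correct and follows essentially the same route as the paper, which likewise derives (i) directly from Lemma \ref{lem:cl:subseq} and (ii), (iii) from Lemma \ref{lem:cl:subseq} combined with Proposition \ref{prop:cl:cases}. The only (harmless) deviation is in (iii), where you absorb $\pi$ into a single {\gp} map via Lagrange interpolation of $\pi(n\bmod A)=P(A\fp{n/A})$ instead of gluing over the residue classes modulo $A$; both arguments work, and yours slightly streamlines that item.
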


\begin{proof}
	The first item follows directly from Lemma \ref{lem:cl:subseq}. The second and third items follow from Lemma \ref{lem:cl:subseq} 
	and Proposition \ref{prop:cl:cases}.
\end{proof}

\begin{lemma}\label{lem:cl:subs}
	Let $\Sigma$ and $\Pi$ be two alphabets. Let $\bb a$ be a {\gpword} over $\Sigma$ and $\sigma$ be a morphism of constant length from 
	$\Sigma^*$ to $\Pi^*$. Then $\sigma(\bb a)$ is a {\gpword}.
\end{lemma}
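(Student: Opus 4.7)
The plan is to deduce the lemma from the closure properties already established in this section: subsequence extraction (Lemma \ref{lem:cl:subseq}), application of codings (Lemma \ref{lem:cl:code}), and case-by-case construction over a partition into {\gp} subsets (Proposition \ref{prop:cl:cases}). The essential idea is that the $n$-th letter of $\sigma(\bb a)$ depends in a simple way on $\lfloor n/k \rfloor$ and $n \bmod k$, where $k$ is the common length of the blocks $\sigma(x)$; both of these are {\gp} maps, so everything can be assembled.

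More concretely, let $k \in \NN$ be such that $\sigma$ is $k$-uniform, and, for each $r \in [k]$, write $\sigma(x) = \varphi_0(x)\varphi_1(x)\cdots \varphi_{k-1}(x)$ for all $x \in \Sigma$, thus defining $k$ maps $\varphi_r \colon \Sigma \to \Pi$. Since $h(n) = \ip{n/k}$ is a {\gp} map $\NN_0 \to \NN_0$, Lemma \ref{lem:cl:subseq} implies that $\bra{a_{\ip{n/k}}}_{n=0}^\infty$ is a {\gpword} over $\Sigma$, and Lemma \ref{lem:cl:code} then gives that, for each $r \in [k]$, the word
\[
	\bb b^{(r)} = \bra{\varphi_r\bra{a_{\ip{n/k}}}}_{n=0}^\infty
\]
is a {\gpword} over $\Pi$.

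For each $r \in [k]$, set $S_r = \set{n \in \NN_0}{n \bmod k = r}$. Since $n \mapsto n \bmod k = k\fp{n/k}$ is a {\gp} map, Corollary \ref{cor:constr:fibre} (or Lemma \ref{lem:cl:[g=0]} applied to $g(n) = n \bmod k - r$) shows that each $S_r$ is a {\gp} subset of $\NN_0$, and $\NN_0 = S_0 \cup S_1 \cup \cdots \cup S_{k-1}$ is a partition. Applying Proposition \ref{prop:cl:cases} to the {\gpwords} $\bb b^{(0)}, \bb b^{(1)}, \dots, \bb b^{(k-1)}$, one obtains a {\gpword} $\bb c$ over $\Pi$ defined by $c_n = b^{(n \bmod k)}_n$ for all $n \in \NN_0$.

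It remains to identify $\bb c$ with $\sigma(\bb a)$. If $n = km + r$ with $r \in [k]$, then by definition of $\sigma$ as a morphism the $n$-th letter of $\sigma(\bb a)$ equals the $r$-th letter of $\sigma(a_m)$, which is $\varphi_r(a_m) = \varphi_r(a_{\ip{n/k}}) = b^{(r)}_n = c_n$. Hence $\sigma(\bb a) = \bb c$ is a {\gpword}. I do not foresee any serious obstacle: every step is a direct application of a closure result already proved, and the only mild care required is the verification that the residue classes $S_r$ are {\gp} subsets of $\NN_0$, which is immediate from the generalised polynomial representation of $n \bmod k$.
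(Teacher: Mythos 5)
Your proof is correct and is essentially the paper's own argument with the details written out: the paper likewise notes that $\sigma(\bb a)_{kn+r}$ is the $r$-th letter of $\sigma(a_n)$ and then invokes Lemma \ref{lem:cl:subseq}, Lemma \ref{lem:cl:code}, and Proposition \ref{prop:cl:cases}, exactly as you do via the maps $\varphi_r$, the {\gp} map $\ip{n/k}$, and the residue-class partition.
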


\begin{proof}
	Let us assume that $\sigma$ has constant length $k$, and set $\bb a' = \sigma(\bb a)$. 
	For each $i \in [k]$ and $n \in \NN_0$, we have $a'_{kn+i} = \sigma(a_n)_i$, where we let $ \sigma(a_n)_i$ denote the $i$th letter occurring in  
	$\sigma(a_n)$. 
	It remains to apply Proposition \ref{prop:cl:cases} and Lemmas \ref{lem:cl:subseq} and \ref{lem:cl:code}.
\end{proof}

By similar techniques, we can show a result in the reverse direction: {\gpwords} are preserved under grouping blocks of constant length, 
or, in other words, if the image of a word $\bb a$ by an injective morphism of constant length is a {\gpword}, then $\bb a$ itself is also a {\gpword}.

\begin{lemma}
	Let $\bb a$ be an infinite word defined over a finite alphabet $\Sigma$ and let $k \in \NN$. 
	Consider the infinite word ${\bb a}'$ over $\Sigma^k$ given by $ a_n' = a_{kn} a_{kn+1} \cdots a_{kn+k-1}$. 
	Then ${\bb a}'$ is  a {\gpword} if and only if $\bb a$ is a {\gpword}.
\end{lemma}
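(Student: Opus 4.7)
The plan is to treat the two implications separately, using the closure properties already established earlier in Section \ref{sec:cl}. Both directions follow by routine assembly of existing tools, so I do not foresee a genuine obstacle; the main point is simply to organise the bookkeeping.

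For the forward direction, suppose $\bb a$ is a {\gpword}. By Corollary \ref{cor:cl:rearrange}\,(i), each arithmetic subsequence $\bra{a_{kn+i}}_{n=0}^\infty$ with $i \in [k]$ is a {\gpword} over $\Sigma$. Iterating Proposition \ref{prop:cl:prod} then yields that the Cartesian product $\bra{(a_{kn}, a_{kn+1}, \dots, a_{kn+k-1})}_{n=0}^\infty$ is a {\gpword} over $\Sigma^k$, and this is exactly $\bb a'$.

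For the reverse direction, suppose $\bb a'$ is a {\gpword} over $\Sigma^k$. My strategy is to reconstruct $\bb a$ by stretching $\bb a'$ by a factor of $k$ and reading off one coordinate in each residue class modulo $k$, then gluing the pieces with Proposition \ref{prop:cl:cases}. Explicitly, for each $i \in [k]$ let $\pi_i \colon \Sigma^k \to \Sigma$ denote the $i$-th coordinate projection; by Lemma \ref{lem:cl:code}, the word $\bb b^{(i)} := \bra{\pi_i(a'_n)}_{n=0}^\infty$ is a {\gpword}. Applying Lemma \ref{lem:cl:subseq} with the {\gp} map $h(n) = \ip{n/k}$, the word $\bb c^{(i)} := \bra{b^{(i)}_{\ip{n/k}}}_{n=0}^\infty$ is again a {\gpword}.

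It remains to note that the residue classes $S_i = \{n \in \NN_0 : n \equiv i \pmod k\}$ for $i \in [k]$ partition $\NN_0$ into {\gp} sets, being the zero loci of the {\gp} maps $n \mapsto k\fp{(n-i)/k}$. For $n \in S_i$, writing $n = km+i$ one has $m = \ip{n/k}$ and hence $c^{(i)}_n = \pi_i(a'_m) = a_{km+i} = a_n$. Proposition \ref{prop:cl:cases} then assembles the words $\bb c^{(0)}, \dots, \bb c^{(k-1)}$ into the single {\gpword} $\bb a$, completing the proof. The only slightly delicate checks are that $h(n) = \ip{n/k}$ is {\gp} and that the residue classes are {\gp} subsets of $\NN_0$, both of which are immediate from the definitions.
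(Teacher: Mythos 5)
Your proof is correct and follows essentially the route the paper intends: the paper only remarks that the proof is ``similar to the one of Lemma \ref{lem:cl:subs}'', i.e.\ a combination of Proposition \ref{prop:cl:cases} with Lemmas \ref{lem:cl:subseq} and \ref{lem:cl:code} (plus products/rearrangements for the grouping direction), which is exactly the assembly you carry out in both directions. The only implicit point, the identification of the product alphabet $\Sigma\times\cdots\times\Sigma$ with $\Sigma^k$, is harmless since it is a bijective coding covered by Lemma \ref{lem:cl:code}.
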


\begin{proof}
	The proof is similar to the one of Lemma \ref{lem:cl:subs}.
\end{proof}

\subsection{Orbit closure}

Given an infinite word $\bb a$ defined over a finite alphabet $\Sigma$, we let $\cO(\bb a)$ denote the orbit of $\bb a$ under the shift, 
that is, the set of all infinite words $ \bb a' $ given by $a'_n = a_{n+m}$ for some $m \in \NN_0$.

\begin{proposition}\label{prop:cl:orb-cl-0}
	Let $\bb a$ be a {\gpword} and let $\bb a' \in  \cO(\bb a)$. Then $\bb a'$ is a {\gpword}. 
\end{proposition}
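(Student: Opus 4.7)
The plan is to reduce this directly to Lemma \ref{lem:cl:subseq}. By definition of the orbit $\cO(\bb a)$, any element $\bb a' \in \cO(\bb a)$ satisfies $a'_n = a_{n+m}$ for some fixed $m \in \NN_0$.

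First I would observe that the map $h \colon \NN_0 \to \NN_0$ defined by $h(n) = n+m$ is a polynomial map (of degree one), hence trivially a {\gp} map by clause \ref{it:def:i} of the definition of {\gp} maps, and moreover its image is contained in $\NN_0$. Therefore Lemma \ref{lem:cl:subseq} applies and yields that $\bra{a_{h(n)}}_{n=0}^\infty = \bra{a_{n+m}}_{n=0}^\infty = \bb a'$ is a {\gpword}.

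Alternatively, to make the argument self-contained, I would write down a representation $a_n = c(g(n))$ with $g \colon \NN_0 \to \RR$ a finitely-valued {\gp} map and $c \colon g(\NN_0) \to \Sigma$ a coding, and set $g'(n) := g(n+m)$. By Remark \ref{rem:composition}, $g'$ is the composition of $g$ with the polynomial $n \mapsto n+m$, hence it is itself a {\gp} map. Clearly $g'(\NN_0) \subseteq g(\NN_0)$, so $g'$ is finitely-valued, and $a'_n = c(g'(n))$ exhibits $\bb a'$ as a {\gpword}.

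There is no real obstacle here: the entire content of the statement reduces to the fact that shifting the argument by a constant integer is a polynomial operation and hence preserves the class of {\gp} maps. The only mildly subtle point worth noting is that one must verify the shifted map $n \mapsto g(n+m)$ still sends $\NN_0$ into a finite subset of $\RR$, which is immediate since its image lies in the finite set $g(\NN_0)$.
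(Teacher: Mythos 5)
Your argument is correct and is essentially the paper's: the paper invokes Corollary \ref{cor:cl:rearrange}(i) with $A=1$, $B=m$, which is itself just Lemma \ref{lem:cl:subseq} applied to the {\gp} map $n \mapsto n+m$, exactly as you do. Your self-contained variant via Remark \ref{rem:composition} is a fine unwinding of the same reasoning.
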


\begin{proof}
This follows from (i) of Corollary \ref{cor:cl:rearrange} in the special case where $A=1$. 
\end{proof}

More generally, we can consider the orbit closure $\bar{\cO(\bb a)}$, that is, the closure of $\cO(\bb a)$ with respect to the product topology 
on $\Sigma^{\infty}$. Explicitly, $\bb a'$ belongs to $\bar{\cO(\bb a)}$ if, for every $N \in \NN$, there exists $m$ such that $a'_n = a_{n+m}$ for all $n \in [N]$. 

\begin{proposition}\label{prop:cl:orb-cl}
	Let $\bb a$ be a {\gpword} and let $\bb a' \in \bar{\cO(\bb a)}$. Then $\bb a'$ is a {\gpword}. 
\end{proposition}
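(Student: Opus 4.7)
The plan is to use Theorem~\ref{thm:BL-word} in both directions. By the forward direction, represent $\bb a$ via a minimal nilsystem $(X,T)$, a point $x \in X$, and a semialgebraic partition $X = \bigcup_{i \in \Sigma} S_i$ with $a_n = i$ iff $T^n(x) \in S_i$. Since $\bb a' \in \bar{\cO(\bb a)}$, there exist integers $m_k \to \infty$ such that, for every fixed $n$, one has $a'_n = a_{n+m_k}$ for all $k$ large enough. By compactness of $X$, after passing to a subsequence we may assume $T^{m_k}(x) \to y$ for some $y \in X$. Continuity of $T^n$ then gives $T^{n+m_k}(x) \to T^n(y)$, and combined with $a_{n+m_k} = a'_n$ for large $k$ this forces $T^n(y) \in \overline{S_{a'_n}}$ for every $n \in \NN_0$.

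The natural hope is to conclude $a'_n = i$ iff $T^n(y) \in S_i$, but this can fail when $T^n(y)$ lies on the boundary $\bigcup_i \partial S_i$. To resolve such ambiguities, pass to a further subsequence along which the \emph{direction of approach} of $T^{m_k}(x)$ to $y$ stabilises in Mal'cev coordinates. Because each $S_i$ is semialgebraic, the sign pattern of the finitely many defining polynomials at a limit point, together with the direction of approach, determines into which $S_i$ the approaching points eventually fall; there are only finitely many such ``types'', so the stabilisation follows from the pigeonhole principle. This defines a perturbed semialgebraic partition $\{S'_i\}_{i \in \Sigma}$ of $X$ that agrees with $\{S_i\}$ on the interior of each cell and assigns boundary points to the cell into which the chosen direction of approach lands. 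Since $T$ acts polynomially in Mal'cev coordinates (on each connected component of $G/\Gamma$, up to the action of $\Gamma$), the direction of approach of $T^{n+m_k}(x)$ to $T^n(y)$ is controlled uniformly in $n$, so in fact $T^n(y) \in S'_{a'_n}$ for all $n$. The converse implication of Theorem~\ref{thm:BL-word} applied to $(X,T,y,\{S'_i\})$ then exhibits $\bb a'$ as a bracket word.

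The main obstacle lies in making the direction-of-approach construction rigorous: one must define the finitely many approach ``types'' semialgebraically, verify that the perturbed partition $\{S'_i\}$ is still semialgebraic, and check that the direction is transported correctly under $T^n$ for every $n$, not only $n = 0$. All three points ultimately rest on the o-minimal structure of semialgebraic sets and on the polynomial description of nilrotations in Mal'cev coordinates; additional technicalities arise from disconnected components of $X$ and from the branching that occurs when multiples of translations cross walls of the fundamental domain of $\Gamma$. Once these points are settled, the argument closes via Theorem~\ref{thm:BL-word}.
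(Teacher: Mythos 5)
Your reduction to a limit point is fine: choosing $m_k$ with $a'_n = a_{n+m_k}$ for $n \in [k]$, passing to a subsequence with $T^{m_k}(x) \to y$, and concluding $T^n(y) \in \overline{S_{a'_n}}$ is correct. The gap is everything after that. First, the claim that ``the sign pattern of the defining polynomials at a limit point, together with the direction of approach, determines into which $S_i$ the approaching points eventually fall'' is false: membership near a boundary point is not determined by a direction alone. For $S = \set{(u,v)}{0 < v < u^2}$ and the limit point $0$, sequences approaching along the tangential direction $(1,0)$ can lie inside or outside $S$ depending on higher-order behaviour, so there is no finite list of ``types'' indexed by directions and no pigeonhole. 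Second, even if one had such a classification at $y$, the transport ``uniformly in $n$'' does not follow: in Mal'cev coordinates the map $T^n$ has polynomially growing coefficients in $n$ (and the reduction modulo $\Gamma$ introduces the wall-crossing branching you mention), so the approach data of $T^{n+m_k}(x)$ to $T^n(y)$ is not controlled by the data at $n=0$. Third, to invoke the converse of Theorem \ref{thm:BL-word} you need an actual semialgebraic partition $\{S'_i\}$ of $X$ with $T^n(y) \in S'_{a'_n}$ for \emph{all} $n$; no such partition is constructed, and your own closing paragraph concedes that these points are unresolved. Since the boundary issue is precisely the content of the proposition (it is the one closure property the paper flags as not formal), the proposal as written is a plan, not a proof.

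For comparison, the paper does not argue on the nilmanifold at all: it combines Lemma \ref{lem:nil:shift} (every shift $g(\cdot+m)$ of a bounded {\gp} map is obtained by specialising a single parametric {\gp} map $h(\vec\a,\cdot)$ at some $\vec\a \in [0,1)^d$) with Lemma \ref{lem:cl:limit} (pointwise limits $\lim_i h(x_i,n)$ of such specialisations along a bounded parameter sequence are again {\gp} maps on $\NN$, a special case of \cite[Prop.\ 2.16]{Konieczny-2021-JLM}). The second lemma is exactly the black box that absorbs the boundary difficulties you are trying to handle by hand; if you want to salvage your approach, you would essentially have to reprove it.
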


\begin{proof}
This is a direct consequence of Lemmas \ref{lem:nil:shift} and \ref{lem:cl:limit} given in Appendix \ref{app:nil}.
\end{proof}
 
\begin{example}
	Let $\bb a$ be the {\gpword} given by $a_n = \braif{ \fp{ \sqrt{2} n\fp{\sqrt{3} n}} < 1/4}$. 
	Then each $\bb a' \in \cO(\bb a)$ can be written in the form 
	\[ a'_n = \braif{  \fp{ (\sqrt{2} n+\a)\fp{\sqrt{3} n+\b} + \gamma n + \delta } < 1/4},\] 
	where $\a,\b,\gamma,\delta \in [0,1)$. Conversely, $\bar{\cO(\bb a)}$ contains all sequences $\bb a'$ of the aforementioned form,
	as well as all sequences obtained from them by replacing any instances of $\fp{x}$ with $\fp{x}' = 1-\fp{-x}$, 
	or by replacing the strict inequality $<$ with $\leq$.	
\end{example}

\subsection{{\gp} sets}\label{ssec:gp-sets}

The closure properties of {\gpwords} directly translate into closure properties of {\gp} subsets of $\NN_0$ thanks to Corollary \ref{cor:constr:fibre}. 
In fact, we have already used this connection at several places. For ease of reference, we gather these properties here.

\begin{proposition}\label{prop:cl:sets}
	The family of {\gp} subsets of $\NN_0$ is a field of sets. In other words, it contains the empty set and if $E,F \subset \NN_0$ are {\gp} sets 
	then so are $\NN_0 \setminus E$, $E \cup F$, and $E \cap F$.
\end{proposition}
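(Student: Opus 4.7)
The plan is to translate each required set-theoretic operation into a letter-by-letter operation on the corresponding indicator bracket words, and then invoke the closure properties already established. Recall from Corollary \ref{cor:constr:fibre} that, for $E \subset \NN_0$, the set $E$ is a {\gp} set if and only if $\bb{1}_E$ is a {\gpword}. Thus it suffices to check that $\bb{1}_\emptyset$ is a {\gpword} and that the family of {\gpwords} of the form $\bb{1}_E \in \{0,1\}^\infty$ is stable under the three pointwise operations $x \mapsto 1-x$, $(x,y) \mapsto \max(x,y)$ (or $x+y-xy$), and $(x,y) \mapsto xy$.

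For the empty set, the word $\bb{1}_\emptyset = 000\cdots$ is the zero sequence, which is eventually constant and therefore a {\gpword} by Example \ref{ex:ex:1_0}. For complement, given a {\gpword} $\bb{1}_E$ over $\{0,1\}$, I would apply Corollary \ref{cor:cl:prod}: pair $\bb{1}_E$ with the constant bracket word $111\cdots$ (again by Example \ref{ex:ex:1_0}), then apply the map $f \colon \{0,1\}^2 \to \{0,1\}$ given by $f(x,y) = y - x$. Alternatively, even more simply, Lemma \ref{lem:cl:code} applied with the coding $\varphi(0) = 1$, $\varphi(1) = 0$ immediately yields $\bb{1}_{\NN_0 \setminus E} = \varphi(\bb{1}_E)$ as a {\gpword}.

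For intersection and union, let $E, F \subset \NN_0$ be {\gp} sets, so both $\bb{1}_E$ and $\bb{1}_F$ are {\gpwords}. Applying Corollary \ref{cor:cl:prod} to the pair $(\bb{1}_E, \bb{1}_F)$ with the map $f \colon \{0,1\}^2 \to \{0,1\}$ defined by $f(x,y) = xy$ produces the {\gpword} $\bb{1}_{E \cap F}$; similarly, taking $f(x,y) = \max(x,y)$ produces $\bb{1}_{E \cup F}$. Both are then {\gp} sets by Corollary \ref{cor:constr:fibre}.

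There is no genuine obstacle here: the content of the proposition is entirely bookkeeping, and the real work has already been done in Proposition \ref{prop:cl:prod}, Corollary \ref{cor:cl:prod}, and Corollary \ref{cor:constr:fibre}. The only mild subtlety is remembering that Corollary \ref{cor:cl:prod} requires each input to be a {\gpword} in its own right; this is why we must separately note that constant sequences are {\gpwords} in order to handle the empty set and, if desired, to express the complement as a binary operation.
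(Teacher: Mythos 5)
Your proof is correct and is essentially the paper's argument: the proposition is pure bookkeeping over the closure machinery of Sections \ref{sec:constr}--\ref{sec:cl}, and your route through Corollary \ref{cor:constr:fibre}, Lemma \ref{lem:cl:code}, and Corollary \ref{cor:cl:prod} is the same in substance as the paper's one-line deduction from Proposition \ref{prop:cl:cases} (which is itself proved from Proposition \ref{prop:cl:prod} and Corollary \ref{cor:cl:prod}). If anything, your version spells out the pointwise operations $x\mapsto 1-x$, $(x,y)\mapsto xy$, $(x,y)\mapsto\max(x,y)$ more explicitly, which makes the bookkeeping slightly more transparent.
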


\begin{proof}
The result follows from Proposition \ref{prop:cl:cases}. 
\end{proof}

\begin{proposition}\label{prop:cl:sets-2}
	Let $E \subset \NN_0$ be a {\gp} set. Then the following sets are also {\gp} sets. 
	\begin{itemize}
	\item $m \cdot E = \set{nm}{n \in E}$, where $m \in \NN_0$. 
	\item $E / m = \set{n \in \NN_0}{ nm \in E}$, where $m \in \NN$.
	\item $E - m = \set{n \in \NN_0}{ n+m \in E}$, where  $m \in \ZZ$. 	
	\end{itemize}
\end{proposition}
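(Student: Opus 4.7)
The plan is to invoke Corollary \ref{cor:constr:fibre} three times in order to reduce each of the three bullet points to the claim that a specific infinite word is a bracket word, and then to apply the appropriate rearrangement result from this section to $\bb{1}_E$.

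For $E/m$, the definition gives $\bb{1}_{E/m}(n) = \bb{1}_E(mn)$, so I would apply Corollary \ref{cor:cl:rearrange}(i) to $\bb{1}_E$ with $A = m$ and $B = 0$. For $m \cdot E$ (with $m \geq 1$; the case $m = 0$ gives $\{0\}$ or $\emptyset$, which are trivially GP), I would apply Corollary \ref{cor:cl:rearrange}(ii) with $A = m$ to $\bb{1}_E$, obtaining a bracket word over $\{0, 1, \diamondsuit\}$ whose $n$-th entry is $\bb{1}_E(n/m)$ when $m \mid n$ and $\diamondsuit$ otherwise; this coincides with $\bb{1}_{m \cdot E}$ once we apply the coding $\diamondsuit \mapsto 0$ via Lemma \ref{lem:cl:code}.

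The case $E - m$ with $m \geq 0$ is immediate from Corollary \ref{cor:cl:rearrange}(i) with $A = 1$ and $B = m$, since $\bb{1}_{E-m}(n) = \bb{1}_E(n+m)$. The case $m < 0$ is the only one requiring a moment's thought. Writing $m = -k$ with $k \in \NN$, the word $\bb{1}_{E-m}$ vanishes on $[0, k)$ and equals $\bb{1}_E(n-k)$ on $[k, \infty)$. The partition $\NN_0 = [0, k) \sqcup [k, \infty)$ is by GP sets (the first is finite, the second its complement, using Proposition \ref{prop:cl:sets}), so Proposition \ref{prop:cl:cases} reduces matters to producing a single bracket word $\bb c$ whose values on $[k, \infty)$ agree with $\bb{1}_E(n - k)$. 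I would obtain such a $\bb c$ by applying Lemma \ref{lem:cl:subseq} to $\bb{1}_E$ with the GP map $h \colon \NN_0 \to \NN_0$, $h(n) = (n - k)\braif{n \geq k}$; here $\braif{n \geq k}$ is GP since it is the characteristic word of the cofinite (hence GP) set $[k, \infty)$, invoking Corollary \ref{cor:constr:fibre} in the other direction.

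There is no real obstacle: each item is essentially a one-line appeal to a previously established closure property, and the only minor subtlety is the need, in the $m < 0$ subcase of $E - m$, to handle the initial segment where $n + m < 0$. This is managed cleanly by the case-by-case construction of Proposition \ref{prop:cl:cases} together with the observation that shifting the indexing of $\bb{1}_E$ backwards can be realised as composition with a GP self-map of $\NN_0$.
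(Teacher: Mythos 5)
Your proof is correct and follows essentially the same route as the paper, whose proof of this proposition is exactly the one-line observation that it follows from Corollary \ref{cor:cl:rearrange} (via the correspondence of Corollary \ref{cor:constr:fibre}). The only difference is that you spell out the $m<0$ subcase of $E-m$, which the paper leaves implicit; your handling of it via Proposition \ref{prop:cl:cases} and Lemma \ref{lem:cl:subseq} is sound.
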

\begin{proof}
	This is a consequence of Corollary \ref{cor:cl:rearrange}.
\end{proof}

\section{Frequencies and recurrence}\label{sec:facts}

In this section, we collect some facts concerning frequencies with which symbols and subwords appear in {\gpwords}, as well as additive structure of the set of positions at which a given symbol appears in a {\gpword}. Most of these facts are consequences of results concerning uniform 
distribution and recurrence of orbits of point in nilsystems, and hence are related to the dynamical characterization of bracket 
words discussed in Section \ref{sec:nil}. We point out that, similarly, many standard properties of Sturmian words can be inferred from dynamical properties of circle rotations.

\subsection{Uniform frequency}

Let $\Sigma$ be a finite alphabet. Given an infinite word $\bb a\in\Sigma^{\infty}$ and a letter $x \in \Sigma$, the (asymptotic)
\emph{frequency} of $x$ in $\bb a$ is defined as
\[    \freq(\bb a,x) = \lim_{N \to \infty} \abs{\set{n \in [N]}{a_n= x}}/N\,,\]
provided that the limit exists. 
More generally, given a finite word $\fword{w} \in \Sigma^\ell$, the frequency of $\fword{w}$ in $\bb a$ is defined as
\[
        \freq(\bb a, \fword{w}) = \lim_{N \to \infty} \abs{\set{n \in
[N]}{\bb a_{[n,n+\ell)} = \fword{w}}}/N\,,
\]
provided that the limit exists. 
Here and elsewhere, $\bb a_{[n,n+\ell)}$ denotes the finite subword
$a_{n}a_{n+1}\cdots a_{n+\ell-1}$. We define also the \emph{recurrence function} of $\fword{w}$ in 
$\bb a$ as the least length of a segment of $\bb a$ that is guaranteed to contain an occurrence of $\fword{w}$:
\[
        \mathrm{rec}(\bb a, \fword{w}) = \inf \set{ r \in \NN }{
        \parbox{17em}{
        {for each } $m$ { there exists } $n \in [m,m+r)$ { such that } $\bb
a_{[n,n+\ell)} = \fword{w}$ }
        } \in \NN \cup \{\infty\} \,.
\]
For {\gpwords}, we have a strong result about the existence of 
frequencies, which is a rephrasing of \cite[Thm.{} B]{BergelsonLeibman-2007}.

\begin{theorem}\label{prop:facts:freq-exists}
Let $\bb a$ be a {\gpword} defined over a finite alphabet $\Sigma$ and $\ell$ 
be a positive integer. Then for each $\fword{w} \in \Sigma^\ell$, the frequency $\freq(\bb
a, \fword{w})$ exists and, moreover, one has
        \[
                \abs{\set{n \in [M,M+N)}{\bb a_{[n,n+\ell)} = \fword{w}}}/N \to
\freq(\bb a, \fword{w})
        \]
uniformly in $M$ as $N \to \infty$.
\end{theorem}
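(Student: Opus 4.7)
The plan is to reduce the statement to a uniform equidistribution result for orbits on nilmanifolds via the dynamical representation supplied by Theorem \ref{thm:BL-word}. First, fix such a representation: a minimal nilsystem $(X,T)$, a point $x_0 \in X$, and a partition $X = \bigcup_{i \in \Sigma} S_i$ into pairwise disjoint semialgebraic pieces with $a_n = i$ iff $T^n(x_0) \in S_i$. For a fixed word $\fword{w} = w_0 w_1 \cdots w_{\ell-1} \in \Sigma^\ell$, the event $\bb{a}_{[n,n+\ell)} = \fword{w}$ translates to $T^n(x_0) \in S_{\fword{w}}$, where
\[
S_{\fword{w}} \coloneqq \bigcap_{j=0}^{\ell-1} T^{-j}(S_{w_j})\,.
\]
Since translations on a nilmanifold act piecewise polynomially in Mal'cev coordinates (cf.\ Section \ref{ssec:semialgebraic}), the preimage of each semialgebraic $S_{w_j}$ under the power $T^{j}$ is again semialgebraic, and thus so is the finite intersection $S_{\fword{w}}$. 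Consequently the quantity under study becomes
\[
\frac{1}{N}\sum_{n=M}^{M+N-1} \mathbf{1}_{S_{\fword{w}}}\bra{T^n(x_0)}\,.
\]

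Next I would invoke the classical fact that a minimal nilsystem is uniquely ergodic; let $\mu$ denote its unique $T$-invariant Borel probability measure (a uniform average of Haar measures on the cyclically permuted connected components). By Oxtoby's theorem, for every continuous function $f \colon X \to \RR$ the ergodic averages $\frac{1}{N}\sum_{n=M}^{M+N-1} f(T^n(y))$ converge to $\int_X f\, d\mu$ uniformly in both $y \in X$ and $M \in \NN_0$ as $N \to \infty$. To transfer this to the indicator $\mathbf{1}_{S_{\fword{w}}}$, I would use the fact that the topological boundary of any semialgebraic set is contained in a semialgebraic set of strictly smaller dimension, and hence $\mu(\partial S_{\fword{w}}) = 0$ by the Mal'cev description of $\mu$ as pushforward of Lebesgue measure on $[0,1)^{\dim G}$. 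Therefore $\mathbf{1}_{S_{\fword{w}}}$ is Riemann integrable with respect to $\mu$: for each $\varepsilon > 0$ one can sandwich $f_- \leq \mathbf{1}_{S_{\fword{w}}} \leq f_+$ with $f_\pm \in C(X)$ satisfying $\int_X (f_+ - f_-)\, d\mu < \varepsilon$. Applying the uniform Birkhoff convergence to $f_\pm$ and squeezing shows that $\freq(\bb{a},\fword{w})$ exists and equals $\mu(S_{\fword{w}})$, with convergence uniform in $M$.

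The main obstacle is ensuring the two semialgebraic-geometric inputs invoked above: namely that $T^{-j}(S_{w_j})$ is itself semialgebraic (using that translations are piecewise polynomial in Mal'cev coordinates, possibly after dealing with the interplay between the Baker--Campbell--Hausdorff formula on $G^\circ$ and the reduction modulo~$\Gamma$) and that the topological boundary of a semialgebraic subset of the nilmanifold has zero Haar measure. Both facts are standard in the framework of \cite{BergelsonLeibman-2007}, and once they are granted the remainder is merely a routine application of unique ergodicity, reducing the statement to the equidistribution assertion of \cite[Thm.\ B]{BergelsonLeibman-2007}.
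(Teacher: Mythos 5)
Your argument is correct, but it follows a different route from the paper. The paper's proof is a one-line reduction: write $a_n = c(g(n))$ with $g$ a finitely-valued generalised polynomial, invoke \cite[Thm.\ B]{BergelsonLeibman-2007} (the well-distribution theorem for bounded generalised polynomials) to get the uniform existence of frequencies for $(g(n))_{n}$, and observe that the conclusion survives the coding $c$; the treatment of length-$\ell$ words is left implicit in that citation. You instead re-derive the relevant part of Bergelson--Leibman's Theorem~B from the dynamical representation of Theorem~\ref{thm:BL-word}: you encode the occurrence of $\fword{w}$ as a visit of $T^n(x_0)$ to $S_{\fword{w}} = \bigcap_{j<\ell} T^{-j}(S_{w_j})$, and then combine unique ergodicity of minimal nilsystems (Oxtoby-type uniform convergence for continuous functions, which gives uniformity in $M$ since the average over $[M,M+N)$ is the average over $[0,N)$ started at $T^M x_0$) with the fact that $\mu(\partial S_{\fword{w}})=0$ and a continuous sandwich of the indicator. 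The two facts you flag as the ``main obstacle'' are indeed standard in the Bergelson--Leibman framework; moreover the semialgebraicity of $T^{-j}(S_{w_j})$ is not actually needed: since $T$ is a $\mu$-preserving homeomorphism, $\partial\bra{T^{-j}S_{w_j}} = T^{-j}(\partial S_{w_j})$ has $\mu$-measure zero directly, and $\partial S_{\fword{w}}$ is contained in the union of these boundaries, which slightly simplifies your argument. The trade-off is clear: the paper's proof is shorter but leans entirely on the cited theorem, while yours is self-contained modulo classical nilsystem facts and makes both the uniformity in $M$ and the passage from single symbols to words of length $\ell$ explicit.
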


\begin{proof}
By definition, there exist a finitely valued \gp{} map $g:\NN_0 \to
\RR$ and a coding $c:g(\NN_0)\to \Sigma$ such that $a_n=c(g(n))$. 
We deduce from \cite[Thm.{} B]{BergelsonLeibman-2007} that the desired 
conclusion holds for the sequence $g(n)$. 
The same conclusion remains true after applying a coding. 
\end{proof}

\begin{corollary}\label{cor:facts:freq-exists}
For $\bb a$ and $\fword{w}$ as above, either $\freq(\bb a, \fword{w})
= 0$ or $\mathrm{rec}(\bb a, \fword{w}) < \infty$.
\end{corollary}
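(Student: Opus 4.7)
The plan is straightforward and relies on the uniformity statement in Theorem \ref{prop:facts:freq-exists}. Denote $\alpha = \freq(\bb a, \fword{w})$ and suppose we are in the nontrivial case $\alpha > 0$; the goal is then to produce a finite bound on $\mathrm{rec}(\bb a,\fword{w})$.

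First I would invoke Theorem \ref{prop:facts:freq-exists}, which guarantees that
\[
\frac{\abs{\set{n \in [M,M+N)}{\bb a_{[n,n+\ell)} = \fword{w}}}}{N} \longrightarrow \alpha
\]
uniformly in $M$ as $N \to \infty$. Applied with error tolerance $\alpha/2$, this yields an integer $N_0$ (depending only on $\bb a$, $\fword{w}$ and $\alpha$) such that for every $N \geq N_0$ and every $M \in \NN_0$,
\[
\abs{\set{n \in [M,M+N)}{\bb a_{[n,n+\ell)} = \fword{w}}} \geq \tfrac{\alpha}{2} N.
\]
Enlarging $N_0$ if necessary so that $\alpha N_0 / 2 \geq 1$, we conclude that each window $[M,M+N_0)$ contains at least one starting position $n$ of an occurrence of $\fword{w}$.

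By the very definition of the recurrence function, this gives $\mathrm{rec}(\bb a,\fword{w}) \leq N_0 < \infty$, which finishes the argument. The dichotomy in the statement is then just the case split on whether $\alpha = 0$ or $\alpha > 0$. There is no real obstacle here: the corollary is purely a soft consequence of the uniformity in $M$ already provided by Theorem \ref{prop:facts:freq-exists}, and the only thing to be careful about is using the uniform (rather than merely pointwise) convergence, since nonuniform convergence would not suffice to rule out arbitrarily long $\fword{w}$-free windows appearing further and further out in $\bb a$.
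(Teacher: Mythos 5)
Your proof is correct and is exactly the argument the paper intends: the corollary is stated without proof precisely because it follows immediately from the uniformity in $M$ of the convergence in Theorem \ref{prop:facts:freq-exists}, which is the step you carry out (choosing $N_0$ so that every window of length $N_0$ contains at least $\tfrac{\alpha}{2}N_0 \geq 1$ occurrences). Your closing remark about why mere pointwise convergence would not suffice is also the right thing to be careful about.
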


\begin{remark}
It can happen that $\freq(\bb a, \fword{w}) = 0$ but still 
$\fword{w}$ appears in $\bb a$ infinitely often, see e.g.\ Example \ref{ex:ex:rec}. Hence, {\gpwords} are 
not guaranteed to be uniformly recurrent\footnote{A sequence $\bb a$ is uniformly
recurrent if, for every finite word $\fword{w}$ which appears in $\bb a$,
        there exists $\ell \in \NN$ such that $w$ appears in the
length-$\ell$ segment $\bb a|_{[n,n+\ell)}$ for all $n$.}.
\end{remark}

The same result can be stated in terms of densities of {\gp} sets. 
Recall that the  \emph{asymptotic density} of a set $E \subset \NN_0$ 
is defined by 
\[ d(E) = \lim_{N\to\infty} \abs{E\cap [N]}/N\,,
\]
provided that the limit exists. In general, we let $\bar{d}(E)$ and 
$\underline{d}(E)$ denote the upper and lower asymptotic densities, 
obtained by replacing $\lim$ with $\limsup$ and $\liminf$ 
respectively. Additionally, we define the 
\emph{upper and lower uniform (Banach) densities} by 
$$d^*(E) = \limsup_{N\to\infty} \sup_{M} \abs{E\cap [M,M+N)}/N$$
and
$$
d_*(E) = \liminf_{N\to\infty} \inf_{M} \abs{E\cap [M,M+N)}/N \,.
$$
In general, we have the chain of  inequalities:
\(
        d_*(E) \leq \underline d(E) \leq \overline d(E) \leq d^*(E).
\)

\begin{theorem}\label{thm:facts:dens-exists}
Let $E$ be a {\gp} subset of $\NN_0$. Then $d(E)$ exists. Moreover, 
$d^*(E) = d_*(E) = d(E)$. 
\end{theorem}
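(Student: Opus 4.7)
The plan is to reduce this statement to the uniform frequency result for bracket words, Theorem \ref{prop:facts:freq-exists}. By Corollary \ref{cor:constr:fibre}, the hypothesis that $E \subset \NN_0$ is a {\gp} set is equivalent to saying that the characteristic word $\bb{1}_E \in \{0,1\}^\infty$ is a {\gpword}. Thus, we are in a position to apply the combinatorial frequency statements to $\bb a = \bb{1}_E$.

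Taking the length-one word $\fword{w} = 1 \in \{0,1\}^1$ and observing the identity
\[
    \abs{\set{n \in [M,M+N)}{(\bb 1_E)_n = 1}} = \abs{E \cap [M,M+N)}\,,
\]
Theorem \ref{prop:facts:freq-exists} applied to $\bb 1_E$ yields a real number $\alpha := \freq(\bb 1_E, 1)$ such that $\abs{E \cap [M,M+N)}/N \to \alpha$ as $N \to \infty$, uniformly in $M \in \NN_0$. Specialising to $M = 0$ gives the existence of the asymptotic density $d(E) = \alpha$.

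For the equality $d^*(E) = d_*(E) = d(E)$, I would unpack the definitions of the upper and lower uniform Banach densities. Fix $\e > 0$. Uniform convergence provides $N_0 = N_0(\e)$ such that $\abs{\abs{E \cap [M,M+N)}/N - \alpha} < \e$ for all $N \geq N_0$ and all $M \in \NN_0$. Taking first the supremum (resp.\ infimum) over $M$ and then the $\limsup$ (resp.\ $\liminf$) over $N$ shows that $\abs{d^*(E) - \alpha} \leq \e$ and $\abs{d_*(E) - \alpha} \leq \e$. Letting $\e \to 0$ gives $d^*(E) = d_*(E) = \alpha = d(E)$, as required.

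No real obstacle is anticipated: the entire argument is a translation between the word-combinatorial formulation (frequencies of letters) and the set-theoretic formulation (densities of sets), with the uniformity in $M$ from Theorem \ref{prop:facts:freq-exists} doing all the work. The only point worth checking carefully is the correct interchange of the sup/inf over $M$ with the lim sup/inf over $N$, which is immediate from uniform convergence.
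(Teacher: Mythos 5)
Your proof is correct and follows exactly the route the paper intends: the paper gives no separate argument for this theorem, presenting it as a direct reformulation of Theorem \ref{prop:facts:freq-exists} applied to $\bb 1_E$ (with the equivalence from Corollary \ref{cor:constr:fibre}), and your write-up simply spells out the routine uniform-convergence bookkeeping.
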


\subsection{Distribution along subsequences}\label{ssec:facts:equidist}

\renewcommand{\t}{t}

Many of the desirable properties of bracket words are preserved under passing to subsequences. As a first instance of this principle, we
consider the simple case of polynomial subsequences.

\begin{proposition}\label{cor:facts:poly-freq-exists}
Let $\bb a$ be a {\gpword} defined over a finite alphabet $\Sigma$, $p: \NN_0 \to \NN_0$ be a polynomial, and $\ell$ be a positive integer. 
Then for each $\fword{w} \in \Sigma^\ell$, 
the frequency $\freq \bra{ \bra{a_{p(n)}}_{n=0}^\infty, \fword{w}}$ exists.
\end{proposition}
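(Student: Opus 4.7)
The plan is to reduce the statement to Theorem \ref{prop:facts:freq-exists} (existence of subword frequencies for {\gpwords}) by means of the closure properties established in Section \ref{sec:cl}, rather than to re-run the dynamical machinery from \cite{BergelsonLeibman-2007} on polynomial orbits directly.

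First I would observe that for any polynomial $p \colon \NN_0 \to \NN_0$ and any $k \in \NN_0$, the map $n \mapsto p(n+k)$ is again a polynomial $\NN_0 \to \NN_0$, hence in particular a {\gp} map. By Lemma \ref{lem:cl:subseq}, each of the $\ell$ infinite words
\[
	\bb b^{(k)} = \bra{a_{p(n+k)}}_{n=0}^\infty, \qquad 0 \leq k \leq \ell-1,
\]
is a {\gpword} over $\Sigma$.

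Next, I would use the closure under Cartesian products (Proposition \ref{prop:cl:prod}, iterated $\ell-1$ times) to conclude that the infinite word
\[
	\bb c = \bra{\bra{a_{p(n)},\, a_{p(n+1)},\, \dots,\, a_{p(n+\ell-1)}}}_{n=0}^\infty
\]
is a {\gpword} over the finite alphabet $\Sigma^\ell$. The crucial point is the trivial identity
\[
	\freq\bra{(a_{p(n)})_{n=0}^\infty, \fword{w}} = \freq(\bb c,\fword{w}),
\]
where on the right-hand side $\fword{w} = (w_0,w_1,\dots,w_{\ell-1}) \in \Sigma^\ell$ is regarded as a single letter of $\Sigma^\ell$, so that we are asking for the frequency of a length-$1$ subword of $\bb c$.

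Finally, applying Theorem \ref{prop:facts:freq-exists} to the {\gpword} $\bb c$ (with the length parameter equal to $1$) yields the existence of $\freq(\bb c,\fword{w})$, which is what we wanted. I do not anticipate any genuine obstacle: the entire argument is an exercise in assembling the already-established closure properties so as to dump the work onto the deep input from \cite{BergelsonLeibman-2007} that is packaged in Theorem \ref{prop:facts:freq-exists}. In fact, the same reasoning shows more, namely that the convergence is uniform in the starting position $M$, exactly as in the conclusion of Theorem \ref{prop:facts:freq-exists}.
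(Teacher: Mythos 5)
Your proof is correct and in essence the paper's own argument: the crucial step in both is that $(a_{p(n)})_{n=0}^\infty$ is itself a bracket word because {\gp} maps are closed under composition (your appeal to Lemma \ref{lem:cl:subseq} is exactly this observation), after which Theorem \ref{prop:facts:freq-exists} does all the work. The detour through the shifted copies and Proposition \ref{prop:cl:prod} to reduce to frequencies of single letters is harmless but unnecessary, since Theorem \ref{prop:facts:freq-exists} already gives (uniform) existence of frequencies of subwords of arbitrary length $\ell$ for the bracket word $(a_{p(n)})_{n=0}^\infty$ directly.
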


\begin{proof}
Let $a_n=c(g(n))$ be a representation of $\bb a$, where $g: \NN_0 \to \RR$ is a \gp{} map. Then $g\cdot p : \NN_0 \to \NN_0$ is also a \gp{} map for \gp{} maps 
are closed under composition, and $a_{p(n)}=c(g\circ p(n))$. The results follows now from Theorem \ref{prop:facts:freq-exists}. 
\end{proof}

We point out that even in the simplest cases, passing to a subsequence
can alter frequencies with which symbols occur. For instance, if $\bb
a = 101010\cdots$ then $\freq(\bb a, 1) = 1/2$ but
$\freq((a_{2n})_{n=0}^\infty, 1) = 1$.

Next, let us consider distribution along the primes. In
\cite{GreenTao-2012-Mobius}, Green and Tao obtained quantitative
estimates on correlations between the M\"{o}bius function and
nilsequences, which have important consequences for the question at
hand, (cf.{} \cite[Sec.{} 5]{GreenTao-2012-Mobius}). Extending the
techniques developed by Green and Tao, and the connection between
{\gp} maps and nilsystems, Bergelson, H{\aa}land-Knutson and Son
\cite{BergelsonHalandSon-2020} showed that bounded {\gp} maps have
asymptotic distribution along the primes. The following result is
obtained by specialising \cite[Theorem 5.1]{BergelsonHalandSon-2020}
to finitely-valued sequences. Below, we let $p_n$ denote the $n$-th
prime. We also mention related work of Eisner \cite{Eisner-2019} concerning
convergence of ergodic averages along primes in nilsystems. In the case of Sturmian words, this result is essentially due to Davenport \cite{Davenport-1937}. 

\begin{theorem}\label{thm:facts:prime-freq-exists}
        Let $\bb a$ be a {\gpword} defined over a finite alphabet $\Sigma$. 
        Then for each $x \in \Sigma$, the frequency
        $\freq \bra{ \bra{a_{p_n}}_{n=0}^\infty, x}$ exists.
\end{theorem}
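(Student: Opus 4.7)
The plan is to translate the question about frequencies of a letter in the prime-indexed subsequence into a statement about Ces\`aro averages of a bounded {\gp} sequence along the primes, and then to invoke \cite[Theorem 5.1]{BergelsonHalandSon-2020} directly. For $x \in \Sigma$, the frequency $\freq\bra{\bra{a_{p_n}}_{n=0}^\infty, x}$ exists if and only if the limit
\[
	\lim_{N \to \infty} \frac{1}{N} \sum_{n=1}^N \braif{a_{p_n} = x}
\]
exists, so it suffices to establish the convergence of this average.

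The first key step will be to observe that the indicator $f(n) = \braif{a_n = x}$ is itself a bounded {\gp} map. This is immediate from Corollary \ref{cor:constr:fibre}: the fibre $E_x = \set{n \in \NN_0}{a_n = x}$ is a {\gp} subset of $\NN_0$, so $f = \bb{1}_{E_x} \colon \NN_0 \to \{0,1\}$ is a bounded {\gp} sequence. Substituting $\braif{a_{p_n} = x} = f(p_n)$ into the displayed limit reduces the claim to the convergence of $\frac{1}{N}\sum_{n=1}^N f(p_n)$, which is precisely the specialisation of \cite[Theorem 5.1]{BergelsonHalandSon-2020} to the bounded {\gp} sequence $f$.

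On the side of the present paper, then, there is essentially no obstacle beyond bookkeeping; the hard work has already been done in \cite{BergelsonHalandSon-2020}. Their argument relies on the dynamical representation of bounded {\gp} sequences as values of piecewise polynomial functions along nilsystem orbits (which is Theorem \ref{thm:BL-mini} in our exposition), combined with quantitative equidistribution along primes on nilmanifolds. The latter ultimately rests on the non-correlation of the M\"{o}bius function with nilsequences proved by Green and Tao \cite{GreenTao-2012-Mobius}. The main technical subtlety, which is handled in \cite{BergelsonHalandSon-2020}, is that the piecewise polynomial function corresponding to $\bb{1}_{E_x}$ on the nilmanifold is only semialgebraic and not continuous, so one must approximate its characteristic function in $L^1$ by continuous functions on the nilmanifold and control the resulting error using the equidistribution statement applied to these continuous test functions.
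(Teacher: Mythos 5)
Your proposal is correct and follows essentially the same route as the paper: the paper also reduces the statement to \cite[Thm.\ 5.1]{BergelsonHalandSon-2020}, applying it to the underlying finitely-valued {\gp} map and noting that the conclusion survives a coding, whereas you apply it to the fibre indicator $\bb{1}_{E_x}$ obtained from Corollary \ref{cor:constr:fibre} — a negligible difference in bookkeeping.
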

\begin{proof}
	This follows from {\cite[Thm.\ 5.1]{BergelsonHalandSon-2020}} using the same argument as in the proof of Theorem \ref{prop:facts:freq-exists}.
\end{proof}

Considering $\bb a = 101010\cdots$ again, we see that the frequencies $\freq \bra{ \bra{a_{p_n}}_{n=0}^\infty, x}$ need not be equal to $\freq \bra{ \bb a, x}$.  
One can also inquire if it possible to generalise Theorem \ref{thm:facts:prime-freq-exists} from frequencies of symbols $x \in \Sigma$ to frequencies of 
words $w \in \Sigma^\ell$, $\ell \geq 2$. While it seems plausible that this generalisation is true, it is out of reach of the current techniques. 
Indeed, already in the case where $\bb a = (n \bmod q)_{n=0}^\infty$ is a periodic word with period $q \geq 3$, such a generalisation would require 
us to understand the asymptotic behaviour of
\begin{equation}\label{eq:358:1}
	\frac{1}{N} \abs{ \set{n < N}{ p_n \equiv w_1, p_{n+1} \equiv w_2,  \dots, p_{n+\ell-1} = w_{n+\ell-1} \bmod{q}}}.
\end{equation}
It is conjectured that the expression in \eqref{eq:358:1} converges to $1/\varphi(q)^\ell$, where $\varphi$ denotes the totient function; in fact, the Main Conjecture in \cite{LemkeSoundararajan-2016} gives a more precise asymptotic expression. However, such estimate remains unknown for any $\ell \geq 2$, as discussed in  \cite{LemkeSoundararajan-2016}.

Lastly, we consider a class of subsequences which preserve the frequencies of symbols. 
As already alluded to earlier, Theorem \ref{thm:BL-mini} leads to a close connection between 
equidistribution results for nilsystems and statements about frequencies of symbols in {\gpwords}. Recall 
that each nilmanifold $X$ comes equipped with the  Haar measure $\mu_{X}$. A sequence $(x_n)_{n=0}^\infty$ is 
\emph{equidistributed} in $X$ if for each continuous map 
$f \colon X \to \RR$ we have 
\begin{equation}\label{eq: equidistributed}
        \lim_{N \to \infty} \frac{1}{N} \sum_{n = 0}^{N-1} f(x_n) = \int_{X}
f d\mu_{X}\,.
\end{equation}

\begin{definition}\label{def:good-equidist}
        Let $\t \colon \NN_0 \to \NN_0$. We say that $t$ is \emph{good for
equidistribution in nilsystems} if, 
        for all minimal nilsystems $(X,T)$ and all $x \in X$, 
        the sequence $(T^{\t(n)}(x))_{n=0}^\infty$ is equidistributed in $X$.
\end{definition}

\begin{proposition}\label{prop:nil:equidistribution}
        Let $\bb a$ be a {\gpword} defined over a finite alphabet $\Sigma$, and let 
$t \colon \NN_0 \to \NN_0$ be good for equidistribution in nilsystems.
        Then the frequencies of letters in $\bb a$ along the 
subsequence $t(n)$ exist and remain the same as in $\bb a$: 
        \[
        \freq\bra{ \bra{a_{t(n)}}_{n=0}^\infty, x} = \freq(\bb a,x) \;\;\quad
\forall x \in \Sigma\,.
\]
\end{proposition}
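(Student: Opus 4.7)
My plan is to reduce the statement to an equidistribution fact via the dynamical representation of Theorem \ref{thm:BL-word}, with the main technical point being the passage from continuous test functions to indicator functions of semialgebraic sets.

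First, I would invoke Theorem \ref{thm:BL-word} to pick a minimal nilsystem $(X,T)$, a point $x_0 \in X$, and a partition $X = \bigcup_{i \in \Sigma} S_i$ into semialgebraic pieces so that $a_n = i$ iff $T^n(x_0) \in S_i$. Then for any subsequence $t \colon \NN_0 \to \NN_0$ one has
\[
\frac{1}{N}\abs{\set{n \in [N]}{a_{t(n)} = i}} = \frac{1}{N}\sum_{n=0}^{N-1} \mathbf{1}_{S_i}\bra{T^{t(n)}(x_0)},
\]
so the statement reduces to showing that this average converges to $\mu_X(S_i)$.

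The central technical step is the following: for every semialgebraic piece $S_i$, the topological boundary $\partial S_i \subset X$ satisfies $\mu_X(\partial S_i) = 0$. This should follow because, working in a Mal'cev chart, $S_i$ pulls back to a semialgebraic subset of $[0,1)^{\dim G}$, whose boundary is contained in a finite union of lower-dimensional algebraic varieties and hence has Lebesgue measure zero; pushing this back gives the claim, since the Haar measure $\mu_X$ is absolutely continuous in Mal'cev coordinates. With $\mu_X(\partial S_i) = 0$, one can sandwich $\mathbf{1}_{S_i}$ between continuous functions $f^-_\e \leq \mathbf{1}_{S_i} \leq f^+_\e$ with $\int (f^+_\e - f^-_\e)\, d\mu_X < \e$, and conclude that any sequence $(y_n)$ equidistributed in $X$ in the sense of \eqref{eq: equidistributed} also satisfies
\[
\lim_{N \to \infty} \frac{1}{N}\sum_{n=0}^{N-1} \mathbf{1}_{S_i}(y_n) = \mu_X(S_i).
\]

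Applying this to $y_n = T^{t(n)}(x_0)$, which is equidistributed by hypothesis on $t$, yields $\freq\bra{(a_{t(n)})_{n=0}^\infty, i} = \mu_X(S_i)$. Applying it instead to $y_n = T^n(x_0)$, which is equidistributed because minimal nilsystems are uniquely ergodic (equivalently, the identity is itself good for equidistribution in the sense of Definition \ref{def:good-equidist}, by classical results going back to Parry), yields $\freq(\bb a, i) = \mu_X(S_i)$. Comparing the two equalities gives the proposition.

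The main obstacle is the boundary-measure step: one needs that semialgebraic pieces in the Mal'cev chart have negligible topological boundary, and that Haar measure pulled back to the chart is comparable to Lebesgue measure. Both are routine given the structure of Mal'cev coordinates reviewed in Section \ref{sec:nil}, but this is the only place where the semialgebraic (rather than arbitrary measurable) nature of the partition enters the argument, so it must be stated carefully; the rest of the proof is a clean combination of Theorem \ref{thm:BL-word}, the definition of good for equidistribution, and unique ergodicity of minimal nilsystems.
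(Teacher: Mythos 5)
Your proposal is correct and follows essentially the same route as the paper's (very terse) proof: represent $\bb a$ via Theorem \ref{thm:BL-word} and approximate the indicators $\mathbf{1}_{S_i}$ by continuous functions so that the equidistribution identity \eqref{eq: equidistributed} applies, using unique ergodicity of the minimal nilsystem to handle $\freq(\bb a,x)$ itself. The boundary-measure-zero argument for semialgebraic pieces in Mal'cev coordinates is precisely the detail the paper leaves implicit behind the phrase ``efficiently approximated by continuous functions,'' so your write-up is simply a fuller version of the same proof.
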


\begin{proof}
This is a consequence of the representation of $\bb a$ coming from Theorem \ref{thm:BL-word}. 
Indeed, though the indicator functions ${\bb 1}_{S_i}$ of the 
semialgebraic sets $S_i$ from Theorem \ref{thm:BL-word} are not 
continuous, they can be efficiently approximated by  continuous functions to which Equality \eqref{eq: equidistributed} can be applied.
\end{proof}

Hopefully, the following result of Frantzikinakis
\cite{Frantzikinakis-2009} provides a plentiful source of sequences
that satisfy Definition
\ref{def:good-equidist} and make Proposition 
\ref{prop:nil:equidistribution} relevant.

\begin{theorem}[{\cite[Thm.\ 1.1]{Frantzikinakis-2009}}]\label{thm:Fran}
        Let us assume that $f \colon \RR_{>0} \to \RR_{>0}$ satisfies the
following properties.

\begin{enumerate}

\item\label{it:574:1} The function $f$ belongs to some Hardy field.

\item\label{it:574:2} There exists $C > 0$ such that ${f(x)} = O(x^C)$ as $x \to \infty$.

\item\label{it:574:3} For every $p \in \ZZ[x]$ and $c \in \RR$, we have
\(\displaystyle \lim_{x \to \infty} \frac{\abs{ f(x) - c p(x)}}{\log
x} = \infty.\)
\end{enumerate}
Then $\ip{f}$ is good for equidistribution in nilsystems.
\end{theorem}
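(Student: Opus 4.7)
The plan is to reduce the equidistribution of $(T^{\ip{f(n)}}(x))_{n \geq 0}$ on an arbitrary minimal nilsystem $(X,T)$ to a family of exponential sum estimates on tori, and then extract the required cancellation from the Hardy field hypotheses. The first step is to invoke Weyl's criterion together with the fact that continuous functions on a nilmanifold $X = G/\Gamma$ are uniformly approximated by finite linear combinations of nilcharacters, so that it suffices to show
\[
\frac{1}{N}\sum_{n=0}^{N-1} \chi\bra{T^{\ip{f(n)}}(x)} \longrightarrow 0
\]
for every nontrivial nilcharacter $\chi$ on $X$.

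I would then argue by induction on the nilpotency step $s$ of $G$. Raising to modulus squared and applying van der Corput's inequality with shift $h$ transforms the sum into an average, over $h$, of sums involving $\chi(T^{\ip{f(n+h)}}(x))\overline{\chi(T^{\ip{f(n)}}(x))}$. Using the polynomial description of the action of $T$ in Mal'cev coordinates, this product can be rewritten as a nilcharacter evaluated along an orbit in a nilsystem of step $s-1$, with the iterate being the increment $\ip{f(n+h)} - \ip{f(n)}$. After finitely many applications of van der Corput, one is reduced to the abelian case $G = \RR$ with a new function obtained from $f$ by finite differencing.

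In the base case one needs to show that for every irrational $\alpha$, the sequence $(\alpha \ip{f(n)})_{n\geq 0}$ is equidistributed modulo $1$. Writing $\alpha \ip{f(n)} = \alpha f(n) - \alpha \fp{f(n)}$, the first term is equidistributed by the classical Hardy field equidistribution theorems (which use exactly hypotheses \ref{it:574:1}--\ref{it:574:3}), while the contribution of $\alpha \fp{f(n)}$ can be expanded via Fourier analysis in $\fp{f(n)}$ and reduced to joint equidistribution of tuples $(c_1 f(n), c_2 f(n))$ modulo $\ZZ^2$, which again follows from the same Hardy field machinery.

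The principal obstacle in making this plan rigorous is checking that the iterated van der Corput procedure preserves condition \ref{it:574:3}: after $h$-differencing, the relevant function is essentially $h f'(x)$, and after $s$ applications it is a finite linear combination of derivatives and translates of $f$, which must still lie in a Hardy field and remain far from polynomials in the precise sense that $\abs{F(x) - cp(x)}/\log x \to \infty$ for every $p \in \ZZ[x]$ and $c \in \RR$. Hardy fields are closed under differentiation and the arithmetic operations involved, so closure is not an issue, but the quantitative bookkeeping---ensuring the ``away from polynomial'' hypothesis survives linearisation at each inductive step and couples properly with the non-abelian structure of $G$---is the delicate part of the argument.
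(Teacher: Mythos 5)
The paper does not prove this statement at all---it is quoted verbatim from Frantzikinakis \cite{Frantzikinakis-2009}---so the only question is whether your sketch would constitute a proof, and it would not: the central step-reduction claim is incorrect as stated. The van der Corput/step-induction machinery reduces the nilpotency class when the orbit map $n \mapsto g(n)\Gamma$ is a \emph{polynomial} sequence (or when the iterate is $n$ itself), because then $g(n+h)g(n)^{-1}$ has lower degree in the relevant filtration. Here the orbit map is $n \mapsto T^{\ip{f(n)}}x$, which is not polynomial, and the differenced expression $\chi\bra{T^{\ip{f(n+h)}}x}\overline{\chi\bra{T^{\ip{f(n)}}x}}$ is \emph{not} a nilcharacter of a step-$(s-1)$ system evaluated at the increment $\ip{f(n+h)}-\ip{f(n)}$. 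Already on the Heisenberg nilmanifold, writing $m=\ip{f(n)}$ and $k=\ip{f(n+h)}-\ip{f(n)}$, a vertical nilcharacter gives a phase of the shape $m\beta\ip{m\alpha}$, and the differenced phase is $m\beta\bra{\ip{(m+k)\alpha}-\ip{m\alpha}}+k\beta\ip{(m+k)\alpha}$, which still contains the large variable $m$ multiplied by bracket terms; it does not collapse to a function of $k$ alone. Controlling such expressions amounts to understanding the joint distribution of $\bra{T^{\ip{f(n)}}x,T^{\ip{f(n+h)}}x}$ in a subnilmanifold of $X\times X$, which is essentially as hard as the original problem, so the induction never gets off the ground.

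A second, independent gap is that hypothesis \ref{it:574:3} is not stable under the differencing/linearisation you rely on, so the inductive hypotheses do not close. For example $f(x)=x\log x$ satisfies \ref{it:574:1}--\ref{it:574:3}, but the function $hf'(x)=h\log x+h$ that your scheme would feed into the next stage violates \ref{it:574:3} (take $p$ constant), and indeed $\ip{\log n}$ is very far from equidistributing; the classical Hardy-field toral results handle this by a growth-rate case analysis, not by a blanket induction on \ref{it:574:3}. For the record, the actual proof in \cite{Frantzikinakis-2009} follows a different route altogether: one approximates $f$ by its Taylor polynomials on short intervals $[N,N+L)$, applies the quantitative equidistribution theorem of Green and Tao for polynomial orbits on nilmanifolds (the tool quoted here as Theorem \ref{thm:GT}) block by block, and handles the integer part by tracking $\fp{f(n)}$ on $X\times\TT$ via $\ip{f(n)}=f(n)-\fp{f(n)}$, rather than by a van der Corput induction on the nilpotency class.
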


For a definition of a Hardy field, we refer the reader to 
\cite{Frantzikinakis-2009}. Here, we just point out that one example of a Hardy field
is the logarithmic-exponential functions, that is, real-valued 
functions defined on some interval $[c,\infty)$, $c \in \RR$, that can be constructed from polynomials using addition, multiplication, and the functions 
$\exp$ and $\log$. Hence, the following formulae define sequences that 
are good for equidistribution in nilsystems: $\ip{n^{3/2}}$, $\ip{n^{2} 
\log^2 n}$, $\ip{n^{2} + \log^2 n}$, and $\ip{n^{2} + \sqrt{2}n}$. 
Conversely, because of item \ref{it:574:3}, there is no guarantee that 
the same should apply to $\ip{n^2 + \log n}$ or $\ip{\sqrt{2}n}$. 
In fact, one can compute that 
\[
        \frac{1}{\sqrt{2}} \ip{\sqrt{2}n} = n + \frac{\fp{\sqrt{2}n} }{\sqrt{2}}
\equiv \frac{\fp{\sqrt{2}n} }{\sqrt{2}} \bmod{1},
\]
so $\ip{\sqrt{2}n}$ is not good for equidistribution for the circle 
rotation by ${1}/{\sqrt{2}}$. 
Extensions of some results in \cite{Frantzikinakis-2009} were 
recently obtained by Richter \cite{Richter-2020}.

It is not possible to directly generalise Proposition \ref{prop:nil:equidistribution}  to frequencies of words instead of symbols. 
For instance, the sequence $t(n) = \ip{\sqrt{n}}$ is good for equidistribution in nilsystems and constant on each interval $[N^2,(N+1)^2)$, $N \in \NN$. 
Hence, already  for the periodic word $\bb a = 101010\cdots$ and $w \in \{0,1\}^2$ we have
	\[
		\freq\bra{ (a_{t(n)})_{n=0}^\infty, w } = 
		\begin{cases}
			1/2 &\text{if } w \in \{00,11\},\\
			0 & \text{if } w \in \{01,10\}.
		\end{cases}
	\] 
	Thus, the frequencies of words in $(a_{t(n)})_{n=0}^\infty$ bear little resemblance to the frequencies of symbols and words in $\bb a$.	
	
	On the other hand, under additional growth conditions, one can obtain positive results. The key technical component is the following theorem of Bergelson, Moreira and Richter. Below, we let $f^{(j)}$ denote the $j$-th derivative of $f$.
	
	\begin{theorem}[{\cite[Thm. 5.6, special case]{BergelsonRichterMoreira-2020}}]\label{thm:BMR}
		Let $(X,T)$ be a minimal nilsystem, let $z \in X$, and let $f_1,f_2,\dots,f_k$ be functions belonging to the same Hardy field. Suppose further that the following two conditions hold. 
	\begin{enumerate}
		\item\label{it:BRM:1} For each $h \in \spanlin\set{ f_i^{(j)}}{1 \leq i \leq k, j \geq 0}$ and each $p(x) \in \RR[x]$, we have either $\abs{h(x) - p(x)} = O(1)$ or $\abs{h(x) - p(x)}/\log x \to \infty$ as $x \to \infty$.
		\item\label{it:BRM:2} For each non-zero $h \in \spanlin\set{ f_i }{ 1 \leq i \leq k}$ and each $p(x) \in \RR[x]$, we have $\abs{h(x) - p(x)} \to \infty$ as $x \to \infty$.
	\end{enumerate}
	Then $\bra{\bra{T^{\ip{f_1(n}}(z),T^{\ip{f_2(n}}(z),\dots, T^{\ip{f_k(n}}(z) }}_{n=0}^\infty$ is equidistributed in $X^k$.
	\end{theorem}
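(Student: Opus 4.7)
My plan is to deduce this from the general equidistribution criterion for polynomial-like sequences in nilmanifolds, combined with the growth hypotheses on Hardy field combinations of the $f_i$. Let $(X,T) = (G/\Gamma, T_a)$ where $a \in G$, and consider the product nilsystem $(X^k, T \times \cdots \times T)$, which is itself a translation on the nilmanifold $G^k/\Gamma^k$. The sequence we want to equidistribute can be written as $(a^{\ip{f_1(n)}}, \dots, a^{\ip{f_k(n)}}) \cdot (z,\dots,z)$. The first step is to invoke Leibman's equidistribution theorem for nilmanifolds, which reduces equidistribution on $G^k/\Gamma^k$ to equidistribution of the horizontal projection on the maximal torus quotient $(G/[G,G]\Gamma)^k$. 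This is a general principle: a polynomial-like orbit in a nilmanifold is equidistributed iff its projection to the abelianisation is.

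Once reduced to the torus, Weyl's criterion says it suffices to verify that for every nonzero character, the associated exponential sum is $o(N)$. Concretely, if $\alpha \in \RR^{\dim(G/[G,G])}$ encodes the horizontal part of the translation $a$, we must show that for all $(m_1,\dots,m_k) \in \ZZ^{d \cdot k} \setminus \{0\}$,
\[
\frac{1}{N} \sum_{n=0}^{N-1} e\!\left( \sum_{i=1}^k \langle m_i, \alpha\rangle \ip{f_i(n)}\right) \longrightarrow 0.
\]
After absorbing the inner products into real coefficients $c_i = \langle m_i, \alpha \rangle$, and writing $\ip{f_i(n)} = f_i(n) - \fp{f_i(n)}$, this becomes a Weyl sum for the Hardy field function $h = \sum_i c_i f_i$ plus a correction coming from the fractional parts of the individual $f_i(n)$. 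The correction can be handled by Fourier-expanding the sawtooth function and reducing it again to Weyl sums of the same shape.

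The heart of the argument, then, is establishing equidistribution mod $1$ of $(h(n))_{n\geq 0}$ whenever $h$ is a nonzero real combination of the $f_i$ and their derivatives — more precisely, whenever $h \in \spanlin\{f_i\}$ is nonzero. Here is where the two hypotheses come in. If $h$ lies within $O(1)$ of a real polynomial $p$, then hypothesis \ref{it:BRM:2} forces $p$ to be nonconstant and in fact to have at least one irrational non-trivial coefficient (otherwise $|h - p|$ stays bounded and doesn't tend to infinity), so Weyl's classical equidistribution theorem applies to $p(n)$ mod $1$, and a perturbation by an $O(1)$ Hardy field function preserves equidistribution after applying van der Corput's inequality. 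If $h$ is not within $O(1)$ of any polynomial, hypothesis \ref{it:BRM:1} ensures $|h(x) - p(x)|/\log x \to \infty$ for every polynomial $p$; this is precisely Boshernitzan's criterion for a Hardy field function to be equidistributed mod $1$, so Boshernitzan's theorem yields the required cancellation.

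The main obstacle I expect is the bookkeeping: one has to verify that taking derivatives and integer linear combinations of the $f_i$ yields only Hardy field elements still satisfying the Boshernitzan growth gap, and that the floor functions and the multidimensional character data $m_i$ can be consolidated into a single scalar Hardy field function to which the one-dimensional Weyl/Boshernitzan machinery applies. This is why hypothesis \ref{it:BRM:1} is imposed on the whole span of the $f_i^{(j)}$ rather than on the $f_i$ alone: it is exactly the closure needed to run the van der Corput–type reductions that appear when iterating the Weyl inequality over the nilpotent structure of $G$.
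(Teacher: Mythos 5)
The paper does not actually prove this statement; it is quoted directly as a special case of \cite[Thm.\ 5.6]{BergelsonRichterMoreira-2020}, so the only fair comparison is between your sketch and the argument in that reference.

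Your sketch has a genuine gap at its very first step. You invoke Leibman's criterion to reduce equidistribution of $\bra{a^{\ip{f_1(n)}},\dots,a^{\ip{f_k(n)}}}(z,\dots,z)$ in $G^k/\Gamma^k$ to equidistribution of its projection on the horizontal torus. But Leibman's theorem (Theorem C of \cite{Leibman-2005}, Theorem \ref{thm:Leib-nil} here) applies only to \emph{polynomial} maps $\ZZ \to G$, and $n \mapsto a^{\ip{f_i(n)}}$ is not a polynomial sequence when the $f_i$ are general Hardy field functions --- indeed not even when $f_i$ is a polynomial, because of the floor. For arbitrary sequences of group elements the abelianisation criterion is simply false: already on the Heisenberg nilmanifold one can build sequences whose horizontal projection is equidistributed while the orbit is badly biased along the central circle. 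The entire content of the Bergelson--Moreira--Richter theorem is precisely to overcome this non-abelian obstruction; once you have (incorrectly) reduced to a torus, what remains is essentially Boshernitzan's classical result, so your argument only proves the abelian case. The actual proof in \cite{BergelsonRichterMoreira-2020} proceeds differently: the Hardy field functions are approximated by genuine polynomials on short intervals via Taylor expansion (this is exactly where the hypothesis on the span of the derivatives $f_i^{(j)}$ is used, rather than in any van der Corput bookkeeping), the quantitative equidistribution theory for polynomial orbits on nilmanifolds (Green--Tao, as in Theorem \ref{thm:GT}) is applied on each short interval, and the resulting estimates are averaged. Without an ingredient of this kind, the passage from the torus to a general nilsystem is missing, and the proof does not go through.
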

	
	Let now $f$ be a function belonging to a Hardy field, and put $t = \ip{f}$ and $f_i(x) = f(x+i)$ for every $i$ such that  $1 \leq i \leq k$. Using standard techniques, such as the Taylor expansion and estimates on derivatives of functions in Hardy fields \cite[Lem.\ 2.1]{Frantzikinakis-2009}, one can verify that conditions \ref{thm:BMR}.\ref{it:BRM:1} and \ref{thm:BMR}.\ref{it:BRM:2} are satisfied if 
	\begin{align*}
		\frac{f(x)}{x^k \log x} &\to \infty & \text{and} && \frac{f(x)}{x^{k+1}} &\to 0 & \text{as }&& x \to \infty.
	\end{align*}
	Combining the conclusion of Theorem \ref{thm:BMR} with Theorem \ref{thm:BL-word}, one could show that  for each {\gpword} $\bb a$ and each word $w \in \Sigma^k$ we have
	\begin{equation}\label{eq:265:1}
		\freq\bra{ \bra{a_{t(n)}}_{n=0}^\infty, w} = \prod_{i=0}^{k-1} \freq\bra{ \bb a, w_i}\,.
	\end{equation}
	We emphasise that the frequencies in \eqref{eq:265:1} are, in general, not equal to $\freq(\bb a,w)$; rather, they are the values which one would expect if $t$ exhibited random-like behaviour. We leave the details to the interested reader.

\subsection{IP recurrence}

Apart from asymptotic density, one can also inquire into combinatorial 
richness of {\gp} subsets of $\NN$. 
In order to state the relevant results, we introduce some terminology. 
A set $E \subset \NN$ is an \emph{$\IP$-set} if it contains all finite 
sums of a sequence of positive integers, that is, if there exists a 
sequence of positive integers $(n_i)_{i=1}^\infty$ such that 
$\sum_{i \in I} n_i \in E$ for all finite set $I \subset \NN$. 
A set $F \subset \NN$ is $\IP^*$ if $E \cap F \neq \emptyset$ for 
every $\IP$-set $E$, or, equivalently, if for every sequence of  positive integers 
$(n_i)_{i=1}^\infty$ there exists a finite set $I \subset \NN$ with 
$\sum_{i \in I} n_i \in F$. We also define shifted variants of these 
notions. A set $E \subset \NN$ is $\IP_+$ if $(E + n) \cap \NN$ is an $\IP$-set 
for some $n \in \ZZ$. 
Likewise, a set $F \subset \NN$ is $\IP^*_+$ if $(F + n) \cap \NN$ is 
an $\IP^*$-set for some $n \in \ZZ$. 
Each $\IP^*$-set is \emph{syndetic}, meaning that for every 
$\IP^*$-set $F$ there exists an integer $N$ such that $F \cap [n,n+N) 
\neq \emptyset$ 
for all $n \in \NN_0$. In particular, all $\IP^*$-sets have positive 
lower uniform density. 
Moreover, since for all $m \in \NN$, the multiples of $m$ form an 
$\IP$-set, all $\IP^*$-sets contain a multiple of $m$. 
The class of $\IP^*$-sets is closed under intersection, meaning that 
for all pairs $F,F' \subset \NN$ of $\IP^*$-sets, $F \cap F'$ is also 
an $\IP^*$-set. 
In what follows, we say that a statement $\varphi(n)$ holds for 
\emph{almost all} $n \in \NN_0$ if $d\bra{\set{n \in \NN_0}{\neg \varphi(n)}} = 0$.

\begin{theorem}[{\cite[Thm.{}D]{BergelsonLeibman-2007}}]\label{thm:nil:IP*-rec} 
        Let $\bb a$ be a {\gpword} over $\Sigma$. Then for almost all $n_0
\in \NN_0$ and all sequences of positive integers $(n_i)_{i=1}^\infty$, 
        there exists a finite set $I \subset \NN$ such that $a_{ n_0 + 
\sum_{i \in I} n_i } = a_{n_0}$. 
        In other words, if $E \subset \NN$ is a {\gp} set with $d(E) > 0$, 
then $E$ is $\IP^*_+$. Furthermore, $(E-n) \cap \NN$ is $\IP^*$ for 
almost all $n \in E$. 
\end{theorem}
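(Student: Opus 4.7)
The plan is to extract the conclusion from the dynamical representation of bracket words in Theorem \ref{thm:BL-word} combined with the $\IP^*$-recurrence property for orbits in nilsystems. Recall the equivalent formulation: for almost every $n_0$, setting $i = a_{n_0}$, we want the return set $R_{n_0} := \fp{m \in \NN : a_{n_0+m} = i}$ to be an $\IP^*$ set. This is exactly the claim that for every sequence $(n_i)_{i=1}^\infty$ of positive integers there is a finite $I \subset \NN$ with $\sum_{i \in I} n_i \in R_{n_0}$.

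First, I would invoke Theorem \ref{thm:BL-word} to write $a_n = i$ iff $T^n(x) \in S_i$, where $(X,T)$ is a minimal nilsystem and $X = \bigsqcup_{i \in \Sigma} S_i$ is a partition into semialgebraic pieces. Setting $y := T^{n_0}(x)$, the return set becomes $R_{n_0} = \fp{m \in \NN : T^m(y) \in S_i}$. The key external input is the $\IP^*$-recurrence theorem for nilsystems, due ultimately to Furstenberg--Katznelson and extended to the nilpotent setting (as used in \cite{BergelsonLeibman-2007}): for any nonempty open set $U \subset X$ and any $y \in U$, the set $\fp{m \in \NN : T^m(y) \in U}$ is $\IP^*$. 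Accepting this, if $y$ lies in the topological interior of $S_i$, one chooses a small open $U$ with $y \in U \subset S_i$; then $R_{n_0} \supset \fp{m : T^m(y) \in U}$ is $\IP^*$ as well, since supersets of $\IP^*$ sets are $\IP^*$.

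Second, I would show that the exceptional set of ``bad'' $n_0$, namely those for which $T^{n_0}(x)$ lies on $\partial S_{a_{n_0}}$, has asymptotic density zero. Each boundary $\partial S_i$ is a semialgebraic set of dimension strictly less than $\dim X$, hence has vanishing Haar measure. The set $B_i := \fp{n \in \NN_0 : T^n(x) \in \partial S_i}$ is a \gp{} subset of $\NN_0$ via Theorem \ref{thm:BL-mini}, so by Theorem \ref{thm:facts:dens-exists} its density $d(B_i)$ exists and equals $\mu_X(\partial S_i) = 0$. Taking the finite union $B := \bigcup_{i \in \Sigma} B_i$ still yields a density-zero set, and for every $n_0 \notin B$ the preceding paragraph applies. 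This gives the first assertion of the theorem and, via Corollary \ref{cor:constr:fibre}, the second assertion about GP subsets with positive density.

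The main obstacle is the nilpotent $\IP^*$-recurrence theorem itself, which is a nontrivial result that I would simply cite rather than reprove; in the Sturmian/abelian case it reduces to the Hindman/Bogoliubov-type fact that for an irrational rotation the return times to a neighborhood form an $\IP^*$ set, but the general nilpotent case requires the full Bergelson--Leibman machinery. A secondary technical point is verifying that $\partial S_i$ is genuinely a \gp{} subset of $X$ once pulled back through the coordinate map $\tau$; this uses the fact that boundaries of semialgebraic sets are themselves semialgebraic, after which Theorem \ref{thm:BL-mini} applies directly and reduces the density estimate to equidistribution (Theorem \ref{prop:facts:freq-exists}).
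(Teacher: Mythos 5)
Your argument is essentially correct in outline, but note first that the paper does not prove this statement at all: Theorem \ref{thm:nil:IP*-rec} is presented as a rephrasing of \cite[Thm.\ D]{BergelsonLeibman-2007}, so the ``official'' proof is a citation. What you sketch is the standard derivation one would give from the dynamical side: represent $\bb a$ via Theorem \ref{thm:BL-word}, use that minimal nilsystems are distal, and invoke the theorem that every point of a distal system is $\IP^*$-recurrent (for every neighbourhood $U$ of $y$, the return set $\set{m}{T^m y \in U}$ is $\IP^*$). That recurrence theorem is due to Furstenberg (distal points are exactly the $\IP^*$-recurrent points), not to Furstenberg--Katznelson, whose $\IP$ Szemer\'edi theorem is a measurable multiple-recurrence statement; the attribution should be fixed, but mathematically your external input is the right one. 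Two technical points deserve more care than you give them. First, since the Mal'cev coordinate map $\tau$ is a bijection but not a homeomorphism, the topological boundary $\partial_X S_i$ in $X$ is not literally $\tau^{-1}\bra{\partial \tau(S_i)}$; it is, however, contained in the union of $\tau^{-1}\bra{\partial\tau(S_i)}$ and the image of the faces of the cube, both of which are semialgebraically coded and of Haar measure zero, and this weaker containment suffices for your argument. Second, Theorem \ref{thm:facts:dens-exists} only gives the \emph{existence} of $d(B_i)$, not that it equals $\mu_X(\partial S_i)$; to get $d(B_i)=0$ you need equidistribution of the orbit of $x$ with respect to $\mu_X$ (unique ergodicity of minimal nilsystems) together with the fact that the exceptional set sits inside a closed set of measure zero, i.e.\ the approximation-by-continuous-functions argument you allude to at the end. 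With these patches the first assertion follows, and the ``in other words'' and ``furthermore'' clauses follow via Corollary \ref{cor:constr:fibre} exactly as you say, since the bad set has density zero in $\NN_0$ while $d(E)>0$.
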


\begin{remark}
	For Sturmian words, a much simpler proof is possible, see e.g.\ a special case of \cite[Cor.\ 7.3]{Bergelson-2010}.
\end{remark}

\begin{remark}
One can obtain more precise versions of this result. Specifically, it 
follows from \cite{BergelsonLeibman-2018} that one 
can additionally require that $I \subset [r]$ for some integer $r$ 
that only depends on $\bb a$ and $n_0$. 
In a different direction, it follows from \cite{Konieczny-2017-IJM} 
that one can additionally require that the gaps between consecutive 
elements of $I$ are bounded from above by some integer $d$ that only 
depends on $\bb a$. 
\end{remark}

\section{Counting function and discrepancy}\label{sec:growth}

\newcommand{\discr}{\Delta}

Let $\bb a$ be a {\gpword} defined over a finite alphabet $\Sigma$.
In addition to the asymptotic frequency $\freq(\bb a, x)$ with which a symbol $x \in \Sigma$ appears in $\bb a$, 
one can inquire into more quantitative estimates on the count of occurrences of $x$ in $\bb a$. This leads us to consider the 
counting function 
\begin{equation}\label{eq:growth:freq}
	\cnt(\bb a, x; N) = \abs{\set{ n \in [N]}{ a_n = x}} \,,
\end{equation}
as well as the closely related notion of \emph{symbolic discrepancy} considered in  \cite{Adamczewski-2004},
\begin{equation}\label{eq:growth:discr}
	\discr(\bb a; N) = \max_{x \in \Sigma}\absBig{ \cnt(\bb a, x; N) -N \freq(\bb a,x)} \,.
\end{equation}
For example, if $\bb a$ is the Sturmian word given by $a_n = \ip{\a n + \b} - \ip{\a (n-1)+\b}$, then we have the estimates
\begin{align}\label{eq:growth:Sturm}
	\cnt(\bb a, 1; N) &= \a N + O(1) && \text{and}& \discr(\bb a, N) = O(1)\,,
\end{align}
which are significantly stronger than $\freq(\bb a, 1) = \a$.

In this section, our aim is to study the possible rates of growth of $\cnt(\bb a,x;N)$ and $\discr(\bb a; N)$ when $\bb a$ is a bracket word. We note that, as a direct consequence of the definitions, for each $x \in \Sigma$ and $N \in \NN$ we have
\[ 0\leq \cnt(\bb a, x; N),\ \discr(\bb a;N) \leq N\,.\]
Since frequencies of all symbols exist, we also have
\[
	\cnt(\bb a, x; N)/N \to \freq(\bb a,x) \text{ and } \discr(\bb a;N)/N \to 0 \ \text{as } N \to \infty\,.
\]
This quantitative approach is especially relevant in the case where $\freq(\bb a, x) = 0$. We point out that if there exists $z \in \Sigma$ with $\freq(\bb a,z) = 1$ then 
\[ \discr(\bb a; N) = N - \cnt(\bb a,z ;N) = \sum_{x \in \Sigma \setminus \{z\}}\cnt(\bb a,x ;N) \,.\]
For instance, if we let $F=\{0,1,2,3,5,8,\ldots\}$ denote the set of Fibonacci numbers, $\varphi = (1+\sqrt 5)/2$ be the golden ratio, and 
$E = \set{ n \in \NN_0}{ \fpa{\varphi n} < 1/\sqrt{n}}$, 
then the frequencies $\freq(\bb{1}_{E}, 1)$ and $\freq(\bb{1}_F,1)$ are both zero, but 
\[ \cnt(\bb{1}_F,1;N) = \discr(\bb{1}_F;N) = \Theta(\log N)\] grows much more 
slowly (cf.{} Proposition \ref{prop:growth-N^lambda}) than 
\[ \cnt(\bb{1}_E,1;N) = \discr(\bb{1}_E;N) =\Theta(\sqrt{N})\,.\] 

For context, we mention that similar questions have been studied for other classes of sequences of combinatorial interest. For instance, it is well-known (see e.g.\ \cite[Sec.\ 1.2]{Rigo-book-2}) that for each $k$-automatic word $\bb a$ over a finite alphabet $\Sigma$ and each $x \in \Sigma$, the count of integers $n$ with $a_n = x$ whose base-$k$ expansion has length $L \geq 1$, $\cnt(\bb a, x; k^{L}) - \cnt(\bb a, x; k^{L-1})$, satisfies a linear recurrence in $L$. Thus, either $\cnt(\bb a,x;N) = \Theta(\log^d N)$ for some $d \in \NN_0$ or $\cnt(\bb a,x; N) = \Omega(N^c)$ for some $c \in (0,1]$ \cite[Thm.\ 1.48]{Rigo-book-2}. In particular, if $\cnt(\bb a,x; N) = N^{o(1)}$ then $\cnt(\bb a,x; N) = \log^{O(1)}N$. Another convenient consequence is that if $\freq(\bb a, x) = 0$ then $\cnt(\bb a, x;N) = O(N^{c})$ for some $c \in (0,1)$.

\subsection{Linear growth and low discrepancy} 
 As we have already observed, the rate of growth of $\cnt(\bb a,x;N)$ can be linear and $\discr(\bb a;N)$ can be bounded. In this context, it is helpful to recall the notion of a $C$-balanced word. An infinite word $\bb a$ over an alphabet $\Sigma$ is \emph{$C$-balanced} if the number of occurrences of a given symbol in any two subwords of $\bb a$ of equal length differs by at most $C$. 
For instance, as we have already pointed out in the introduction, Sturmian words are $1$-balanced. It was shown in \cite[Prop.\ 7]{Adamczewski-2003} that an infinite word $\bb a$ is $C$-balanced for a finite value $C > 0$ if and only the discrepancy $\discr(\bb a; N)$ is bounded as $N \to \infty$.

\begin{lemma}\label{lem:growth:linear}
	For every $\a \in (0,1]$, there exists a {\gpword} over $\{0,1\}$ with $\cnt(\bb a, 1; N) = \a N + O(1)$, and thus $\freq(\bb a, 1) = \a$ and $\discr(\bb a; N) = O(1)$. 
\end{lemma}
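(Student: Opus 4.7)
The plan is to exhibit an explicit bracket word with the desired counting function, essentially a Sturmian word with parameter $\beta = 0$, and then to observe that the required counting estimate follows from telescoping.

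More precisely, I would define the word $\bb a \in \{0,1\}^\infty$ by
\[
	a_n = \ip{\alpha (n+1)} - \ip{\alpha n} \qquad (n \in \NN_0).
\]
This is manifestly a generalised polynomial map into $\ZZ$, and since $\alpha \in (0,1]$ we have $\alpha(n+1) - \alpha n = \alpha \le 1$, so the increment $a_n$ always lies in $\{0,1\}$. In particular, $\bb a$ is a bracket word over $\{0,1\}$ (with $g(n) = a_n$ and $c = \mathrm{id}$ in Definition \ref{def:def:bra-word}). Note that this even produces a Sturmian word when $\alpha \in (0,1)$ is irrational, and either an eventually periodic word or the constant word $1111\cdots$ when $\alpha$ is rational or $\alpha = 1$.

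The counting estimate is then immediate from telescoping:
\[
	\cnt(\bb a, 1; N) = \sum_{n=0}^{N-1} a_n = \ip{\alpha N} - \ip{0} = \ip{\alpha N} = \alpha N + O(1).
\]
Dividing by $N$ and letting $N \to \infty$ gives $\freq(\bb a, 1) = \alpha$, and since $\cnt(\bb a, 0; N) = N - \cnt(\bb a, 1; N) = (1-\alpha)N + O(1)$ the discrepancy bound $\discr(\bb a; N) = O(1)$ follows directly from definition \eqref{eq:growth:discr}.

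There is really no obstacle here: the only mild subtlety is to check that $a_n \in \{0,1\}$, which uses $\alpha \le 1$, and to confirm that the construction fits Definition \ref{def:def:bra-word}, which is transparent because the formula is built from a polynomial and a floor function. The construction is simply the natural generalisation of formula \eqref{eq:intro-1} from the introduction to arbitrary $\alpha \in (0,1]$ (allowing rational $\alpha$), which is why the estimate matches \eqref{eq:growth:Sturm}.
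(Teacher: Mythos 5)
Your proof is correct and follows essentially the same route as the paper, which simply cites the Sturmian estimate \eqref{eq:growth:Sturm}; your telescoping computation is just that estimate made explicit. The only added value is that you spell out why the formula still works for rational $\a$ and $\a = 1$ (where the word is no longer Sturmian), a point the paper's one-line proof glosses over.
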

\begin{proof}
	Follows immediately from \eqref{eq:growth:Sturm}.
\end{proof}

The example above stems from the fact that $[0,\alpha)$ is a bounded remainder set for the rotation by $\a$ on the torus $\RR/\ZZ$. In general, given $d \in \NN$ and $\a = (\a_1,\a_2,\dots,\a_d)$ a measurable set $S \subset \RR^d/\ZZ^d$ is a \emph{bounded remainder set} for the rotation by $\a$ on $\RR^d/\ZZ^d$ if for almost all $x \in \RR^d/\ZZ^d$, the \emph{discrepancy}
\begin{equation*}
	\Delta(S,x;N) = \abs{ \sum_{n = 0}^{N-1}1_S(x + n \a) - \lambda(S)N } 
\end{equation*}
is bounded by a constant $C(S,\a)$ independent of $x$ and $N$. (Above, $\lambda$ denotes the Lebesgue measure.) A complete classification of Riemann measurable bounded remainder sets for rotations on tori was obtained in \cite{GrepstadLev-2015}. In particular, by \cite[Thm.\ 1]{GrepstadLev-2015}, if $v_1,v_2,\dots,v_d \in \ZZ \a + \ZZ^d$ are linearly independent vectors such that the parallelepiped
\[
	P = \set{ \sum_{i=1}^d t_i v_i }{ t_1,t_2,\dots,t_d \in [0,1)}
\]
is contained in $[0,1)^d$ then $P$ (or, strictly speaking, its projection to $\RR^d/\ZZ^d$) is a bounded remainder set. 

\begin{example}\label{ex:growth:BRS}
	Let $d \in \NN$, let $\a = (\a_1,\a_2,\dots,\a_d) \in \RR^d$ be a $d$-tuple of real numbers such that $1,\a_1,\a_2,\dots,\a_d$ are linearly independent over $\QQ$, and let $S \subset \RR^d/\ZZ^d$ be a bounded remainder set for the rotation by $\a$ on $\RR^d/\ZZ^d$ which is also a semialgebraic set. Then $\bb a = \bra{1_S(n\a)}_{n=0}^\infty$ is a bracket word with $\freq(\bb a, 1) = \lambda(S)$ and $\discr(\bb a; N) = O(1)$ as $N \to \infty$.
\end{example}

A different way to generalise Lemma \ref{lem:growth:linear} is to consider iterated discrete derivatives of polynomials of higher degrees.
\begin{example}\label{ex:growth:diff-quad}
	Let $\a \in (0,1)$ and let $\bb a$ be the {\gpword} given by 
	\begin{align*}
	a_n &= \ip{\a (n+2)^2} - 2 \ip{\a(n+1)^2} + \ip{\a n^2} + 1 
	\\ &= - \fp{\a (n+2)^2} + 2 \fp{\a(n+1)^2} - \fp{\a n^2} + 1+2\a.
	\end{align*}	 
It follows directly from the formula above that $a_n \in \ZZ$ and $-1 < a_n < 5$ for all $n \in \NN_0$, so $\bb a$ is a word over the alphabet $\{0,1,2,3,4\}$. Consider the word $\bb a'$ over the alphabet $\{0,1\}$ given by 
\[
	a_{4n+i}' = 
	\begin{cases}
		1 & \text{if } a_n > i, \\
		0 & \text{otherwise.}
	\end{cases}
	\qquad \text{for } n \in \NN_0,\ i \in \{0,1,2,3\} \,.
\]
Then we can compute that 
\begin{align*}
	\cnt(\bb a',1;N) &= \sum_{n=0}^{N-1} a_n' = \sum_{n=0}^{N/4-1} a_n + O(1) 
	\\ &= N/4 + \ip{\a(N/4+1)^2} - \ip{\a(N/4)^2} + O(1) = (1/4+\a/8)N + O(1).
\end{align*}
Hence, $\cnt(\bb a',1;N) = (1/4+\a/8)N + O(1)$ and $\discr(\bb a';N) = O(1)$.   
\end{example}

We recall the words in Examples \ref{ex:growth:BRS} and \ref{ex:growth:diff-quad} are $C$-balanced for some finite constant $C$. 
Thanks to Theorem \ref{thm:BL-word}, one could use Example \ref{ex:growth:diff-quad} to construct an explicit example of a bounded remainder set for a certain $2$-step nilsystem. 

\subsection{Slow growth and small discrepancy}
At the other extreme in terms of the possible rates of growth of $\cnt(\bb a, x; N)$, it is possible for $\cnt(\bb a, x; N)$ to tend to $\infty$ arbitrarily slowly. In the case where $\abs{\Sigma} = 2$, $\discr(\bb a; N) = \cnt(\bb a, x; N)$ tends to $\infty$ at the same rate.
As a first example, we mention a result \cite[Theorem C]{ByszewskiKonieczny-2018-TAMS} 
asserting that any ``sufficiently sparse'' subset of $\NN_0$ is  a {\gp} set. 

\begin{proposition}[{\cite[Thm. C]{ByszewskiKonieczny-2018-TAMS}}]\label{prop:constr:super-sparse}
	Let $E = \set{n_i}{ i \in \NN_0} \subset \NN_{\geq 2}$ be a set with
	\[
		\liminf_{i \to \infty} \frac{\log n_{i+1}}{\log n_i} > 1.
	\]
	Then ${\bb 1}_E$ is a {\gpword}.
\end{proposition}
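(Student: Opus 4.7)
The plan is to realise $E$ as a generalised polynomial subset of $\NN_0$, which by Corollary \ref{cor:constr:fibre} is equivalent to the assertion that $\bb{1}_E$ is a bracket word.

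First, using closure of the class of generalised polynomial sets under finite modifications (Proposition \ref{prop:cl:sets}), I would reduce to the case where the growth bound $n_{i+1} \geq n_i^c$ holds for every $i \in \NN_0$ and some fixed $c > 1$; discarding finitely many initial indices costs nothing. The next step is to construct a real parameter $\alpha$ (possibly several parameters $\alpha_1, \dots, \alpha_r$) together with a generalised polynomial map $g \colon \NN_0 \to \RR$ built from the $\alpha_j$, the identity, and the operations of addition, multiplication and the floor function, with the property that $g(n) \in [0,1)$ if and only if $n \in E$ for all sufficiently large $n$. Once such a $g$ is produced, Proposition \ref{prop:cl:g-in-I} applied to the interval $I = [0, 1)$ identifies $E$ (up to finite modification) as a generalised polynomial set.

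A natural starting point is to take $\alpha$ whose continued-fraction convergent denominators include (or coincide with) the sequence $(n_i)$, which forces $\fpa{n_i \alpha}$ to be extremely small; combined with the standard fact that best rational approximations of the second kind are precisely the convergents, this would yield a quantitative lower bound on $\fpa{n \alpha}$ for denominators $n$ that are not convergent denominators. A formula of the shape $g(n) = n^k \fpa{n \alpha}$ for a suitable integer $k$ is then the first candidate for a generalised polynomial separating $E$ from its complement.

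The main obstacle will be ensuring that the formula genuinely singles out the members of $E$ and not parasitic additional integers. In particular, any small integer multiple $t n_i$ satisfies $\fpa{t n_i \alpha} \approx t \fpa{n_i \alpha}$, which remains small, so such $n$ would pass a naive threshold test based on one $\alpha$ alone. Overcoming this requires either additional Diophantine parameters $\alpha_2, \alpha_3, \dots$ with distinct convergent-denominator sequences, combined in a product-type formula whose conjunction pinpoints the $n_i$ exactly, or a more subtle algebraic trick exploiting the super-polynomial growth $n_{i+1} \geq n_i^c$ so that the ``gap'' between $n_i$ and its smallest parasitic multiple becomes comparable to $n_{i+1}$. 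An alternative route is through the dynamical characterisation of Theorem \ref{thm:BL-word}: one would construct a minimal nilsystem whose orbit of a distinguished point visits a chosen semialgebraic region precisely at times in $E$, and the sparsity of $E$ should give enough room for such a construction. In either approach, the delicate bookkeeping of the cumulative Diophantine errors is the heart of the argument.
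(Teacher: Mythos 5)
Your overall strategy is the same one the paper points to: the paper does not prove this proposition here but quotes \cite[Thm.~C]{ByszewskiKonieczny-2018-TAMS}, remarking that the proof applies Proposition \ref{prop:cl:g-in-I} to a map of the form $\braif{\fpa{\a n} < 1/n^d}$ with a carefully constructed single $\a$ --- which is exactly your first candidate $g(n)=n^k\fpa{n\a}$ with $I=[0,1)$. The difficulty is that your proposal stops precisely where that construction does its work. You correctly flag the parasitic solutions (small multiples $tn_i$), but then only list possible remedies (extra parameters $\a_2,\a_3,\dots$ in a product formula, an unspecified ``algebraic trick'', or a bespoke nilsystem) without carrying out any of them, so the argument never closes. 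Moreover, your hope that the best-approximation property yields a usable lower bound on $\fpa{n\a}$ for all $n$ that are not convergent denominators is false as stated: when a partial quotient is large, multiples of a convergent denominator $q_i$ and the semiconvergents have $\fpa{n\a}$ of size roughly $t\fpa{q_i\a}$, which can be far below $n^{-d}$; your own multiples example is an instance of this, and nothing in the sketch repairs it.

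Concretely, what is missing is a two-sided calibration of the approximations, which is the heart of the cited argument. One must build $\a$ so that $\fpa{n_i\a}<n_i^{-d}$ but also $\fpa{n_i\a}\geq 2^{-(d+1)}n_i^{-d}$ (equivalently, the convergent denominator following $n_i$ exceeds $n_i^{d}$ by at most a bounded factor); then for $t\geq 2$, as long as $t\fpa{n_i\a}\leq 1/2$, one has $\fpa{t n_i\a}=t\fpa{n_i\a}\geq (t n_i)^{-d}$, so multiples fail the test, and the remaining integers are handled by arranging that every convergent denominator other than the $n_i$ is followed by a jump of at most a fixed power, so that $\fpa{n\a}\geq n^{-d}$ for all $n\notin E$ (with a routine separate treatment of the wrap-around range of $t$). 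To make room for a denominator of size about $n_i^{d}$ below $n_{i+1}$ one first splits $E$ into finitely many subsequences (indices in residue classes modulo some $m$) so that each piece satisfies $n_{i+1}\geq n_i^{D}$ with $D$ large; this is exactly where the hypothesis $\liminf_{i\to\infty}\log n_{i+1}/\log n_i>1$ is used, and closure under finite unions (Proposition \ref{prop:cl:sets}) recovers $E$. None of this bookkeeping appears in your sketch, and your alternative routes are neither developed nor needed, so as it stands the proposal does not establish the proposition.
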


The proof of Proposition \ref{prop:constr:super-sparse} relies on an application of Proposition \ref{prop:cl:g-in-I} for a {\gp} map of the form 
$g(n) = \braif{ \fpa{\a n} < 1/n^d}$ with large $d \in \NN$ and a carefully constructed $\a \in \RR$.  
Of course, this result provides examples of bracket words 
over $\{0,1\}$ for which the growth of $\cnt(\bb a, 1; N)$ tends to infinity as slowly as wanted. 

Now, we prove the following general result that also covers growth order of type $\log N$. 

\begin{proposition}\label{prop:growth:sub-log}
	Let $f \colon \NN_0 \to \RR_{>0}$ be a non-decreasing map and assume that there exists a positive real number $C$ such that 
	$f(2n) \leq f(n) + C$ for all $n \in \NN$. Then there exists a {\gpword} $\bb a$ over $\{0,1\}$ such that $\cnt(\bb a,1;N) = f(N) + O(1)$.
\end{proposition}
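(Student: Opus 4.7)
The plan is to construct a {\gp} set $\tilde E \subseteq \NN_0$ with $\abs{\tilde E \cap [N]} = f(N) + O(1)$; then $\bb a = \bb{1}_{\tilde E}$ will be the desired {\gpword}. First, replacing $f$ by $\ip{f}$ and modifying finitely many values, we may assume that $f$ is $\NN_0$-valued and that $f(n) - f(n-1) \leq C$ for every $n \geq 1$ (the original hypothesis $f(2n) \leq f(n) + C$ implies this for $n \geq 3$). Set $K = C + 1$ and decompose $f = \sum_{j=1}^K f_j$ via $f_j(n) = \abs{\{i \in \NN : 1 \leq i \leq f(n),\, i \equiv j \pmod{K}\}}$. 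Since $K > C$, each $f_j$ satisfies $f_j(2n) \leq f_j(n) + 1$ and has single-position jumps bounded by $1$. Letting $m_i^{(j)}$ denote the unique position where $f_j$ jumps from $i - 1$ to $i$, the relation $f_j(2n) \leq f_j(n) + 1$ forces $m_{i+2}^{(j)} > 2 m_i^{(j)}$. Partitioning by residues modulo $4$, we split $E^{(j)} = \{m_i^{(j)}\} = \bigcup_{s=0}^{3} E^{(j, s)}$, with each $E^{(j, s)} = (n_l^{(j, s)})_{l \geq 0}$ satisfying $n_{l+1}^{(j, s)} \geq 4\, n_l^{(j, s)}$.

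Next, for each pair $(j, s)$ we will realise a {\gp} set $\tilde E^{(j, s)}$ whose counting function matches that of $E^{(j, s)}$ up to $O(1)$, via the theory of continued fractions. Inductively choose partial quotients $a_l^{(j, s)} = \ip{(n_l^{(j, s)} - q_{l-2}^{(j, s)})/q_{l-1}^{(j, s)}}$; the $4$-fold separation ensures $a_l^{(j, s)} \geq 2$, and the convergent denominators $(q_l^{(j, s)})_{l \geq 0}$ of $\alpha_{j, s} := [0; a_1^{(j, s)}, a_2^{(j, s)}, \dots]$ satisfy $q_l^{(j, s)} \in (3 n_l^{(j, s)} / 4,\ n_l^{(j, s)}]$. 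Because the $n_l^{(j, s)}$'s are $4$-separated, at most one index $l$ can have $q_l^{(j, s)} \leq N < n_l^{(j, s)}$ for any given $N$, and hence the counting functions of $\tilde E^{(j, s)} := \{q_l^{(j, s)} : l \geq 0\}$ and of $E^{(j, s)}$ differ by at most $1$ uniformly in $N$. Legendre's theorem on best rational approximations, combined with the lower bound on partial quotients, gives $\{n \in \NN : n\, \fpa{\alpha_{j, s}\, n} < 1/2\} = \tilde E^{(j, s)}$, so Proposition \ref{prop:cl:g-in-I} identifies $\tilde E^{(j, s)}$ as a {\gp} subset of $\NN_0$.

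Finally, $\tilde E := \bigcup_{j, s} \tilde E^{(j, s)}$ is a {\gp} set by Proposition \ref{prop:cl:sets}, and aggregating the per-component counts gives $\abs{\tilde E \cap [N]} = \sum_j f_j(N) + O(K) = f(N) + O(1)$. The main obstacle is the drift analysis in the middle paragraph: the integrality of the recursion $q_{l+1} = a_{l+1} q_l + q_{l-1}$ prevents us from matching the targets $n_l^{(j, s)}$ exactly, and it is essential that the $4$-fold sparseness keeps the resulting discrepancy from accumulating in the counting function beyond $O(1)$.
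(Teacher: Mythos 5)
Your reduction of $f$ to the pieces $f_j$ and the mod-$4$ splitting are fine, as is the inductive choice of partial quotients giving $q_l^{(j,s)} \in (3n_l^{(j,s)}/4,\, n_l^{(j,s)}]$. The proof breaks at the identification $\set{n \in \NN}{\, n\,\fpa{\alpha_{j,s} n} < 1/2} = \tilde E^{(j,s)}$. Legendre's theorem only tells you that if $n\,\fpa{\alpha n} < 1/2$ and the nearest integer $p$ to $n\alpha$ is coprime to $n$, then $p/n$ is a convergent; it says nothing when $\gcd(n,p) > 1$. And such $n$ genuinely occur: writing $\fpa{q_l\alpha} \approx 1/q_{l+1} \approx 1/(a_{l+1}q_l)$, for any $d$ with $d\,\fpa{q_l\alpha} < 1/2$ one has $\fpa{d q_l \alpha} = d\,\fpa{q_l\alpha}$, so $d q_l \cdot \fpa{d q_l\alpha} \approx d^2/a_{l+1} < 1/2$ whenever $d \lesssim \sqrt{a_{l+1}/2}$. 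Thus your set contains not only the convergent denominators but also the multiples $q_l, 2q_l, \dots$ up to roughly $\sqrt{a_{l+1}}\,q_l$. In your construction $a_{l+1}$ is comparable to $n_{l+1}^{(j,s)}/n_l^{(j,s)}$, which is unbounded whenever $f$ grows more slowly than logarithmically (e.g.\ $f(N) = \log\log N$, which satisfies the hypotheses), so the spurious multiples contribute up to about $\sqrt{n_{l+1}/n_l}$ extra elements below $n_{l+1}$ --- far more than $O(1)$, and even in the geometric-ratio case the error is multiplicative rather than additive. The lower bound $a_l \geq 2$ you invoke points the wrong way: excluding multiples would require an \emph{upper} bound on the partial quotients, but bounded partial quotients force the $q_l$ to grow at a bounded geometric rate, which is incompatible with matching a slowly growing $f$. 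So the central step --- realising each sparse set $\tilde E^{(j,s)}$ as a {\gp} set via a single rotation and the threshold $1/2$ --- fails, and there is no easy patch within this framework.

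For comparison, the paper sidesteps exactly this obstruction: it takes the set $F$ of Fibonacci numbers and $F' = F + [C]$, notes that $\abs{F' \cap [n,2n)} \geq C$ for large $n$ (which is all the room the hypothesis $f(2n) \leq f(n) + C$ requires), invokes the result of the upcoming preprint \cite{ByszewskiKonieczny-upcoming} that \emph{every} subset of $F$ (hence of $F'$, by shifting and finite unions) is a {\gp} set, and then greedily selects $E \subset F'$ with $\abs{E \cap [N]} = f(N) + O(1)$. The nontrivial Diophantine work --- precisely the control of multiples of good denominators that your argument is missing --- is what that cited result supplies.
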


\begin{proof}
	Let $F$ be the set of all Fibonacci numbers, and let $F' = F+[C]$. Note that for all sufficiently large $n \in \NN$, we have $\abs{F' \cap [n,2n)} \geq C$. 
	In \cite{ByszewskiKonieczny-upcoming}, it is shown that any subset $E$ of $F$ is a {\gp} set, and hence the same also applies to each $E \subset F'$. 
	One can inductively construct a set $E \subset F'$ such that $\abs{E \cap [N]} = f(N)+ O(1)$ for all $N$: for each $n \in F'$, assuming that $E \cap [n]$ 
	has already been constructed, we let $n \in E$ if and only if $f(n) > \abs{E \cap [n]}$. It remains to set $\bb a = \bb{1}_E$.
\end{proof}

\begin{example}
	For every $\a \in \RR_{>0}$, there is a {\gpword} $\bb a$ over $\{0,1\}$ such that $\cnt(\bb a,1;N) = \a \log N + O(1)$.
\end{example}

For the sake of completeness, we mention that for even slower rates of growth, we can reformulate \cite[{Thm.{} C}]{ByszewskiKonieczny-2018-TAMS} 
to obtain a slightly more precise result which does not involve an error term.

\begin{proposition}
	Let $f \colon \RR_{>0} \to \RR_{>0}$ be a continuously differentiable increasing function with $\sup_x\abs{f'(x)} < \infty$.
	Then there exists a {\gpword} $\bb a$ over $\{0,1\}$ with $\cnt(\bb a,1;N) = \floor{ f(\log \log N) }$ for all sufficiently large $N$.
\end{proposition}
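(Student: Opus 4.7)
The strategy is to realise $\bb a$ as the characteristic word $\bb{1}_E$ of a very sparse set $E \subset \NN_0$, chosen so that $\abs{E \cap [N]}$ matches $\ipnormal{f(\log \log N)}$, and then to invoke Proposition~\ref{prop:constr:super-sparse}. If $f$ is bounded, then $\ipnormal{f(\log\log N)}$ is eventually equal to some constant $L_0 \in \NN_0$, and any finite set $E$ of cardinality $L_0$ does the job, since eventually-zero words are bracket words.

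Now assume $f$ is unbounded and set $M := \sup_x \abs{f'(x)} < \infty$. I would define $k_i$ as the smallest integer $N > e^e$ with $f(\log\log N) \geq i$, for $i$ larger than some threshold $i_0$, and put $e_i := k_i - 1$ and $E := \set{e_i}{i \geq i_0}$. By the minimality of $k_i$, one has $\ipnormal{f(\log\log N)} = i$ precisely when $k_i \leq N < k_{i+1}$, and by construction $\abs{E \cap [N]} = i$ on the same range. This directly gives $\cnt(\bb{1}_E, 1; N) = \ipnormal{f(\log\log N)}$ for all sufficiently large $N$, which is the identity we seek.

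The main technical step is to verify the sparsity hypothesis $\liminf_{i\to\infty} \log e_{i+1}/\log e_i > 1$ of Proposition~\ref{prop:constr:super-sparse}. By the mean value theorem applied to $f$, we have $f^{-1}(i+1) - f^{-1}(i) \geq 1/M$. The definition of $k_i$ forces $f^{-1}(i) \leq \log\log k_i < f^{-1}(i) + \delta_i$, where $\delta_i := \log\log k_i - \log\log(k_i - 1) \to 0$ as $k_i \to \infty$ (with $k_i \to \infty$ following from the unboundedness of $f$). Combining these two estimates yields $\log\log k_{i+1} - \log\log k_i \geq 1/(2M)$ for all sufficiently large $i$, whence $\log k_{i+1}/\log k_i \geq e^{1/(2M)} > 1$. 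Since $\log(k_i - 1) \sim \log k_i$, the same bound holds for $e_i$, so Proposition~\ref{prop:constr:super-sparse} applies and $\bb{1}_E$ is a bracket word, finishing the proof. The only delicate point is this sparsity verification, where the assumption $\sup_x \abs{f'(x)} < \infty$ is used in an essential way; the rest is bookkeeping about a staircase function.
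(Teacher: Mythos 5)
Your route is essentially the paper's: build a sparse set whose counting function realises $\floor{f(\log\log N)}$ and feed it to Proposition~\ref{prop:constr:super-sparse}, with the sparsity $\liminf_i \log e_{i+1}/\log e_i>1$ extracted from $f^{-1}(i+1)-f^{-1}(i)\geq 1/M$ (bounded derivative) plus the vanishing increment of $\log\log$ at consecutive integers; that part of your argument is correct and matches the paper's computation almost line by line, and your separate treatment of bounded $f$ is a harmless addition. The one slip is in the bookkeeping of the count: with $E=\set{e_i}{i\geq i_0}$ and $k_i\leq N<k_{i+1}$ you get $\abs{E\cap[N]}=i-i_0+1$, not $i$, so as written you only achieve $\cnt(\bb a,1;N)=\floor{f(\log\log N)}-(i_0-1)$; since the whole point of this proposition (as opposed to the preceding ones) is exact equality with no additive error, this needs repairing. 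The repair is easy: pad $E$ with $i_0-1$ further integers chosen below $e_{i_0}$ (there is room once $i_0$ is large, because $f$ grows at most linearly while $\log\log N=o(N)$, so $k_{i_0}\geq i_0$), which restores $\abs{E\cap[N]}=\floor{f(\log\log N)}$ for all large $N$ and does not disturb Proposition~\ref{prop:constr:super-sparse}, whose hypothesis only concerns the tail of $E$. It would also be worth stating explicitly that the threshold $i_0$ is chosen so that consecutive $k_i$ are distinct (which your $\delta_i\to 0$ estimate already guarantees), since the identity $\abs{E\cap[N]}$ counting one element per index relies on the $e_i$ being strictly increasing. With these adjustments your proof coincides with the paper's.
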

\begin{proof}
	Since $f(\log \log x)$ is increasing in $x$ and its derivative tends to $0$ as $x \to \infty$, we can construct a set $E  = \{m_1 < m_2 < \cdots \}$ with $\abs{E \cap [N]} = \floor{ f(\log \log N) }$ for all sufficiently large $N$. Thus, for sufficiently large $n$ we have
	\[ f(\log \log m_n) < n \leq f(\log \log (m_n+1))\,.\]
As a consequence, we find that
\[
	f^{-1}(n- 1/2) \leq \log \log m_n \leq f^{-1}(n)\,.
\]
Applying this bound to $n$ and $n+1$, we obtain
\[
	\frac{ \log m_{n+1} }{\log m_{n}} \geq \exp\bra{f^{-1}(n+1/2) - f^{-1}(n) }\,.
\]	
Since $f'(x)$ is bounded as $x \to \infty$, we have
\[
	\liminf_{n \to \infty} \bra{ f^{-1}(n+1/2) - f^{-1}(n) } > 0\,,
\]	
and consequently
\[
	\liminf_{n \to \infty}  \frac{ \log m_{n+1} }{\log m_{n}} > 1\,.
\]
Thus, $\bb{1}_E$ is a {\gpword} by Proposition \ref{prop:constr:super-sparse}.
\end{proof}

\begin{example}
There are {\gpwords} $\bb a$ over $\{0,1\}$ such that $\cnt(\bb a,1;N)$ is any of the following: $\floor{ \a \log \log N}$ 
for any $\a \in \RR_{> 0}$, $\floor{ (\log \log N)^c}$ for any $c \in [0,1)$, $\floor{ \log \log \log N}$, and so on.
\end{example}

\subsection{Moderate behaviours}

Bearing in mind the examples mentioned above, we are left with the question of determining which rates of growth 
between linear and logarithmic are possible for $\cnt(\bb a,x;N)$. Towards this end, we will make use of results concerning the asymptotic behaviour of Diophantine expressions of the form $\prod_{i=1}^d \fpa{\a_i n}$, which have received considerable attention in connection with the Littlewood conjecture (see Example \ref{ex: littlewood}). While the conjecture remains unsolved, one can obtain considerably more precise estimates for a generic choice of $(\a_1,\a_2,\dots,\a_d)$. For future reference, let us define the set
\[
	E(\lambda,\a) = \set{n \in \NN}{ \prod_{i=1}^d \fpa{n\a_i} < n^{-1+\lambda} },
\]
where $d \in \NN$, $\lambda \in [0,1]$, and $\a = (\a_1,\a_2,\dots,\a_d) \in \RR^d$.
Note that $E(\lambda,\a)$ is a {\gp} set if $\lambda \in \QQ$.

\begin{proposition}\label{prop:growth-log^dN}
For every $\lambda \in [0,1) \cap \QQ$ and $c \in \NN_0$, there exists a {\gpword} $\bb a$ over $\{0,1\}$ with $\cnt(\bb a, 1; N) = \Theta(N^\lambda \log^c N)$. 
\end{proposition}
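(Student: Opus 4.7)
The plan is to take $\bb a = \bb{1}_{E(\lambda,\alpha)}$ for a suitable choice of dimension $d \in \NN$ and point $\alpha = (\alpha_1,\ldots,\alpha_d) \in \RR^d$. Since $\lambda \in \QQ$, the set $E(\lambda,\alpha)$ has already been observed to be a {\gp} subset of $\NN$, so by Corollary \ref{cor:constr:fibre} the word $\bb a$ is automatically a {\gpword}. The task therefore reduces to controlling $\abs{E(\lambda,\alpha) \cap [N]}$ for a good choice of $\alpha$.

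The choice of $d$ is guided by the following heuristic. If $1,\alpha_1,\ldots,\alpha_d$ are linearly independent over $\QQ$, the vectors $(\fpa{n\alpha_1},\ldots,\fpa{n\alpha_d})$ equidistribute in $[0,1/2]^d$. An elementary computation shows that for i.i.d.\ uniform $U_i \in [0,1/2]$ one has $\PP(U_1 \cdots U_d < t) = \Theta\bra{t (\log 1/t)^{d-1}}$ as $t \to 0^+$. Substituting $t = n^{-1+\lambda}$ and summing over $n \leq N$ gives
\[
\sum_{n=1}^N n^{-1+\lambda} (\log n)^{d-1} =
\begin{cases}
\Theta(N^\lambda (\log N)^{d-1}) & \text{if } \lambda > 0,\\
\Theta((\log N)^d) & \text{if } \lambda = 0.
\end{cases}
\]
Setting $d = c+1$ when $\lambda > 0$, and $d = c$ when $\lambda = 0$ and $c \geq 1$, reproduces the target order $\Theta(N^\lambda \log^c N)$. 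The degenerate case $\lambda = c = 0$ is settled by $\bb a = \bb 0$, for which $\cnt(\bb a, 1; N) = 0 = \Theta(1)$.

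To convert this heuristic into a rigorous statement I would invoke a quantitative result of Gallagher--Schmidt type from metric Diophantine approximation: for almost every $\alpha \in \RR^d$ and a monotone $\psi \colon \NN \to \RR_{>0}$, the counting function of $n \leq N$ with $\prod_i \fpa{n\alpha_i} < \psi(n)$ is comparable, up to a multiplicative constant depending only on $d$, to $\sum_{n \leq N} \psi(n) (\log 1/\psi(n))^{d-1}$. Applied with $\psi(n) = n^{-1+\lambda}$, this produces a full-measure set of admissible $\alpha \in \RR^d$, and in particular existence of at least one such $\alpha$, completing the argument.

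The main obstacle is the lower bound on $\cnt(\bb a, 1; N)$. The matching upper bound follows rather cheaply from a first-moment computation via Fubini and the volume estimate $\PP(U_1 \cdots U_d < t) = O\bra{t (\log 1/t)^{d-1}}$. The lower bound, however, requires a quasi-independence (variance or pair-correlation) bound on the events $\{n \in E(\lambda,\alpha)\}$ as $n$ varies, in the spirit of the divergence half of Gallagher's theorem. Should no off-the-shelf quantitative statement provide a sharp enough error term, a direct second-moment calculation using the Fourier expansion of $\bb{1}_{[0,1/2]}$ should deliver the required lower bound for almost every $\alpha$, at the cost of some additional bookkeeping.
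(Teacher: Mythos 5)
Your proposal takes essentially the same route as the paper: the paper also sets $\bb a = \bb 1_{E(\lambda,\a)}$ with $d = c+1$ (resp.\ $d=c$) for $\lambda \neq 0$ (resp.\ $\lambda = 0$), and obtains the counting asymptotic $\Theta(N^{\lambda}\log^{d-1}N)$ (resp.\ $\Theta(\log^d N)$) for almost all $\a \in \RR^d$ by citing an off-the-shelf metric result of exactly the Gallagher--Schmidt/Wang--Yu type you describe (a variant of Wang--Yu 1981, as stated in Chow--Technau), so no second-moment argument is needed. One small slip: in the degenerate case $\lambda = c = 0$ the all-zero word gives $\cnt(\bb a,1;N) \equiv 0$, which is \emph{not} $\Theta(1)$; take instead a word with exactly one occurrence of $1$, e.g.\ $\bb a = \bb 1_{\{0\}}$ as in the paper.
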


\begin{proof}
	It follows from a variant of the main result in \cite{WangYu-1981}, as cited in \cite[Thm.{} 1.6]{ChowTechnau-2022} and specialised to $\psi(n) = n^{-1+\lambda}$, 
	that for each $d \in \NN$ and almost all $\a \in \RR^d$, we have the asymptotic formula
	\begin{equation}\label{eq:WY-001}
		\abs{E(\lambda,\a) \cap [N]} = 
		\begin{cases}
		\Theta(x^{\lambda} \log^{d-1} x) &\text{if } \lambda \neq 0,\\
		\Theta(\log^{d} x) &\text{if } \lambda = 0.
	\end{cases}
	\end{equation}
	If $\lambda = c = 0$ then one can simply take $\bb a = \bb 1_{\{0\}}$, so suppose this is not the case. Let $d = c+1$ if $\lambda \neq 0$ and $d = c$ if $\lambda = 0$. It follows from \eqref{eq:WY-001} that for almost all $\a \in \RR^d$ we may take $\bb a = \bb 1_{E(\lambda,\a)}$.
\end{proof}

Unfortunately, the main result in \cite{WangYu-1981} does not provide any explicit example of $\a \in \RR^d$ for which \eqref{eq:WY-001} holds, and as a consequence our proof of Proposition \ref{prop:growth-log^dN} does not provide any explicit example of {\gpwords} $\bb a$ such that $\cnt(\bb a, 1; N)$ has prescribed asymptotic behaviour. However, such examples can be constructed in the special case where (using notation from Proposition \ref{prop:growth-log^dN}) $d = 0$. Below, we let $\varphi$ denote the golden ratio.

\begin{proposition}\label{prop:growth-N^lambda}
Let $\lambda \in (0,1) \cap \QQ$, and let $\bb a = \bb 1_{E}$, where
\[
	E = \set{ n \in \NN }{ \fpa{n\varphi} \leq n^{-1+\lambda}} \,.
\]
Then $\cnt(\bb a, 1; N) = \Theta(N^\lambda)$.
\end{proposition}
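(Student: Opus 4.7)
The strategy is to exploit the well-known Diophantine properties of the golden ratio. Because $\varphi = [1;1,1,1,\dots]$ has bounded partial quotients, the Kronecker sequence $(\fp{n\varphi})_{n=1}^N$ has discrepancy $O((\log N)/N)$. Applied to a symmetric neighbourhood of zero in $\RR/\ZZ$, this yields, uniformly in $\delta>0$ and $N\ge 1$,
\[
	S(N,\delta) := \abs{\set{n \in [1,N]}{\fpa{n\varphi} \leq \delta}} = 2 \delta N + O(\log N),
\]
with absolute implicit constant. This is the single analytic input of the proof.

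For the lower bound, I observe that any $n \le N$ with $\fpa{n\varphi} \le N^{-1+\lambda}$ automatically satisfies $\fpa{n\varphi} \le n^{-1+\lambda}$, because the exponent is negative and $n \le N$. Consequently
\[
	\cnt(\bb a, 1; N) \geq S(N, N^{-1+\lambda}) = 2 N^\lambda + O(\log N) = \Omega(N^\lambda),
\]
where I have used $\lambda>0$ to absorb the logarithmic error into the leading term.

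For the upper bound I perform a dyadic decomposition: $[1,N] \subset \bigcup_{k=0}^{K} [2^k, 2^{k+1})$ with $K = \lceil \log_2 N\rceil$. For $n \in [2^k, 2^{k+1})$, the inequality $\fpa{n\varphi} \le n^{-1+\lambda}$ forces $\fpa{n\varphi} \le 2^{k(-1+\lambda)}$, so the number of $n$ in this dyadic block lying in $E$ is at most $S(2^{k+1}, 2^{k(-1+\lambda)}) \ll 2^{k\lambda} + k$. Summing,
\[
	\cnt(\bb a, 1; N) \ll \sum_{k=0}^K \bra{2^{k\lambda} + k} \ll N^\lambda + (\log N)^2 = O(N^\lambda),
\]
again using $\lambda > 0$.

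The only nontrivial ingredient is the classical discrepancy bound for noble irrationals; the rest is elementary manipulation of dyadic sums. The single subtlety is that both estimates rely crucially on $\lambda>0$ to dominate the logarithmic discrepancy error, which is exactly why the case $\lambda=0$ is excluded from the statement.
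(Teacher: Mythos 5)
Your proof is correct, and it takes a genuinely different route from the paper. You rely on the classical discrepancy bound $O((\log N)/N)$ for the rotation by $\varphi$ (valid because $\varphi$ has bounded partial quotients), combined with the monotonicity trick $n \leq N \Rightarrow n^{-1+\lambda} \geq N^{-1+\lambda}$ for the lower bound and a dyadic decomposition for the upper bound; both error terms are logarithmic and are absorbed since $\lambda > 0$. The paper instead argues combinatorially through the Zeckendorf (Fibonacci) expansion: it uses a lemma of Drmota, M\"uller and Spiegelhofer saying that $-\log_\varphi \fpa{n\varphi}$ differs by at most $3$ from the index of the lowest nonzero Zeckendorf digit of $n$, and then counts admissible digit strings (no two consecutive $1$s) in each block $[f_{\ell-1}, f_\ell)$, summing over blocks. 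Your approach is more standard and strictly more general --- it applies verbatim with any badly approximable number in place of $\varphi$, and in principle yields an asymptotic with explicit constants rather than just a $\Theta$-bound --- while the paper's approach stays within the Fibonacci-numeration circle of ideas used elsewhere in that section (e.g.\ for the characteristic word of the Fibonacci numbers) and gives a digit-level description of the set $E$ as those $n$ whose Zeckendorf expansion ends in a sufficiently long run of zeros. The only points worth stating explicitly in a polished write-up are that the set $\set{x}{\fpa{x}\leq\delta}$ is a union of two intervals of total measure $2\delta$, so the discrepancy bound applies with an extra factor of $2$, and that the implied constants in the dyadic summation depend on $\lambda$, which is harmless since $\lambda$ is fixed.
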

\begin{proof}
	Recall that every positive integer $n$ has a unique expansion 
	\[
		n = \sum_{i=2}^\infty \epsilon_i(n) f_i \,,
	\]
	where $f_i$ is the $i$-th Fibonacci number, $\epsilon_i(n) \in \{0,1\}$ and $(\epsilon_i(n),\epsilon_{i+1}(n)) \neq (1,1)$ for all $i \geq 2$. 
	Let $\nu(n)$ denote the least index $i \geq 1$ such that $\epsilon_{i+1}(n) = 1$, and let $\mu(n) = - \log_{\varphi} \fpa{n \varphi}$. 
	It follows from \cite[Lemma 1]{DrmotaMullerSpiegelhofer-2018} that the difference between $\nu(n)$ and $\mu(n)$ is bounded; in fact, 
	$\abs{\mu(n) - \nu(n) } \leq 3$ for all $n \in \NN$. 
	
	It is a standard exercise to show that for each $\ell \in \NN$, the number of sequences in $\{0,1\}^\ell$ with no pair of consecutive 
	$1$s is $f_{\ell+2} = \Theta(\varphi^\ell)$. As a consequence, for each sufficiently large $\ell \in \NN$ and each $c \in (-10,10)$, we have
\begin{equation}\label{eq:226:1}
 \abs{ \set{ n \in [f_{\ell-1}, f_{\ell}) }{  \nu(n) \geq (1-\lambda) \ell + c} } = \Theta\bra{ \varphi^{\lambda \ell}}\,.
\end{equation}
Since $\abs{\mu(n) - \nu(n) } \leq 3$ and $\abs{\log_{\varphi} n - \ell} < 2$ for all $n \in  [f_{\ell-1}, f_{\ell})$, 
it follows that
\begin{equation}\label{eq:226:2}
 \abs{ \set{ n \in [f_{\ell-1}, f_{\ell}) }{  \mu(n) \geq (1-\lambda) \log_{\varphi} n } } = \Theta\bra{ \varphi^{\lambda \ell}}\,.
\end{equation}
Let $N$ be a large integer, and let $i$ be the index with $f_{i-1} \leq N < f_{i}$. Summing \eqref{eq:226:2} over all $\ell \leq i$ (resp.{} $\ell \leq i-1$), 
we conclude that 
\[ \abs{E \cap [N]} =  \abs{ \set{ n \in [N] }{  \mu(n) \geq (1-\lambda) \log_{\varphi} n } } = \Theta\bra{ \varphi^{\lambda i}}=\Theta\bra{ N^{\lambda}}\, . \qedhere
\]	
\end{proof}

In general, it is not clear how to construct $\a \in \RR^d$ which satisfy \eqref{eq:WY-001} for $d \geq 2$. However, some progress in that direction follows from recent work of Chow and Technau. For instance, it follows as a special case of \cite[Theorem 1.9]{ChowTechnau-2022}, that for each $\lambda \in (0,1)$ and each $\alpha_1 \in \RR$ which is not a Liouville number (\textit{i.e.}, $\fpa{n\a_1}\gg 1/n^C$ for some constant $C$), for almost all $\alpha_2$ we have the we have the one-sided variant of  \eqref{eq:WY-001},
	\[
		\abs{E(\lambda,(\a_1,\a_2)) \cap [N]} \gg N^{\lambda} \log N\,.
	\]

\subsection{Almost linear growth}

It might come as a surprise that there exist {\gpwords} $\bb a$ over $\{0,1\}$ such that the growth rate $\cnt(\bb a,1; N)$ is slower than, 
but arbitrarily close to, linear. This is closely related to a classical result of Khinchine asserting that there exist pairs $\a_1,\a_2 \in \RR$, 
with $1,\a_1,\a_2$ linearly independent over $\QQ$, such that $\min_{\abs{n} < N} \fpa{n_1\a_1 + n_2\a_2}$ tends to $0$ arbitrarily fast 
as $N \to \infty$, see e.g. \cite[Ch. V, Thm. XIV]{Cassels-book}. We will need the following elementary lemma.

\begin{lemma}\label{lem:exist-swell-approx}
	Let $(N_i)_{i = 1}^\infty$ be a sequence of positive integers and $(\e_i)_{i = 1}^\infty$ be a sequence with values in $(0,1)$  such that
	\[ N_{i+1} \geq 2 N_i/ \e_i\, .\]
	Then there exists $\a \in \RR \setminus \QQ$ with $\fpa{N_i\a} \leq \e_i$ for all $i \in \NN$. 
\end{lemma}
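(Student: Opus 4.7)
My plan is to construct $\a$ as the unique point in a nested intersection of closed intervals, each interval encoding one of the required Diophantine approximations.

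First, I would rephrase the constraint $\fpa{N_i\a}\leq \e_i$ as the existence of an integer $m_i$ with $\a\in I_i := [m_i/N_i - \e_i/N_i,\ m_i/N_i + \e_i/N_i]$, a closed interval of length $2\e_i/N_i$ centered at a fraction with denominator $N_i$. So the task reduces to constructing integers $m_i$ making the intervals $I_i$ nested, i.e., $I_{i+1}\subseteq I_i$ for every $i$.

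The inductive step is where the hypothesis $N_{i+1}\geq 2N_i/\e_i$ enters. Given $I_i$, I want $m_{i+1}$ such that the interval $I_{i+1}$ of radius $\e_{i+1}/N_{i+1}$ centered at $m_{i+1}/N_{i+1}$ lies inside $I_i$; equivalently, $m_{i+1}/N_{i+1}$ must lie in the shrunken interval $I_i' := [m_i/N_i - \e_i/N_i + \e_{i+1}/N_{i+1},\ m_i/N_i + \e_i/N_i - \e_{i+1}/N_{i+1}]$, which has length $2\e_i/N_i - 2\e_{i+1}/N_{i+1}$. The number of multiples of $1/N_{i+1}$ in $I_i'$ is at least $\lfloor 2N_{i+1}\e_i/N_i - 2\e_{i+1}\rfloor$, and using $N_{i+1}\geq 2N_i/\e_i$ together with $\e_{i+1}<1$, this is at least $\lfloor 4-2\e_{i+1}\rfloor\geq 2$. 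Thus at every step at least two valid choices for $m_{i+1}$ exist.

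With the intervals $I_i$ nested, I would note that $N_{i+1}\geq 2N_i/\e_i\geq 2N_i$ forces $N_i\to\infty$, so the lengths $2\e_i/N_i$ tend to $0$; hence $\bigcap_i I_i$ consists of a single real number $\a$, which by construction satisfies $\fpa{N_i\a}\leq \e_i$ for all $i$.

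For irrationality, which is the one point requiring a sliver of extra care, I would exploit the fact that the inductive step provides at least two admissible choices of $m_{i+1}$ for every $i$. Making independent binary choices at each step produces an injection from $\{0,1\}^{\NN}$ into the set of valid $\a$'s, so uncountably many real numbers satisfy all the constraints. Since $\QQ$ is countable, at least one (in fact, all but countably many) such $\a$ is irrational. The main obstacle, a minor one, is precisely this irrationality argument; everything else is a direct diameter-versus-spacing comparison driven by the hypothesis $N_{i+1}\geq 2N_i/\e_i$.
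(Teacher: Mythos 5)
Your proof is correct and takes essentially the same route as the paper: a nested-interval construction in which the hypothesis $N_{i+1}\geq 2N_i/\e_i$ guarantees at least two admissible centres $m_{i+1}/N_{i+1}$ inside the current interval at every step, followed by a binary-branching argument producing uncountably many admissible $\a$ and hence an irrational one. The only difference is presentational (the paper tracks the full sets $A_j'$ of surviving intervals rather than one chosen chain), and your injectivity claim shares the same implicit simplification as the paper's, namely that adjacent intervals are disjoint, which literally holds only when $\e_{i}<1/2$ — the complementary case being vacuous since $\fpa{\cdot}\leq 1/2$ always.
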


\begin{proof}
	For every $j \in \NN$,  we set 
	\[
		A_j = \set{ \a \in \RR/\ZZ }{ \fpa{N_i\a} \leq \e_i  \text{ for all } 1 \leq i \leq j}.
	\]
	It will suffice to show that the set $A= \bigcap_{j=1}^\infty  A_j$ is uncountable.
	
	Note that, for each $j \in \NN$, $A_{j+1}$ is the intersection of $A_{j}$ with the periodic set $\set{ \a \in \RR/\ZZ }{ \fpa{N_{j+1}\a} \leq \e_{j+1}}$, which is the union of the intervals
	 \[
	 	I_{j+1}^{(m)} = \left[ \frac{m-\e_{j+1}}{N_{j+1}}, \frac{m+\e_{j+1}}{N_{j+1}} \right], \qquad  m \in [N_{j+1}].
	 \]
	 This motivates us to recursively define a family of sets $A_j'$ by $A_1' = A_1$ and
	\[
		A_{j+1}' = \bigcup \set{I_{j+1}^{(m)} }{ m \in [N_{j+1}] \text{ and }I_{j+1}^{(m)} \subset A_{j}' }.
	\]
	A routine inductive argument shows that $A'_j \subset A_j$ for each $j \in \NN$, and it follows directly from the definition that $A'_j$ is a union of intervals with lengths ${2\e_{j}}/{N_j}$. Set $A'= \bigcap_{j=1}^\infty  A_j' \subset A$.  
	
	Since ${2\e_{j}}/{N_i} \geq 4/N_{j+1}$, for each $m \in [N_j]$ there exists $m' \in [N_{j+1}]$ such that 
	\begin{equation}\label{eq:294:1}
		I_{j+1}^{(m')}, I_{j+1}^{(m'+1)} \subset \left[ \frac{m'-1}{N_{j+1}}, \frac{m'+2}{N_{j+1}} \right] \subset I_{j}^{(m)}.
	\end{equation}
Applying this observation inductively and using Cantor's intersection theorem, we conclude that $I_j^{(m)} \cap A' \neq \emptyset$ for each $j \in \NN$ and $m \in [N_j]$ such that $I_j^{(m)} \subset A_j$. In fact, since the left hand side of \eqref{eq:294:1} includes two disjoint intervals, we can use a similar argument to produce an injective map from $\{0,1\}^\NN$ to $A'$. Thus, $A'$ is uncountable, and so is $A$.
\end{proof}

\begin{proposition}\label{prop:growth-slow-o(N)}
	Let $f \colon \NN \to (0,1]$ be any function with $f(N) \to 0$ as $N \to \infty$. There exists a {\gpword} $\bb a$ over $\{0,1\}$ such that $\freq(\bb a, 1) = 0$ 
	but $\cnt(\bb a,1;N) = \discr(\bb a; N) \geq f(N)N$ for all $N \in \NN$.
\end{proposition}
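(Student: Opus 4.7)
The plan is to invoke Lemma~\ref{lem:exist-swell-approx} to produce a Liouville-like irrational $\alpha$ and then use Proposition~\ref{prop:cl:g-in-I} to define a \gp{} set $E \subseteq \NN$ from a polynomial inequality in $\alpha$, so that $\bb a = \bb{1}_E$ is the desired bracket word. First I would reduce to the case that $f$ is non-increasing: setting $\tilde f(N) = \sup_{M \geq N} f(M)$ produces a non-increasing function $\tilde f \colon \NN \to (0,1]$ with $\tilde f \to 0$ and $\tilde f \geq f$ pointwise, so it suffices to work with $\tilde f$ in place of $f$.

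Next I would inductively choose a rapidly growing $N_1 < N_2 < \cdots$ and correspondingly small $\e_1 > \e_2 > \cdots \in (0,1)$ satisfying the hypothesis $N_{i+1} \geq 2N_i/\e_i$ of Lemma~\ref{lem:exist-swell-approx}, calibrating $(N_i,\e_i)$ to the decay of $\tilde f$ as explained below. Lemma~\ref{lem:exist-swell-approx} then yields $\alpha \in \RR\setminus\QQ$ with $\fpa{N_i\alpha} \leq \e_i$ for all $i$. I would define
\[
E = \{n \in \NN : n\,\fpa{n\alpha} < 1\}\,,
\]
which is a \gp{} set by Proposition~\ref{prop:cl:g-in-I}. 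For every $i$ and every positive integer $k$ with $k^2 \e_i N_i < 1$, one computes $kN_i \cdot \fpa{kN_i\alpha} \leq k^2 N_i\e_i < 1$, so $kN_i \in E$; hence $E$ contains the full arithmetic progression $\{N_i, 2N_i,\dots, K_iN_i\}$ with $K_i := \lfloor (\e_i N_i)^{-1/2}\rfloor$.

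The verification is that by appropriate tuning of $(N_i, \e_i)$, these progressions collectively cover every scale with $|E \cap [N]| \geq \tilde f(N)\,N$, while the rapid growth of $N_i$ forces $|E\cap [N]|/N \to 0$ and hence $\freq(\bb a, 1) = 0$. The principal obstacle lies in this calibration: the constraint $\e_i \geq 2N_i/N_{i+1}$ bounds the density contribution of scale $i$ to roughly $1/N_i$ on $[N_i, K_i N_i]$, which may be insufficient to match a very slow decay of $f$ (say $1/\log\log N$). To remedy this, one replaces the inequality $n\fpa{n\alpha} < 1$ by a \gp{} inequality of the form $n^a \fpa{n\alpha}^b < 1$ with larger integer parameters $a, b$, which enlarges the admissible range of multiples of $N_i$; alternatively, one can take a finite union involving several $\alpha_1,\dots,\alpha_r$ obtained from independent applications of Lemma~\ref{lem:exist-swell-approx}, allowing the density profile of $E$ to match any prescribed slow rate of decay to zero.
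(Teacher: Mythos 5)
Your overall strategy is the same as the paper's (Lemma~\ref{lem:exist-swell-approx} plus Proposition~\ref{prop:cl:g-in-I}, with arithmetic progressions of multiples of the $N_i$ providing the count), but there is a genuine gap in the calibration, and your first proposed remedy does not repair it. With a single $\a$, the hypothesis of Lemma~\ref{lem:exist-swell-approx} forces $\e_i \geq 2N_i/N_{i+1}$, so the progression at scale $i$ reaches only up to $K_iN_i \leq (\e_i^{-1}N_i)^{1/2} \leq (N_{i+1}/2)^{1/2}$, and for the modified inequality $n^a\fpa{n\a}^b<1$ only up to about $(N_{i+1}/2)^{b/(a+b)}$ — in every case a fixed power of $N_{i+1}$ strictly less than $N_{i+1}$. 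Meanwhile scale $i+1$ contributes nothing below $N_{i+1}$ and has density only $1/N_{i+1}$, so it cannot certify the bound $f(N)N$ until $N \geq h(N_{i+1})$, where $h(M)$ denotes the first integer at which $f$ drops below $1/M$. Hence scales $\leq i$ must supply roughly $h(N_{i+1})/N_{i+1}$ elements below $h(N_{i+1})$, whereas they supply at most about $(i+1)K_iN_i/N_i < (i+1)N_{i+1}/N_i$. For slowly decaying $f$ (say $f(N)=1/\log\log\log N$), $h(M)$ grows faster than any power of $M$, so no choice of the scales $N_i$ or of the exponents $a,b$ can satisfy both constraints: enlarging $a,b$ only changes the exponent $b/(a+b)<1$ and never lets one scale bridge the gap up to $h(N_{i+1})$.

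Your second remedy (a union over several $\a$'s) is the correct idea, and it is precisely the paper's mechanism — but it is the heart of the proof and you leave it entirely unexecuted. The paper takes two irrationals $\a_0,\a_1$ and interleaves the scales: $\a_j$ is required to approximate only the $N_i$ with $i \equiv j \bmod 2$, so the hypothesis of Lemma~\ref{lem:exist-swell-approx} only constrains $\e_i$ against $N_{i+2}$, permitting the choice $\e_i = 2N_i/N_{i+2}$ with $N_{i+2} = 2h(N_{i+1})^2$. Then every multiple of $N_i$ up to $h(N_{i+1})$ lies in $E_{i \bmod 2} = \set{n \in \NN}{\fpa{n\a_{i\bmod 2}} \leq 1/n}$, so scale $i$ alone carries the bound on the whole range $[h(N_i),h(N_{i+1}))$, where by definition of $h$ one has $f(N) \leq 1/N_i$; density zero follows since each $E_j$ has density zero for irrational $\a_j$. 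Without this interleaving (or an equivalent device decoupling the size of $\e_i$ from $N_{i+1}$), the calibration you defer to ``appropriate tuning'' cannot be completed.
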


\begin{proof}
	We may assume without loss of generality that $f(n) \geq 1/n$ for all $n \in \NN$ and that $f$ is non-increasing; if this is not the case, we can freely replace $f(n)$ with $\max\bra{1/n,f(n),f(n+1),f(n+2),\dots}$.
	For each $N$, let $h(N)$ denote the least positive integer with $f(h(N)) < 1/N$, and let $(N_i)_{i =1}^\infty$) be the sequence defined by 
	$N_1 = 1$ and
	\[
		N_{i+1} = 2 h(N_i)^2\, .
	\]
	Put also $\e_i = N_{i}/h(N_{i+1})^2 = 2N_{i}/N_{i+2}$. Note that $h(N) > N$ so $N_{i+1} > 2N_i^2$, and in particular the sequence $N_i$ is increasing.
	
	By Lemma \ref{lem:exist-swell-approx}, there exist $\a_0, \a_1 \in \RR \setminus \QQ$ such that $\fpa{ \a_{j} N_{i} } \leq \e_{i}$ 
	for all $j \in \{0,1\}$ and $i \in \NN$ with $i \equiv j \bmod{2}$. 
	For $j \in \{0,1\}$, set 
	\[
		E_j = \set{ n \in \NN }{ \fpa{n \a_j} \leq 1/n}\, ,
	\]  
	$E = E_0 \cup E_1 \cup [h(N_1)]$, and $\bb a = \bb{1}_E$.	We claim that $\bb a$ satisfies the required conditions. 
	Since $\a_0$ and $\a_1$ are irrational, we have 
	\[ \freq(\bb a,1) = d(E) \leq d(E_0) + d(E_1) = 0\, .\]
	
	It remains to show that $\cnt(\bb a,1;N) = \abs{E \cap [N]} \geq f(N)N$ for all $N$. If $N \leq h(N_1)$ then $[N] \subset E$, 
	so we may assume that $N > h(N_1)$. Let $i$ denote the unique index such that $h(N_i) \leq N < h(N_{i+1})$, 
	and put $\a = \a_{i \bmod 2}$. Since $\fpa{N_i \a} \leq \e_i$, 
	we have
\[
	\fpa{mN_i \a} \leq m \fpa{N_i \a} \leq m \e_i \leq \frac{1}{mN_i}
\]
and thus $m N_i \in E$ for all integers $m$ with $1 \leq m \leq h(N_{i+1})/N_{i}$. It follows that
\[
	\abs{ E \cap [N] } \geq N/N_i \geq f(h(N_{i}))N \geq f(N)N. \qedhere
\]
\end{proof}

\begin{example}
	There exists a {\gpword} $\bb a$ over $\{0,1\}$ with $\freq(\bb a,1) = 0$ and $\cnt(\bb a,1;N) \geq N/\log\log\log N$ for all sufficiently large $N$. 
	Note, however, that Proposition \ref{prop:growth-slow-o(N)} does not ensure the existence of a bracket word $\bb a$ with 
	$\cnt(\bb a,1;N) = \Theta(N/\log\log\log N)$.
\end{example}

\subsection{Algebraic coefficients}

The construction in Proposition \ref{prop:growth-slow-o(N)} relies on finding a pair of real numbers $(\a_0,\a_1)$ 
with some rather unusual Diophantine properties. If we restrict our attention to {\gpwords} with algebraic coefficients (c.f. Remark \ref{rmk:alg-coeff}), 
such constructions are no longer possible and some gap appears in the possible growth rates.

\begin{proposition}\label{prop:growth-alg-gap}
	Let $\bb a$ be a {\gpword} defined over a finite alphabet $\Sigma$ which {arises from a {\gp} map with algebraic coefficients} 
	and let $x \in \Sigma$. If $\freq(\bb a,x) = 0$, then there exists $c > 0$ such that  $\cnt(\bb a, x; N) = O(N^{1-c})$.
\end{proposition}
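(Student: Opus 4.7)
The plan is to leverage the dynamical representation from Theorem \ref{thm:BL-word} together with effective equidistribution results on nilmanifolds. Write $a_n = x$ iff $T^n(z) \in S_x$, where $(X,T)$ is a minimal nilsystem (with $X = G/\Gamma$), $z \in X$, and $S_x \subset X$ is a semialgebraic piece. The assumption that $\bb a$ arises from a {\gp} map with algebraic coefficients should allow us to choose this representation so that the Mal'cev coordinates of the translation element, of the base point $z$, and of the polynomials defining $S_x$, all lie in $\bar\QQ$. Since $\freq(\bb a, x) = 0$, Theorem \ref{prop:facts:freq-exists} and uniqueness of the Haar measure on the orbit closure $\overline{\{T^n z : n \in \NN_0\}}$ force $\mu(S_x) = 0$, where $\mu$ is the induced Haar measure on the orbit closure; in particular $S_x$ sits inside a proper semialgebraic subset of the orbit closure.

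First I would reduce to the case where the orbit closure is all of $X$ (by replacing $G/\Gamma$ with a sub-nilmanifold, which remains defined over $\bar\QQ$). The key analytic input is a quantitative (polynomial) equidistribution statement: for any Lipschitz function $f \colon X \to \RR$,
\begin{equation*}
\left| \frac{1}{N} \sum_{n=0}^{N-1} f(T^n z) - \int_X f\, d\mu_X \right| \ll \|f\|_{\mathrm{Lip}} \, N^{-\kappa}
\end{equation*}
for some $\kappa > 0$ depending only on $(X,T)$. Such a bound follows, for algebraic translation data, from the Green--Tao effective equidistribution theorem for polynomial sequences on nilmanifolds \cite{GreenTao-2012}: non-equidistribution forces a non-trivial horizontal character whose frequency is rationally close to $0$, and classical Diophantine estimates (Liouville/Schmidt) for algebraic numbers quantitatively rule this out at a polynomial rate.

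The second ingredient is a \L{}ojasiewicz-type volume bound for semialgebraic sets: since $S_x$ has measure zero in $X$, the $\epsilon$-neighbourhood $S_x^{(\epsilon)} = \{y \in X : \dist(y, S_x) < \epsilon\}$ satisfies $\mu_X(S_x^{(\epsilon)}) \ll \epsilon^{\delta}$ for some $\delta > 0$ depending only on $S_x$; this is standard since $\mu_X(S_x)=0$ forces $S_x$ to sit inside the zero locus of a non-zero polynomial in Mal'cev coordinates. Approximate $\bb 1_{S_x^{(\epsilon)}}$ above and $\bb 1_{S_x}$ below by a Lipschitz function $f_\epsilon$ with $\|f_\epsilon\|_{\mathrm{Lip}} \ll 1/\epsilon$. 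Applying effective equidistribution to $f_\epsilon$ gives
\begin{equation*}
\cnt(\bb a, x; N) \leq \sum_{n=0}^{N-1} f_\epsilon(T^n z) \ll N \mu_X(S_x^{(\epsilon)}) + \epsilon^{-1} N^{1-\kappa} \ll N \epsilon^{\delta} + \epsilon^{-1} N^{1-\kappa}.
\end{equation*}
Optimising by choosing $\epsilon = N^{-\kappa/(1+\delta)}$ yields $\cnt(\bb a, x; N) \ll N^{1-c}$ with $c = \kappa\delta/(1+\delta) > 0$.

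The main obstacle will be justifying the polynomial equidistribution rate in a sufficiently general form: the Green--Tao theorem is stated in terms of a quantitative Diophantine condition on the translation, and I need to verify that the algebraicity of the defining data for $\bb a$ translates into such a quantitative condition at every stage of the nilpotent factorisation (including the possibly disconnected component group and the sub-nilmanifold reduction). A secondary technical point is to make sure that an algebraic representation of $\bb a$ can indeed be converted into a nilsystem whose Mal'cev coordinates, translation, and semialgebraic partition are all defined over $\bar\QQ$; this should be extractable from the construction in the proof of Theorem \ref{thm:BL-mini} (or, more economically, from Leibman's canonical representation discussed in Section \ref{sec:comp}).
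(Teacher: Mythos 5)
Your proposal follows essentially the same route as the paper's proof in the main case: there, too, the argument combines a polynomial equidistribution rate for algebraic polynomial nil-orbits (Lemma \ref{lem:quant-equi-algebraic}, proved exactly as you suggest via Theorem \ref{thm:GT} together with the Schmidt subspace theorem) with a Lipschitz approximation of the indicator of the measure-zero semialgebraic set --- concretely a tent function $H_\delta$ built from a polynomial vanishing on the proper variety containing $S$, with $\normLip{H_\delta} = O(1/\delta)$ and $\int H_\delta \, d\mu_X = O(\delta^{c_2})$ --- followed by the same optimisation in $\delta$; your \L{}ojasiewicz-type neighbourhood bound is this step in a slightly different guise. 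The genuine divergence is the non-equidistributed case. You propose to pass to the orbit closure and assert that the resulting sub-nilmanifold ``remains defined over $\bar\QQ$'' (and you correctly flag this, together with the disconnectedness issues, as your main obstacle). The paper avoids that claim entirely: it starts from the polynomial-orbit representation of Theorem \ref{thm:BL-poly} (so $G$ is connected and simply connected and the orbit is a polynomial sequence with algebraic coefficients), and when $(g(n)\Gamma)_{n}$ is not equidistributed it takes a horizontal character $\eta$ with $\eta\circ g$ constant, factors $g(n) = g'(n)\gamma(n)$ with $g'$ valued in the connected component of $\ker\eta$ and $\gamma$ periodic, passes to the arithmetic progressions $qn+i$, and inducts on $\dim G$; the data stays visibly algebraic at every step because the smaller group is cut out by a character rather than by taking a closure. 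If you wish to keep your orbit-closure reduction you would still have to justify both the algebraicity of the closure and a polynomial equidistribution rate on it, which amounts to the same work the induction does for free; otherwise your outline matches the paper's argument.
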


\begin{proof}
In this proof, we will make extensive use of the material discussed in Appendix \ref{app:nil}. We also recall that definitions of algebraic varieties and semialgebraic sets are given in Section \ref{ssec:semialgebraic}.

	By Theorem \ref{thm:BL-poly}, there exists a nilmanifold $X = G/\Gamma$ with $G$ connected and simply connected, 
	as well as a polynomial sequence $g \colon \ZZ \to G$ and a semialgebraic set $S \subset X$ such that for each $n \in \NN_0$ 
	we have the equivalence: $a_n = x \Longleftrightarrow g(n)\Gamma \in S$. 
	The nilmanifold $X$ can be equipped with a Mal'cev coordinates $\tau \colon X \to [0,1)^d$, where we put $d = \dim G$. Recall also from \eqref{eq:Malcev-G} that we have a closely related coordinate map $\tilde\tau \colon G \to \RR^d$. 
	Furthermore, inspecting the construction in \cite{BergelsonLeibman-2007}, we see that $X$ can be equipped with a Mal'cev coordinates $\tau$ such that 
	$\tilde \tau \circ g$ is a polynomial with algebraic coefficients and $\tau(S)$ is defined by equations and inequalities with algebraic coefficients.  
	Since $\freq(\bb a,x) = 0$, the measure of $S$ is zero. As a consequence (see e.g.\ \cite[Sec.\ 2.8]{BochnakCosteRoy-book}), $S$ is contained in a proper algebraic variety $V \subsetneq \RR^d$, which is defined over $\QQ$. 
	
	Now, we pick a non-zero polynomial $p \colon \RR^d \to \RR$ that vanishes on $V \cup \partial [0,1]^d$, which exists because $V \cup \partial [0,1]^d$ is contained in a proper algebraic sub-variety of $\RR^d$ (specifically, in the union of the algebraic variety $V$ and $2d$ hyperplanes). 
	Throughout the argument, we allow all implicit constants to depend on $X$, $g$, and $S$. 
	There are now two cases to consider, depending on the distribution of $g(n)\Gamma$. 
	
	Suppose first that the sequence $(g(n)\Gamma)_{n=0}^\infty$ is equidistributed in $X$. Then by Lemma \ref{lem:quant-equi-algebraic}, 
	there exists $c_1 > 0$ such that for each $N$ the sequence $\bra{ g(n)\Gamma }_{n=0}^{N-1}$ is $O(N^{-c_1})$-equidistributed for all $N \in \NN$.  
	For $\delta > 0$, consider the function $H_{\delta} \colon X \to [0,1]$ defined by
\[
	H_\delta(x) = \max\braBig{ 1-\frac{\abs{p(\tau(x))}}{\delta}, 0 } \,.
\]
	Then $\norm{H_{\delta}}_{\mathrm{Lip}} = O(1/\delta)$ and $\int H_{\delta} d\mu_{X} = O( \delta^{c_2})$ for some $c_2 > 0$. 
We can now estimate
\[
	\cnt(\bb a,x;N) \leq \sum_{n=0}^{N-1} H_\delta(g(n)\Gamma) \ll \frac{N^{1-c_1}}{\delta} + \frac{N}{\delta^{c_2} } \,\cdot
\] 
	Letting $\delta = N^{-c/c_2}$ for a sufficiently small $c > 0$, we obtain $\cnt(\bb a,x;N) = O(N^{1-c})$.
	
	Let us assume now that the sequence $(g(n)\Gamma)_{n=0}^\infty$ is not equidistributed in $X$. In that case,  
	there exists a horizontal character $\eta \colon X \to \RR/\ZZ$ such that $\eta \circ g$ is constant. 
	Let $G'$ be the connected component of $\ker \eta$ and let $\Gamma' = \Gamma \cap G'$. Then, $g(n)$ can be decomposed as 
	$g(n) = g'(n)\gamma(n)$, where $g'$ takes values in $G'$ and $\gamma$ is periodic.  This can be shown by adapting the proof of 
	\cite[Prop.{} 9.2]{GreenTao-2012}, or using this result as a black-box and passing to the limit $\delta \to 0$; periodicity of $\gamma$ 
	follows from Lemma A.12 therein. Let $q$ denote a period of $\gamma$. Each of the sequences $\bb a^{(i)}$, $0 \leq i < q$, defined by 
	$a^{(i)}_n = a_{qn+i}$ can be represented using the nilmanifold $G'/\Gamma'$ rather than $X$. 
	Reasoning by induction with respect to the dimension $d$, we may assume that for each $i$, $0 \leq i < q$, there exists 
	$c_i$ such that $\cnt(\bb a^{(i)},x;N) = O( N^{1-c_i} )$. Letting $c = \min_{0 \leq i < q} c_i$, we see that $\cnt(\bb a,x;N) = O(N^{1-c})$.
\end{proof}

\subsection{Concluding remarks}

We close this section by mentioning several rates of growth about which we do not know if they can be realised. 
It is also not clear if restriction to {\gpwords} with algebraic coefficients influences the answer.

\begin{question}
Does there exist a {\gpword} $\bb a$ over $\{0,1\}$ such that one of the following rate of growth holds?
\begin{enumerate}
\item $\cnt(\bb a, 1; N) = \Theta(N^{\lambda})$, where $\lambda \in (0,1) \setminus \QQ$\,.
\item $\cnt(\bb a, 1; N) = \Theta(\log^c N)$, where $c \in (1,\infty) \setminus \NN$\,.
\item $\cnt(\bb a, 1; N) = \Theta\bra{ e^ {\sqrt{\log N}}}$\,.
\end{enumerate}
\end{question}

\section{Computability}\label{sec:comp}

As we have already seen, a single {\gpword} can be represented by many different formulae. 
This happens, for instance, in Lemma \ref{lem:cl:[g=0]}. An even simpler example is provided by the {\gpword} $\bb a = \bb 1_{0}$, 
which can be represented as $a_n = \ip{1-\fp{\a n}}$ for any $\a \in \RR \setminus \QQ$. 
A more surprising representation of the same {\word} is attributed to H{\aa}land Knutson in \cite[p.\ 2]{GrahamOBryant-2010}:
\[
	a_n = \ip{ \ipnormal{\sqrt{2}n}2 \sqrt{2}n}- \ip{\sqrt{2}n}^2 - 2n^2 + 1\,.
\]

Existence of multiple representations of a single {\gpword} is not, in and of itself, surprising or worrying. 
After all, the same phenomenon occurs for usual polynomials, say with real coefficients.  
The reason why this ambiguity does not lead to problems in that case is that each polynomial $p(x) \in \RR[x]$ has 
a canonical representation $p(x) = \sum_{i=0}^d \a_i x^d$, where $\a_i \in \RR$ and $\a_d \neq 0$.

Leibman \cite{Leibman-2012} constructed a similar canonical representation for {\gp} sequences. 
The full statement of the main result, \cite[Thm.{} 0.1 \& 0.2]{Leibman-2012}, is rather technical, so we state only a simplified version. 
Specifically, we restrict our attention to {\gpwords} (Leibman's result concerns arbitrary bounded {\gp} maps from $\ZZ^d$ to $\RR$) and do not 
discuss the details of the construction (Leibman explicitly describes the families $\cF_M$ appearing below). 

\begin{theorem}\label{thm:Leib}
	Let $\bb a$ be a {\gpword} defined over a finite alphabet $\Sigma$. Then there exist families $\cF_M$, $M \in \NN$, of {\gp} maps from $\NN_0$ to $\RR$, depending only on polynomials which appear in a given representation of $\bb a$, 
	with the following properties. 
\begin{enumerate}
	\item\label{it:L:A} For each $M,d \in \NN$, and each $d$-tuple of different maps $v_1,v_2, \dots, v_d \in \cF_M$, 
	the sequence $(\fp{v_1(n)},\fp{v_2(n)},\dots,\fp{v_d(n)})_{n=0}^\infty$ is equidistributed in $[0,1)^d$.
	\item\label{it:L:B} There exist integers $M,Q,d \in \NN$, maps $v_1,v_2,\dots,v_d \in \cF_M$, and a partition of 
	\[ \textstyle \{0,{1}/{Q},\dots,(Q-1)/{Q}\}  \times [0,1)^d \]
	into pairwise disjoint semialgebraic pieces $S_i$, $i\in \Sigma$, such that for each $n \in \NN$ and $i \in \Sigma$, 
	\begin{align}\label{eq:419:1}
		a_n = i && \text{ if and only if }&& (\fp{n/Q},\fp{v_1(n)},\fp{v_2(n)},\dots,\fp{v_d(n)}) \in S_i\,.
	\end{align}
	\end{enumerate}   
\end{theorem}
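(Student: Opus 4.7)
The plan is to deduce this from the dynamical representation of bracket words (Theorem \ref{thm:BL-word}) combined with Leibman's canonical form for bounded generalised polynomials from \cite{Leibman-2012}. First, Theorem \ref{thm:BL-word} furnishes a minimal nilsystem $(X,T) = (G/\Gamma,T_g)$, a point $x \in X$, and a semialgebraic partition $X = \bigsqcup_{i \in \Sigma} S_i$ such that $a_n = i$ if and only if $T^n(x) \in S_i$. Decomposing $X = \bigsqcup_{j=0}^{Q-1} X_j$ into its finitely many connected components, which are cyclically permuted by $T$, the component containing $T^n(x)$ is determined by $n \bmod Q$; this yields the factor $\fp{n/Q}$ appearing in \eqref{eq:419:1}.

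Second, I would work inside a single connected component under the action of $T^Q$. Using Mal'cev coordinates $\tau$ on $X_0$, the orbit lifts to a polynomial map from $\NN_0$ to $\RR^{\dim G^\circ}$ taken modulo $\ZZ^{\dim G^\circ}$, and each coordinate is of the form $\fp{u(n)}$ for some GP map $u$. The pullback of each $S_i$ is semialgebraic by the definition of a piecewise polynomial map on a nilmanifold, so a provisional version of \eqref{eq:419:1} is already in hand. What is missing is the equidistribution assertion (i), which fails in general for the naive coordinates $u$ because of hidden $\ZZ$-linear relations modulo $1$ among their values.

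The heart of the proof is the construction of the canonical family $\cF_M$. Here I would invoke \cite[Thm.\ 0.1 \& 0.2]{Leibman-2012}: starting from the polynomial $g \colon \ZZ \to G$, one identifies all non-trivial horizontal characters $\eta$ for which $\eta \circ g$ reduces modulo $1$ to a polynomial of strictly smaller complexity plus a periodic correction, passes to the sub-nilmanifold killing each such obstruction, and absorbs the periodic corrections into the modulus $Q$. Iterating this process, which terminates because a well-chosen measure of complexity strictly decreases, produces a family of GP sequences whose fractional parts exhibit no remaining $\ZZ$-linear dependencies modulo $1$; these form $\cF_M$. The joint equidistribution in (i) then follows by the standard polynomial Weyl criterion applied on the quotient nilmanifold on which the recursion stabilises.

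The main obstacle is carrying out Leibman's recursion while preserving the GP-structure of the coordinate maps at every stage, and verifying that the resulting families $\cF_M$ depend only on the polynomial data underlying a given representation of $\bb a$ rather than on ancillary choices. This bookkeeping is precisely the technical core of \cite{Leibman-2012}; once it is granted, the present statement is obtained by specialising the canonical form to finitely-valued GP maps and re-packaging it as a coding by a semialgebraic partition of $\{0,1/Q,\dots,(Q-1)/Q\} \times [0,1)^d$, which yields (ii).
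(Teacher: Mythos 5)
The paper does not give an independent proof of Theorem \ref{thm:Leib}: it is stated as a simplified special case of \cite[Thm.\ 0.1 \& 0.2]{Leibman-2012}, specialised to finitely-valued {\gp} maps, and the remark following the statement records how the families $\cF_M$ are constructed there --- one starts from polynomials $p_1/M, p_2/M,\dots$ spanning the $\QQ$-algebra generated by the polynomials occurring in the chosen representation of $\bb a$, and then adjoins certain products $u\fp{v}$ with $u,v \in \cF_M$. Since your argument also ultimately rests on invoking \cite[Thm.\ 0.1 \& 0.2]{Leibman-2012}, its core step coincides with what the paper does; but the packaging differs, and as a self-contained route it has gaps. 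First, the detour through Theorem \ref{thm:BL-word} (connected components giving $Q$, Mal'cev coordinates, pullbacks of the $S_i$) is redundant once Leibman's theorem is invoked: that theorem applies directly to the finitely-valued {\gp} map underlying $\bb a$ and already supplies both the equidistributed family and the semialgebraic coding, so nothing from your first two paragraphs is actually consumed later.

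Second, and more substantively, what you describe as the content of \cite[Thm.\ 0.1 \& 0.2]{Leibman-2012} --- a recursion over horizontal characters removing obstructions to equidistribution of a polynomial orbit --- is factorization machinery in the spirit of \cite{GreenTao-2012}, not the canonical-form statement itself. If one genuinely ran such a recursion on the nilmanifold produced by Theorem \ref{thm:BL-word}, the resulting coordinate functions would a priori depend on the nilsystem and on ancillary choices made in the Bergelson--Leibman construction, whereas the theorem requires $\cF_M$ to depend only on the polynomials appearing in the given representation of $\bb a$; in Leibman's construction this is automatic because $\cF_M$ is built directly from that polynomial data, but in your set-up it would need a separate argument which you flag and leave open. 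Similarly, part \ref{it:L:A} asserts joint equidistribution of \emph{every} tuple of distinct elements of $\cF_M$, not merely of the tuple used in \ref{it:L:B}; this is part of what the cited theorem asserts, rather than something that follows from a Weyl-criterion check at the terminal stage of your recursion. The safe reading of your proposal is therefore: cite Leibman's theorem and specialise, which is precisely the paper's route; the surrounding dynamical scaffolding is neither needed nor, as written, sufficient on its own.
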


\begin{remark}
\begin{enumerate}
\item The families $\cF_M$ are explicitly constructed in \cite{Leibman-2012}. 
We start with polynomial maps $p_1(x),p_2(x),\ldots \in \RR[x]$ that span the $\QQ$-algebra generated by the polynomials which 
appear in some fixed representation of $\bb a$. As the first step, $p_1/M,p_2/M, \ldots \in \cF_M$. In subsequent steps, we add to $\cF_M$ 
some elements of the form $u \fp{v}$ where $u,v \in \cF_M$ have already been constructed. For details on exactly which of such elements should be added, 
we refer to the original paper.
\item The representation in \ref{thm:Leib}\ref{it:L:B} can be explicitly computed for a given representation of the {\gpword} $\bb a$. 
For fixed $M$, $Q$, and $d$, the sets $S_i$ are determined uniquely up to a set of measure zero. 
\end{enumerate}
\end{remark}

As a consequence of Theorem \ref{thm:Leib}, for a given bracket word $\bb a$ over an alphabet $\Sigma$ and $i \in \Sigma$, one can check if $a_n = i$ 
for almost all $n \in \NN_0$. Indeed, it is enough to verify if $S_i$ has full measure. Since, for any two {\gpwords} $\bb a$ and $\bb b$ over the same alphabet, 
we can construct the {\gpword} $(\braif{a_n=b_n})_{n=0}^\infty$, one can determine whether $a_n = b_n$ for almost all $n \in \NN_0$.  
The situation is totally different 
when we insist on exact equality, even when restricting our attention to bracket words with algebraic coefficients.

\begin{theorem}\label{thm:compbis}
	 Given {\gp} map $g \colon \NN_0 \to \{0,1\}$ 	with algebraic coefficients, the following problems are undecidable.
\begin{enumerate}
	\item\label{it:compbis-A} Determine if $g(n) = 0$ for at least one $n \in \NN_0$.
	\item\label{it:compbis-B} Determine if $g(n) = 0$ infinitely many $n \in \NN_0$.
	\item\label{it:compbis-B'}Determine if  $g(n)=0$ for all but finitely many $n\in\NN_0$.
	\item\label{it:compbis-A'}Determine if  $g(n) = 0$ for all $n \in \NN_0$.
\end{enumerate}	
\end{theorem}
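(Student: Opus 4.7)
My plan is to reduce Hilbert's tenth problem to items \ref{it:compbis-A} and \ref{it:compbis-B}. By Matiyasevich's theorem, it is undecidable whether a given polynomial $P(x_1,\dots,x_k) \in \ZZ[x_1,\dots,x_k]$ has a root in $\NN_0^k$. First, I observe that \ref{it:compbis-A} and \ref{it:compbis-A'} are in fact equivalent: since $g \colon \NN_0 \to \{0,1\}$ is a {\gp} map with algebraic coefficients, so is $1-g$ (by Corollary \ref{cor:cl:prod} and Lemma \ref{lem:cl:code}), and $g \equiv 0$ iff $1-g$ never vanishes. An analogous reduction via $g \mapsto 1-g$ links \ref{it:compbis-B} and \ref{it:compbis-B'}. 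It therefore suffices to prove undecidability of (i) and (ii).

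Given $P \in \ZZ[x_1,\dots,x_k]$, the objective is to construct, effectively in $P$, a {\gp} map $g_P \colon \NN_0 \to \{0,1\}$ with algebraic coefficients whose zero set is nonempty iff $P$ has a root in $\NN_0^k$, and is moreover either empty or infinite. This dichotomy would simultaneously settle (i) and (ii). The strategy has two steps: (a) produce {\gp} maps $\pi_1,\dots,\pi_k \colon \NN_0 \to \NN_0$ whose joint image is all of $\NN_0^k$, with every tuple attained for infinitely many $n$; (b) set
\[
g_P(n) = \braif{P(\pi_1(n),\dots,\pi_k(n)) = 0},
\]
which is then a $\{0,1\}$-valued {\gp} map with algebraic coefficients by Remark \ref{rem:composition} (composing $P$ with the $\pi_i$) and Lemma \ref{lem:cl:[g=0]} (detecting the vanishing locus).

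The main obstacle is step (a). Classical pairing bijections $\NN_0 \to \NN_0^k$ such as Cantor's are polynomial in the forward direction, but their inverses involve operations like $\sqrt{\cdot}$ that fall outside the piecewise polynomial framework of {\gp} maps. To circumvent this, I would exploit the dynamical characterisation of Theorem \ref{thm:BL-word}: finitely-valued {\gp} sequences are precisely codings of minimal nilsystem orbits by semialgebraic sets, and passing to sufficiently high-step nilmanifolds makes the coordinate sequences rich enough to encode arbitrary integer data. Alternatively, the flexibility afforded by Proposition \ref{prop:constr:super-sparse}, combined with the closure properties of Section \ref{sec:cl}, should permit a direct combinatorial construction of the enumeration by interleaving sparse {\gp} sets indexing the coordinates. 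Once this enumeration is in hand, the undecidability of (i) and (ii) follows immediately from Matiyasevich's theorem, and via the equivalences above this also yields the undecidability of (iii) and (iv).
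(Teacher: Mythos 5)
Your overall reduction is the same as the paper's: build a {\gp} map with algebraic coefficients that enumerates all of $\NN_0^k$ (or $\ZZ^d$) with every tuple hit infinitely often, compose with the polynomial $P$, detect vanishing with an Iverson-type bracket, and invoke Matiyasevich; the passage from (i),(ii) to (iii),(iv) via $g \mapsto 1-g$ is also exactly the paper's argument. The problem is that your step (a) --- the existence of {\gp} maps $\pi_1,\dots,\pi_k \colon \NN_0 \to \NN_0$ whose joint image is all of $\NN_0^k$ with infinite fibres --- is precisely where all the work lies, and you leave it as a hope rather than a construction. The paper devotes Proposition \ref{prop:comp:surjective-Z^d}, Lemma \ref{lem:comp:equidist-linear} and Lemma \ref{lem:comp:surjective} to it: one takes $g_A(n) = \bra{ \ip{ \fpnormal{\sqrt{2} n}^A n }, \ip{ \fpnormal{\sqrt{3} n}^A n }}$ for large $A$ and proves, via a quantitative simultaneous equidistribution estimate for $(\sqrt{2}n,\sqrt{3}n)$ with polynomial rate, that every pair $(k,l)$ is attained in every window $[N^2,N^2+N)$ for large $N$; surjections onto $\NN_0^d$ and $\ZZ^d$ are then obtained by iterating and differencing. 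Neither of your two suggested routes substitutes for this: Theorem \ref{thm:BL-word} only characterises \emph{finitely-valued} {\gp} sequences as semialgebraic codings of nilorbits, and gives no handle on unbounded integer-valued {\gp} maps with prescribed surjectivity, while Proposition \ref{prop:constr:super-sparse} produces characteristic functions of very sparse sets and there is no evident way to interleave such sets into a {\gp} enumeration of $\NN_0^k$ in which every tuple occurs (let alone infinitely often). So as written the proposal has a genuine gap at its central step.

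A minor point: with $g_P(n) = \braif{P(\pi_1(n),\dots,\pi_k(n)) = 0}$ the zero set of $g_P$ consists of the $n$ with $P(\pi(n)) \neq 0$, so to make ``$g_P$ has a zero'' equivalent to ``$P$ has a root'' you should use $1-g_P$ (or state the equivalence in terms of $g_P(n)=1$). This is a one-line fix and does not affect the substance.
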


\begin{corollary}
It is undecidable if a given {\gp} map $g \colon \NN_0 \to \RR$ with algebraic coefficients takes only finitely many values.
\end{corollary}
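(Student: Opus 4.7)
The plan is to derive this corollary from Theorem \ref{thm:compbis} by a one-line reduction. I would argue by contradiction, assuming that the finitely-valuedness problem is decidable and producing from this assumption a decision procedure for one of the undecidable problems listed in that theorem.

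The concrete reduction goes from part \ref{it:compbis-B'}. Given a GP map $g \colon \NN_0 \to \{0,1\}$ with algebraic coefficients, I would introduce the auxiliary map
\[
h(n) \;=\; n \cdot g(n) \,.
\]
Since $n \mapsto n$ is a polynomial with integer (hence algebraic) coefficients and the class of GP maps with algebraic coefficients is closed under multiplication, $h$ is itself a GP map $\NN_0 \to \RR$ with algebraic coefficients. Its image is
\[
h(\NN_0) \;=\; \{0\} \cup \{\, n \in \NN_0 \mid g(n) = 1 \,\},
\]
which is finite precisely when $\{\, n \in \NN_0 \mid g(n) = 1 \,\}$ is finite, that is, when $g(n) = 0$ for all but finitely many $n$. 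Consequently, any algorithm deciding whether a given GP map with algebraic coefficients takes only finitely many values would, when applied to $h$, decide the problem in part \ref{it:compbis-B'} of Theorem \ref{thm:compbis}, contradicting its undecidability.

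There is no real obstacle in this plan; all the substance of the corollary is absorbed into Theorem \ref{thm:compbis}. The only judgement call is which of the four undecidable problems to reduce from: multiplication by $n$ is tailored to \ref{it:compbis-B'}, since it transports ``eventual vanishing of $g$'' exactly to ``finiteness of the image of $h$''. Reductions from the other parts of the theorem would require a little more care; for instance, to reduce from \ref{it:compbis-B} one could first pass from $g$ to $1-g$ (which preserves algebraicity of coefficients) before applying the same multiplication trick.
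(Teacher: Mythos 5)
Your proposal is correct and is essentially identical to the paper's own proof: the paper also sets $h(n)=g(n)\,n$ and reduces to the undecidability of deciding whether $g(n)=0$ for all but finitely many $n$ (part \ref{it:compbis-B'} of Theorem \ref{thm:compbis}). No issues.
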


\begin{proof}
Take an arbitrary {\gp} map $g \colon \NN_0 \to \{0,1\}$ and consider the {\gp} map $h\colon \NN_0 \to \RR$ defined by $h(n)=g(n)n$. Then 
$h$ takes finitely many values if and only if $g(n)=0$ for all but finitely many $n\in\NN_0$.  
\end{proof}

\begin{remark}
\begin{enumerate}
\item	In Theorem \ref{thm:compbis}, item \ref{it:compbis-A} is reminiscent of the undecidability of Hilbert's tenth problem, concerning the existence of integer solutions to polynomial equations. In fact, our argument proceeds by a reduction to this problem.
	
\item	In the case of $k$-automatic sequences, the analogues of properties in 	\ref{thm:compbis}\ref{it:compbis-A}--\ref{it:compbis-A'} are easily seen to be decidable.
	
\item	 Another related result, due to Allouche and Shallit \cite[Thm.\ 5.2]{AlloucheShallit-1992}, asserts that it is undecidable if a given $k$-regular sequence with integer values has at least one vanishing term. We point out that, conversely to Theorem \ref{thm:compbis}\ref{it:compbis-A'}, it is decidable if a $k$-regular sequence is identically zero \cite[Thm.\ A]{KrennShallit-2022}.
\end{enumerate}

\end{remark}

A key component in the proof of Theorem \ref{thm:compbis} is the existence of a surjective {\gp} map from $\NN_0$ to $\ZZ^d$ for each $d \in \NN$.

\begin{proposition}\label{prop:comp:surjective-Z^d}
	For each $d \in \NN$, there exists a {\gp} map $h_d \colon \NN_0 \to \ZZ^d$ with algebraic coefficients such that for each $x \in \ZZ^d$ there exist infinitely many $n \in \NN_0$ with $h_d(n) = x$. In particular, $h_d$ is surjective.
\end{proposition}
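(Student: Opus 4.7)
The plan is to factor the problem: it suffices to construct (i) a GP map $h_1\colon \NN_0\to\ZZ$ with algebraic coefficients such that every $k\in\ZZ$ is hit infinitely often, and (ii) a GP surjection $\Phi_d\colon\NN_0\to\ZZ^d$ with algebraic coefficients (not required to have infinite fibres). Taking $g(n) := |h_1(n)|$, which is a GP map $\NN_0\to\NN_0$ whose every fibre is infinite since $h_1$'s are, and setting $h_d := \Phi_d\circ g$, Remark \ref{rem:composition} yields that $h_d$ is GP with algebraic coefficients, and for each $x\in\ZZ^d$, choosing any $j$ with $\Phi_d(j)=x$, there are infinitely many $n$ with $g(n)=j$, so $h_d(n)=x$ for infinitely many $n$.

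For (i), I would take $\alpha=\sqrt 2$ (or any algebraic irrational with bounded partial quotients) and set $h_1(n) = \ip{2n\fp{n\alpha}} - n$. This is manifestly a GP map with algebraic coefficients whose values lie in $\{-n,\ldots,n-1\}$, hence unbounded in both directions. The equation $h_1(n)=k$ is equivalent to $\fp{n\alpha}$ lying in the length-$1/(2n)$ target interval $I_n^{(k)}:=[\tfrac12+\tfrac{k}{2n},\tfrac12+\tfrac{k+1}{2n})$ centred near $\tfrac 12$. By the three-distance theorem applied to $\alpha$, the gaps in $\{\fp{n\alpha}:N\le n\le 2N\}$ are all $O(1/N)$; since $I_n^{(k)}$ itself slides by only $O(1/N)$ across this window, a short covering argument produces $n\in[N,2N]$ with $\fp{n\alpha}\in I_n^{(k)}$, and letting $N\to\infty$ yields infinitely many such $n$.

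For (ii), the case $d=1$ is handled by the explicit bijection $\Phi_1(j) = (-1)^{j+1}\ceil{j/2}$, which is GP via $(-1)^{j+1} = 1 - 2(j - 2\ip{j/2})$. The case $d\ge 2$ is the main obstacle, because no polynomial surjection $\NN_0\to\ZZ^d$ exists (the image of any polynomial lies on a lower-dimensional algebraic variety) and standard pairings involve square roots that are not GP-expressible; moreover a naive product of $d$ one-dimensional constructions is blocked by Schmidt's subspace theorem, which prevents simultaneous approximation of $(\fp{n\alpha_i})_{i=1}^d$ to precision $1/n$ infinitely often when the $\alpha_i$ are algebraic. The approach I would pursue is to use the super-sparse GP sets of Proposition \ref{prop:constr:super-sparse}: with $S=\{n_i:i\ge 1\}$ a GP set having $n_{i+1}\ge n_i^2$, we would define $\Phi_d(n_i)$ to be the $i$-th element of some enumeration of $\ZZ^d$ and $\Phi_d(n)=0$ for $n\notin S$. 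The hard part will be expressing $\Phi_d|_S$ as a GP function of $n$, because the slot index $i$ relates to $n=n_i$ through a doubly-logarithmic-type inverse that is not a priori GP. The key trick I expect the paper to exploit is the Diophantine provenance of $S$ in Proposition \ref{prop:constr:super-sparse}, where $S$ is essentially $\{n:\fpa{n\gamma}<1/n^C\}$ for a carefully chosen algebraic $\gamma$ of large type; this should let one read off the digits of $i$ in a fixed integer base directly from $n$ via GP expressions involving $\fpa{n\gamma}$ together with iterated floors, completing the construction.
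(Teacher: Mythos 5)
Your reduction (compose a one-dimensional map with infinite fibres with a surjection $\Phi_d\colon\NN_0\to\ZZ^d$) is logically sound, and your part (i) is plausible for $\alpha=\sqrt2$ via the three-distance theorem. But the proof has a genuine gap: part (ii), the construction of a GP surjection onto $\ZZ^d$ for $d\ge 2$, is exactly the crux of the proposition, and you do not construct it — you only speculate that the index $i$ of the $i$-th element of a super-sparse GP set $S=\{n_i\}$ from Proposition \ref{prop:constr:super-sparse} can be ``read off'' from $n_i$ by GP expressions in $\fpa{n\gamma}$, and that an arbitrary enumeration of $\ZZ^d$ can then be GP-encoded in those digits. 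Neither claim is established, and both look hard: the relation between $i$ and $n_i$ is (at least) doubly exponential, and nothing in the paper's toolkit gives a GP formula extracting base-$b$ digits of such an index, let alone composing it with an arbitrary bijection $\NN_0\to\ZZ^d$. So the proposal, as written, proves the statement only modulo an unproved assertion that is at least as strong as the proposition itself.

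Moreover, the Diophantine obstruction you invoke to dismiss the direct product approach is not real, and the paper's actual proof goes precisely that route. The trick is to avoid needing approximation to precision $1/n$: the paper takes $g_A(n)=\bigl(\ip{\fp{\sqrt2\,n}^A n},\,\ip{\fp{\sqrt3\,n}^A n}\bigr)$ with a large fixed exponent $A$, so that hitting a prescribed value $(k,l)$ for some $n\in[M,M+N)$ with $M=N^2$ only requires steering $(\fp{\sqrt2\,n},\fp{\sqrt3\,n})$ into a target of polynomial (power-saving) size. The needed input is a quantitative simultaneous equidistribution estimate for $(\sqrt2\,n,\sqrt3\,n)$ on windows of length $N$ with error $N^{-c}$ (Lemma \ref{lem:comp:equidist-linear}), which is proved using exactly the norm-form/subspace-type lower bound $\fpa{n\sqrt2+m\sqrt3}\gg N^{-3}$ that you cite as an obstruction. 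Infinitude of each fibre comes from letting $N\to\infty$; surjectivity onto $\NN_0^{2d}$ follows by iterating $g_2$ coordinatewise, and $\ZZ^d$ is reached by taking differences of pairs of coordinates. If you replace your part (ii) by this power-of-the-fractional-part construction, your compositional framework becomes unnecessary, since the fibres are already infinite.
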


Before we proceed to construct the maps mentioned in Proposition \ref{prop:comp:surjective-Z^d}, let us show how their existence can be used to deduce Theorem \ref{thm:compbis} from classical undecidability results.

\begin{proof}[Proof of Theorem \ref{thm:compbis} assuming Proposition \ref{prop:comp:surjective-Z^d}]
	Let $p$ be an arbitrary polynomial in $\mathbb Z[x_1,\ldots,x_d]$ and let $h \colon \NN_0 \to \ZZ^d$ be the map constructed in Proposition 
	\ref{prop:comp:surjective-Z^d}. Define $g_p$ by $g_p(n) = \braif{ p(h(n)) = 0}$ for $n\in \NN_0$. Then  $g_p$ is a 
	{\gp} map from $\NN_0$ to $\{0,1\}$ with algebraic coefficients, and the following conditions are equivalent. 
\begin{enumerate}
\item[{\rm (a)}] There exists $n \in \NN_0$ with $g_p(n) = 0$.
\item[{\rm (b)}] There exist infinitely many $n \in \NN_0$ with $g_p(n) = 0$.
\item[{\rm (c)}] There exist $x_1,x_2,\dots,x_d \in \ZZ$ with $p(x_1,x_2,\dots,x_d) = 0$.
\end{enumerate}	
Hilbert's tenth problem is known to be undecidable (see, for instance, \cite{Matiyasevich-1993}). Hence, comparing (a) and (c), we conclude that there is no algorithm to determine whether an element of the set 
	 $\{g_p \mid d\geq 1, \,p\in\mathbb Z[x_1,\ldots,x_d]\}$   has a zero. Similarly,  comparing (b) and (c) we conclude that there is no algorithm to determine whether an element of the aforementioned set has infinitely many zeroes. This finishes the proof in cases \ref{it:compbis-A} and \ref{it:compbis-B}.
	 
	 To derive case \ref{it:compbis-B'} from \ref{it:compbis-B}, it is enough to notice that, for any map $g \colon \NN_0 \to \{0,1\}$, the condition that $g(n) = 0$ for all but finitely many $n \in \NN_0$ is equivalent to the condition that there are finitely many $n \in \NN_0$ such that $1-g(n) = 0$. Likewise,  \ref{it:compbis-A'} follows from \ref{it:compbis-A} 
	 and the observation that, $g(n) = 0$ for all $n \in \NN_0$ is equivalent to the condition that there does not exist $n \in \NN_0$ such that $1-g(n) = 0$. 
\end{proof} 

We now turn to the proof of Proposition \ref{prop:comp:surjective-Z^d}. The main difficulty lies in the construction of a surjective {\gp} map $\NN_0 \to \NN_0^2$; once such a map is constructed, it will not be difficult to use it to construct surjective maps $\NN_0 \to \ZZ^d$ for all $d \in \NN$. As a source of motivation, let us consider a (random) map $\bb f \colon \NN_0 \to \NN_0^2$ given by $\bb f(n) = \bra{\ip{\bb X_n^2 n},\ip{\bb Y_n^2 n}}$, where $\bb X_n,\bb Y_n$ for $n \in \NN_0$ are jointly independent random variables, uniformly distributed in $[0,1)$. One can explicitly compute that, for fixed $k,l \in \NN$ and sufficiently large $N$, we have \[\PP(\bb f(n) = (k,l)) = \frac{\bra{\sqrt{k+1}-\sqrt{k}}\bra{\sqrt{l+1}-\sqrt{l}}}{n}\,\cdot\] Hence, by the second Borel–Cantelli lemma, almost surely there exists infinitely many $n \in \NN_0$ with $\bb f(n) = (k,l)$. This prompts us to consider generalised polynomials of the form $\bra{ \ip{ \fpnormal{\sqrt{2} n}^2 n }, \ip{ \fpnormal{\sqrt{3} n}^2 n }}$, or, more generally 
\[
	\bra{ \ip{ \fpnormal{\sqrt{2} n}^A n }, \ip{ \fpnormal{\sqrt{3} n}^A n }}
\]
for $A \geq 2$, and exploit equidistribution properties of the sequence $\bra{\fpnormal{\sqrt{2} n},\fpnormal{\sqrt{3} n}}$ in $[0,1)^2$. As a first step in that direction, we mention the following quantitative equidistribution estimate.

\begin{lemma}\label{lem:comp:equidist-linear}
	There exist $N_0 > 0$ and $c > 0$ such that for each $M \in \ZZ$, $N \geq N_0$, and $(x,y) \in [0,1)^2$, 
	there exists $n \in [M,M+N)$ such that 
	$$\max\left\{\fpa{\sqrt{2}n - x}, \fpa{\sqrt{3}n-y}\right\} \leq 1/N^c\,.$$
\end{lemma}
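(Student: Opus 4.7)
The plan is to establish an effective equidistribution statement for the sequence $\mathbf{x}_n = (\fp{\sqrt{2}n}, \fp{\sqrt{3}n}) \in [0,1)^2$ on any window $n \in [M, M+N)$, by combining the Erd\H{o}s--Turán--Koksma inequality with an explicit lower bound on $\fpa{h_1\sqrt{2} + h_2\sqrt{3}}$ for nonzero integer vectors $(h_1,h_2)$. Once the discrepancy of the first $N$ terms of the shifted sequence is shown to be $O(N^{-\gamma})$ for some $\gamma > 0$ independent of $M$, the approximation statement follows from a standard volume comparison: the target box of side $2/N^c$ around $(x,y)$ has measure $\Theta(N^{-2c})$, so it must contain at least one point of the sequence as soon as $N^{-2c} \gg D_N$, and we can take $c < \gamma/2$.

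The main analytic input is the bound on exponential sums. For any $(h_1,h_2) \in \ZZ^2 \setminus \{(0,0)\}$ and any $M$, the geometric series estimate gives
\[
\biggl| \sum_{n=M}^{M+N-1} e\bigl((h_1\sqrt{2} + h_2\sqrt{3})n\bigr) \biggr| \ll \frac{1}{\fpa{h_1\sqrt{2} + h_2\sqrt{3}}},
\]
so the task reduces to lower-bounding $\fpa{h_1\sqrt{2} + h_2\sqrt{3}}$ polynomially in $H := \max(|h_1|,|h_2|)$. I would do this directly via the norm form: for any integer $k$,
\[
\bigl((k - h_1\sqrt{2} - h_2\sqrt{3})(k - h_1\sqrt{2} + h_2\sqrt{3})(k + h_1\sqrt{2} - h_2\sqrt{3})(k + h_1\sqrt{2} + h_2\sqrt{3})\bigr) = (k^2 - 2h_1^2 - 3h_2^2)^2 - 24 h_1^2 h_2^2,
\]
which is a nonzero integer when $(h_1,h_2) \neq (0,0)$ (as $\sqrt{6} \notin \QQ$). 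Taking $k$ to be the nearest integer to $h_1\sqrt{2} + h_2\sqrt{3}$, the other three factors are each $O(H)$, whence $\fpa{h_1\sqrt{2} + h_2\sqrt{3}} \gg H^{-3}$.

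Feeding this into the Erd\H{o}s--Turán--Koksma inequality in dimension $2$, for any $H \geq 1$ the discrepancy of $(\mathbf{x}_n)_{n \in [M, M+N)}$ satisfies
\[
D_N \ll \frac{1}{H} + \sum_{\substack{(h_1,h_2) \in \ZZ^2 \\ 0 < \max(|h_1|, |h_2|) \leq H}} \frac{1}{r(h_1,h_2)} \cdot \frac{H^3}{N} \ll \frac{1}{H} + \frac{H^3 (\log H)^2}{N},
\]
where $r(h_1,h_2) = \max(1,|h_1|)\max(1,|h_2|)$. Optimising with $H = N^{1/4}$ gives $D_N \ll N^{-1/4}(\log N)^2$, and the volume comparison above then yields the lemma with any $c < 1/8$ (for $N$ large enough).

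The main (mild) obstacle is verifying the shift-invariance: the ETK inequality in its usual form refers to the first $N$ terms of a sequence, but since the exponential sum bound from the geometric series is uniform in the starting index $M$, the same discrepancy bound applies to $(\mathbf{x}_n)_{M \leq n < M+N}$, so $N_0$ and $c$ can be chosen independently of $M$. Everything else is routine, and the constants $N_0$ and $c$ produced are effective.
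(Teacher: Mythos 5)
Your proposal is correct, and it shares the paper's key Diophantine input but packages it differently. The heart of both arguments is the same norm-form estimate: the product of the four conjugates $k \pm h_1\sqrt{2} \pm h_2\sqrt{3}$ is a nonzero integer, giving $\fpa{h_1\sqrt{2}+h_2\sqrt{3}} \gg H^{-3}$ for $0 < \max(|h_1|,|h_2|) \leq H$ (your algebraic identity checks out, and the nonvanishing argument covering the cases $h_1h_2 = 0$ and $2h_1^2 = 3h_2^2$ is fine). Where you diverge is in the machinery that converts this into the hitting statement: the paper simply cites a theorem of Chen on inhomogeneous simultaneous approximation (with the norm bound verifying its hypothesis), and remarks that one could alternatively invoke the quantitative Leibman/Green--Tao equidistribution theorem from the appendix; you instead give a self-contained derivation via the geometric-series bound $\bigl|\sum_{n=M}^{M+N-1} e((h_1\sqrt{2}+h_2\sqrt{3})n)\bigr| \ll \fpa{h_1\sqrt{2}+h_2\sqrt{3}}^{-1}$ (uniform in $M$, which correctly handles the shifted window) and the two-dimensional Erd\H{o}s--Tur\'an--Koksma inequality, followed by a volume comparison. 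Your route is more elementary and fully effective, at the cost of a weaker exponent (any $c < 1/8$ versus the $2/7$ the paper extracts from Chen's theorem), which is irrelevant since the lemma only needs some $c > 0$. The only detail worth a sentence in a written-up version is that the target box centred at $(x,y)$ may wrap around the torus, so one should split it into at most four anchored boxes (or note that discrepancy with respect to anchored boxes controls general boxes up to a bounded factor); this is routine and does not affect the conclusion.
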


\begin{proof}
	A more precise version follows from \cite[Thm 1.(ii)]{Chen-2000} combined with standard estimates on the quality of approximate rational relations 
	between algebraic numbers. Specifically, we have the well-known estimate $$\min_{(n,m) \in [N]^2 \setminus \{(0,0)\}} \fpa{n\sqrt{2} + m \sqrt{3}} \gg N^{-3}\,,$$
	which is easily obtained by noticing that the norm $\prod_{\sigma,\rho \in \{\pm 1\} }\bra{k +\sigma n\sqrt{2} + \rho m \sqrt{3}}$ is an integer for all $k,n,m \in \ZZ$. (Stronger estimates follow from Schmidt's subspace theorem \cite{Schmidt-1972}, but are not needed for our purposes.)
	 
	The theorem is applied with $n = 2$, $m=1$, $\a_1 = x$, $\a_2 = y$, $T_1 = M$, $T_1'=M+N$, $\delta_1=\delta_2=1$, and $M = N^{1/7}$; 
	then $\Delta = 2$, $\Lambda_2 \gg N^{-3}$, and $\norm{S(T)} = N$. The conclusion, after elementary manipulations, asserts that there exists 
	$n \in [M,M+N)$ such that 
	$$\max\left\{\fpa{\sqrt{2}n - x}, \fpa{\sqrt{3}n-y}\right\} \ll N^{-2/7}\,.$$ Alternatively, one can also derive this estimate as a special case of 
	Theorem \ref{thm:GT}.
\end{proof}

\begin{lemma}\label{lem:comp:surjective}
	For $A \in \NN$, define the map $g_A \colon \NN_0 \to \NN_0^2$ by 
	\[
		g_A(n) =  \bra{ \ip{ \fpnormal{\sqrt{2} n}^A n }, \ip{ \fpnormal{\sqrt{3} n}^A n }}\,.
	\] 
	There exists a constant $A_0$ such that for all $A > A_0$, for all $(k,l) \in \NN_0$ the set
	\[
		\set{ n \in \NN_0 }{ g_A(n) = (k,l) }
	\]
	is infinite. In particular, $g_A$ is surjective.
\end{lemma}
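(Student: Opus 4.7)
My strategy is to apply Lemma \ref{lem:comp:equidist-linear} with well-chosen targets so as to locate, for each sufficiently large threshold $n_0$, a specific integer $n \geq n_0$ with $g_A(n) = (k,l)$. Fix $(k,l) \in \NN_0^2$. I would set $N = \lfloor \sqrt{n_0} \rfloor$ and choose the targets $x = ((k+1/2)/n_0)^{1/A}$ and $y = ((l+1/2)/n_0)^{1/A}$, designed so that $x^A n_0 = k+1/2$ lies at the midpoint of $[k,k+1)$, and likewise for $y$. Lemma \ref{lem:comp:equidist-linear} then produces some $n \in [n_0, n_0+N)$ with $\fpa{\sqrt{2}n - x}, \fpa{\sqrt{3}n - y} \leq N^{-c}$.

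The heart of the plan is to verify that this $n$ satisfies $\fp{\sqrt{2}n}^A n \in [k,k+1)$ and $\fp{\sqrt{3}n}^A n \in [l,l+1)$. For $n_0$ large, $x$ lies safely inside $(N^{-c}, 1 - N^{-c})$, so one may write $\fp{\sqrt{2}n} = x + \tau$ with $|\tau| \leq N^{-c}$. Then
\[
\fp{\sqrt{2}n}^A n \;=\; x^A n \cdot (1 + \tau/x)^A \;=\; (k+1/2)\,\frac{n}{n_0}\,\bigl(1 + O(A N^{-c}/x)\bigr).
\]
Two error terms appear: the relative variation $n/n_0 - 1 \leq N/n_0 = n_0^{-1/2}$ and the relative Diophantine error $N^{-c}/x \asymp n_0^{-c/2+1/A}$. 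Both tend to zero as $n_0 \to \infty$ provided $A > 2/c$. Hence $\fp{\sqrt{2}n}^A n = (k+1/2)(1 + o(1))$, which eventually falls in $[k, k+1)$, and the second coordinate is handled identically.

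Setting $A_0 = \lceil 2/c \rceil + 1$ ensures $A > 2/c$ for every $A > A_0$, and the argument then produces, for each $(k,l)$ and each large enough $n_0$, some $n \in [n_0, n_0 + \sqrt{n_0})$ with $g_A(n) = (k,l)$. Since $n_0$ is arbitrary, the set of such $n$ is unbounded, hence infinite. The principal obstacle is balancing three competing scales: the Diophantine accuracy $N^{-c}$, the size $x \asymp n_0^{-1/A}$ of the target (small for large $n_0$, so relative errors threaten to blow up), and the window length $N$ (short enough that $n/n_0 \approx 1$ but long enough to apply Lemma \ref{lem:comp:equidist-linear}). The choice $N = \sqrt{n_0}$ together with $A > 2/c$ reconciles these constraints, and the lower bound on $A$ is essentially forced by the requirement that the relative error $N^{-c}/x$ vanish.
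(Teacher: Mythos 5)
Your proposal is correct and takes essentially the same route as the paper: the paper also applies Lemma \ref{lem:comp:equidist-linear} on a window $[M,M+N)$ with $M=N^2$ and target $x$ chosen so that $x^A M$ hits the desired integer, then controls the relative errors $N^{-c}/x$ and $N/M$ by taking $A$ larger than a multiple of $1/c$. The only cosmetic differences are your midpoint target $(k+1/2)/M$ in place of the paper's endpoint $k/M$ (which lets you treat $k=0$ uniformly, where the paper needs a separate case) and the resulting threshold $A>2/c$ versus the paper's $A>3/c$; both are immaterial since the lemma only asserts the existence of some $A_0$.
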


\begin{proof}
	Let $c$ be the constant from Lemma \ref{lem:comp:equidist-linear} and let $A > 3/c$. 
	Fix $(k,l) \in \NN_0^2$, let $N$ be a sufficiently large integer (to be determined in the course of the argument), and put $M = N^2$. 
	It will suffice to show that there exists $n \in [M,M+N)$ with $g_A(n) = (k,l)$. 
	
	Pick $(x,y)\in [0,1)^2$ such that $x^A M = k$ and $y^A M = l$ 
	(that is, $x = (k/M)^{1/A}$ and $y = (l/M)^{1/A}$). By Lemma \ref{lem:comp:equidist-linear}, we can find $n \in [M,M+N)$ such that 
	$x \leq \fp{\sqrt{2}n} < x+1/N^{c}$ and $y \leq \fp{\sqrt{3}n} < y+1/N^{c}$. It remains to show that
	\begin{equation}\label{eq:785:1}
		k \leq \fpnormal{\sqrt{2} n}^A n < k+1 \;\; \mbox{ and } \;\; l \leq \fpnormal{\sqrt{3} n}^A n < l+1\,.
	\end{equation}	
We only consider the first of these two conditions, the second one is fully analogous. The lower bound is immediate:
\[
	\fpnormal{\sqrt{2} n}^A n \geq x^A n = kn/M \geq k\,.
\]
For the upper bound, we first obtain that 
\begin{align*}
	\fpnormal{\sqrt{2} n}^A n < (x+N^{-c})^A (M+N) \,.
\end{align*}
If $k = 0$, then $x = 0$ and thus $(x+N^{-c})^A (M+N) < 2 N^{2-Ac} < 1$ (recall that $A > 3/c$). 
If $k \geq 1$, then $x > N^{-2c/3}$ and hence
\begin{align*}
(x+N^{-c})^A (N_0+N) &= x^A N_0 \bra{1+N^{-c}/x}^A(1+N/M) 
\\ &\leq k \brabig{1+N^{-c/3}}^A(1+1/N) \leq k \exp\bra{ (A+1) N^{-c/3} } \,.
\end{align*}
Thus, we can find $N_0 = N_0(k) = O_{A}(k^{3/c})$ such that, for all $N \geq N_0$, we have
\[
	\fpnormal{\sqrt{2} n}^A n\leq  k \exp\bra{ (A+1) N^{-c/3} } < k+1 \,,
\]
as needed.
\end{proof}

\begin{proof}[Proof of Proposition \ref{prop:comp:surjective-Z^d}]
Let $g_2 \colon \NN_0 \to \NN_0^2$ denote the map constructed in Lemma \ref{lem:comp:surjective}. 
For $d \geq 2$, define  $g_{d} \colon \NN_0 \to \NN_0^{d}$ inductively by 
\[ g_{i+1}(n) = \bra{ g_2(n)_1, g_i(g_2(n)_2)}.\] 
An inductive argument shows that for each $x \in \NN_0^d$, there are infinitely many $n \in \NN_0$ with $g_d(n) = x$. Next, define $h_d \colon \NN_0 \to \ZZ^d$ by 
\[ h_d(n) = \bra{ g_{2d}(n)_1 - g_{2d}(n)_{d+1}, g_{2d}(n)_2 - g_{2d}(n)_{d+2}, \dots, g_{2d}(n)_{d} - g_{2d}(n)_{2d}}\,.\] 
Since the map $\NN_0^2 \to \ZZ$, $(n,m) \mapsto n-m$ is surjective, for each $x \in \ZZ^d$, there are infinitely many $n \in \NN_0$ with $h_d(n) = x$. 
\end{proof}

\subsection{Consequences for bracket words}

As a consequence of Theorem \ref{thm:compbis}, we deduce Theorem \ref{thm:comp}, which we restate below for the reader's convenience.

\thmdecide*

\begin{proof}
Given an arbitrary {\gp} map $g \colon \NN_0 \to \{0,1\}$, define $\bb a$ by $a_n = g(n)$ for all $n \in \NN_0$, and let $\bb b$ be defined by $b_n=0$ 
for all $n\in \NN_0$.  Then $\bb a = \bb b$ if and only if $g(n) = 0$ for all $n \in \NN_0$.  
Thus, the results follows from Theorem \ref{thm:comp}.
\end{proof}

\section{Linear recurrences}\label{sec:pisot}

In earlier sections, we have encountered examples of {\gpwords} related to linear recurrence sequences, 
such as the Fibonacci numbers. Here, we discuss these results in more detail and provide some new arguments. 

\subsection{Results}

We recall that a \emph{Pisot number} is a real algebraic integer $\beta > 1$ such that all Galois conjugates of $\beta$ have absolute value strictly less than $1$. 
Similarly, a \emph{Salem number} is a real algebraic integer $\beta > 1$ whose Galois conjugates  all have absolute value no greater than $1$, and at 
least one of which has absolute value exactly $1$. The minimal polynomial for a Salem number must be reciprocal, which implies that $1/\beta$ is a Galois conjugate of $\beta$, and that all other roots have absolute value exactly one. As a consequence, a Salem number is a unit in the ring of algebraic integers. 

\begin{theorem}\label{prop:pisot:rec}
	Let $\beta > 1$ be an algebraic unit with minimal polynomial $p(x) = x^d - \sum_{i=1}^d x^{d-i} a_i$ for some $d \in \NN$ and $a_1,a_2, \dots, a_d \in \ZZ$. 
	Let $(n_i)_{i=0}^\infty$ be a sequence of non-negative integers satisfying the linear recurrence 
	\[ n_{i+d} = \sum_{i=1}^d a_i n_{i+d-1}\, , \qquad i \in \NN_0 \,,\]	
	and put $E = \set{n_i}{i \in \NN_0}$. Then $\bb{1}_E$ is a {\gpword} if one of the following holds. 
	\begin{enumerate}
	\item\label{it:pisot:rec-1} $d = 2$ (in this case, $\beta$ must be a Pisot number).
	\item\label{it:pisot:rec-2} $d = 3$ and $\beta$ is a Pisot number with no real Galois conjugate ({\it i.e.}, the discriminant of $p$ is negative). 
	\item\label{it:pisot:rec-3} $\beta$ is a Salem number.
	\end{enumerate}
\end{theorem}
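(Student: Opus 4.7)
The plan is to construct, in each case, a generalised polynomial map $g\colon \NN_0 \to \RR$ (with algebraic coefficients) whose zero set equals $E$, up to a finite symmetric difference that can be adjusted via Example \ref{ex:ex:1_0} and Proposition \ref{prop:cl:cases}. By Lemma \ref{lem:cl:[g=0]} and Corollary \ref{cor:constr:fibre}, this will suffice to conclude that $\bb{1}_E$ is a bracket word. The construction exploits the Binet-type closed form $n_i = A\beta^i + \sum_k A_k \beta_k^i$, where $(\beta_k)$ ranges over the non-principal Galois conjugates of $\beta$. The algebraic unit hypothesis $\prod_k \beta_k = \pm 1/\beta$ forces the subdominant contributions to be tightly controlled, either through exponential decay (Pisot cases) or through bounded quasi-periodic behaviour (Salem case).

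For case \ref{it:pisot:rec-1}, I would write $p(x) = x^2 - a_1 x - a_2$ with $a_2 = \pm 1$ and let $\beta'$ be the other root, so $|\beta'| = 1/\beta$. Consider the norm form $Q(x,y) = x^2 - a_1 xy - a_2 y^2 = (x - \beta y)(x - \beta' y)$. Substituting $n_i = A\beta^i + B\beta'^i$ yields the invariant
\[
    Q(n_{i+1}, n_i) = -AB(\beta - \beta')^2 (-a_2)^i \in \{C_0, C_1\},
\]
which depends only on the parity of $i$. Since $|n_i/\beta - n_{i-1}| = O(\beta^{-i})$, we have $\nint{n_i/\beta} = n_{i-1}$ for all $i$ sufficiently large. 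I would therefore set
\[
    g(m) = \bra{Q(m, \nint{m/\beta}) - C_0}\bra{Q(m, \nint{m/\beta}) - C_1},
\]
a generalised polynomial with algebraic coefficients that vanishes on $E$ for $m$ large. For the converse direction, the integer solutions to $Q(x,y) \in \{C_0, C_1\}$ with $x/y$ close to $\beta$ decompose, by the classical theory of Pell-type equations, into finitely many orbits under multiplication by $\beta$ in $\ZZ[\beta]$, each of which is itself a Pisot sequence of the considered kind. If any orbit other than that of $(n_1, n_0)$ appears among the zeros of $g$, I would eliminate it by intersecting with suitable arithmetic progressions modulo some $q$, using that distinct orbits reduce differently modulo $q$ once $q$ is large enough, and that these finite Boolean operations preserve the class of generalised polynomial sets by Proposition \ref{prop:cl:sets}.

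Cases \ref{it:pisot:rec-2} and \ref{it:pisot:rec-3} follow a similar template, with $Q$ replaced by a higher-degree invariant coming from the companion matrix of $p$, and $\nint{m/\beta}$ replaced by a sequence of iterated roundings. In case \ref{it:pisot:rec-2}, the complex-conjugate pair $\beta', \overline{\beta'}$ satisfies $|\beta'|^2 = 1/\beta$, so $|n_i - A\beta^i| = O(n_i^{-1/2})$; this decay is still fast enough to recover the pair $(n_{i+1}, n_i)$ from $m = n_{i+2}$ via two successive roundings. Case \ref{it:pisot:rec-3} is more delicate: the contribution $R(i)$ to $n_i$ coming from the conjugates of $\beta$ on the unit circle does not decay but instead forms a bounded quasi-periodic function, which is itself a bracket word as a semialgebraic coding of the torus rotation $i \mapsto (i\theta_1, \dots, i\theta_k) \bmod 1$. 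The index $i$ of a given $m = n_i \in E$ can still be recovered from the residual $1/\beta$-part of the Binet expansion, yielding a generalised polynomial identity $m = A\beta^{i(m)} + A'\beta^{-i(m)} + R(i(m))$ whose satisfaction would characterise $E$.

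The main obstacle in each case is the converse step: showing that the candidate identity defining $g$ really has $E$ --- and not a strictly larger set --- as its exact zero locus. In the Pisot cases this is controlled by the Pell-equation picture, where the parasitic orbits can always be severed by modular conditions. The Salem case is the most subtle, since any slight mismatch between $m$ and its candidate index $i(m)$ propagates directly into the non-decaying correction $R(i(m))$; the key tools here will be the $\QQ$-linear independence of $\log \beta$ with the angles $\theta_j$, which follows from the irreducibility of the Salem minimal polynomial, and the minimality of the associated torus rotation, which together ensure that the reconstruction $m \mapsto i(m)$ is unambiguous.
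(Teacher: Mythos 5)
The forward half of your construction is fine (the Binet form, the fact that $\nint{m/\beta}$ recovers the predecessor of a large element of $E$, and the norm--form invariant $Q(n_{i+1},n_i)\in\{C_0,C_1\}$), but the proof breaks exactly where you say the ``main obstacle'' is: the converse step, and your proposed repair is asserted rather than proved. The zero set of your $g$ is, up to a finite set, the union of \emph{all} orbits of integer solutions of $Q(x,y)\in\{C_0,C_1\}$ lying in the relevant sector, i.e.\ a finite union of value sets of other recurrences of the same type, and to cut $E$ out of this union you claim that ``distinct orbits reduce differently modulo $q$ once $q$ is large enough''. No argument is given for this, and it is not a standard fact: residue sets of two distinct recurrence orbits modulo a fixed $q$ have no reason to be disjoint, and the paper itself treats even the very special instance of this claim that it needs (separating the powers of $\beta=\tilde\beta^{k}$ from the other $\langle\beta\rangle$-orbits $\{\tilde\beta^{j+k\NN_0}\}$ inside the cyclic group generated by the fundamental unit $\tilde\beta$) as requiring a genuine proof via Ward's theorem on periods of linear recurrences modulo prime powers \cite{Ward-1933}. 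The mechanism the paper actually uses in Theorem \ref{prop:pisot} is different and avoids parasitic orbits at the source: it recovers $(\beta^i,\a^i,\bar\a^i)$ from $(n,\nint{\b n},\nint{\b^2 n})$ by a linear system and then imposes $g(n)\in\cO_K$, $g(n)h(n)h^*(n)=1$ and $g(n)\in[n-1,n+1)$, so that by the Dirichlet unit theorem \emph{in a field whose unit group has rank one} the solutions are exactly the powers of the fundamental unit. Your proposal never invokes this rank-one structure, which is also why it cannot account for the hypothesis in case \ref{it:pisot:rec-2} that $\beta$ has no real Galois conjugate (that hypothesis is precisely what forces the cubic unit group to have rank one); as written, your argument would apply verbatim to totally real cubic Pisot units, a case the theorem conspicuously does not claim, which is a sign that the finitely-many-orbits and congruence-separation steps are hiding the real difficulty.

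Two further points of comparison. First, the statement is about arbitrary non-negative solutions of the recurrence, and the paper passes from the special sequence $\nint{\b^i}$ (Theorem \ref{prop:pisot}) to the general case via the reduction in \cite[Prop.\ 5.1]{ByszewskiKonieczny-2018-TAMS}; your direct attack on general initial conditions makes the parasitic-orbit problem strictly worse, since the classes of solutions of $Q(x,y)=C$ for a general integer $C$ need not be powers of a single unit at all. Second, your Salem sketch (case \ref{it:pisot:rec-3}) is far from a proof: the reconstruction $m\mapsto i(m)$ is not explained, the non-decaying term $R(i)$ destroys the integrality/finite-valuedness of any naive invariant, and the asserted $\QQ$-linear independence of $\log\beta$ and the angles $\theta_j$ does not ``follow from irreducibility'' (irreducibility only gives that each $\theta_j/2\pi$ is irrational). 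The paper does not prove this case either --- case \ref{it:pisot:rec-1} is quoted from \cite[Thm.\ B]{ByszewskiKonieczny-2018-TAMS} and case \ref{it:pisot:rec-3} from the forthcoming \cite{ByszewskiKonieczny-upcoming} --- so the only part proved in the paper is the cubic case, and there your route and the paper's genuinely diverge in the way described above.
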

\begin{proof}
{ Case \ref{it:pisot:rec-1} is covered by \cite[Thm.{} B]{ByszewskiKonieczny-2018-TAMS}, using an argument which relies on best rational approximations to 
quadratic irrationals. Case \ref{it:pisot:rec-2} likewise follows from \cite[Thm.{} B]{ByszewskiKonieczny-2018-TAMS} under mild additional assumptions.  
Below in Theorem \ref{prop:pisot} we give a complete proof of Case \ref{it:pisot:rec-2} which has a distinctly algebraic flavour, in contrast to the argument in 
\cite{ByszewskiKonieczny-2018-TAMS} relying on Diophantine approximation. 
Finally, case \ref{it:pisot:rec-3} is covered in the upcoming preprint \cite{ByszewskiKonieczny-upcoming} using methods analogous to those used in Theorem \ref{prop:pisot}. 
}
\end{proof}

In the remainder of this section, we will mostly speak of {\gp} subsets of $\NN_0$ rather than {\gpwords}. We recall that, as defined in Section \ref{sec:def}, these terms are closely related, and the connection between them if further elucidated in Corollary \ref{cor:constr:fibre}.
	
\subsection{Cubic Pisot units}

The main ingredient in the proof of Theorem \ref{prop:pisot:rec}\ref{it:pisot:rec-2} is the following result. 
In fact, the two results are equivalent due to a reduction obtained in \cite[Prop.{} 5.1]{ByszewskiKonieczny-2018-TAMS}. 

\begin{theorem}\label{prop:pisot}
Let $\beta > 1$ be a cubic Pisot unit with a pair of complex Galois conjugates $\a,\bar\a$ and let $E = \set{ \nint{\b^i} }{ i \in \NN_0}$. Then $\bb{1}_E$ is a {\gpword}.
\end{theorem}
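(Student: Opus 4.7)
By Corollary~\ref{cor:constr:fibre} it suffices to exhibit $E$ as a \gp{} subset of $\NN_0$; by Proposition~\ref{prop:cl:g-in-I} this will follow from an explicit Diophantine characterisation of $E$ by \gp{} inequalities.

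I first perform an algebraic reduction. Since $\beta$ is a unit, $|N(\beta)| = \beta|\alpha|^2 = 1$, whence $|\alpha| = \beta^{-1/2}$. Setting $m_i := \mathrm{Tr}(\beta^i) = \beta^i + \alpha^i + \bar\alpha^i \in \ZZ$, one has $|m_i - \beta^i| \leq 2\beta^{-i/2}$, so $m_i = \nint{\beta^i}$ for all $i$ larger than some $i_0$. Hence $E$ agrees with $\{m_i : i \geq i_0\}$ up to a finite set, which is harmless by Proposition~\ref{prop:cl:sets}. The algebraic identity
\[
m_i\beta^k - m_{i+k} \;=\; \alpha^i(\beta^k - \alpha^k) + \bar\alpha^i(\beta^k - \bar\alpha^k)
\]
has absolute value $O(|\alpha|^i) = O(m_i^{-1/2})$, so for $n = m_i$ both $\fpa{n\beta}$ and $\fpa{n\beta^2}$ are $O(n^{-1/2})$. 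My aim is to prove that, for a suitably small explicit $c > 0$, the converse
\begin{equation*}\tag{$\star$}
n\fpa{n\beta}^2 < c \;\text{ and }\; n\fpa{n\beta^2}^2 < c \;\;\Longrightarrow\;\; n \in \{m_i : i \geq i_0\}
\end{equation*}
holds for every sufficiently large $n$. The left-hand side has the form $\braif{h < c}$ for the \gp{} map $h(n) := \max\{n\fpa{n\beta}^2,\,n\fpa{n\beta^2}^2\}$, so an application of Proposition~\ref{prop:cl:g-in-I} will then complete the proof.

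Establishing $(\star)$ is the crux, and the principal obstacle I anticipate. Given $n$ satisfying the hypothesis, set $m := \nint{n\beta}$ and $m' := \nint{n\beta^2}$, and form the algebraic integers $\xi := n\beta - m$, $\xi' := n\beta^2 - m'$, and $\eta := m\beta - m' = \xi' - \xi\beta$ of $\ZZ[\beta]$; each has real Galois embedding of size $O(n^{-1/2})$. I plan to exploit the Galois embedding $\ZZ[\beta] \hookrightarrow \RR\times\CC$, under which $\ZZ[\beta]$ is a rank-$3$ lattice and multiplication by $\beta$ acts as the diagonal map $(x,z) \mapsto (\beta x, \alpha z)$, contracting the complex factor by $|\alpha| = \beta^{-1/2}$. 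The key tool is the norm inequality $|\xi^{(1)}|\cdot|\xi^{(2)}|^2 \geq 1$ valid for any non-zero $\xi \in \ZZ[\beta]$, and analogously for $\xi'$ and $\eta$. Combining these bounds with the a priori estimate $|\xi^{(2)}| \approx n|\beta - \alpha|$ (forced by $m$ being close to $n\beta$), I aim to show that the triple $(n, m, m')$ must coincide exactly with $(m_i, m_{i+1}, m_{i+2})$ for some $i \geq i_0$, rather than merely approximate one. The delicate point is calibrating $c$ small enough that these norm constraints cannot be simultaneously satisfied by any spurious close-but-not-equal triple; I expect this to rely on the non-vanishing of the Vandermonde factor $(\beta - \alpha)(\beta - \bar\alpha)$ together with the cubic recurrence structure, which together should force every small element of $\ZZ[\beta]$ meeting all three conditions to lie on the genuine orbit $\{\alpha^i(\beta - \alpha) + \bar\alpha^i(\beta - \bar\alpha) : i \geq 0\}$.
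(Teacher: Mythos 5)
Your reduction (it suffices to show $E$ is a \gp{} set, via Proposition \ref{prop:cl:g-in-I}), and your forward estimates ($m_i=\operatorname{Tr}(\beta^i)=\nint{\beta^i}$ for large $i$, and $\fpa{m_i\beta},\fpa{m_i\beta^2}=O(m_i^{-1/2})$) are fine, but the heart of the argument, the characterisation $(\star)$, is only announced, and it is in fact false as formulated, so the gap cannot be closed along these lines. There are two separate problems. First, the quantity $n\fpa{n\beta}^2$ along $n=m_i$ does \emph{not} tend to zero: since $m_i\beta-m_{i+1}=2\Re\bra{\alpha^i(\beta-\alpha)}$ and $|\alpha|=\beta^{-1/2}$, one has $m_i\fpa{m_i\beta}^2=4\cos^2(\psi_i)\,|\beta-\alpha|^2+o(1)$ with $\psi_i=\arg\bra{\alpha^i(\beta-\alpha)}$ equidistributed modulo $2\pi$ (the argument of $\alpha$ is an irrational multiple of $\pi$), and similarly $m_i\fpa{m_i\beta^2}^2=4\cos^2(\psi_i+\phi)\,|\beta^2-\alpha^2|^2+o(1)$. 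Hence for a ``suitably small'' $c$ a positive proportion of the genuine elements of $E$ violate your inequalities; to contain $E$ up to a finite set you are forced to take $c>c_0:=4\max\bra{|\beta-\alpha|^2,\,|\beta^2-\alpha^2|^2}$. Second, once $c>c_0$ the converse fails whenever $\beta$ is not a fundamental unit of $\cO_K$ (e.g.\ $\beta=\rho^2$ with $\rho$ the plastic number): writing $\beta=\tilde\beta^k$ with $\tilde\beta>1$ fundamental and taking $u=\tilde\beta^{r}$, $0<r<k$, the integers $n=\operatorname{Tr}(u\beta^i)$ satisfy $n\fpa{n\beta}^2\le 4|N_{K/\QQ}(u)|\,|\beta-\alpha|^2+o(1)=4|\beta-\alpha|^2+o(1)$ and likewise for $\beta^2$, so this entire family --- disjoint from $E$ up to finitely many terms --- passes your test for all large $i$. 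This is exactly what your norm-inequality step cannot detect: $|N(\xi)|\ge 1$ for nonzero $\xi\in\cO_K$ distinguishes nothing within a unit orbit, and even elements of small non-unit norm (e.g.\ $2\beta^i$, of norm $8$) slip under any threshold large enough to contain $E$ for many $\beta$.

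The paper's proof avoids metric inequalities altogether: from the \gp{} data $\bra{n,\nint{\beta n},\nint{\beta^2 n}}$ it recovers, by solving a linear system, an element $g(n)\in K$ and its conjugates $h(n),h^*(n)$, and characterises $n\in E$ (for large $n$) by the \emph{exact} conditions $g(n)\in\cO_K$ (a lattice condition, expressible with fractional parts), $g(n)h(n)h^*(n)=1$, and $g(n)\in[n-1,n+1)$; the rank-one unit group given by Dirichlet's theorem then forces $g(n)=\beta^i$. Even there, the case where $\beta$ is not the fundamental unit requires an additional congruence argument (periodicity of the recurrence modulo $m$, via Ward's theorem) precisely to sieve out the unit-twisted families that defeat $(\star)$. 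Any repair of your approach would need to adjoin exact algebraic conditions of this kind; inequalities of the shape $n\fpa{n\beta^j}^2<c$ alone cannot isolate $E$.
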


\begin{proof}
We will devise a procedure that verifies if, for a given integer $n \in \ZZ$, we have
\begin{equation}\label{eq:n=b^i}
n = \nint{\b^i} \text{ for some } i \in \NN_0\,.
\end{equation}
Later, we will explain how this procedure can be encoded using a generalised polynomial formula. 
Throughout, we assume that $\abs{n}$ is sufficiently large, which we may do because {\gp} sets are closed under finite modifications.
Set $L = \QQ(\b,\a,\bar\a)$ and $K = \QQ(\b)$, and let $p(x) = x^3 - ax^2 -bx-1$ denote the minimal polynomial of $\b$.

Suppose for a moment that \eqref{eq:n=b^i} holds, and hence in particular $n > 0$. Our first goal is to compute $\beta^i$ as a {\gp} function of $n$. Since $\b^i + \a^i + \bar\a^i$ is an integer and $\abs{\a^i} = \beta^{-i/2} \ll 1/\sqrt{n}$, 
we see that 
\begin{equation}\label{eq:374:00}
 n = \b^i + \a^i + \bar\a^i \,.
\end{equation}
Similarly, for every $j \in \NN$, we have 
\[
	\b^j n = {\b^{i+j} + \a^{i+j} + \bar\a^{i+j}} + O(\b^j/\sqrt{n})\,,
\]
and, as a consequence, bearing in mind that $n$ is large enough, we obtain
\begin{align}
\label{eq:374:01}
	\nint{\b n} &= \b^{i+1} + \a^{i+1} + \bar\a^{i+1}\\
\label{eq:374:02}
	\nint{\b^2 n} &= \b^{i+2} + \a^{i+2} + \bar\a^{i+2}\,.
\end{align}
Thus, we have expressed $(n,\nint{\beta n},\nint{\beta^2 n})$ as a linear function of $(\beta^i,\alpha^i,\bar\alpha^i)$. As a consequence, we can compute $(\beta^i,\alpha^i,\bar\alpha^i)$ by solving a system of linear equations involving $(n,\nint{\beta n},\nint{\beta^2 n})$ and algebraic coefficients belonging to $L$.

For each $n \in \ZZ$, let $g(n)$, $h(n)$, and $h^*(n)$ be the solution to the system of equations
\begin{align}
\label{eq:374:10}	 g(n) + h(n) + h^*(n) &= n\\
\label{eq:374:11}	g(n)\b + h(n)\a + h^*(n)\bar\a  &= \nint{\b n}\\
\label{eq:374:12}	 g(n)\b^2 + h(n)\a^2 + h^*(n)\bar\a^2 &= \nint{\b^2 n}\, . 
\end{align}
Note that \eqref{eq:374:10}--\eqref{eq:374:12} is non-singular, so $g(n)$, $h(n)$, and $h^*(n)$ are well-defined and unique. 
In fact, one can explicitly compute that 
\begin{align}
\label{eq:374:20}	 g(n) &= \frac{\nint{\b n}(\a+\bar\a)-\a\bar\a n-\nint{\b^2 n}}{(\b-\a)(\b-\bar\a)}
= \frac{\nint{\b n}(a-\b)-n/\b-\nint{\b^2 n}}{2\b^2 -a\b+1/\b} \\
\label{eq:374:21}	 h(n) &= \frac{\nint{\b n}(\b+\bar\a)-\b\bar\a n-\nint{\b^2 n}}{(\a-\bar\a)(\a-\b)}
= \frac{\nint{\b n}(a-\a)-n/\a-\nint{\b^2 n}}{2\a^2 -a\a+1/\a} \\
\label{eq:374:22}	 h^*(n) &= \frac{\nint{\b n}(\a+\b)-\a\b n-\nint{\b^2 n}}{(\bar\a-\a)(\bar\a-\b)}
= \frac{\nint{\b n}(a-\bar\a)-n/\bar\a-\nint{\b^2 n}}{2\bar\a^2 -a\bar\a+1/\bar\a}\,\cdot
\end{align}
The key reason for our interest in $g,h,h^*$ is that, if \eqref{eq:n=b^i} holds, then it follows from the discussion above that $g(n) = \beta^i$, $h(n) = \a^i$ and $h^*(n) = \bar\a^i$. Indeed, $(\beta^i,\alpha^i,\bar\alpha^i)$ is a solution to \eqref{eq:374:10}--\eqref{eq:374:12}, and the solution to \eqref{eq:374:10}--\eqref{eq:374:12} is unique.

It is apparent from formulae \eqref{eq:374:20}--\eqref{eq:374:22} (or from the symmetries of \eqref{eq:374:10}--\eqref{eq:374:12}) 
that $h^*(n) = \bar{h(n)}$ and $g(n) \in K = \QQ(\b)$, $h(n) \in \QQ(\a)$ for all $n \in \ZZ$. We also see that $g(n)$ is a linear combination 
of $n$, $\nint{\b n}$, and $\nint{\b^2 n}$. Hence $g$ is a {\gp} map. For the same reason, $\Re(h)$ and $\Im(h)$ are {\gp} maps\footnote{Note that we have not introduced the notion of a {\gp} map $\NN_0 \to \CC$. However, if one identifies $\CC$ with $\RR^2$ in the standard way, then $h$ is a {\gp} map under this identification.}.

Our next goal is to express condition \eqref{eq:374:00} in terms of the maps $g,h,h^*$.  By the Dirichlet unit theorem, we know that the group of units of $\cO_K$ has rank $1$. Since $\beta$ is a unit, it generates a group that has finite index in the group of all units of $\cO_K$. Assume for now that $\beta$ is a fundamental unit, meaning that all units in $\cO_K$ take the form $\pm \beta^i$ for $i \in \ZZ$. We address the general case at the end of the proof. 

Recall that if \eqref{eq:n=b^i} holds for some $n \in \ZZ$, then $g(n) = \b^i$ and $h(n) = \a^i$. Thus, $g(n) \in \cO_K$ and $g(n)h(n)h^*(n) = 1$. If, additionally, $\abs{n}$ is sufficiently large, then $g(n) \in [n-1,n+1)$.  
Suppose, conversely, that for some $n \in \ZZ$ we have $g(n) \in \cO_K$ and $g(n)h(n) h^*(n) = 1$. 
Then $g(n)$ is a unit, since its norm $N_{L/\QQ}(g(n)) = g(n)h(n)h^*(n)$ is equal to $1$. As a consequence, we have $g(n) = \pm \b^i$ for some $i \in \ZZ$. 
If additionally $g(n) \geq n-1$ then $g(n) = \b^i$ for some $i \in \NN$. Thus, for all but finitely many $n \in \ZZ$, \eqref{eq:n=b^i} is equivalent to
\begin{align}\label{cond:521:2} 
 g(n) \in \cO_K\,,\quad g(n)h(n)h^*(n) &= 1\,, &&\text{and}& g(n) &\in [n-1,n+1)\,.
\end{align}

Our final goal is to express the conditions in \eqref{cond:521:2} in terms of generalised polynomials. As a first step in this direction, we will need a more precise description of $g,h,h^*$. 
Since $g(n) \in \QQ(\b)$, for each $n \in \ZZ$, there exists a decomposition
\begin{equation}\label{eq:374:30}
g(n) = u(n) + v(n)\b + w(n)\b^2\,,
\end{equation}
where $u(n)$, $v(n)$, and $w(n) \in \QQ$. If $\sigma \in \mathrm{Gal}(L/\QQ)$ is an automorphism with $\sigma(\b) = \a$, then 
$\sigma(g(n)) = h(n)$, and as a consequence we also have
\begin{align}\label{eq:374:31}
h(n) &= u(n) + v(n)\a + w(n)\a^2\\
h^*(n) &= u(n) + v(n)\bar\a + w(n)\bar\a^2\,.
\end{align}

Arguing along similar lines as above, we can express $u(n)$, $v(n)$, and $w(n)$ as linear combinations of $g(n)$, $h(n)$, and $h^*(n)$, 
and hence also as a linear combination of $n$, $\nint{\b n}$, and $\nint{\b^2 n}$. For instance,
\begin{align*}
	u(n) = \frac{1}{\Delta}\Big(& 
	\bra{-2 a^3+a^2 b^2-10 a b+4 b^3-9}n   
	\\&+\bra{a^3 b-a^2+4 a b^2+6 b}\nint{\b n} +  
	\bra{-a^2 b +3 a-4 b^2}\nint{\b^2 n} 
	\Big)\,,
\end{align*}
where $\Delta = -4 a^3+a^2 b^2-18 a b+4 b^3-27$ is the discriminant of $p$.
In particular, $u$, $v$, and $w$ are {\gp} maps.

Since $g$, $\Re(h)$ and $\Im(h)$ are generalised polynomials, 
it follows that
$$
E_1 = \set{n \in \ZZ}{g(n)h(n)h^*(n) = 1} \text{ and } 
E_2 = \set{n \in \ZZ}{g(n)-n \in [-1,1)}
$$
are {\gp} sets. Let us consider the set 
\[ 
\Lambda = \set{(x,y,z) \in \QQ^3}{ x + y\b + z\b^2 \in \cO_K} \,.
\] 
Clearly, $\Lambda$ is a lattice. It follows that $\Lambda$ is a {\gp} subset of $\RR^3$. Indeed, if $\Lambda = A \ZZ^3$ for a matrix $A \in \mathrm{GL}(3;\RR)$, 
then 
\[
	\Lambda = \set{x \in \RR^3}{ \fp{ \bra{A^{-1}x}_1} + \fp{ \bra{A^{-1}x}_2} + \fp{ \bra{A^{-1}x}_3} = 0}\, .
\]
Recalling that $u$, $v$, and $w$ are {\gp} maps, we conclude that
\[
	E_3 = \set{n \in \ZZ}{ g(n) \in \cO_K} = \set{n \in \NN_0}{ (u(n),v(n),w(n)) \in \Lambda }
\]
is a {\gp} set. Hence, $E_1 \cap E_2 \cap E_3$ is a {\gp} set  by Proposition \ref{prop:cl:sets}, and it is precisely the set of all $n \in \ZZ$ 
which satisfy \eqref{cond:521:2}, as needed.

Let us now consider the case where the fundamental unit of $K$ is some $\tilde\beta \neq \pm \beta$. 
We may assume that $\tilde \beta > 0$. Since $\beta$ is a unit, we have $\beta = \tilde\beta^k$ for some $k \in \NN$. 
It is straightforward to verify that $\tilde\beta$ is again a cubic Pisot unit with a pair of complex conjugates, say $(\gamma, \bar{\gamma})$, 
so the above discussion applies with $\tilde\beta$ in place of $\beta$. Set $\tilde n_i = \tilde\b^i + \gamma^i + \bar{\gamma}^i$, for every $i \in \NN_0$. 
Then $(\tilde n_i)_{i=0}^\infty$ satisfies a linear recurrence relation, and, for all sufficiently large $i$, we have 
$\tilde{n_i} = \nintnormal{\tilde\b^i}$ and $\tilde n_{i+1} = \nintnormal{\tilde\beta \tilde n_i}$. Furthermore, 
$\tilde E = \set{ \tilde n_i}{i \in \NN_0}$ is a {\gp} subset of $\NN_0$. 
One can find an integer $m$ such that the sequence $(\tilde n_i \bmod{m})_{i=0}^\infty$ is periodic with minimal period $\ell$ that is a 
multiple of $k$ \cite{Ward-1933}
\footnote{More precisely, by \cite[Thm.\ 1]{Ward-1933}, it is enough to consider the case where $k$ is a  power of a prime $p$. By \cite[Thm.\ 10.2 \& Cor.\ 3]{Ward-1933}, the period of $(\tilde n_i \bmod{p^N})_i$ is a multiple of the order of $\tilde \beta$ in $\mathbb{F}_q[\tilde\beta]^{\times}$, where $q = p^{N-C}$ for a constant $C$. In particular, the period of $(\tilde n_i \bmod{p^{N}})_i$ is a multiple of $k$ for sufficiently large $N$.}
. Note that, for each sufficiently large $i$ and for each $j$, $0 \leq j < \ell$, we have
\[
	\nint{\tilde \b^j \tilde n_i} \equiv \tilde n_{i+j\bmod \ell} \bmod m \,.
\]
This allows us to identify, for each residue class $r$, $0 \leq r < \ell$, the elements $n \in \tilde E$ which take the form 
$n = \tilde n_i$ with $i \equiv r \bmod \ell$.  With finitely many exceptions, these are precisely those $n \in \tilde E$ for which 
$\nint{\tilde \b^j \tilde n} \equiv \tilde n_{r+j\bmod \ell} \bmod m$ for all $j$, $0 \leq j < \ell$. It follows that the set $\set{\nint{\b^i}}{i \in \NN_0}$ 
differs by finitely many elements from 
\begin{equation}\label{eq:399:1}
	\set{ n \in \tilde E}{ (\exists\, 0 \leq a < \ell/k) \ (\forall\, 0 \leq j < \ell)\ \nintnormal{\tilde\b^j n} \equiv \tilde n_{ak+j} \bmod{m} }\, .
\end{equation}
Since the set in \eqref{eq:399:1} is a {\gp} set, and since the property of being a {\gp} set is preserved under finite modifications, 
we conclude that the set $\set{\nint{\b^i}}{i \in \NN_0}$ is also a {\gp} set.
\end{proof}

\begin{proof}[Proof of Theorem \ref{prop:pisot:rec}(ii)]
Follows immediately from Theorem \ref{prop:pisot} and \cite[Prop.{} 5.1]{ByszewskiKonieczny-2018-TAMS}.
\end{proof}

\subsection{Concluding remarks} 
Let us now briefly discuss potential generalisations of Theorem \ref{prop:pisot:rec}. Let $p(x) = x^d - \sum_{i=1}^d a_ix^{d-i} \in \ZZ[x]$ be a monic irreducible polynomial of degree $d \in \NN$ with a root $\beta > 1$ which is either a Pisot unit or a Salem number, and let $\a_1,\a_2,\dots,\a_{d-1}$ denote the remaining roots. 

Much of the reasoning in Theorem \ref{prop:pisot} carries through to this more general context, except that the group of units of $\cO_{K}$ now no longer has rank $1$ (here, $K = \QQ(\beta)$). Adapting the argument, we can hope to show that 
\[
	F = \set{ \operatorname{Tr}_{L/\QQ}(\mu) }{ \mu \text{ is a unit of } \cO_K} 
\]
is a {\gp} subset of $\ZZ$, where $L = \QQ(\b,\a_1,\dots,\a_{d-1})$ is the splitting field of $p$. 

Consider also the sequence given by $n_i = \mathrm{Tr}(\beta^i) = \beta^i + \sum_{k=1}^{d-1} \a_k^i \in \ZZ$, and note that $n_i$ obeys the linear recurrence
$$
	n_{i+d} = \sum_{j=1}^d a_j n_{i+d-j}, \qquad i \in \NN_0\,.
$$
Set $E = \set{ n_i}{i \in \NN_0} \cap \NN$. Note that the assumption on $\beta$ ensures that $n_i = \nintnormal{\b^i} + O(1)$ for all $i \in \NN_0$. 
In analogy with Theorem \ref{prop:pisot:rec}, one can ask if $E$ is a {\gp} subset of $\NN$. 

In the case where the group of units of $\cO_{K}$ has rank $1$, the set $F$ is essentially equal to $E$, which is one of the key observations behind the 
proof of Theorem \ref{prop:pisot}. In general, we have the inclusion $E \supset F$, but $E$ will usually be a proper subset of $F$ and there is no clear way of 
describing $E$ inside of $F$ with a {\gp} formula. 

In the case where $\beta$ is a Pisot number but not a unit, we have a negative result for $d = 1$: if $k \geq 2$ is and integer, then $E = \set{k^i}{i \in \NN_0}$ 
is not a {\gp} set. (We revisit this example in Section \ref{sec:neg}.) When $d \geq 2$, nothing is known, but, based on the previously mentioned result, 
it seems reasonable to expect a negative result. In the case where $\beta$ is neither a Pisot nor a Salem number ({\it i.e.}, either $\beta$ is transcendental or 
$\beta$ has a  Galois conjugate with absolute value greater than $1$) nothing is known, and the techniques discussed in this section appear not to 
be applicable. We end this section with the following two general problems. 

\begin{problem}
\begin{enumerate} 
\item  
Classify all sequences of positive integers $(n_i)_{i=1}^\infty$ satisfying a linear recurrence such that the characteristic word $\bb{1}_E$ of the set $E = \set{n_i}{i \in \NN_0}$ is a {\gpword}. 
\item 
Classify all real numbers $\beta > 1$ such that the characteristic word $\bb{1}_E$ of the set $E = \set{ \nint{\b^i} }{ i \in \NN_0}$ is a {\gpword}.
\end{enumerate}
\end{problem}

\section{Negative results}\label{sec:neg}

In this section we discuss the problem of proving that a given infinite word cannot be described by a generalised polynomial formula. 
We remark that, as a general principle, it is often harder to verify that a sequence does not admit a representation of a specified form than it is to find such a representation when it exists. This phenomenon is particularly prevalent from a computational point of view. For context, we also note that Allouche, Shallit, and Yassawi 
\cite{AlloucheShallitYassawi-2021} recently survey ways in which one can show that a sequence is not automatic.

\subsection{General conditions}

Let us now review criteria that can be used to show that a given infinite word $\bb a$ 
is not a {\gpword}. In principle, each ``positive'' result about {\gpwords} gives rise to such a criterion, and \emph{vice versa}. 
For each property $P$ which can apply to finitely-valued sequences, the implication ``\textit{If $\bb a$ is a {\gpword} then $P(\bb a)$}'' is 
tantamount to ``\textit{If $\neg P(\bb a)$ then $\bb a$ is not a {\gpword}}''. In practice, depending on the aesthetic appeal of the property $P(\bb a)$ and 
on the ease of verifying $\neg P(\bb a)$, one of these implications is more interesting than the other. 
The following proposition gathers several such properties that arise from positive results discussed earlier.

\begin{proposition}\label{prop:neg:long-list}
Let $\bb a$ be an infinite word defined over a finite alphabet $\Sigma$. Then $\bb a$ is not a {\gpword}
if one of the following conditions hold. 
\begin{enumerate}
\item\label{it:neg:long-list:0} There is $w \in \Sigma^*$ such that $\mathrm{freq}(\bb a,w)$ does not exist.
\item\label{it:neg:long-list:1} There is $\fword{w} \in \Sigma^*$ such that $\mathrm{freq}(\bb a,\fword{w}) > 0$ but $\mathrm{rec}(\bb a,\fword{w}) = \infty$.
\item\label{it:neg:long-list:2} The subword complexity of $\bb a$ satisfies $\limsup_{n\to \infty}\log(p_{\bb a}(N))/\log N = \infty$.
\item\label{it:neg:long-list:3} There exists $x \in \Sigma$ with $\freq(\bb a,x) < 1$ and $t \colon \NN_0 \to \NN_0$ that is good for equidistribution 
on nilsystems (cf.{} Sec.{} \ref{ssec:facts:equidist}) such that $a_{t(n)} = x$ for all $n \in \NN_0$.
\end{enumerate}
\end{proposition}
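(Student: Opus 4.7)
The plan is to prove each of the four criteria as the direct contrapositive of a result already established earlier in the paper; there is no genuinely new content to produce, only a bookkeeping of which statement rules out each pathology.

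For item \ref{it:neg:long-list:0}, the plan is to quote Theorem \ref{prop:facts:freq-exists}, which guarantees that if $\bb a$ is a {\gpword}, then $\freq(\bb a,w)$ exists for every $w \in \Sigma^\ell$ and every $\ell \geq 1$ (indeed, the convergence is even uniform in the starting position). If there exists some $w$ for which $\freq(\bb a,w)$ fails to exist, $\bb a$ therefore cannot be a {\gpword}. For item \ref{it:neg:long-list:1}, I would appeal to Corollary \ref{cor:facts:freq-exists}, which records the dichotomy that, for a {\gpword} $\bb a$ and any $w$, either $\freq(\bb a,w)=0$ or $\mathrm{rec}(\bb a,w)<\infty$; the hypothesis $\freq(\bb a,w)>0$ together with $\mathrm{rec}(\bb a,w)=\infty$ violates this dichotomy.

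For item \ref{it:neg:long-list:2}, I would invoke Theorem \ref{thm:A}: if $\bb a$ is a {\gpword}, there exists $C>0$ with $p_{\bb a}(N)=O(N^C)$, so
\[
\limsup_{N\to\infty}\frac{\log p_{\bb a}(N)}{\log N} \leq C < \infty.
\]
Hence the assumption $\limsup_{N\to\infty}\log p_{\bb a}(N)/\log N = \infty$ forces $\bb a$ not to be a {\gpword}. For item \ref{it:neg:long-list:3}, I would use Proposition \ref{prop:nil:equidistribution}: if $\bb a$ were a {\gpword} and $t$ were good for equidistribution in nilsystems, then for every $x \in \Sigma$ one has $\freq\bra{(a_{t(n)})_{n=0}^\infty,x}=\freq(\bb a,x)$. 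But the hypothesis $a_{t(n)}=x$ for all $n \in \NN_0$ forces $\freq\bra{(a_{t(n)})_{n=0}^\infty,x}=1$, and hence $\freq(\bb a,x)=1$, contradicting the assumption $\freq(\bb a,x)<1$.

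Since each item is an immediate rephrasing of a prior result, there is no hard step; the only task is to phrase the four arguments uniformly and cite the correct reference for each. The only mildly subtle point worth underlining in the writeup is that Theorem \ref{thm:A} is the \emph{new} ingredient that makes item \ref{it:neg:long-list:2} non-trivial, whereas items \ref{it:neg:long-list:0}, \ref{it:neg:long-list:1}, \ref{it:neg:long-list:3} rest on the dynamical representation via nilsystems established in Section \ref{sec:nil} and its consequences in Section \ref{sec:facts}.
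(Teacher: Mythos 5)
Your proposal is correct and matches the paper's proof exactly: the paper also derives items (i)--(iv) as immediate contrapositives of Theorem \ref{prop:facts:freq-exists}, Corollary \ref{cor:facts:freq-exists}, Theorem \ref{thm:A}, and Proposition \ref{prop:nil:equidistribution}, respectively. Your writeup simply spells out the short arguments that the paper leaves implicit.
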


\begin{proof}
These are immediate consequences of Theorem \ref{prop:facts:freq-exists}, Corollary \ref{cor:facts:freq-exists}, 
Theorem \ref{thm:A}, and Proposition \ref{prop:nil:equidistribution}.
\end{proof}

Another way to discriminate bracket words that we want to mention is related to periodicity. 
Following \cite{ByszewskiKonieczny-2020-CJM}, we say that an infinite word 
$\bb a$ is \emph{weakly periodic} if for every infinite arithmetic progression $\bra{kn+r}_{n =0}^\infty$ ($k \in \NN,\ r \in \NN_0$) 
there exist two distinct sub-progressions with equal steps $\bra{k'n+r'}_{n =0}^\infty$ and $\bra{k'n+r''}_{n =0}^\infty$ 
($k' \in \NN$ and $k \mid k'$, $r'$ and $r''$ belong to $\NN_0$, $r'\equiv r'' \equiv r \bmod k$, and $r' \neq r''$) 
such that the corresponding restrictions of $\bb a$ are the same: $ a_{k'n +r'} = a_{k'n+r''}$ for all $n \in \NN_0$. 
For instance, all automatic sequences are weakly periodic, all Toeplitz sequences are weakly periodic, and the characteristic sequence of 
the square-free integers is weakly periodic.\footnote{A sequence $\bb a$ is a Toeplitz sequence if for each $n$ there exists a period $q \in \NN$ 
such that $a_{n+qm} = a_{n}$ for all $m \in \NN$; 
see, for instance, \cite{Downarowicz-2005}  for background. An integer $n$ is square-free if, for every prime $p$, one has $p^2 \nmid n$.} 
We will also say that $\bb a$ is \emph{almost everywhere periodic} if there exists an infinite periodic word $\bb a'$ such that $a_{n} = a'_{n}$ for almost all $n$, that is, $d\bra{\set{n \in \NN_0}{a_{n} \neq a'_{n}}} = 0$. The following result is a rephrasing of {\cite[{Thm.{} 2.6}]{ByszewskiKonieczny-2020-CJM}}.

\begin{proposition}[{\cite[{Thm.{} 2.6}]{ByszewskiKonieczny-2020-CJM}}]\label{prop:neg:weakly-periodic}
	Let $\bb a$ be an infinite word that is weakly periodic but not almost everywhere periodic. Then $\bb a$ is not a {\gpword}.	
\end{proposition}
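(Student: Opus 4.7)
I would work with the dynamical representation given in Theorem \ref{thm:BL-word}, writing $a_n = c(T^n x)$ for a minimal nilsystem $(X,T)$, a point $x \in X$, and a semialgebraic partition $X = \bigsqcup_{i \in \Sigma} S_i$. Assuming toward a contradiction that $\bb a$ is a bracket word which is weakly periodic but not almost everywhere periodic, the plan is to translate weak periodicity into a rich family of dynamical symmetries of the partition $\{S_i\}$, and then to exploit the rigidity of semialgebraic sets to force almost-everywhere periodicity.

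\textbf{Step 1: Symmetries from weak periodicity.} Fix an arithmetic progression $(kn+r)_{n\geq 0}$. Weak periodicity supplies $k', r', r''$ with $k \mid k'$, $r' \equiv r'' \equiv r \bmod k$, $r' \ne r''$, and $a_{k'n+r'} = a_{k'n+r''}$ for all $n$. Setting $h := r' - r''$ and $y_n := T^{k'n + r''}(x)$, this reads $c(T^h y_n) = c(y_n)$ for every $n$. By Lemma \ref{lem:nil:shift} (and minimality of the $T^{k'}$-system on the appropriate connected component), the orbit $\{y_n\}$ is equidistributed, hence dense, in a sub-nilmanifold $X' \subseteq X$ containing $T^{r''}x$. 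Semialgebraicity of each $S_i$ means that the boundary $\partial S_i$ has Haar measure zero and the indicators $\mathbf 1_{S_i}$ are continuous off $\partial S_i$; combined with uniform distribution this yields $c(T^h y) = c(y)$ for all $y \in X'$ outside a null set. Thus $T^h$ is an a.e.\ symmetry of the partition restricted to $X'$.

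\textbf{Step 2: Iteration and closure.} Applying weak periodicity inductively to the sub-progressions that arise at each step — each of which, by Corollary \ref{cor:cl:rearrange}, gives rise to a bracket word whose dynamical representation can be taken on the same nilmanifold — produces an infinite sequence of shifts $T^{h_1}, T^{h_2}, \dots$ that preserve the partition almost everywhere. The subgroup $H \leq G$ generated by the corresponding group elements (via the Mal'cev coordinates) is dense enough that, by the semialgebraic rigidity observation, its closure $\bar H$ must be a closed subgroup of $G$ that leaves each $S_i$ invariant up to a set of measure zero. If $\bar H$ had infinite index, one could extract a new bracket word by descending to $G/\bar H\Gamma$ that would still be weakly periodic and not a.e.\ periodic, and the induction (on nilpotency class and dimension) would reduce to the case of a circle rotation, where the statement is classical. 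Hence $\bar H$ has finite index and $X/\bar H$ is a finite set.

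\textbf{Step 3: Conclusion.} Because $c$ is $\bar H$-invariant modulo a null set, there exists a map $\tilde c$ on the finite quotient $X/\bar H$ and a periodic word $\bb a'$ given by $a'_n = \tilde c(T^n x \bmod \bar H)$ such that $a_n = a'_n$ whenever $T^n x$ lies outside the null exceptional set. By Theorem \ref{thm:facts:dens-exists} the set of $n$ for which $T^n x$ falls into this null set has asymptotic density zero, so $\bb a = \bb a'$ almost everywhere, contradicting the hypothesis. The principal obstacle throughout is the bookkeeping of the null sets arising on the boundaries of the semialgebraic pieces: the entire argument hinges on extending the pointwise relation $a_{k'n+r'} = a_{k'n+r''}$ (which holds everywhere) to a density statement about the partition, and then reversing the process to deduce an a.e.\ equality between $\bb a$ and a periodic sequence.
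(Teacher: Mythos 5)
Your starting point (the dynamical representation from Theorem \ref{thm:BL-word}) matches the cited proof, but the engine of your argument in Steps 2--3 cannot work as stated. All the shifts you produce are powers of the single translation: if $T=T_g$, then $T^{h_1},T^{h_2},\dots$ correspond to $g^{h_1},g^{h_2},\dots$, so the subgroup $H$ they generate lies inside the cyclic group $\langle g\rangle$, which in a connected, simply connected nilpotent Lie group is a closed \emph{discrete} subgroup; its closure is itself, and it has infinite (indeed uncountable) index in $G$ whenever $\dim G>0$. Hence the finite-index horn of your dichotomy is never reached, and the infinite-index horn is unsubstantiated: $\bar H\Gamma$ need not be closed, $G/\bar H\Gamma$ need not be a nilmanifold carrying a bracket word, and no argument is given that ``weakly periodic but not almost everywhere periodic'' descends to it, so the claimed induction down to a circle rotation does not get off the ground. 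There is also a problem already in Step 1: $h=r'-r''$ is a multiple of $k$ but not of $k'$, so $T^h$ does not map $X'$ (the $T^{k'}$-orbit closure of $T^{r''}x$) to itself; what you obtain is an a.e.\ identification between the partitions restricted to two \emph{different} sub-nilmanifolds, not an invariance you can iterate or combine with ergodicity of $T^h$ on $X'$.

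There is a further measure-theoretic gap in the passage between pointwise orbit statements and a.e.\ statements. The orbit $\{T^nx\}_{n\geq 0}$ is countable, hence null, so in Step 3 the assertion that $T^nx$ avoids your exceptional null set for a set of $n$ of density one is not automatic: that null set, produced by a.e.\ arguments, could in principle contain the whole orbit. To repair this you must track that all exceptional sets are contained in semialgebraic sets of measure zero (whose closures are still null) and invoke unique ergodicity of minimal nilsystems, and likewise in Step 1 you must justify that $S_i\cap X'$ is a $\mu_{X'}$-continuity set. Doing this honestly pushes you toward the route the cited proof actually takes: decompose $X$ into finitely many components on which a suitable power $T^d$ is \emph{totally minimal}; show that under the implication furnished by weak periodicity an open set is either empty or dense in such a component (this is \cite[Lem.\ 2.4]{ByszewskiKonieczny-2020-CJM}); and use that a semialgebraic set with empty interior has measure zero to conclude that on each component a single symbol fills everything up to a null set. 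Then $\bb a$ agrees almost everywhere with a word of period $d$, contradicting the hypothesis that $\bb a$ is not almost everywhere periodic.
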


The proof relies on Theorem \ref{thm:BL-mini} and the fact that each nilsystem $(X,T)$ can be partitioned into a finite number of components, 
$X = X_1 \cup X_2 \cup \dots \cup X_d$, such that $(X_i,T^d)$ is a totally minimal\footnote{A topological dynamical system $(X,T)$ is said to be totally minimal if 
$(X,T^d)$ is minimal for every $d\geq 1$.} dynamical system for all $i$, $1 \leq i \leq d$. 
Next, we observe that, for a totally minimal dynamical system $(X,T)$, $x \in X$, open $S \subset X$, $k \in \NN$, $r,r' \in \NN_0$ with $r \neq r'$, 
if for all $n \in \NN_0$ we have the implication 
\[T^{kn+r}(x) \in S \Rightarrow T^{kn+r'}(x) \in S\,,\] then $S$ is either empty or dense in $X$ (see \cite[Lem.{} 2.4]{ByszewskiKonieczny-2020-CJM}).

\begin{example}
	The indicator function of the square-free integers is not a {\gpword} (which can also be derived from item (iii) of Proposition \ref{prop:neg:long-list}). 
	A non-periodic Toeplitz sequence is not a {\gpword} (note that a Toeplitz sequence is periodic if and only if it is almost everywhere periodic). 
	It was shown in \cite{CassaigneKarhumaki-1997} that a large class of Toeplitz sequences have polynomial subword complexity, so item (iii) of Proposition \ref{prop:neg:long-list} 
	cannot be applied in this case, nor can item (ii) Proposition \ref{prop:neg:long-list}.
\end{example}

As a counterpart to Theorem \ref{thm:nil:IP*-rec},  
it is shown in \cite{ByszewskiKonieczny-2018-TAMS}  that any {\gp} subset $E \subset \NN_0$ with $d(E) = 0$ is very poor in terms of additive structure. 
Here, we cite a slightly weaker (but more succinct) variant of {\cite[{Thm.{} A}]{ByszewskiKonieczny-2018-TAMS}}. 

\begin{proposition}[{\cite[{Thm.{} A}]{ByszewskiKonieczny-2018-TAMS}}]\label{prop:neg:sparse-IP}
	Let $E \subset \NN_0$ be a set. Suppose that $d(E) = 0$ and $E$ contains an $\IP$ set. Then $E$ is not a {\gp} set.	
\end{proposition}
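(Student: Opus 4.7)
My plan is to argue by contradiction, establishing the contrapositive: if $E\subset\NN_0$ is a {\gp} set containing the $\IP$-set generated by some sequence $(n_i)_{i=1}^\infty$, then $d(E)>0$. By Corollary \ref{cor:constr:fibre} together with Theorem \ref{thm:BL-word}, I would first write $E=\{n\in\NN_0:T^n(x_0)\in S\}$ for a minimal nilsystem $(X,T)=(G/\Gamma, T_h)$, a base point $x_0\in X$, and a semialgebraic set $S\subset X$. Since minimal nilsystems are uniquely ergodic, Theorem \ref{thm:facts:dens-exists} gives $d(E)=\mu_X(S)$, so it suffices to show $\mu_X(S)>0$. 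If instead $\mu_X(S)=0$, then by the structure theory of semialgebraic sets, read through Mal'cev coordinates, $S$ is nowhere dense in $X$ and contained in a proper real algebraic subvariety of $[0,1)^{\dim G}$.

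The hypothesis translates to $T^{\sum_{i\in I}n_i}(x_0)\in S$ for every finite $I\subset\NN$. I would extract, via Hindman's theorem applied iteratively to colorings induced by progressively finer finite open covers of $X$, and followed by a Milliken-Taylor style diagonal argument, a sub-$\IP$-sequence $m_k=\sum_{i\in I_k}n_i$ (with pairwise disjoint finite $I_k\subset\NN$) along which the iterated $\IP$-limits $\lim_{\min K\to\infty} T^{\sum_{k\in K}m_k}(x_0)$ all converge to a common point $y\in\bar S$. Replacing $S$ by its closure enlarges it only by a semialgebraic set of lower dimension, which has $\mu_X$-measure zero, so I may freely assume $y\in S$.

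The decisive step is to promote this single accumulation point into the required positive measure. Here I would invoke the nilpotent $\IP$-polynomial recurrence theory of Bergelson-Leibman: along a suitable refinement of $(m_k)$, the full $\IP$-orbit $\{T^{\sum_{i\in J}m_i}(x_0):J\subset\NN\text{ finite}\}$ becomes dense in a connected sub-nilmanifold $Y\subset X$ through $y$, and the entirety of $Y$ is forced to lie in $\bar S$. The main obstacle is now precisely to rule out that $Y$ is contained in a proper algebraic subvariety of $X$, since otherwise $\mu_X(\bar S)=0$ would remain consistent with $Y\subset\bar S$; the crucial rigidity is that the homogeneous geometry of the sub-nilmanifold $Y$ is incompatible with being swept into a lower-dimensional algebraic locus unless $Y$ is a single point, a case that is excluded by extracting the limit along a generic enough branch of the diagonalisation. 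Once $Y$ is shown to be positive-dimensional and transverse to the algebraic locus containing $S$, minimality of $(X,T)$ propagates $Y$ across $X$ and forces $\mu_X(\bar S)>0$, contradicting $d(E)=0$. This combinatorial-geometric rigidity, matching nilpotent $\IP$-recurrence against the semialgebraic structure of $S$, is the technical core of the argument in \cite{ByszewskiKonieczny-2018-TAMS}.
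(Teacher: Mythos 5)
The paper itself offers no proof of this proposition---it is quoted directly from \cite{ByszewskiKonieczny-2018-TAMS}---so the only question is whether your sketch stands on its own, and it does not. Your opening reductions are fine: by Corollary \ref{cor:constr:fibre} and Theorem \ref{thm:BL-word} you may write $E=\set{n}{T^n(x_0)\in S}$ with $S$ semialgebraic, and since minimal nilsystems are uniquely ergodic and $\partial S$ is $\mu_X$-null, $d(E)=\mu_X(S)$, so $d(E)=0$ forces $S$ into a proper subvariety. The genuine gap is the ``decisive step'': the claim that, after refining the $\IP$-set, the full $\IP$-orbit of $x_0$ becomes dense in a connected sub-nilmanifold $Y$ through $y$ with $Y\subset\bar S$. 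No theorem of Bergelson--Leibman (or anyone else) delivers this: $\IP$-orbit closures in nilsystems need not be sub-nilmanifolds, and nothing rules out the refined $\IP$-limits collapsing to a single point---on the contrary, since nilsystems are distal, one can always refine so that the iterated $\IP$-limit is a recurrence point, so ``choosing a generic enough branch of the diagonalisation'' cannot be relied upon to produce a positive-dimensional $Y$. You flag this yourself as the technical core and in effect defer it back to the cited paper, which means the proposal is a strategy outline rather than a proof.

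Even granting a positive-dimensional $Y\subset\bar S$, the closing inference is wrong on its own terms: a proper connected sub-nilmanifold satisfies $\mu_X(Y)=0$, so $Y\subset\bar S$ is perfectly consistent with $\mu_X(\bar S)=0$, and minimality of $(X,T)$ does not ``propagate'' $Y$ across $X$, because $\bar S$ is a fixed set, not a $T$-invariant one. The contradiction you actually need is with the containment of $S$ in the proper algebraic subvariety $V$, i.e.\ you must show $Y\not\subset V$; that is precisely the transversality assertion you state without argument, and it is not automatic (sub-nilmanifolds, read in Mal'cev coordinates, can easily sit inside proper algebraic sets). So both the existence of the sub-nilmanifold $Y$ and the incompatibility of $Y$ with $V$ are unproved, and the final measure-theoretic contradiction does not follow even if they were. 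The argument in \cite{ByszewskiKonieczny-2018-TAMS} has to work considerably harder at exactly this point, and none of that work is reconstructed here.
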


\begin{corollary}\label{cor:neg:sparse-IP}
	Let $\bb a = (a_n)_{n=0}^\infty$ be an infinite word over a finite alphabet $\Sigma$. Suppose that there exists $x \in \Sigma$ and an $\IP$ set $E \subset \NN$ such that $\freq(\bb a, x) = 0$ and $a_n = 0$ for all $n \in E$. Then $\bb a$ is not a {\gpword}.
\end{corollary}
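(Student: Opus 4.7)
The plan is to deduce this corollary as a direct translation of Proposition \ref{prop:neg:sparse-IP} into the language of infinite words, using the fibre characterisation of {\gpwords} provided by Corollary \ref{cor:constr:fibre}. I read the conclusion $a_n = 0$ in the statement as a typographical slip for $a_n = x$, which is the only reading compatible with the hypothesis $\freq(\bb a, x) = 0$.

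First I would introduce the fibre $F = \set{n \in \NN_0}{a_n = x}$. The two hypotheses translate directly into properties of $F$: since $a_n = x$ for every $n \in E$, we have $E \subset F$, so $F$ contains an $\IP$ set; and since $\freq(\bb a, x) = 0$, Theorem \ref{thm:facts:dens-exists} (applied after we temporarily assume $\bb a$ is a {\gpword}) or the definition of frequency alone gives $d(F) = 0$.

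Next, I would argue by contradiction. Assume that $\bb a$ is a {\gpword}. By Corollary \ref{cor:constr:fibre}, every fibre of $\bb a$ is a {\gp} subset of $\NN_0$; in particular, $F$ is a {\gp} set. But $F$ has asymptotic density zero and contains an $\IP$ set, which contradicts Proposition \ref{prop:neg:sparse-IP}. Hence $\bb a$ cannot be a {\gpword}.

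There is no genuine obstacle in this argument — the whole content of the corollary is the passage from the set-theoretic formulation to the word-theoretic one, which is supplied by Corollary \ref{cor:constr:fibre}. The only thing to check carefully is the correspondence between zero frequency of the letter $x$ and zero density of the fibre $F$, which is immediate from the definitions in Section \ref{sec:facts}.
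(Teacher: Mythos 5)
Your proof is correct and is precisely the intended derivation: the paper leaves the corollary without an explicit proof because it follows at once from Corollary \ref{cor:constr:fibre} (the fibre $\set{n}{a_n=x}$ of a {\gpword} is a {\gp} set) together with Proposition \ref{prop:neg:sparse-IP}, exactly as you argue, and your reading of the typo $a_n=0$ as $a_n=x$ is the right one. The only superfluous step is invoking Theorem \ref{thm:facts:dens-exists}: the hypothesis $\freq(\bb a,x)=0$ already says the limit defining $d(F)$ exists and equals zero, so no assumption on $\bb a$ is needed for that.
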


\begin{example}
	The characteristic word of the set $ \set{\sum_{n \in I} n! }{ I \subset \NN,\ \abs{I} < \infty}$ is not a {\gpword}. 
\end{example}

Finally, we cite a result recently obtained by the second-named author \cite[{Thm.{} A}]{Konieczny-2021-JLM}, as a final ingredient needed to finish the classification of automatic {\gpwords}, cf.\ Theorem \ref{thm:BK}. 
Recall that a set $E \subset \NN_0$ is \emph{thick} if its complement is not syndetic, or equivalently if for each $\ell$ there exists $n$ with $n,n+1,\dots, n+\ell \in E$. 

\begin{theorem}[{\cite[{Thm.{} A}]{Konieczny-2021-JLM}}]\label{thm:neg:powers}
	Let $k \geq 2$ be an integer, let $E \subset \NN_0$, and put
	\[
		F = \set{ m \in \NN}{ m k^n \in E \text{ for infinitely many $n \in \NN_0$}}. 
	\]
	If $\set{k^n}{n \in \NN_0} \subset E$ and $\NN \setminus F$ is thick then $E$ is not a {\gp} set.
\end{theorem}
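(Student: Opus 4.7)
The plan is to argue by contradiction: suppose that $E$ is a {\gp} set. By Theorem \ref{thm:BL-word}, there exist a minimal nilsystem $(X,T)$, a point $x_0 \in X$, and a semialgebraic set $S \subseteq X$ such that $n \in E$ if and only if $T^n(x_0) \in S$. The hypothesis $\{k^n : n \in \NN_0\} \subseteq E$ translates to $T^{k^n}(x_0) \in S$ for every $n \geq 0$, while the condition on $F$ says that the set of $m \in \NN$ for which $T^{mk^n}(x_0) \in S$ holds for infinitely many $n$ has a thick complement.

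The heart of the argument is a fine analysis of the orbits $\phi_m(n) := T^{mk^n}(x_0)$ as $m$ and $n$ vary. Writing $T$ as a nilrotation by some $g \in G$, one has $\phi_m(n) = g^{mk^n} \cdot x_0$, so each $\phi_m$ is a nilsequence sampled along the geometric progression $k^n$. Using equidistribution results for such multiplicatively-structured orbits on nilmanifolds, the orbit $(\phi_m(n))_{n \geq 0}$ equidistributes on a sub-nilmanifold $Y_m \subseteq X$, and the family $(Y_m)_{m \in \NN}$ has a structured dependence on $m$ inherited from the abelianisation and higher-step factors of $X$. The hypothesis $1 \in F$ yields $Y_1 \subseteq \bar S$. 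For $m \notin F$, the orbit $\phi_m$ is eventually contained in $X \setminus S$; combined with minimality of the relevant action on $Y_m$ and with the semialgebraicity of $S$, this forces $Y_m \cap S^\circ = \emptyset$, hence $Y_m \subseteq \overline{X \setminus S}$.

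To conclude, exploit the thickness of $\NN \setminus F$: pick an arbitrarily long interval $[m_0, m_0 + \ell] \subseteq \NN \setminus F$. For every $j$ in this interval one has $Y_{m_0 + j} \subseteq \overline{X \setminus S}$. One then argues that, as $\ell \to \infty$, the union $\bigcup_{j=0}^{\ell} Y_{m_0 + j}$ becomes dense in $X$; this is the key structural input coming from the theory of $k^n$-orbits on nilmanifolds. It follows that $S^\circ = \emptyset$, so $\bar S$ is a semialgebraic subset of $X$ of dimension strictly smaller than $\dim X$. This contradicts $Y_1 \subseteq \bar S$ together with the fact that $Y_1$ has positive dimension (inherited from the fact that $Y_1$ is the closure of the infinite orbit $\{T^{k^n}(x_0)\}$ in a minimal system, which cannot degenerate to a finite set except in trivial situations one can treat separately).

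The principal obstacle is the density step: showing that the sub-nilmanifolds $Y_m$ together sweep out a dense portion of $X$ as $m$ ranges over a long enough interval of integers. I would expect this to require an induction on the nilpotency class of $X$: first analyse how $Y_m$ projects onto the maximal Kronecker (group-rotation) factor of $X$, where the problem reduces to the density of the translates by $m \cdot \pi(g)$ acting on a torus; then lift the conclusion through the successive nilpotent extensions by a careful analysis in Mal'cev coordinates, tracking how the multiplicative sampling $n \mapsto k^n$ interacts with the polynomial structure of the extension cocycles.
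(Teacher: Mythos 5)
First, note that the paper does not actually prove Theorem \ref{thm:neg:powers}: it is quoted from \cite{Konieczny-2021-JLM}, so there is no internal proof to compare with, and your argument must stand on its own. It does not, because its foundation is missing: you assume that each lacunary orbit $\bra{T^{mk^n}(x_0)}_{n=0}^{\infty}$ equidistributes on a sub-nilmanifold $Y_m$, with a structured dependence on $m$. The equidistribution theory you are implicitly invoking (Theorem \ref{thm:Leib-nil}, Theorem \ref{thm:GT}) applies to \emph{polynomial} orbits $n \mapsto g(n)\Gamma$; there is no such theorem for exponential sampling $n \mapsto g^{mk^n}x_0$. Already on the circle $X=\RR/\ZZ$ with $T=T_\alpha$, the closure of $\set{k^n\alpha \bmod 1}{n \in \NN_0}$ need not be a subtorus, a finite union of structured pieces, or carry any minimal action: taking $\alpha=\sum_j k^{-a_j}$ with rapidly growing $a_j$ makes the orbit accumulate at $0$ and its closure a zero-dimensional set with no translation structure. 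Consequently the objects $Y_m$, the claim $Y_m\cap S^{\circ}=\emptyset$ for $m\notin F$ ``by minimality on $Y_m$'', and the density of $\bigcup_{j\le \ell} Y_{m_0+j}$ over a thick interval --- which you yourself flag as the principal obstacle and only sketch --- are all unsupported. This delicacy of orbits along $\set{k^n}{n\in\NN_0}$ is precisely why the statement was a standing conjecture (the classification in \cite{ByszewskiKonieczny-2020-CJM} was conditional on it) rather than a consequence of standard nilmanifold equidistribution.

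Even granting the existence of the $Y_m$ and the density step, the closing contradiction does not close. From $S^{\circ}=\emptyset$ you get $\dim \bar S<\dim X$, but $Y_1\subseteq \bar S$ together with ``$Y_1$ has positive dimension'' is not contradictory unless $\dim Y_1>\dim\bar S$, which you never establish; moreover $Y_1$, being the closure of a lacunary orbit, can perfectly well be infinite yet zero-dimensional (as in the circle example above), so the dichotomy ``finite or positive-dimensional'' is false. For the record, the actual proof in \cite{Konieczny-2021-JLM} proceeds along entirely different lines: rather than equidistribution along the times $k^n$, it exploits limits of (parametric) generalised polynomial maps along ultrafilters (compare Lemma \ref{lem:cl:limit}, which cites Prop.\ 2.16 of that paper) together with the structure theory of sparse generalised polynomial sets, using the thickness of $\NN\setminus F$ to produce a limiting object that cannot be a {\gp} map.
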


\begin{example}
	Let $k \geq 2$ be an integer. Then the characteristic word of the set $\set{k^n}{n \in \NN_0}$ is not a {\gpword}.
\end{example}

For a set $E \subset \NN_0$ and $k \in \NN$, we set $E/k = \set{n \in \NN_0}{kn \in E}$. Restricting our attention to sets $E \subset \NN_0$ with $E/k = E$, we obtain a cleaner statement, which can be phrased either in terms of {\gp} sets or {\gpwords}.

\begin{corollary}\label{cor:neg:powers}
	Let $k \geq 2$ and $E \subset \NN$. If $E \neq \emptyset$, $d(E) = 0$, and $E/k = E$, then $E$ is not a {\gp} set.
\end{corollary}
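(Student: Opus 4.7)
The plan is to reduce the statement to Theorem \ref{thm:neg:powers} by a simple normalisation that forces $1$ into the set. First I will suppose for contradiction that $E$ is a {\gp} set and fix any $m \in E$ (using $E \neq \emptyset$). By Proposition \ref{prop:cl:sets-2}, the set $E^* = E/m = \set{n \in \NN_0}{nm \in E}$ is then also a {\gp} set, it contains $1$, and it inherits the identity $E^*/k = E^*$ directly from $E/k = E$ (since $kn \in E^* \iff knm \in E \iff nm \in E \iff n \in E^*$). Iterating this identity starting from $1 \in E^*$ gives $\set{k^j}{j \in \NN_0} \subseteq E^*$, which is the first hypothesis of Theorem \ref{thm:neg:powers}.

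Next I will identify the auxiliary set $F$ from Theorem \ref{thm:neg:powers}, applied to $E^*$, with $E^* \cap \NN$. Indeed, using $E^*/k = E^*$ in both directions, $m' k^j \in E^*$ holds for some (equivalently, all) $j \in \NN_0$ precisely when $m' \in E^*$, so $F = \set{m' \in \NN}{m' k^j \in E^* \text{ for infinitely many } j} = E^* \cap \NN$.

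Then I will verify that $\NN \setminus F$ is thick. Since the map $n \mapsto nm$ embeds $E^* \cap [N]$ into $E \cap [mN]$, the density zero hypothesis on $E$ transfers to $d(E^*) = 0$. A set of asymptotic density zero has vanishing lower uniform (Banach) density (by Theorem \ref{thm:facts:dens-exists} we do not even need this, but the elementary inequality $d_*(E^*) \leq d(E^*) = 0$ suffices), and therefore cannot be syndetic. Equivalently, its complement $\NN \setminus E^*$ is thick; since $F$ and $E^* \cap \NN$ coincide, $\NN \setminus F$ is thick as well.

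All hypotheses of Theorem \ref{thm:neg:powers} being satisfied for $E^*$, we conclude that $E^*$ is not a {\gp} set, contradicting the conclusion of the first step. Since every ingredient is either a direct closure property or the equivalence between syndeticity and positivity of lower Banach density, I do not foresee any genuine obstacle; the only mild subtlety is keeping track of the reduction by $m$, but this is entirely routine.
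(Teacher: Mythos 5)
Your proof is correct and follows essentially the same route as the paper: divide by a chosen element of $E$ (Proposition \ref{prop:cl:sets-2}) to get a {\gp} set containing all powers of $k$, identify the auxiliary set $F$ of Theorem \ref{thm:neg:powers} with the normalised set itself, and use density zero to get thickness of the complement. The only difference is that you spell out the contradiction and the elementary density/syndeticity step explicitly, which the paper leaves implicit.
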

\begin{proof}
	Pick $n_0 \in E$ and let $E' = E/n_0$. Then $d(E') = 0$, $k^n \in E'$ for all $n$, and 
	\[ 
		F' = \set{ m \in \NN}{ m k^n \in E' \text{ for infinitely many $n \in \NN_0$}} = E'. 
	\]
	Since $d(F') = 0$, the set $\NN \setminus F'$ is thick and we can apply Theorem \ref{thm:neg:powers}.
\end{proof}

\begin{corollary}\label{cor:neg:powers-set}
	Let $\bb a = (a_n)_{n=0}^\infty$ be an infinite word over a finite alphabet $\Sigma$, let $k \geq 2$ be an integer and let $x \in \Sigma$. Suppose that
\begin{inparaenum}[(i)]
\item $a_{kn} = a_{n}$ for all $n \in \NN$,
\item $\freq(\bb a; x) = 0$, and 
\item $a_n = x$ for at least one $n \in \NN$.
\end{inparaenum}
Then $\bb a$ is not a {\gpword}.
\end{corollary}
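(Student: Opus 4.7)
The plan is to reduce this statement to Corollary \ref{cor:neg:powers} via the fibre characterisation of {\gpwords} provided by Corollary \ref{cor:constr:fibre}. There is essentially no obstacle: the three hypotheses of that corollary translate directly into the three hypotheses we are given.

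First I would argue by contradiction: suppose that $\bb a$ is a {\gpword}. By Corollary \ref{cor:constr:fibre}, the fibre
\[
	E_0 = \set{n \in \NN_0}{a_n = x}
\]
is then a {\gp} subset of $\NN_0$. Since the family of {\gp} subsets of $\NN_0$ is closed under finite modifications (a consequence of Proposition \ref{prop:cl:sets} together with the fact that all finite subsets of $\NN_0$ are {\gp}), the set $E = E_0 \cap \NN$ is also a {\gp} subset of $\NN_0$.

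Next, I would verify the three hypotheses of Corollary \ref{cor:neg:powers} for this $E$. Condition (iii) of the present statement gives that $E$ contains at least one positive integer, so $E \neq \emptyset$. Condition (ii) gives $\freq(\bb a, x) = d(E_0) = 0$; since $E$ and $E_0$ differ by at most the single element $0$, we also have $d(E) = 0$. Finally, condition (i) translates into $E/k = E$: for every $n \in \NN$, since $kn \geq 2$ lies in $\NN$, we have the chain of equivalences
\[
	n \in E/k \iff kn \in E \iff a_{kn} = x \iff a_n = x \iff n \in E\,,
\]
where the middle equivalence uses $a_{kn} = a_n$.

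Corollary \ref{cor:neg:powers} then asserts that $E$ is \emph{not} a {\gp} set, contradicting the conclusion of the first paragraph. Hence $\bb a$ cannot be a {\gpword}, as required.
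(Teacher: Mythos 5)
Your argument is correct and is exactly the reduction the paper has in mind: the paper states this corollary without proof as an immediate consequence of Corollary \ref{cor:neg:powers}, via the fibre characterisation of {\gpwords} in Corollary \ref{cor:constr:fibre}, and your careful handling of the element $0$ (passing from $E_0$ to $E = E_0 \cap \NN$ using closure under finite modifications) fills in the only detail the paper leaves implicit.
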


\subsection{Primes and squares}

Let us consider two standard examples: the primes $\cP = \{2,3,5,7,\dots\}$ and the squares $\cS = \{0,1,4,9,\dots\}$. 
In both cases, we can show that the corresponding characteristic word  is not a {\gpword}. 
The arguments are short and rely heavily on theorems from nilpotent dynamics.

\begin{proposition}\label{prop:neg:primes-not-gp}
	The characteristic word ${\bb 1}_{\cP}$ of the set of primes is not a {\gpword}.
\end{proposition}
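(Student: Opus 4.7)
Suppose for contradiction that $\bb{1}_{\cP}$ is a {\gpword}. By Theorem \ref{thm:BL-word} applied to $\bb{1}_{\cP}$, there exist a minimal nilsystem $(X,T)$, a point $x_0 \in X$, and a semialgebraic set $S \subseteq X$ such that, for every $n \in \NN_0$, we have $n \in \cP$ if and only if $T^n(x_0) \in S$. Minimal nilsystems are uniquely ergodic, so the orbit $\bra{T^n(x_0)}_{n \geq 0}$ equidistributes with respect to the Haar measure $\mu_X$; combined with the prime number theorem this gives $\mu_X(S) = d(\cP) = 0$. In particular, $S$ is nowhere dense and is contained in a proper semialgebraic subvariety of $X$.

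The plan is now to compute the limit frequency
\[
    F \;:=\; \freq\bra{ \bra{1_S\bra{T^{p_n}(x_0)}}_{n \geq 1},\, 1}
\]
in two different ways. On one hand, $F$ exists by Theorem \ref{thm:facts:prime-freq-exists} and trivially equals $1$, since $1_S\bra{T^{p_n}(x_0)} = 1_\cP(p_n) = 1$ for every $n \geq 1$. On the other hand, the quantitative nilsequence estimate \cite[Thm.{} 5.1]{BergelsonHalandSon-2020} underlying Theorem \ref{thm:facts:prime-freq-exists}, combined with Green and Tao's disjointness of nilsequences from the M\"obius function \cite{GreenTao-2012-Mobius}, should compute $F$ explicitly as a weighted spatial average. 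Concretely, $X$ projects onto its maximal group-rotation factor $\ZZ/q\ZZ$, splitting into $q$ clopen pieces $X_0,\ldots,X_{q-1}$ cyclically permuted by $T$; each sub-orbit $\bra{T^{qn+a}(x_0)}_{n \geq 0}$ equidistributes in $X_a$ with respect to the conditional Haar measure $\mu_{X_a}$, and along the primes in a residue class coprime to $q$ one obtains similar equidistribution. Performing the standard ``$W$-trick'' to isolate the rational factor then yields
\[
F \;=\; \frac{1}{\varphi(q)} \sum_{\substack{0 \le a < q \\ \gcd(a, q) = 1}} \mu_{X_a}\bra{S \cap X_a} \;=\; 0\,,
\]
because each $\mu_{X_a}\bra{S \cap X_a}$ is proportional to $\mu_X\bra{S \cap X_a} \leq \mu_X(S) = 0$. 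Comparing $F = 1$ with $F = 0$ yields a contradiction.

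The main obstacle is the identification of $F$ with the weighted spatial average displayed above: this is the genuinely deep input, requiring quantitative equidistribution of nilsystem orbits along primes (the full strength of \cite[Thm.{} 5.1]{BergelsonHalandSon-2020} together with \cite{GreenTao-2012-Mobius}), and the observation that the semialgebraicity of $S$ permits approximation of $1_S$ by continuous functions whose discontinuity set has zero Haar measure (since $\partial S$ lies in a proper subvariety of $X$). Once this calculation is granted, the contradiction with $\mu_X(S) = 0$ is immediate, which accounts for the "short proof relying on nilpotent dynamics" advertised in the text.
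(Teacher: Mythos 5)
Your argument is correct in outline but takes a genuinely different, and substantially heavier, route than the paper. The paper's proof is very short: it applies Green and Tao's orthogonality of the M\"obius function to nilsequences (\cite[Thm.\ 5.2]{GreenTao-2012-Mobius}, valid for bounded {\gp} maps) directly to the putative {\gp} map $g = \bb 1_{\cP}$ itself, observes that $\sum_{n<N}\mu(n)\,(\bb 1_{\cP})_n = -\abs{\cP\cap[N]}$ exactly (since $\mu(p)=-1$ for every prime $p$), and so the bound $O(N/\log^2 N)$ contradicts the Prime Number Theorem --- no dynamical representation, no equidistribution along primes, and no semialgebraic boundary considerations are needed. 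Your route instead passes through Theorem \ref{thm:BL-word}, unique ergodicity of minimal nilsystems, and, crucially, an equidistribution theorem for the prime orbit $\bra{T^{p_n}(x_0)}_{n\geq 1}$ with an \emph{identified}, absolutely continuous limiting measure. Be aware that the paper's Theorem \ref{thm:facts:prime-freq-exists} (the specialisation of \cite[Thm.\ 5.1]{BergelsonHalandSon-2020} stated here) only yields \emph{existence} of the frequency $F$, not its value; to conclude $F=0$ you must import the full prime-equidistribution statement for nilsystems (the von Mangoldt/$W$-trick machinery of Green--Tao, or the results of \cite{Eisner-2019}), together with the approximation of $1_S$ by continuous functions using $\mu_X(\partial S)=0$, which is exactly the step you flag as the main obstacle. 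That step is true and available in the literature, so your proof can be completed, but it costs strictly more than the paper's argument, which extracts the contradiction from the M\"obius--nilsequence bound alone precisely because the correlation of $\mu$ with $\bb 1_{\cP}$ is (minus) the prime-counting function. A minor remark: $d(\cP)=0$ needs only Chebyshev's estimate, not the Prime Number Theorem; in the paper the PNT enters only at the final comparison $\abs{\cP\cap[N]} = O(N/\log^2 N)$.
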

\begin{proof}
	As a special case of \cite[Theorem 5.2]{GreenTao-2012-Mobius}, for each {\gp} map $g \colon \NN_0 \to [0,1]$ and for all sufficiently large integers $N$, 
	we have
	\begin{equation}\label{eq:863:1}
		\abs{\EEE_{n < N} \mu(n) g(n)} = O( \log^{-2} N)\,,
	\end{equation}
	where we let $\mu(n)$ denote the M\"{o}bius function ({\it i.e.}, $\mu(n) = 0$ if $n$ is divisible by a square and $\mu(n) = (-1)^k$ if $n$ is 
	the product of $k$ different primes), and the implicit constant is allowed to depend on $g$. 
	If $\bb{1}_{\cP}$ was a {\gpword}, then \eqref{eq:863:1} would imply that 
	\begin{equation}\label{eq:863:2}
		{\abs{\cP \cap [N]}} =O( N/\log^{2} N)\, ,
	\end{equation}
	which would contradict the Prime Number Theorem.
\end{proof}

\begin{remark}
	 Note, in particular, that the cited theorem implies that $\mu$ is not a {\gpword}. The same applies, with virtually the same proof, 
	 to the Liouville function, which is defined by $\lambda(n) = (-1)^k$ if $n$ is the product of $k$ primes (counting multiplicities). 
	 Results from \cite{FrantzikinakisHost-2017} allow one to extend this observation to a wider class of multiplicative functions. 
	 In a different direction, using the results in \cite{GreenTao-2012-Mobius}, one should be able to strengthen the result of 
	 Proposition \ref{prop:neg:primes-not-gp} by showing that for each bounded {\gp} map $g \colon \NN_0 \to \RR$, the averages 
	 $\EE_{p \in \cP \cap [N]} g(p)$ converge to the same limit as the averages $\EE_{n \in [N]} g(n)$ as $N \to \infty$.
\end{remark}

We next consider the set of squares. 

\begin{proposition}\label{prop:neg:squares-not-gp}
	The characteristic word ${\bb 1}_{\cS}$ of the set of squares is not a {\gpword}.
\end{proposition}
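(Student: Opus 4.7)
The plan is to apply Corollary \ref{cor:neg:powers-set} with $k = 4$ and $x = 1$. This corollary is tailor-made for indicator words $\bb 1_E$ whose support $E$ is sparse and exhibits strong multiplicative self-similarity, and the set of squares is an archetypal example of this situation.

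I would verify the three hypotheses in turn for $\bb a = \bb 1_{\cS}$. Hypothesis (iii) is immediate since $1 = 1^2 \in \cS$. Hypothesis (ii) is a standard density count: one has $\abs{\cS \cap [N]} = \Theta(\sqrt{N})$, so that $\freq(\bb 1_{\cS}, 1) = 0$. The substantive point is hypothesis (i), the identity $1_{\cS}(4n) = 1_{\cS}(n)$ for all $n \in \NN$, which follows from the elementary observation that $4n = (2m)^2$ if and only if $n = m^2$, so that multiplication by $4$ preserves both squares and non-squares.

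With the three hypotheses established, Corollary \ref{cor:neg:powers-set} yields the conclusion directly. In this approach there is no real obstacle; the entire difficulty has been absorbed into Theorem \ref{thm:neg:powers}, of which Corollary \ref{cor:neg:powers-set} is a specialisation. The only point requiring care is to apply the corollary at a multiplier $k$ that is itself a perfect square, so that condition (i) holds; any $k \in \{4, 9, 16, \dots\}$ would serve equally well, and $k = 4$ is the smallest such choice.
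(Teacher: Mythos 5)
Your argument is correct, but it is genuinely different from the one in the paper. You reduce everything to Corollary \ref{cor:neg:powers-set} (hence ultimately to Theorem \ref{thm:neg:powers}, i.e.\ the main result of \cite{Konieczny-2021-JLM}), exploiting the multiplicative self-similarity $1_{\cS}(4n)=1_{\cS}(n)$ together with $d(\cS)=0$ and $1\in\cS$; all three hypotheses are verified correctly, and your remark that any perfect-square multiplier $k=m^2\geq 4$ works is also right. The paper instead deduces Proposition \ref{prop:neg:squares-not-gp} from a more general statement: for any polynomial $p$ of degree at least $2$ (indeed for finite unions of such images), the set $\set{p(n)}{n\in\NN_0}$ is not a {\gp} set. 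That proof goes through the dynamical representation of Theorem \ref{thm:BL-poly} and Leibman's equidistribution theorems \cite{Leibman-2005}: if the orbit is equidistributed one shows the polynomial subsequence is still equidistributed and hence avoids the measure-zero semialgebraic cell, and otherwise one passes to sub-nilmanifolds along an arithmetic progression. The trade-off is clear: your route is shorter and more combinatorial, but it works only because $\cS$ is invariant under division by $4$, so it would not handle, say, $\set{n^2+n}{n\in\NN_0}$ or other polynomial images lacking such multiplicative invariance, which the paper's argument covers uniformly; on the other hand, your input (\cite{Konieczny-2021-JLM}) and the paper's input (nilsystem equidistribution) are both substantial black boxes, so neither proof is "elementary."
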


This follows immediately from the following more general result.

\begin{proposition}
	Let $p \colon \ZZ \to \ZZ$ be a polynomial with $p(\NN_0) \subset \NN_0$ and $\deg p \geq 2$. 
	Then the set $ \set{p(n)}{ n \in \NN_0}$ is not a {\gp} subset of $\NN_0$.
\end{proposition}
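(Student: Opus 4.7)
The plan is to combine the dynamical representation of bracket words from Theorem \ref{thm:BL-word} with Leibman's polynomial equidistribution theorem on nilmanifolds to derive a contradiction. Suppose, towards a contradiction, that $E = p(\NN_0)$ is a \gp{} subset of $\NN_0$, equivalently (by Corollary \ref{cor:constr:fibre}) that $\bb{1}_E$ is a \gpword{}. By Theorem \ref{thm:BL-word}, there exist a minimal nilsystem $(X,T)$, a point $x_0 \in X$, and a semialgebraic set $S \subset X$ with $n \in E$ if and only if $T^n x_0 \in S$. Since $\deg p \geq 2$, one has $\abs{E \cap [N]} = O(N^{1/2})$, so $d(E) = 0$. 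Unique ergodicity of minimal nilsystems then forces $\mu_X(S) = 0$; because $S$ is semialgebraic, its topological boundary has positive codimension, so $\mu_X(\bar S) = 0$ as well.

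I would exploit the fact that $T^{p(n)} x_0 \in S$ for every $n \in \NN_0$ by means of a polynomial equidistribution argument. Decompose $X$ into its connected components $X_0, X_1, \ldots, X_{k-1}$; the transformation $T$ permutes them cyclically, so $T^k$ preserves each $X_i$ and, by minimality of $T$ on $X$, acts as a minimal nilrotation on the connected nilmanifold $X_i$. Because $p$ is integer-valued, the sequence $(p(n) \bmod k)_{n \geq 0}$ is periodic; let $N$ denote a period. For each residue class $r \in \{0, 1, \dots, N-1\}$, substituting $n = Nm + r$ yields $p(Nm + r) \equiv p(r) \pmod{k}$, so $q(m) := (p(Nm+r) - p(r))/k$ is an integer-valued polynomial of degree $\deg p \geq 2$, and one has the identity
\[
    T^{p(Nm+r)} x_0 = T^{p(r)}\, (T^k)^{q(m)} x_0 \,.
\]

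The main step is then to invoke Leibman's polynomial equidistribution theorem on the minimal nilrotation $(X_0, T^k)$ with the polynomial $q$ of positive degree. For such a minimal nilrotation on a connected nilmanifold, every non-trivial horizontal character takes an irrational value on the translation element, and Weyl's criterion applied to the resulting polynomial residue modulo $1$ yields equidistribution of $((T^k)^{q(m)} x_0)_{m \geq 0}$ in $X_0$ with respect to Haar measure. Translating by $T^{p(r)}$ transfers this equidistribution to the orbit $(T^{p(Nm+r)} x_0)_{m \geq 0}$ in $X_{p(r) \bmod k}$. Since every point of this orbit lies in $\bar S$, we obtain $\mu_{X_{p(r)\bmod k}}(\bar S \cap X_{p(r) \bmod k}) = 1$, which gives $\mu_X(\bar S) \geq 1/k > 0$, a contradiction. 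The main obstacle is the rigorous application of Leibman's theorem through the horizontal character irrationality condition --- this is a standard but nontrivial fact from the theory of nilsystems which follows from minimality of the nilrotation on the connected component. A minor additional technicality is the degenerate case $\dim X = 0$, in which $X$ is finite and the argument simplifies: the relation $\mu_X(S) = 0$ would force $S = \emptyset$, contradicting $p(0) \in E$.
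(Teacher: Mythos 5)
Your overall strategy is essentially the paper's: reduce to showing that the orbit sampled along the polynomial is equidistributed in the nilmanifold (Weyl on a torus factor plus Leibman's lifting criterion), and contradict the fact that the semialgebraic target $S$ has Haar measure zero (since $d(E)=0$, $\deg p\geq 2$) while containing the entire sampled orbit. Where you genuinely differ is in how the rational/non-equidistributed obstructions are handled: the paper works with the polynomial-orbit representation of Theorem \ref{thm:BL-poly} (with $G$ connected and simply connected) and splits into two cases, using \cite[Thm.{} B]{Leibman-2005} and a passage to arithmetic progressions in the non-equidistributed case (which is why it proves the statement for finite unions $\bigcup_j p_j(\NN_0)$); you instead start from the minimal-nilsystem representation of Theorem \ref{thm:BL-word}, use the cyclic permutation of connected components and minimality (hence unique ergodicity) of $T^k$ on each component, and keep a single polynomial $q$ after restricting to a residue class. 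That reduction is a reasonable and somewhat more economical alternative.

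There is, however, a genuine gap in the key equidistribution step as written. You assert that $T^k$ restricted to a component $X_i$ ``acts as a minimal nilrotation on the connected nilmanifold $X_i$'' and that ``every non-trivial horizontal character takes an irrational value on the translation element''. This is precisely the connectedness pitfall flagged in Appendix \ref{app:nil}: even when $X_i$ is connected, the structure group supplied by Theorem \ref{thm:BL-word} may be disconnected, the translating element $g^k$ need not lie in $G^{\circ}$, and over $G^{\circ}$ the map $T^k$ on $X_i$ is in general only an affine (quasi-translation) map --- e.g.\ the skew product $(x,y)\mapsto(x+\a,y+x)$ on $\TT^2$ coming from the appendix example. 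In that situation the orbit projects to the relevant maximal factor torus as a polynomial, not linear, sequence, so the character-irrationality criterion you invoke does not even parse, and Theorem \ref{thm:Leib-nil} as stated in the paper (which requires $G$ connected and simply connected) does not apply off the shelf. The gap is repairable: either use Leibman's criterion in the form valid for connected $X$ with possibly disconnected $G$ (projection to the maximal factor torus $X/\overline{[G^{\circ},G^{\circ}]}$; minimality gives equidistribution of the projected linear orbit, and a Weyl-type lemma shows that composing an equidistributed real polynomial with the non-constant integer polynomial $q$ preserves equidistribution mod $1$), or re-represent the data via Theorem \ref{thm:BL-poly} as the paper does, at the cost of losing automatic equidistribution and hence needing the paper's two-case argument. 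A minor further point: deducing $\mu_X(S)=0$ from $d(E)=0$ via unique ergodicity already presupposes $\mu_X(\partial S)=0$ (or an appeal to Theorem \ref{prop:facts:freq-exists}), so the order of your deductions should be adjusted, though this is harmless since $\partial S$ is semialgebraic of lower dimension.
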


\begin{proof}
	We will prove marginally more, namely that if $p_1,p_2,\dots,p_s \colon \ZZ \to \ZZ$ are polynomials 
	with $p_j(\NN_0) \subset \NN_0$ and $\deg p_j \geq 2$ for all $j$, $1 \leq j \leq r$, then the set
	\begin{equation}\label{eq:254:1}
		E = \textstyle \bigcup_{j=1}^r p_j(\NN_0)
	\end{equation}
	is not a {\gp} subset of $\NN_0$. For the sake of contradiction, suppose that the converse is true. 
	Throughout the proof, we use terminology introduced in Appendix \ref{app:nil}.
	
	Applying Theorem \ref{thm:BL-poly}, we can find a nilmanifold $X = G/\Gamma$ with $G$ connected and simply connected, a
	 semialgebraic subset $S \subset X$, and a polynomial sequence $g \colon \ZZ \to G$ such that $E = \set{n \in \NN_0}{g(n)\Gamma \in S}$.
	
	Let us first consider the case where $(g(n)\Gamma)_{n=0}^\infty$ is equidistributed in $X$. Since $d(E) = 0$, we have $\mu_{X}(S) = 0$. 
	The polynomial sequence $\bra{ \pi_{\mathrm{ab}}(g(n)\Gamma) }_{n=0}^\infty$ is equidistributed in $X_{\mathrm{ab}}$. 
	Hence, we can conclude from Weyl's equidistribution theorem that $\bra{ \pi_{\mathrm{ab}}(g(p(n)\Gamma)) }_{n=0}^\infty$ is 
	equidistributed in $X_{\ab}$ for each non-constant polynomial map $p \colon \NN_0 \to \NN_0$. 
	It now follows from Theorem \ref{thm:Leib-nil} (that is,  \cite[Thm.{} C]{Leibman-2005}) that $\bra{ g(p(n)\Gamma) }_{n=0}^\infty$ is 
	equidistributed in $X$. In particular $g(p(n))\Gamma \not \in S$ for almost all $n \in \NN_0$. Taking $p = p_1$ we reach a contradiction.	
	
	 Suppose next that $(g(n)\Gamma)_{n=0}^\infty$ is not equidistributed in $X$. Then, it follows from \cite[Thm.{} B]{Leibman-2005} 
	 that there exists $q \in \NN$ and sub-nilmanifolds $Y_0,Y_1,\dots,Y_{q-1} \subset X$ such that $\bra{g(qn+i)\Gamma}_{n=0}^\infty$ is 
	 equidistributed in $Y_i$ for each $i \in [q]$. It remains to apply the previously considered special case to the sets $(E-i)/q$ for each $i \in [q]$ 
	 (note that these sets again take the form \eqref{eq:254:1}, possibly for some larger $r$). 
\end{proof}

\subsection{Number-theoretical functions}

Number theory provides a plentiful source of examples of finitely-valued sequences, for which one can inquire into the existence of a 
generalised polynomial representation. Multiplicative functions constitute a particularly interesting and well-studied class of sequences, with many applications to other problems. 
Recall that a sequence $f \colon \NN \to \CC$ is \emph{multiplicative} if $f(nm) = f(n) f(m)$ 
for all $n,m \in \NN$ with $\gcd(n,m) = 1$. For instance, the M\"{o}bius function, mentioned in the previous sub-section, is often used in the study of the prime numbers. The Prime Number Theorem is equivalent to $\frac{1}{N} \sum_{n=1}^N \mu(n) \to 0$ as $N \to \infty$, while the more quantitative bound $\abs{\sum_{n=1}^N \mu(n)} = O(N^{1/2+\e})$ for each $\e > 0$ is equivalent to the Riemann hypothesis. The results from \cite{GreenTao-2012-Mobius} on correlations of the M\"{o}bius function were a crucial ingredient in the work of Green and Tao on linear patterns in the primes \cite{GreenTao-2010b}. As we pointed out earlier, the M\"{o}bius function is not a {\gpword}, and the same applies to the closely related Liouville function.

While the M\"{o}bius function and the Liouville function are bounded, there are many interesting examples of unbounded integer-valued multiplicative functions. In order to obtain finitely-valued sequences, we use reduction modulo a fixed integer. 
Thus, in this section we investigate infinite words of the form $(\bra{f(n)\bmod{q}}_{n=1}^\infty$, 
where $q \geq 2$ is an integer and $f \colon \NN \to \ZZ$ is multiplicative. 
Analogous questions, with automatic sequences in place of {\gpwords}, were investigated in \cite{Yazdani-2001}.
As an illustrative example, we begin with the Euler totient function  $\phi$, given by
\[
	\phi(n) = \abs{ (\ZZ/n\ZZ)^*} = n\prod_{\mathcal P\ni p \mid n}\bra{1-\frac{1}{p}} \,, \qquad n \in \NN\,.
\]

\begin{proposition}
	Let $q \geq 3$ be an integer. Then $\bra{\phi(n) \bmod q}_{n=1}^\infty$ is not a {\gpword}.
\end{proposition}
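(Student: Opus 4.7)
The strategy is to derive a contradiction by comparing a general M\"obius--orthogonality bound available for any {\gpword} with a classical number-theoretic asymptotic. Assume for contradiction that $\bb a = (\varphi(n) \bmod q)_{n=0}^\infty$ is a {\gpword}, and consider the set $E_0 := \{n \in \NN : q \mid \varphi(n)\}$. By Lemmas \ref{lem:cl:code} and \ref{lem:constr:gp-word-vs-fun}, the indicator $\mathbb{1}_{E_0}$ is a bounded {\gp} map with values in $[0,1]$. The Green--Tao M\"obius--nilsequence theorem, in the form quoted in the proof of Proposition \ref{prop:neg:primes-not-gp}, then yields
\[
	S(N) := \sum_{n \leq N} \mu(n)\,\mathbb{1}_{E_0}(n) = O\bra{N(\log N)^{-2}}.
\]

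To contradict this, I would evaluate $S(N)$ by classical analytic means. Writing $S(N) = M(N) - T(N)$ with $T(N) = \sum_{n \leq N}\mu(n)\mathbb{1}_{E_0^c}(n)$, the term $M(N)$ is negligible by the prime number theorem with Vinogradov-type error. I would first handle the case where $q$ is an odd prime: since $\mu$ vanishes off squarefree integers and $\varphi(n) = \prod_{p\mid n}(p-1)$ for such $n$, the condition $n \in E_0^c$ is equivalent to no prime divisor of $n$ being congruent to $1$ modulo $q$. Consequently $T(N) = \sum_{n \leq N} g(n)$, where $g$ is the multiplicative function with $g(p) = -1$ for $p \not\equiv 1 \pmod q$, $g(p) = 0$ for $p \equiv 1 \pmod q$, and $g(p^k) = 0$ for $k \geq 2$.

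The Dirichlet series $\sum_n g(n) n^{-s} = \prod_{p \not\equiv 1 \pmod q}(1 - p^{-s})$ factors as $\zeta(s)^{-1}\prod_{p \equiv 1\pmod q}(1 - p^{-s})^{-1}$, and standard Dirichlet $L$-function analysis (character orthogonality modulo $q$ combined with $L(1,\chi) \neq 0$ for non-principal $\chi$) shows it has the form $G(s)(s-1)^{1 - 1/\varphi(q)}$ in a neighbourhood of $s=1$, with $G$ analytic and $G(1) \neq 0$. The Selberg--Delange theorem then supplies the asymptotic
\[
	T(N) \sim c_q\, N(\log N)^{1/\varphi(q) - 2}, \qquad c_q \neq 0.
\]
Because $q \geq 3$ forces $1/\varphi(q) > 0$, this grows strictly faster than $N(\log N)^{-2}$, contradicting the upper bound on $S(N)$.

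The main obstacle will be the analytic bookkeeping in the Selberg--Delange step---verifying $G(1) \neq 0$ and that $\Gamma(1/\varphi(q) - 1)$ is finite and nonzero (both classical, using $1/\varphi(q) - 1 \in (-1,-1/2]$ for $q \geq 3$)---and adapting the argument to $q$ without odd prime divisors. When $q$ has an odd prime divisor $\ell$, Lemma \ref{lem:cl:code} applied to the coding $c \mapsto c \bmod \ell$ reduces the problem to $(\varphi(n) \bmod \ell)$, an instance of the prime case just treated. The residual case $q = 2^k$ with $k \geq 2$ follows the same scheme, the condition $v_2(\varphi(n)) < k$ on squarefree $n$ being analysed via Dirichlet characters modulo $2^k$ and yielding an Euler product with analogous branching behaviour near $s=1$ and the identical contradiction.
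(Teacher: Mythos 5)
Your main argument is correct and genuinely different from the paper's. The paper first reduces (by coding) to $q=4$ or $q$ prime; for $q=4$ it observes that the fibre over the residue $2$ essentially encodes $\cP\cap(4\ZZ+3)$ and invokes M\"obius--nilsequence orthogonality as in Proposition \ref{prop:neg:primes-not-gp}, while for $q$ prime it shows $E=\{n:\,q\nmid\phi(n)\}$ has density zero and satisfies $E/p=E$ for a prime $p$ with $q\nmid p-1$, then applies Corollary \ref{cor:neg:powers}, whose engine is the deep Theorem \ref{thm:neg:powers} of \cite{Konieczny-2021-JLM}. You instead run the entire proof through the Green--Tao bound $\sum_{n\le N}\mu(n)\bb{1}_{E_0}(n)=O(N\log^{-2}N)$ and contradict it with a Selberg--Delange evaluation: for odd prime $q$, on squarefree $n$ the condition $q\nmid\phi(n)$ is exactly ``no prime factor $\equiv 1\bmod q$'', the weighted indicator is multiplicative, the Dirichlet series is $\zeta(s)^{1/\varphi(q)-1}$ times a function analytic and nonvanishing at $s=1$ (using $L(1,\chi)\neq0$), and since $1/\varphi(q)-1\in(-1,-1/2]$ the main term $\asymp N(\log N)^{1/\varphi(q)-2}$ is nonzero and beats $N\log^{-2}N$. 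This is a valid alternative: it avoids the powers-of-$k$ machinery entirely and gives a quantitative contradiction, at the cost of importing the Selberg--Delange apparatus; your reduction via coding when $q$ has an odd prime divisor is also fine.

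The one genuine defect is your treatment of $q=2^k$, $k\ge 2$. There the relevant condition on squarefree $n$ is $\sum_{p\mid n,\,p\ \mathrm{odd}}v_2(p-1)\le k-1$, a \emph{sum} constraint over the prime divisors rather than a prime-by-prime condition; hence $\mu(n)\bb{1}[2^k\nmid\phi(n)]$ is not multiplicative, its Dirichlet series has no Euler product, and there is no ``analogous branching behaviour'' at $s=1$ (the true order of the unsigned count is $\asymp N(\log\log N)^{k-2}/\log N$, which would require a two-parameter Selberg--Delange argument, not the scheme you describe). The easiest repair keeps your framework: code modulo $4$, note that on squarefree $n$ one has $4\nmid\phi(n)$ exactly when $n\in\{1,2\}$ or $n\in\{p,2p\}$ with $p\equiv 3\bmod 4$, so the M\"obius sum over $E_0^c$ equals $-\pi(N;4,3)+\pi(N/2;4,3)+O(1)\asymp N/\log N$ by the prime number theorem in arithmetic progressions, again contradicting the $O(N\log^{-2}N)$ bound; this is in substance the paper's $q=4$ case.
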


\begin{proof}
Let us assume by contradiction that $\bra{\phi(n) \bmod q}_{n=1}^\infty$ is  a {\gpword}.
Replacing $q$ with a divisor, we may assume that $q = 4$ or $q$ is a prime. 

We first assume that $q=4$. By direct inspection we have:
\[
	\phi(n) \bmod 4 =
	\begin{cases}
		1 & \text{if } n \in \{1,2\};\\
		2 & \text{if $n = 4$ or $n \in \{p,2p\}$ for a prime $p \equiv 3 \bmod 4$} ;\\
		0 & \text{otherwise}.
	\end{cases}
\]
Since $\bra{\phi(n) \bmod 4}_{n=1}^\infty$ is a {\gpword}, we deduce that $\cP \cap (4\ZZ+3)$ is a {\gp} set. 
This is impossible by direct repetition of the argument in Proposition \ref{prop:neg:primes-not-gp}. 

Suppose now that $q$ is a prime and set 
\[
	E = \set{ n \in \NN}{ \phi(n) \not \equiv 0 \bmod q}\, .
\]
 Alternatively, $E$ is described by the condition: $n \in E$ if and only if $q^2 \nmid n$ and $n$ has no prime divisors congruent to $1$ modulo $q$. 
 Hence, applying the prime number theorem in arithmetic progressions, we conclude that $d(E) = 0$. By assumption, $E$ is a {\gp} set. 
Pick a prime $p > q$ with $q \nmid p-1$. Then $E/p = \set{n \in \NN}{pn \in E} = E$, and hence we infer from Corollary \ref{cor:neg:powers} 
that $E$ is not a  {\gp} set. This provides a contradiction.
\end{proof}

Applying the same ideas, we can obtain the following result, which can be used to deal with most other ``naturally occurring'' multiplicative sequences. 

\begin{proposition}\label{prop:neg:multi}
	Let $f \colon \NN \to \ZZ$ be a multiplicative sequence and let $q \in \NN$. Suppose that the two following properties hold. 
\begin{enumerate}
\item\label{it:138:2}  There exists $p\in \cP$ such that the sequence $\bra{f(p^n) \bmod q}_{n=1}^\infty$ is eventually periodic but not eventually zero.
\item\label{it:138:3} There exist infinitely many  $p \in \cP$ such that $f(p^n) \equiv 0 \bmod q$ for some  $n \in \NN$.
\end{enumerate}
Then $\bra{f(n) \bmod q}_{n=1}^\infty$ is not a {\gpword}.
\end{proposition}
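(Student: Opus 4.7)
The plan is to assume for contradiction that $\bb{a} := (f(n) \bmod q)_{n=1}^\infty$ is a \gpword{} and to apply Theorem~\ref{thm:neg:powers} to a suitable dilate of one of its fibres. By Corollary~\ref{cor:constr:fibre}, the assumption implies that every fibre $E_v := \{n \in \NN : f(n) \equiv v \bmod q\}$ is a \gp{} subset of $\NN_0$ (up to a one-element modification coming from $a_0$).

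Use hypothesis (i) to fix a prime $p_0$, integers $N_0, T \geq 1$, $n_0 \geq N_0$, and a residue $c \not\equiv 0 \bmod q$ with $f(p_0^{n+T}) \equiv f(p_0^n) \bmod q$ for all $n \geq N_0$ and $f(p_0^{n_0}) \equiv c \bmod q$; iterating yields $f(p_0^{n_0 + jT}) \equiv c \bmod q$ for every $j \geq 0$. Set $k := p_0^T \geq 2$, $m_0 := p_0^{n_0}$, and $\tilde E := E_c / m_0$, which remains a \gp{} set by Proposition~\ref{prop:cl:sets-2}. The inclusion $\{k^n : n \in \NN_0\} \subset \tilde E$ is immediate since $m_0 k^n = p_0^{n_0 + nT} \in E_c$.

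The key observation is that, writing an arbitrary $m \in \NN$ as $m = p_0^s m'$ with $\gcd(m', p_0) = 1$, multiplicativity together with the eventual periodicity applied to $n_0 + s + nT \geq N_0$ yields
\[
f(m_0 m k^n) = f(m')\, f(p_0^{n_0 + s + nT}) \equiv f(m')\, f(p_0^{n_0 + s}) \pmod{q},
\]
independently of $n \geq 0$. Hence $m$ belongs to $F := \{m \in \NN : m k^n \in \tilde E \text{ for infinitely many } n \in \NN_0\}$ if and only if $f(m')\, f(p_0^{n_0+s}) \equiv c \bmod q$; since $c \not\equiv 0$, the condition $f(m') \equiv 0 \bmod q$ alone already forces $m \notin F$.

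To show $\NN \setminus F$ is thick, fix $\ell \in \NN$ and, using hypothesis (ii), select $\ell + 1$ distinct primes $q_0, q_1, \dots, q_\ell$, all different from $p_0$, together with exponents $M_i \geq 1$ satisfying $f(q_i^{M_i}) \equiv 0 \bmod q$. The Chinese Remainder Theorem, applied to the pairwise coprime moduli $q_i^{M_i + 1}$, provides $n \in \NN$ with $n + i \equiv q_i^{M_i} \pmod{q_i^{M_i + 1}}$ for every $i = 0, \dots, \ell$; this forces $v_{q_i}(n+i) = M_i$, so the $p_0$-free part of $n+i$ contains $q_i^{M_i}$ as an exact prime-power factor and therefore $f$ of that part vanishes modulo $q$. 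By the previous paragraph $n + i \notin F$ for every $i$, establishing thickness, and Theorem~\ref{thm:neg:powers} then contradicts the \gp{}-ness of $\tilde E$. The single delicate point in the argument is the stabilisation of $f(m_0 m k^n) \bmod q$ in $n$, which combines multiplicativity with the periodicity of $f(p_0^{\cdot})$ after splitting off the $p_0$-adic component of $m$; everything else is routine.
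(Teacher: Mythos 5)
Your proof is correct, and it takes a route that differs in substance from the paper's. The paper first reduces to the case where $q$ is prime, works with the single set $E=\set{n\in\NN}{f(n)\not\equiv 0 \bmod q}$, proves $d(E)=0$ by a syndeticity-plus-CRT contradiction (this step leans on Theorem \ref{thm:facts:dens-exists}, i.e.\ the equality of asymptotic and Banach densities for {\gp} sets), and then invokes the packaged Corollary \ref{cor:neg:powers} after checking the exact invariance $E'/p^d=E'$. You instead apply Theorem \ref{thm:neg:powers} directly to the dilated fibre $\tilde E = E_c/p_0^{n_0}$ of a fixed nonzero residue $c$: the stabilisation $f(p_0^{n_0}mk^n)\equiv f(m')f(p_0^{n_0+s}) \bmod q$ (independent of $n$) lets you describe the set $F$ of Theorem \ref{thm:neg:powers} exactly, and the same CRT trick that the paper uses against syndeticity is used by you to exhibit arbitrarily long intervals inside $\NN\setminus F$, giving thickness outright. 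What your variant buys: no density machinery at all (Theorem \ref{thm:facts:dens-exists} and Corollary \ref{cor:neg:powers} are not needed), and no reduction to prime $q$ — a reduction which in the paper is the mildly delicate point, since one has to see that hypothesis \ref{it:138:2} survives passage to a prime divisor of $q$; working with the fibre $E_c$ of a nonzero residue sidesteps this entirely. What the paper's route buys is mainly economy of exposition, reusing the cleaner statement of Corollary \ref{cor:neg:powers} and the density-zero fact, which is of independent interest. Your key computation (splitting off the $p_0$-adic part of $m$ and using eventual periodicity of $f(p_0^{\cdot}) \bmod q$ from the exponent $n_0\geq N_0$ onward) is exactly right, and the minor indexing issue between $\NN$ and $\NN_0$ is harmless for the reason you give, since {\gp} sets are stable under shifts and finite modifications (Proposition \ref{prop:cl:sets-2}).
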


\begin{proof}
We argue by contradiction, assuming that $\bra{f(n) \bmod q}_{n=1}^\infty$ is a {\gpword}. 
	Using Lemma \ref{lem:cl:code}, we deduce that $\bra{f(n) \bmod p}_{n=1}^\infty$ is also a {\gpword} for every $p \mid q$. 
	Replacing $q$ with a prime divisor if necessary, we may thus assume that $q$ is prime.
	 Consider the {\gp} set 
	 \[
	E = \set{ n \in \NN}{ f(n) \not \equiv 0 \bmod q}\, .
	\]
Let $p$ be a prime satisfying the conditions in \ref{it:138:2}. Let $c \in \NN_0$ and $d \in \NN$ be such that 
$f(p^{n+d}) \equiv f(p^{n}) \bmod q$ for all $n \geq c$ and $f(p^c) \not \equiv 0 \bmod q$. 
Let $E'$ be the {\gp} set defined by $E' = E/p^c = \set{n \in \NN}{p^c n \in E}$. Then $E'/p^d = E'$. 

Next, we show that $d(E) = 0$. Suppose, conversely, that $d(E) > 0$. Then, by Theorem \ref{thm:facts:dens-exists}, 
$E$ is syndetic, meaning that we can find $N$ such that $E$ intersects any interval $[n,n+N)$ ($n \in \NN$). 
By \ref{it:138:3}, we can pick pairwise coprime integers $r_0,r_1,\dots,r_{N-1}$ with $f(r_i) \equiv 0 \bmod q$ for all $i \in [N]$. 
By the Chinese remainder theorem, we can find $n \in \NN$ such that $n+i \equiv r_i \bmod r_i^{2}$ for all $i \in [N]$. 
Then we have $f(n+i) \equiv 0 \bmod q$ and $n+i \not \in E$ for all $i \in [N]$, contradicting the defining condition of $N$. 
Hence $d(E) = 0$. 

Since $d(E) = 0$, we also have $d(E') = 0$. Thus, we infer from Corollary \ref{cor:neg:powers} that $E'$ is not a {\gp} set, which contradicts 
previous observations.
\end{proof}

\begin{example}
Given an integer $k \geq 0$, we let $\sigma_k \colon \NN \to \NN$ denote the $k$-th power divisor-sum function defined by
\[
	\sigma_k(n) = \sum_{d \mid n} d^k = \prod_{p\mid n} \frac{p^{k(\nu_p(n)+1)}-1}{p^k-1}\,, \qquad n \in \NN\,,
\]
where the sum runs over all positive integers $d$ that divide $n$. Note that $\sigma_0(n) = d(n)$ is the number of divisors of $n$ 
and $\sigma_1(n) = \sigma(n)$ is the sum of divisors. For each integer $q \geq 2$, the assumptions of Proposition \ref{prop:neg:multi} are satisfied: 
if $k \geq 1$ then in \ref{it:138:2} we can take $p=q$, and  in \ref{it:138:3} we can take any $p \equiv 1 \bmod{q}$ and $n = 1$ 
(infinitude of such primes follows from Dirichlet's theorem). If $k = 0$, both \ref{it:138:2} and \ref{it:138:3} hold for all primes $p$. 
Hence, $\bra{\sigma_k(n) \bmod q}_{n=1}^\infty$ is not a {\gpword}.
\end{example}

\begin{example}
Given an integer $k \geq 1$, we let $\tau_k \colon \NN \to \NN$ denote the number of representations as the product of $k$ positive integers, that is 
\[
	\tau_k(n) = \abs{ \set{(d_1,d_2,\dots,d_k) \in \NN^k}{ \prod_{i=1}^k d_i = n} } = \prod_{p\mid n} \binom{k+\nu_p(n)-1}{k-1}\,, \qquad n \in \NN\,.
\]
For each integer $q \geq 2$, the assumptions of Proposition \ref{prop:neg:multi} are satisfied: both \ref{it:138:2} and \ref{it:138:3} hold for all primes $p$.
Hence, $\bra{\tau_k(n) \bmod q}_{n=1}^\infty$ is not a {\gpword}.
\end{example}

\begin{example}
	Let $\tau \colon \NN \to \ZZ$ denote the Ramanujan function, and let $q$ be an integer with $\gcd(q,6) = 1$. For each prime $p$, the sequence $\bra{\tau(p^n)}_{n=1}^\infty$ satisfies the recurrence
	\[
		\tau(p^{n+2}) = \tau(p) \tau(p^{n+1}) - p^{11}\tau(p^n)\,,
	\] 
	and hence the sequence $\bra{\tau(p^n) \bmod q}_{n=1}^\infty$ is eventually periodic. In particular, since $\tau(2) = -24 \not\equiv 0 \bmod q$, in Proposition \ref{prop:neg:multi} assumption \ref{it:138:2} is satisfied for $p = 2$. It is known that the set of $n \in \NN$ with $\tau(n) \not\equiv 0 \bmod q$ has asymptotic density $0$ \cite{Ramanujan}. As a consequence, assumption \ref{prop:neg:multi}\ref{it:138:3} is also satisfied (otherwise, there would exist $P \in \NN$ such that $\tau(n) \not \equiv 0 \bmod q$ for all $n \in P\NN + 1$). Hence, $\bra{\tau(n) \bmod q}_{n=1}^\infty$ is not a {\gpword}.
\end{example}

We close this section on multiplicative functions with another result, Corollary \ref{cor:prop:neg:fin-primes-multi}, which applies to multiplicative sequences that exhibit 
non-trivial behaviour only for a finite set of primes. In particular, these sequences do not satisfy condition (ii) of Proposition \ref{prop:neg:multi}. In fact, we can state a somewhat more general result. For a prime $p$ and an integer $n$, we let $\nu_p(n)$ denote the $p$-adic valuation of $n$.  

\begin{proposition}\label{prop:neg:fin-primes}
	Let $\ell \in \NN$, let $p_1,p_2,\dots,p_{\ell}$ be primes, let $\Sigma$ be a finite alphabet, and let $F \colon \NN_0^\ell \to \Sigma$. Let $\bb a$ be the infinite word defined 
	over $\Sigma$ by
	\[ a_n = F\bra{\nu_{p_1}(n),\nu_{p_2}(n), \dots,\nu_{p_{\ell}}(n)} , \qquad n \in \NN. \]
Suppose that $\bb a$ is not almost everywhere periodic. Then $\bb a$ is not a {\gpword}.
\end{proposition}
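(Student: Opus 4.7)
The plan is to deduce the statement from Proposition \ref{prop:neg:weakly-periodic} by showing that $\bb a$ is a Toeplitz sequence, hence weakly periodic. Since the hypothesis rules out that $\bb a$ is almost everywhere periodic, the proposition then delivers the conclusion at once.

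The core step will be to exhibit, for each position $n \in \NN$, an explicit Toeplitz period. Given $n \in \NN$, I would set $k_i := \nu_{p_i}(n)$ for $1 \leq i \leq \ell$ and take
\[
    q := \prod_{i=1}^{\ell} p_i^{k_i+1}.
\]
For any $m \in \NN$ and each $i$, one computes $\nu_{p_i}(qm) \geq k_i + 1 > \nu_{p_i}(n) = k_i$, and the ultrametric property of the $p_i$-adic valuation then forces
\[
    \nu_{p_i}(n + qm) = \min\bra{\nu_{p_i}(n), \nu_{p_i}(qm)} = k_i.
\]
Applying $F$ to the resulting $\ell$-tuple yields $a_{n+qm} = F(k_1,\ldots,k_\ell) = a_n$, so $q$ is a Toeplitz period for $n$, and $\bb a$ is Toeplitz.

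To pass from Toeplitz to weak periodicity, I would take an arbitrary arithmetic progression $(kn+r)_{n=0}^\infty$, pick any $n^\star \in \NN$ with $n^\star \equiv r \bmod k$, and let $q^\star$ be the least common multiple of $k$ and the Toeplitz period attached to $n^\star$ (any multiple of a Toeplitz period is again a Toeplitz period). Setting $k' := q^\star$, $r' := n^\star$, and $r'' := n^\star + q^\star$, one checks that $r', r'' \equiv r \bmod k$, $r' \neq r''$, and
\[
    a_{k'n+r'} = a_{n^\star + q^\star n} = a_{n^\star} = a_{n^\star + q^\star(n+1)} = a_{k'n+r''}
\]
for every $n \geq 0$, confirming the weak periodicity of $\bb a$.

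There is no serious obstacle in this outline; the substance of the proof lies in the one-line observation that the $p_i$-adic valuations are locally determined by congruences modulo high prime powers, which in turn is just the ultrametric inequality. The assumption that $\bb a$ is not almost everywhere periodic is used only at the very end, to invoke Proposition \ref{prop:neg:weakly-periodic}.
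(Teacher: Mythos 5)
Your proposal is correct and follows essentially the same route as the paper: reduce to Proposition \ref{prop:neg:weakly-periodic} and verify weak periodicity via the observation that the $p_i$-adic valuations are frozen by adding multiples of a sufficiently high power of $\prod_i p_i$ (the ultrametric equality). The paper verifies weak periodicity directly, choosing $k'$ with $\nu_{p_i}(k')>\nu_{p_i}(r)$ and taking $r'=r$, $r''=r+k'$, whereas you interpose the (equivalent, and also noted in the paper) Toeplitz property of $\bb a$ before passing to weak periodicity; this is only a cosmetic difference.
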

\begin{proof}
	By Proposition \ref{prop:neg:weakly-periodic}, it will suffice to check that $\bb a$ is weakly periodic. Pick any $k \in \NN,r \in \NN_0$. We can find $k'$ with $k \mid k'$ such that $\nu_{p_i}(k') > \nu_{p_i}(r)$ for all $1 \leq i \leq \ell$. Thus, $a_{k'n + r} = a_{k'n + k'+r}$ for all $n \in \NN_0$, so, with notation as in the definition of weak periodicity, 
	we may take $r' = r$ and $r'' = r+k'$.
\end{proof}

\begin{corollary}\label{cor:prop:neg:fin-primes-multi}
	Let $f \colon \NN \to \ZZ$ be a multiplicative sequence and let $q \in \cP$. 
	Suppose that $(f(n) \bmod q)_{n=1}^\infty$ is not periodic and that there exists $p_0 \in \cP$ such that $f(p^n) \equiv 1 \bmod q$ 
	for all primes $p \geq p_0$ and all $n \in \NN$. Then $\bra{f(n) \bmod q}_{n=1}^\infty$ is not a {\gpword}.
\end{corollary}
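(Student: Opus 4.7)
The plan is to invoke Proposition~\ref{prop:neg:fin-primes} with $\{p_1 < p_2 < \cdots < p_\ell\}$ taken to be the set of primes strictly smaller than $p_0$. First I would observe that for every $n \in \NN$, writing $n = p_1^{e_1} \cdots p_\ell^{e_\ell} m$ with $\gcd(m, p_1 p_2 \cdots p_\ell) = 1$, the multiplicativity of $f$ together with the hypothesis $f(p^k) \equiv 1 \pmod{q}$ for all primes $p \geq p_0$ and all $k \geq 1$ yields
\[
	f(n) \equiv \prod_{i=1}^{\ell} f(p_i^{e_i}) \pmod{q},
\]
because the factor $f(m)$ is a product of terms $f(p^k)$ with $p \geq p_0$, each congruent to $1$ modulo $q$. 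Hence $f(n) \bmod q$ depends only on the tuple $(\nu_{p_1}(n), \ldots, \nu_{p_\ell}(n))$, and so $\bb a = (f(n) \bmod q)_{n=1}^\infty$ is of the form required by Proposition~\ref{prop:neg:fin-primes}.

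To apply that proposition it remains to check that $\bb a$ is not almost everywhere periodic, and I would deduce this from the weaker hypothesis that $\bb a$ is not periodic by arguing that for words of the above special form, the two notions coincide. Suppose that some periodic word $\bb a'$ of period $T$ agrees with $\bb a$ outside a set $Z \subseteq \NN$ of density zero, and fix an arbitrary $n_0 \in \NN$, decomposed as $n_0 = p_1^{e_1} \cdots p_\ell^{e_\ell} m_0$ with $\gcd(m_0, P) = 1$, where $P = p_1 p_2 \cdots p_\ell$. Writing $T = p_1^{f_1} \cdots p_\ell^{f_\ell} M$ with $\gcd(M, P) = 1$, I would seek $n = p_1^{e_1} \cdots p_\ell^{e_\ell} m'$ with $\gcd(m', P) = 1$ and $n \equiv n_0 \pmod{T}$. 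By the Chinese Remainder Theorem the latter congruence is equivalent to $m' \equiv m_0 \pmod{M \prod_i p_i^{\max(f_i - e_i,\, 0)}}$, and the residue of $m_0$ modulo each prime power on the right is coprime to $p_i$, so this system is compatible with the condition $\gcd(m', P) = 1$. Consequently the set of admissible $n$ is a nonempty union of arithmetic progressions, of positive density, so I can choose such an $n \notin Z$. Then $a_{n_0} = a_n$ by the first paragraph, $a_n = a'_n$ since $n \notin Z$, and $a'_n = a'_{n_0}$ since $\bb a'$ has period $T$ and $n \equiv n_0 \pmod T$. Altogether $a_{n_0} = a'_{n_0}$ for every $n_0$, so $\bb a = \bb a'$ is periodic, contradicting the hypothesis.

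The only delicate point is the bookkeeping in this second step: one must separately handle indices $i$ with $f_i > e_i$, where the congruence forces $m' \equiv m_0 \pmod{p_i}$ so that coprimality to $p_i$ comes for free, and indices with $f_i \leq e_i$, where the congruence modulo $p_i$ is trivial and one has to impose $m' \not\equiv 0 \pmod{p_i}$ by hand (still leaving a positive-density set of valid $m'$). Once this accounting is complete, Proposition~\ref{prop:neg:fin-primes} delivers the conclusion that $\bb a$ is not a \gpword.
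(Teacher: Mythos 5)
Your proposal is correct and follows the paper's route: reduce to Proposition \ref{prop:neg:fin-primes} by observing that multiplicativity plus $f(p^n)\equiv 1 \bmod q$ for $p \geq p_0$ makes $f(n) \bmod q$ a function of the valuations $\nu_{p_1}(n),\dots,\nu_{p_\ell}(n)$ at the primes below $p_0$, and then check that the word is not almost everywhere periodic. The only divergence is in that last step: the paper notes the sequence is Toeplitz (for each $n$, taking $d$ with $\nu_{p_i}(d)>\nu_{p_i}(n)$ for all $i$ gives $a_{n+dm}=a_n$ for all $m$) and invokes the fact that a Toeplitz sequence is periodic if and only if it is almost everywhere periodic, whereas you prove the implication ``not periodic $\Rightarrow$ not almost everywhere periodic'' directly by your CRT/positive-density argument; the two are essentially equivalent in content, your version being self-contained and the paper's being shorter by citing the standard Toeplitz fact.
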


\begin{proof}
	Note that the sequence $(f(n) \bmod q)_{n=1}^\infty$ is Toeplitz, since, for every positive integer $n$, one has $f(n+dm) = f(n)$ for all $m \in \NN$, for any choice of  $d \in \NN$ 
	such that $\nu_p(d) > \nu_p(n)$ for all $p < p_0$. Note also that $(f(n) \bmod q)_{n=1}^\infty$ is not periodic, and thus it is also not almost everywhere periodic. It remains to apply Proposition \ref{prop:neg:fin-primes}.
\end{proof}

\begin{example}
	The sequence $\bra{ (-1)^{\nu_2(n) + \nu_3(n) }}_{n=1}^\infty$ is not a {\gpword}.
\end{example}

\subsection{Automatic sequences}\label{ssec:Negative-Automatic} 
Let us now consider the problem of classifying automatic sequences which are bracket words. 
To begin with, we very briefly recall the definition of an automatic sequence; for extensive background, see \cite{AlloucheShallit-book}. 

Let $k \in \NN$, let $\Sigma_k = \{0,1,\dots,k-1\}$ denote the base-$k$ alphabet, and let $\Omega$ be a finite set. 
A sequence $\bb a$ over $\Omega$ is $k$-automatic if there exists a finite $k$-automaton which computes $\bb a$. More explicitly, 
this means that there exist a finite set of states $Q$, a distinguished state $q_0 \in Q$, a transition function 
$\delta \colon Q \times \Sigma_k\to Q$, and  an output map $\tau \colon Q \to \Omega$ 
such that for each integer $n \in \NN_0$ with base-$k$ expansion $n = n_0 + kn_1 + k^2 n_2 + \dots + k^\ell n_{\ell}$ 
we have
\[
	a_n = 
	\tau\left(\delta\Big(
		\delta \big(
			\dots
			\delta( 	
				\delta(q_0, n_0)
			, n_1) 
			\dots
		, n_{\ell-1} \big)
	, n_{\ell} \Big)\right)\,.
\]

It is relatively easy to show that if an automatic sequence $\bb a = (a_n)_{n=0}^\infty$ coincides with a bracket word $\bb b = (b_n)_{n=0}^\infty$ almost everywhere, that is, 
if $$d\bra{\set{n \in \NN_0}{a_n \neq b_n}} = 0 \,,$$ 
then $\bb a$ must also be periodic almost everywhere ({\it i.e.}, there exists a periodic word $\bb c$ 
such that $d\bra{\set{n \in \NN_0}{a_n \neq c_n}} = 0$). Indeed, this result follows from Proposition \ref{prop:neg:weakly-periodic} combined with the fact that 
a $k$-automatic sequence has a finite $k$-kernel; see \cite{ByszewskiKonieczny-2020-CJM} for details. 
Here, the $k$-kernel of a sequence $\bb a = (a_n)_{n=0}^\infty$ is defined as 
the set of subsequences $\set{ \bra{a_{k^i n+r}}_{n=0}^\infty}{r,i \in \NN_0,\ r < k^i}$. 

As a consequence of this ``density $1$'' result, in order to classify all automatic sequences which are bracket words, 
it is enough to consider sequences $\bb a$ over $\{0,1\}$, with $\mathrm{\freq}(\bb a,1) = 0$. Equivalently, we let $\bb a = \bb 1_{E}$, 
where $E \subset \NN_0$ and $d(E) = 0$. In \cite{ByszewskiKonieczny-2018-TAMS}, we investigated such ``sparse'' bracket words and showed 
a slightly stronger variant of Proposition \ref{prop:neg:sparse-IP}. Combining it with facts concerning additive richness of automatic sets, 
a full classification was obtained in \cite{ByszewskiKonieczny-2020-CJM}, conditional on the conjecture that for $k \geq 2$ 
the set $\set{k^i}{i \in \NN_0}$ of powers of $k$ is not a {\gp} subset of $\NN_0$. This conjecture was finally proved in \cite{Konieczny-2021-JLM}, 
leading to the following result.

\begin{theorem}[{\cite[Thm.{} B]{Konieczny-2021-JLM}}]\label{thm:BK}
	Let $\bb a$ be an automatic sequence that is not eventually periodic. Then $\bb a$ is not a {\gpword}.
\end{theorem}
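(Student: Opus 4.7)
The plan is to argue by contradiction, combining the weak-periodicity reduction of Proposition \ref{prop:neg:weakly-periodic} with the structural result Theorem \ref{thm:neg:powers}.

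Suppose $\bb a$ is $k$-automatic, not eventually periodic, and yet a {\gpword}. First, I would observe that every $k$-automatic sequence is \emph{weakly periodic}: given an arithmetic progression $(kn+r)_{n \geq 0}$, finiteness of the $k$-kernel of $\bb a$ and the pigeonhole principle force two distinct sub-progressions of common step to carry identical subsequences of $\bb a$. Proposition \ref{prop:neg:weakly-periodic} then applies, yielding an eventually periodic word $\bb c$ with $d(\set{n \in \NN_0}{a_n \neq c_n}) = 0$. Since $\bb a$ is not eventually periodic, the set $E := \set{n \in \NN_0}{a_n \neq c_n}$ is non-empty (and, in fact, infinite). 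By standard closure properties of automatic sequences, $E$ is $k$-automatic; by Proposition \ref{prop:cl:prod}, Lemma \ref{lem:cl:code}, and Corollary \ref{cor:constr:fibre} applied to the bracket words $\bb a$ and $\bb c$ (with $\bb c$ a bracket word by Example \ref{ex:ex:per}), $E$ is also a {\gp} subset of $\NN_0$. The problem therefore reduces to proving: no non-empty $k$-automatic set $E \subset \NN_0$ with $d(E) = 0$ can be a {\gp} set.

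To derive this contradiction, I would extract a geometric progression inside $E$ using the automaton: for any fixed $n_0 \in E$, the sequence $(\bb 1_E(n_0 k^j))_{j \geq 0}$ depends only on the automaton state reached after reading the base-$k$ digits of $n_0$ followed by $j$ zeros, and therefore is eventually periodic in $j$. Replacing $n_0$ by $n_0 k^a$ for a suitable $a$ inside the cycle, and setting $d$ equal to the period, I obtain $\set{n_0 k^{dj}}{j \in \NN_0} \subset E$. Letting $\tilde E = \set{m \in \NN_0}{n_0 m \in E}$, which is a {\gp} set by Proposition \ref{prop:cl:sets-2} with $d(\tilde E) = 0$, I have $\set{(k^d)^j}{j \in \NN_0} \subset \tilde E$. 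I would then apply Theorem \ref{thm:neg:powers} to $\tilde E$ with $k$ replaced by $k^d$; provided that $\NN \setminus \tilde F$ is thick, where $\tilde F = \set{m \in \NN}{m k^{dj} \in \tilde E \text{ for infinitely many } j}$, the theorem produces the desired contradiction with $\tilde E$ being {\gp}.

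The main obstacle will be establishing thickness of $\NN \setminus \tilde F$. This does not follow formally from $d(\tilde E) = 0$ alone; rather, it requires the combined use of density and automaticity. The idea is that if $\tilde F$ contained arbitrarily long intervals $[M, M+L)$, then by the $k$-automatic structure of $E$, the states reached from these $m$ under the $0$-transition would jointly visit the accepting cycles of the automaton, forcing a uniform positive contribution to $E$ within windows $[M k^{dj}, (M+L) k^{dj})$ for appropriate $j$, and hence contradicting $d(E) = 0$. Making this interplay precise is the technical heart of the proof and is essentially the content of \cite[Thm.\ B]{Konieczny-2021-JLM}; once it is in hand, Theorem \ref{thm:neg:powers} closes the argument.
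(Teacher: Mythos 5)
Your opening reduction is sound and matches the route sketched in Section \ref{ssec:Negative-Automatic}: automatic sequences are weakly periodic, so Proposition \ref{prop:neg:weakly-periodic} forces $\bb a$ to agree almost everywhere with a periodic word, and the difference set $E$ is an infinite $k$-automatic {\gp} set of density zero. The genuine gap is in how you handle this sparse case. Your extraction of a geometric progression inside $E$ does not work: eventual periodicity of $j \mapsto \bb 1_E(n_0 k^j)$ only yields $\set{n_0k^{a+dj}}{j \in \NN_0} \subset E$ if the symbol $1$ occurs in the \emph{periodic part} of that sequence, and it may well be that for every $n_0 \in E$ the periodic part is identically $0$. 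Concretely, $E = \set{2^m+1}{m \geq 1}$ is $2$-automatic, has density zero, and $\bb 1_E$ is not eventually periodic, yet $E$ contains no set of the form $\set{n_0 k^{dj}}{j \in \NN_0}$ (all elements are odd), so Theorem \ref{thm:neg:powers} can never be brought to bear along the lines you propose. Sets of exactly this kind are the reason the actual argument (from \cite{ByszewskiKonieczny-2020-CJM}) proceeds via a dichotomy for sparse automatic sets: either the set is additively rich (contains a translated IP set, which is excluded for {\gp} sets by the stronger form of Proposition \ref{prop:neg:sparse-IP} from \cite{ByszewskiKonieczny-2018-TAMS}), or it is geometric-progression-like, and only in the second horn does the powers-of-$k$ result enter. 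Your proposal has no counterpart of the first horn.

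The second problem is circularity. At the decisive step (thickness of $\NN \setminus \tilde F$) you defer to ``\cite[Thm.\ B]{Konieczny-2021-JLM}'' --- but Theorem \ref{thm:BK} \emph{is} Theorem B of that paper; the only ingredient one may legitimately import from it is Theorem A, restated here as Theorem \ref{thm:neg:powers}. And the thickness hypothesis is a real obstruction, not a formality: $d(\tilde E)=0$ gives no control on $\tilde F$, whose complement need not be thick in general, so without the structural dichotomy (or some genuine use of the automaton beyond the heuristic you sketch) there is nothing to feed into Theorem \ref{thm:neg:powers}. As written, the proposal proves the theorem only modulo the theorem itself.
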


 Note that this theorem extends the classical result claiming  that a Sturmian word cannot be automatic.

\section{Subword complexity}\label{sec:subword}

Since generalised polynomials are defined by relatively simple formulae, it is natural to inquire if they also have low complexity when viewed 
from different perspectives. In particular, we investigate \emph{subword complexity}, that is, the number $p_{\bb a}(N)$ of different length-$N$ factors 
(or subwords) of an infinite word $\bb a$.  An overview of various results concerning subword complexity can be found in a number of surveys, 
such as \cite{CassaigneNicolas-2010} or \cite{Allouche-1994}, and connections with the theory of dynamical systems are discussed in \cite{Ferenczi-1999}.

\subsection{Background}

For an infinite word $\bb a=(a_n)_{n=0}^\infty$ defined over an alphabet $\Sigma$, the complexity function of $\bb a$ is the function that associates  
with each positive integer $n$ the positive integer  
\[ p_{\bb a}(N) = \abs{\set{ a_{n}a_{n+1}\cdots a_{n+N-1}}{n \in \NN_0}} \,.\]
The simplest sequences, in terms of subword complexity, are the eventually periodic ones: If $\bb a$ is eventually periodic then $p_{\bb a}(N)$ is bounded. 
Conversely, it was shown by Morse and Hedlund that if there exists $N$ with $p_{\bb a}(N) \leq N$ then $\bb a$ must be eventually periodic 
(see, for instance, \cite{MorseHedlund-1938} or \cite[Prop.{} 2]{Ferenczi-1999}). Thus, for any sequence $\bb a$, which is not eventually periodic, and every $N$ 
we have $p_{\bb a}(N) \geq N+1$. The Sturmian words, discussed in  Section \ref{ssec:nil:sturm}, are characterised by the property that $p_{\bb a}(N) = N+1$ for all $N$.
Recall that Sturmian words are, in particular, {\gpwords}. 
At the other extreme, if $\vert \Sigma\vert=k$, we have the upper bound $p_{\bb a}(N) \leq k^N$ for all $N$, and there exist sequences for which equality holds. (In fact, this is the case for almost all sequences with respect to the natural probability measure on $\Sigma^{\NN_0}$). 
Because of the elementary inequality $p_{\bb a}(N+M) \leq p_{\bb a}(N) p_{\bb a}(M)$, the limit
\[
	h(\bb a) = \lim_{N \to \infty} \frac{\log p_{\bb a}(N)}{N}
\]
exists for any finitely-valued sequence $\bb a$; its value is called the \emph{entropy} of $\bb a$, and is closely connected to the notion of 
topological entropy coming from the theory of dynamical systems.

\subsection{New result}

Because {\gpwords} can be represented using nilsystems as in Theorem \ref{thm:BL-word} and because nilsystems have zero entropy, 
one could show\footnote{Since we are about to prove a more precise estimate, we do not go into the details of the argument. 
If in Theorem \ref{thm:BL-word} the sets $S_i$ were open, then the conclusion would be an immediate consequence of standard facts about topological entropy. 
In general, the sets $S_i$ are not open, but their boundaries $\partial S_i$ have zero $\mu_{G/\Gamma}$-measure, which is sufficient for this application.} 
that $h(\bb a) = 0$ for each {\gpword} $\bb a$, meaning that $p_{\bb a}(N) = \exp({o(N)})$ as $N \to \infty$. Our main new result concerning the complexity 
of bracket words is a polynomial bound, given in Theorem \ref{thm:A} and restated below for the reader's convenience. 

\thmsubword*

\begin{remark}
We note that related results concerning topological complexity of nilsystems were obtained in 
\cite{HostKraMaass-2014}, but are not directly applicable in our context (the crucial obstacle is the fact that the representation in 
Theorem \ref{thm:BL-word} involves a partition into semialgebraic sets $S_i$ which are neither open nor closed, while the results 
in \cite[Sec.{} 3]{HostKraMaass-2014} are applicable to open covers).
\end{remark}

The proof of this result is carried out in Sections \ref{sec:sc-induction}, \ref{sec:sc-auxiliary}, and \ref{sec:sc-proof}. 

\subsection{Complementary results}

\newcommand{\C}{\lambda} 
In view of Theorem \ref{thm:A}, it is natural to inquire into more precise asymptotics for 
$p_{\bb a}(N)$, where $\bb a$ is a {\gpword}. Let
\[
	\C(\bb a) = \limsup_{N \to \infty} \frac{\log p_{\bb a}(N)}{\log N}
\]
be the smallest exponent such that $p_{\bb a}(N) \leq N^{\C(\bb a)+o(1)}$ as $N \to \infty$. 
Theorem \ref{thm:A} asserts that $\C(\bb a)$ is finite for all {\gpwords} $\bb a$. 
The exact value of $\C(\bb a)$ is known only in the simplest examples, such as the Sturmian words. 
Already in relatively simple cases, such as $a_n = \braif{ \fp{ \sqrt{2}n \ip{\sqrt{3} n}} > 1/4}$, it would be interesting to compute $\C(\bb a)$ exactly. 

Theorem 3.4 of \cite{HostKraMaass-2014} asserts that the topological complexity $S(\e,N)$ of a minimal nilsystem $(X,T)$ obeys the bounds 
$C(\e) N^c \leq  S(\e,N) \leq C'(\e) N^c$, where $c$ is the total commutator dimension of $X$ (see the original paper for the relevant definitions). 
Let $\bb a$ be a {\gpword} represented on the nilsystem $(X,T)$ as in Theorem \ref{thm:BL-word}. It seems plausible that, under mild additional assumptions, 
similar estimates might hold for $p_{\bb a}(N)$, implying in particular that $\C(\bb a) = c$. 

We prove now that $\lambda(\bb a)$ can be arbitrarily large. In fact, for each $d \in \NN$, we can construct a {\gpword} $\bb a$ with $\lambda(\bb a) = d$.

\begin{proposition}\label{prop: comp1}
	Fix $d \in \NN$, and $\a_1,\a_2, \dots, \a_d \in \RR$ such that $1,\a_1,\a_2,\dots,\a_d$ are $\QQ$-linearly independent. 
	For every $i$, $1 \leq i \leq d$, let $\bb a^{(i)}$ be the Sturmian word defined by $a^{(i)}_n = \ip{n\a_i} - \ip{(n-1) \a_i}$ for $n \in \NN_0$, and set 
	$\bb a = \prod_{i=1}^d \bb a^{(i)}$. Then $\bb a$ is a bracket word with $\lambda(\bb a) = d$.
\end{proposition}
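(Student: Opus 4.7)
The plan has two parts, corresponding to the two sides of the equality $\lambda(\bb a)=d$. To begin with, $\bb a$ is a bracket word by Proposition~\ref{prop:cl:prod}, since each Sturmian word $\bb a^{(i)}$ is a bracket word (Example~\ref{ex:ex:sturm}) and the class is closed under Cartesian products. The upper bound $\lambda(\bb a)\leq d$ is then immediate: a length-$N$ factor of $\bb a$ is determined by the $d$-tuple of corresponding factors of the $\bb a^{(i)}$, so by the Morse--Hedlund complexity formula for Sturmian words,
\[ p_{\bb a}(N)\leq \prod_{i=1}^d p_{\bb a^{(i)}}(N)=(N+1)^d. \]

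For the matching lower bound I would use the dynamical representation from Section~\ref{ssec:nil:sturm}. Let $\a=(\a_1,\dots,\a_d)\in\TT^d$ and let $T\colon x\mapsto x+\a$ denote the associated rotation on $\TT^d=\RR^d/\ZZ^d$. Choosing arcs $I_i\subset\TT$ so that $a_n^{(i)}=\braif{n\a_i\in I_i}$ for $n\geq 1$ (with precise endpoints dictated by whether one uses \eqref{eq:802:1} or \eqref{eq:802:2}), the word $\bb a$ becomes the coding of the orbit of $0$ under $T$ with respect to the product partition $\mathcal P=\bigotimes_{i=1}^d\{I_i,\TT\setminus I_i\}$ of $\TT^d$. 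Two positions $m,n$ give rise to the same length-$N$ factor precisely when $m\a$ and $n\a$ lie in the same atom of the refined partition $\mathcal P_N=\bigvee_{k=0}^{N-1}T^{-k}\mathcal P$. The key observation is that $\mathcal P_N$ remains a product partition: the $i$-th circle is cut at the $N+1$ points $\{-k\a_i\bmod 1:-1\leq k\leq N-1\}$, which are distinct since $\a_i$ is irrational. Hence $\mathcal P_N$ has exactly $(N+1)^d$ atoms, each a non-empty product of arcs and thus of positive Haar measure.

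To finish, I would invoke Weyl's equidistribution theorem: the hypothesis that $1,\a_1,\dots,\a_d$ are $\QQ$-linearly independent guarantees that the orbit $(n\a)_{n\geq 0}$ is equidistributed, and in particular dense, in $\TT^d$, so it meets every atom of $\mathcal P_N$. Therefore $p_{\bb a}(N)=(N+1)^d$, and consequently $\lambda(\bb a)=d$. The main obstacle is the lower bound: one must simultaneously confirm that $\mathcal P_N$ genuinely splits into $\Theta(N^d)$ cells as $N\to\infty$ and that every such cell is visited by the orbit. The product structure of $\mathcal P$ together with the irrationality of each individual $\a_i$ handles the first point, while the joint $\QQ$-linear independence of $1,\a_1,\dots,\a_d$ is precisely what is needed for the second.
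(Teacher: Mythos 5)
Your argument is correct and follows essentially the same route as the paper's proof: the upper bound via $p_{\bb a}(N)\leq\prod_{i}p_{\bb a^{(i)}}(N)=(N+1)^d$, and the lower bound by observing that each length-$N$ factor of $\bb a^{(i)}$ corresponds to a non-degenerate arc of the circle (your atoms of $\mathcal P_N$ are exactly these arcs) and then invoking Weyl equidistribution of $(n\a_1,\dots,n\a_d)$ in $\TT^d$, which is precisely what the $\QQ$-linear independence of $1,\a_1,\dots,\a_d$ provides. The only cosmetic difference is that you phrase the lower bound in the language of refined product partitions and exact complexity $(N+1)^d$, whereas the paper only records the inequality $p_{\bb a}(N)\geq\prod_i p_{\bb a^{(i)}}(N)$; the content is the same.
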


\begin{proof} We first observe that, according to Corollary \ref{cor:cl:prod}, and since Sturmian words are bracket words, the word $\bb a$ is a bracket word. 

	For each $i$, $1 \leq i \leq d$, we have $p_{\bb a^{(i)}}(N) = N+1$. As a general fact about products, we have 
	$p_{\bb a}(N) \leq \prod_{i=1}^d p_{\bb a^{(i)}}(N)$. It  follows that $\lambda(\bb a) \leq \sum_{i=1}^d \lambda\bra{ \bb a^{(i)}} = d$. 
	
	Conversely, for each $i$, $1 \leq i \leq d$, and each length-$N$ subword $w^{(i)}$ of $\bb a^{(i)}$, there exists an interval (not degenerate to a point) 
	$I \subset \RR/\ZZ$ such that, for $n \in \NN_0$, $\bb a^{(i)}|_{[n,n+N)} = w^{(i)}$ if and only if $n \a_{i} \bmod \ZZ \in I$. Since by assumption the numbers 
	$1,\a_1,\a_2,\dots,\a_d$ are $\QQ$-linearly independent, the sequence $(n\a_1,n\a_2,\dots,n\a_d) \bmod \ZZ^d$ is equidistributed in $\RR^d/\ZZ^d$. 
	It follows that for any $d$-tuple $w^{(1)},w^{(2)},\dots,w^{(d)}$ of factors of $\bb a^{(1)}, \bb a^{(2)},\dots, \bb a^{(d)}$ respectively, 
	$\prod_{i=1}^d w^{(i)}$ is a factor of $\bb a$. Hence,
	$p_{\bb a}(N) \geq \prod_{i=1}^d p_{\bb a^{(i)}}(N)$, and consequently $\lambda(\bb a) \geq d$. This ends the proof. 
\end{proof}

We end this section with an explicit example showing that $\C(\bb a)$ takes on arbitrarily large values for {\gpwords} $\bb a$, 
even when the size of the alphabet of $\bb a$ remains bounded.

\begin{proposition}\label{prop: comp2}
	Fix $d \in \NN$. Let $\a \in \RR \setminus \QQ$ and let $\bb a$ be the {\gpword} over $\{0,1,\dots,9\}$ given by 
	$a_n = \ip{ 10 \fp{\a n^d} }$. Then $p_{\bb a}(N) \gg_d N^{d(d-1)/2}$ for all $N \in \NN$.
\end{proposition}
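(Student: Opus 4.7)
The plan is to encode length-$N$ subwords of $\bb a$ as points in $\TT^d$, show via a Lagrange-interpolation bound that each equivalence class of $\TT^d$ has small Haar measure, and conclude by Weyl equidistribution. Expanding
\[
\alpha(n+j)^d = \alpha j^d + \sum_{k=0}^{d-1}\binom{d}{k}\alpha n^{d-k}j^k,
\]
one sees that the subword $a_na_{n+1}\cdots a_{n+N-1}$ depends on $n$ only through the state $\beta(n) := \bigl(\binom{d}{k}\alpha n^{d-k}\bigr)_{k=0}^{d-1} \in \TT^d$, since the piece $\alpha j^d$ is independent of $n$. Any non-trivial $\ZZ$-linear combination of the coordinates of $\beta(n)$ equals $\alpha$ times a non-zero integer polynomial in $n$, so Weyl's theorem yields that $(\beta(n))_{n\geq 0}$ is equidistributed in $\TT^d$.

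Let $\Phi\colon \TT^d \to \{0,\ldots,9\}^N$ denote the quantisation map and write $P_\delta(j) := \sum_{k=0}^{d-1}\delta_k j^k$. The identity $\Phi(\beta)=\Phi(\beta')$ forces $\fpa{P_{\beta-\beta'}(j)}<1/10$ for every $j\in[0,N)$, so each $\Phi$-fibre is contained in a translate of
\[
T := \bigl\{\delta\in\TT^d : \fpa{P_\delta(j)}<1/10\text{ for every }j\in[0,N)\bigr\}
\]
and hence has Haar measure at most $\mathrm{vol}(T)$. Any fibre of positive measure has non-empty open interior, so by equidistribution it is visited by the orbit, yielding $p_{\bb a}(N)\geq 1/\mathrm{vol}(T)$. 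The task is thereby reduced to proving $\mathrm{vol}(T) = O_d(N^{-d(d-1)/2})$.

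To bound $\mathrm{vol}(T)$, I would lift $\delta\in T$ to $\RR^d$ and write $P_\delta(j) = m_j + y_j$ with $m_j\in\ZZ$ and $|y_j|<1/10$ on $\{0,\ldots,N-1\}$. Assuming that $(m_j)_j$ agrees with an integer-valued polynomial of degree $\leq d-1$ on this range (discussed below), the residual $r := P_\delta - m$ is a real polynomial of degree $\leq d-1$ with $|r(j)|<1/10$ on $[0,N)$. A standard divided-difference computation at $d$ equispaced points of this range gives $|\mathrm{coef}_{j^{d-1}}(r)| = O_d(N^{-(d-1)})$; subtracting the leading monomial and iterating yields $|\mathrm{coef}_{j^k}(r)| = O_d(N^{-k})$ for every $k$. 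Since the monomial coefficients of an integer-valued polynomial of degree $\leq d-1$ are rationals with denominator dividing $(d-1)!$, each $\delta_k$ lies within $O_d(N^{-k})$ of one of $O_d(1)$ admissible rationals in the fundamental domain, and integrating coordinate by coordinate gives
\[
\mathrm{vol}(T) \leq O_d(1)\cdot\prod_{k=0}^{d-1}O_d(N^{-k}) = O_d\bigl(N^{-d(d-1)/2}\bigr).
\]

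The main obstacle is showing that $(m_j)_j$ really is an integer-valued polynomial of degree $\leq d-1$. For $d\leq 3$ this follows at once from the crude bound $|\Delta^d y_j|\leq 2^d\max|y_j|<2^d/10<1$, which forces the integer $\Delta^d m_j$ to vanish. For $d\geq 4$ one only knows $\Delta^d m_j\in\ZZ\cap(-2^d/10,2^d/10)$, so $m$ need not be a global polynomial. I would handle this by stratifying $T$ according to the uniformly bounded integer sequence $(\Delta^d m_j)_j$: the dominant no-jump stratum is treated by the above argument, and a refined analysis shows that on any other stratum the consistency of the prescribed jumps with $P_\delta$ being a polynomial of degree $\leq d-1$ imposes enough additional codimension that the total contribution is again $O_d(N^{-d(d-1)/2})$. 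This last step is the principal technical hurdle.
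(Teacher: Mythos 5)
Your overall architecture is sound and in fact parallels the paper's: you reduce the count of length-$N$ windows to a volume (in the paper, a counting) bound for the set $T$ of coefficient vectors $\delta$ of degree-$(d-1)$ polynomials with $\fpa{P_\delta(j)}$ small for all $j\in[N]$, and then use Weyl equidistribution of the shifted-coefficient vectors to realise $\gg 1/\vol(T)$ distinct windows (the paper instead greedily selects $\approx 1/\lambda(R)$ shifts whose pairwise coefficient differences avoid an exceptional union $R$ of small rectangles; your fibre/interior argument is a legitimate repackaging of the same step). The problem is the step you yourself flag: the bound $\vol(T)=O_d(N^{-d(d-1)/2})$ is only established when the integer parts $(m_j)$ form an integer-valued polynomial, i.e.\ for $d\leq 3$. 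For $d\geq 4$ the statement you need is exactly the quantitative Weyl inequality: if $\fpa{P_\delta(j)}<1/10$ for all $j\in[N]$, then there is a \emph{nonzero integer $q=O_d(1)$} with $\fpa{q\,\delta_k}=O_d(N^{-k})$ for each $k$. Your stratification sketch does not prove this, and as stated it cannot be summed naively: the number of admissible jump sequences $(\Delta^d m_j)_j$ is exponential in $N$, so "each stratum contributes $O_d(N^{-d(d-1)/2})$" would not suffice even if proved; one must show that nonempty strata are extremely constrained, which is the actual content of Weyl differencing / the geometry-of-numbers proofs of the inequality. Note also that the multiplier $q$ matters: the admissible rational "centres" for $\delta_k$ are rationals of bounded denominator, not only those with denominator dividing $(d-1)!$ coming from integer-valued polynomials, so your final localisation statement is not quite the right one even in shape.

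The paper sidesteps this hurdle by quoting the known quantitative equidistribution theorem (it cites \cite[Lemma 2.4]{HardyLittlewood-book}, or the special case of Theorem \ref{thm:GT}) to localise the coefficient differences into $O_d(1)$ rectangles with side lengths $O_d(N^{-(d-i)})$, whose total measure is $O_d(N^{-d(d-1)/2})$. If you replace your unproved strata analysis by an appeal to such a result (or supply a genuine proof of it, e.g.\ via van der Corput/Weyl differencing), the rest of your argument goes through; without it, your proof is complete only for $d\leq 3$.
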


\begin{proof}
	For each pair $(m,n) \in \ZZ^2$, we can compute that $a_{n+m} = \ip{10\fp{h_m(n)}}$, where 
	\[
		 h_m(n) =  \sum_{i=0}^d \a_i(m) n^{d-i} \qquad \text{ and } \qquad \a_i(m) = \fp{ \a m^i \binom{d}{i} }\,, \qquad 0 \leq i \leq d\,.
	\]
	Note that $\a_0(m) = \fp{\a}$ for all $m$. Let $\vec \a(m) = \bra{\vec \a_i(m)}_{i=1}^{d} \in [0,1]^{d}$, and let $A = \set{ \vec\a(m)}{ m \in \ZZ}$. 
	By Weyl's equidistribution theorem, $A$ is dense in $[0,1]^{d}$. 
	
	Let $N$ be a sufficiently large integer. Suppose that for some $m,m' \in \NN_0$,  $\bb a|_{[m,m+N)} = \bb a|_{[m',m'+N)}$. 
	Then 
	\begin{equation}\label{eq:245:1}
		\fpa{h_m(n) - h_{m'}(n)}  \leq 1/5\,,  \;\;\;\; \forall n \in [N]\, .
	\end{equation}
 In general, if $\beta = (\beta_0,\beta_1,\dots,\beta_{d-1}) \in [0,1)^{d}$ is such that $\fpa{\sum_{i=0}^{d-1} \b_i n^i} \leq 1/5$ for all $n \in [N]$, then it follows from the quantitative version of the Weyl equidistribution 
	theorem that there exists a non-zero integer $q = O_d(1)$ such that $\fpa{q\beta_i} = O_d (N^{-(d-i)})$ for all $i$, $0 \leq i \leq d$. 
	(This can be derived using \cite[Lemma 2.4]{HardyLittlewood-book}, or as a special case of a much more general statement in Theorem \ref{thm:GT}.) Applying this observation with $\b = \a(m) - \a(m')$, we see that there exists a set $R \subset \RR^d/\ZZ^d$, which is a union of $O_d(1)$ 
	rectangles with side lengths $O_d (N^{-(d-i)})$, $1 \leq i \leq d$, such that \eqref{eq:245:1} implies $\a(m)-\a(m') \bmod \ZZ^d \in R$. Note that the measure of $R$ is 
	\[ 0< \lambda\bra{R} = O_d\bra{N^{-(d-1)}  N^{-(d-2)} \cdots  N^0} = O_d\bra{ N^{-d(d-1)/2}}\,.\]
	Put $r = \ip{1/\lambda(R)}$. We can inductively construct an increasing sequence of integers $m_1,m_2,\dots,m_r$ with the property that $\a(m_j)-\a(m_i) \bmod \ZZ^d \not \in R$ for all $1 \leq i < j \leq r$. Indeed, if $m_1,m_2,\dots,m_{j-1}$ have already been constructed ($1 \leq j \leq r$) then it is enough to pick any $m_{j}$ with 
	\[ \a(m_{j}) \bmod \ZZ^d \not \in \textstyle\bigcup_{i=1}^{j-1} \bra{R + \a(m_i)},\]
	which is possible because the measure of the union above is strictly less than $1$ and $A$ is dense in $[0,1]^d$. By construction, all factors $\bb a|_{[m_i,m_i+N)}$ with $1 \leq i \leq r$ are distinct. It follows that \( p_{\bb a}(N) \geq r \gg_{d} N^{d(d-1)/2}.\)
\end{proof}
 
\subsection{Outline of the proof of Theorem \ref{thm:A}} 
We illustrate our strategy of the proof of Theorem \ref{thm:A} with a specific example, which already employs most of the tools used in the proof of the general case. Since the subsequent discussion serves only as illustration and motivation, we skip some of the technical details.

Let $\bb a$ be the {\gpword} over the alphabet $\{0,1\}$ given by 
\[
	a_n = \braif{\fp{\sqrt{2}n \ip{\sqrt{3}n}} < \frac{1}{2}}.
\]
(Above, we use Iverson bracket, introduced in Section \ref{sec:Notation}.) Our goal is to obtain a polynomial bound for $p_{\bb a}(N)$. We may write $a_n = c(g(n))$, where 
\[
	g(n) = \ip{2\fp{\sqrt{2}n \ip{\sqrt{3}n}}},
\]
and $c \colon \{0,1\} \to \{0,1\}$ is given by $c(0) = 1$, $c(1) = 0$. Of course, the subword complexity of $\bb a$ is the same as the subword complexity of $\bra{g(n)}_{n=0}^\infty$.

Our first step (cf.\ Example \ref{ex:par-gp}) is to note that for each $m \in \NN_0$ there exist $\a,\b,\gamma,\delta \in [0,1)$ such that for all $n \in \NN_0$ we have
\begin{align*}
	g(n+m) &= \ip{ 2\fp{ (\sqrt{2}n+\a) { \ip{\sqrt{3} n + \b}} + \gamma  n + \delta } } =  \tilde g_{\a,\b,\gamma,\delta}(n).
\end{align*}
Thus, instead of estimating the number of subwords of $\bra{g(n)}_{n=0}^\infty$ of a given length $N$, it will suffice to estimate the number of sequences $\bra{\tilde g_{\a,\b,\gamma,\delta}(n)}_{n=0}^{N-1}$, where $(\a,\b,\gamma,\delta)$ varies over $[0,1)^4$. This step is the key reason why we introduce the notion of a parametric {\gp} map in Section \ref{ssec:sc-para-gp}. 

For the purpose of counting, it will be more convenient to work with the operation $\ip{\cdot}$ rather than $\fp{\cdot}$. We can remove $\fp{\cdot}$ from the definition of $\tilde g$ by writing 
\begin{align*}
\tilde g_{\a,\b,\gamma,\delta}(n) &= 
2\ip{ (\sqrt{2}n+\a) { \ip{\sqrt{3} n + \b}} + \gamma  n + \delta }  \\&+
 \ip{ 2\bra{ (\sqrt{2}n+\a) { \ip{\sqrt{3} n + \b}} + \gamma  n + \delta }}
 =  2 g'_{\a,\b,\gamma,\delta}(n) - g''_{\a,\b,\gamma,\delta}(n).
\end{align*}
Thus, to obtain a polynomial bound for $p_{\bb a}(N)$, it will suffice to obtain polynomial bounds for the number of sequences $\bra{ g'_{\a,\b,\gamma,\delta}(n)}_{n=0}^{N-1}$ and $\bra{ g''_{\a,\b,\gamma,\delta}(n)}_{n=0}^{N-1}$, where $(\a,\b,\gamma,\delta)$ varies over $[0,1)^4$. We only consider the first of these two bounds, the second one being analogous. (Actually, the two classes of sequences are related by $g'_{\a,\b,\gamma,\delta}(n) = \ip{g''_{\a,\b,\gamma,\delta}(n)/2}$, but we do not need this fact.)

Expanding the brackets inside $\ip{\cdot}$, we may bring $g'_{\a,\b,\gamma,\delta}(n)$ into a more convenient form
\begin{align*}
	g'_{\a,\b,\gamma,\delta}(n)
	&= \ip{ 
		\sqrt{2} n \ip{\sqrt{3} n + \b} + \a \ip{\sqrt{3} n + \b} + \gamma  n + \delta 
	} 
	\\ &= \ip{ 
		\sqrt{2} h^{(1)}_{\b}(n) + \a h^{(2)}_{\b}(n) + \gamma  h^{(3)}(n) + \delta h^{(4)}(n)
	},
\end{align*} 
where $ h^{(1)}_{\b}(n) = n \ip{\sqrt{3} n + \b}$, $h^{(2)}_{\b}(n) = \ip{\sqrt{3} n + \b}$, $h^{(3)}(n) = n$ and $h^{(4)}(n) = 1$ are integer-valued {\gp} maps, which are strictly simpler than $g'_{\a,\b,\gamma,\delta}(n)$ in a sense that is made precise in Section \ref{ssec:sc-height}.  Note also that for each of the maps $h^{(i)}$ and each $n \in [N]$ we have $0 \leq h^{(i)}(n) < 10 N^2$.

Suppose that we have already proved a polynomial bound on the number of sequences $\brabig{h^{(1)}_{\b}(n)}_{n=0}^{N-1}$ and $\brabig{h^{(2)}_{\b}(n)}_{n=0}^{N-1}$ as $\beta$ varies over $[0,1)$. (Our proof of Theorem \ref{thm:A} proceeds by induction, cf.\ Section \ref{ssec:sc-induction}. Here, we omit the discussion of relatively simple and uninteresting cases.) Since the only dependence of $g'_{\a,\b,\gamma,\delta}(n)$ on $d\b$ is through $h^{(1)}_{\b}$ and $h^{(2)}_{\b}$, it will suffice to obtain for each $\b \in [0,1)$ a polynomial bound for the number of sequences $\bra{ g'_{\a,\b,\gamma,\delta}(n)}_{n=0}^{N-1}$, where $(\a,\gamma,\delta)$ varies over $[0,1)^3$. Thus, have reduced the number of parameters from $4$ to $3$. Let us fix the choice of $\b \in [0,1)$, and let $h^{(1)}(n) = h^{(1)}_{\b}(n)$, $h^{(2)}(n) = h^{(2)}_{\b}(n)$. We point out that from this point we will no longer need any information about the maps $h^{(i)}$ ($1 \leq i \leq 4$) other than that they map $[N]$ to $[10 N^2]$; in particular, we will not use the fact that they are {\gp} (cf.\ Proposition \ref{prop:inductive}).

We are thus left with the task of obtaining a polynomial bound on the number of sequences
\[
\bra{ g'_{\a,\b,\gamma,\delta}(n)}_{n=0}^{N-1} = 
\bra{
\ip{ 
		\sqrt{2} h^{(1)}(n) + \a h^{(2)}(n) + \gamma  h^{(3)}(n) + \delta h^{(4)}(n)
}}_{n=0}^{N-1}
\]
as $\a,\gamma,\delta$ varies over $[0,1)^3$. A natural approach at this point is to approximate $\a,\gamma,\delta$ by rational numbers $\a^*$, $\gamma^*$, $\delta^*$ with denominators $Q$ (to be optimised in the course of the argument) and (say) $0 \leq \a - \a^*,\ \gamma - \gamma^*, \delta-\delta^* \leq 1/Q$. Indeed, as long as we have a polynomial bound $Q = N^{O(1)}$, the choice of $\a^*$, $\gamma^*$, $\delta^*$ will only contribute a polynomial factor to our bound on the subword complexity of $\bb a$. At the same time, for $n \in [N]$ we expect (cf.\ eq.\ \eqref{eq:521:1}), at least heuristically, that
\begin{align*}
	g'_{\a,\b,\gamma,\delta}(n) &= \ip{ 
		\sqrt{2} h^{(1)}(n) + \a h^{(2)}(n) + \gamma  h^{(3)}(n) + \delta h^{(4)}(n)}
	\\
	&= 	 
	 \ip{ 
		\sqrt{2} h^{(1)}(n) + \a^* h^{(2)}(n) + \gamma^*  h^{(3)}(n) + \delta^* h^{(4)}(n) 
} = g'_{\a^*,\b,\gamma^*,\delta^*}(n)
.
\end{align*}
Let us make that last point somewhat more precise. We have 
\[
	\abs{ (\a-\a^*) h^{(2)}(n) + (\gamma-\gamma^*)  h^{(3)}(n) + (\delta-\delta^*) h^{(4)}(n) } \leq 30 N^2/Q.
\]
Thus, (cf.\ eq.\ \eqref{eq:521:2})
\(
	g'_{\a,\b,\gamma,\delta}(n) = g'_{\a^*,\b,\gamma^*,\delta^*}(n)
\)
 as long as we have
\begin{equation}\label{eq:sc:ex:001}
\fpa{ 
		\sqrt{2} h^{(1)}(n) + \a h^{(2)}(n) + \gamma  h^{(3)}(n) + \delta h^{(4)}(n)} > 30N^2/Q.
\end{equation}
Next, we will need to better understand the situation where \eqref{eq:sc:ex:001} does not hold. It will be convenient to define (cf.\ eq.\ \eqref{eq:521:15})
\[
h^{(0)}(n) =
-\nint{ 
		\sqrt{2} h^{(1)}(n) + \a^* h^{(2)}(n) + \gamma^*  h^{(3)}(n) + \delta^* h^{(4)}(n).
}
\]
Then we have the bound $0 \geq h^{0}(n) > -50 N^2$, and we may express \eqref{eq:sc:ex:001} in a slightly more convenient form
\begin{align*}
	\abs{ h^{(0)}(n) + \sqrt{2} h^{(1)}(n) + \a h^{(2)}(n) + \gamma  h^{(3)}(n) + \delta h^{(4)}(n) } > 30N^2/Q.
\end{align*}
Let $B \subset \ZZ^5$ denote the set of vectors $m = (m_0,m_1,m_2,m_3,m_4)$ with $0 \geq m_0 > -50N^2$ and $0 \leq m_1,m_2,m_3,m_4 < 10N^2$ (thus, $\bra{ h^{(0)}(n), h^{(1)}(n),\dots, h^{(4)}(n) } \in B$ for $n \in [N]$). 
The set of vectors $m = (m_0,m_1,m_2,m_3,m_4)$ such that
\begin{align}\label{eq:sc:ex:002}
	\abs{ m_0 + \sqrt{2} m_1 + \a m_2 + \gamma  m_3 + \delta m_4 } \leq 30N^2/Q
\end{align}
can be thought of as a discrete approximation of a hyperplane, and thus we expect it to be additively structured. Indeed, we show in Section \ref{sec:sc-auxiliary} (Proposition \ref{prop:lattice-approx}) that there exists a lattice $\Lambda$ with the following properties:
\begin{itemize}
\item If $m \in B$ and \eqref{eq:sc:ex:002} holds then $m \in \Lambda$.
\item If $m \in B \cap \Lambda$ then 
\begin{align}\label{eq:sc:ex:003}
	\abs{ m_0 + \sqrt{2} m_1 + \a m_2 + \gamma  m_3 + \delta m_4 } \leq C N^{12}/Q
\end{align}
for some absolute constant $C > 0$.
\end{itemize}

Consider $n \in [N]$ such that $\bra{ h^{(0)}(n), h^{(1)}(n),\dots, h^{(4)}(n) } \in \Lambda$. Then (cf.\ eq.\ \eqref{eq:521:3})
\begin{align*}
g'_{\a,\b,\gamma,\delta}(n) &= - h^{(0)}(n) + \ip{ 
		 h^{(0)}(n) + \sqrt{2} h^{(1)}(n) + \a h^{(2)}(n) + \gamma  h^{(3)}(n) + \delta h^{(4)}(n)},
\end{align*}
where the expression under the $\ip{\cdot}$ is bounded in absolute value by $C N^{12}/Q$. We will take $Q > CN^{12}$, meaning that
\begin{align}\label{eq:sc:ex:004}
g'_{\a,\b,\gamma,\delta}(n) &= -h^{(0)}(n),\ &&\text{or}& g'_{\a,\b,\gamma,\delta}(n) = -h^{(0)}(n)-1. 
\end{align}
Next, we need to develop a better understanding of when each of the two possibilities mentioned above occurs. Put
\begin{align*}
	\Lambda^+ &= \set{ m \in \Lambda }{m_0 + \sqrt{2} m_1 + \a m_2 + \gamma  m_3 + \delta m_4 \geq 0},\\
	\Lambda^- &= \set{ m \in \Lambda }{m_0 + \sqrt{2} m_1 + \a m_2 + \gamma  m_3 + \delta m_4 < 0}.
\end{align*}
Then $\Lambda = \Lambda^+ \cup \Lambda^-$ is a partition obtained by cutting $\Lambda$ with a hyperplane. We can now make \eqref{eq:sc:ex:004} more precise (cf.\ eq.\ \eqref{eq:521:4}):
\begin{equation}\label{eq:sc:ex:005}
g'_{\a,\b,\gamma,\delta}(n) =
\begin{cases}
	-h^{(0)}(n) & \text{if } \bra{ h^{(0)}(n), h^{(1)}(n),\dots, h^{(4)}(n) } \in \Lambda^+,\\
	-h^{(0)}(n)-1
	& \text{if } \bra{ h^{(0)}(n), h^{(1)}(n),\dots, h^{(4)}(n) } \in \Lambda^-.\\
\end{cases}
\end{equation}

Recall from \eqref{eq:sc:ex:001} that if $ \bra{ h^{(0)}(n), h^{(1)}(n),\dots, h^{(4)}(n) } \not \in \Lambda$ then $g'_{\a,\b,\gamma,\delta}(n) = g'_{\a^*,\b,\gamma^*,\delta^*}(n)$. Combining this observation with \eqref{eq:sc:ex:005}, we see that the sequence $\bra{ g'_{\a,\b,\gamma,\delta}(n)}_{n=0}^{N-1}$ is completely determined by the following data:
\begin{itemize}
\item the rational approximations $\a^*,\gamma^*,\delta^*$;
\item the restrictions of the sequences $h^{(1)}_{\b}$ and $h^{(2)}_{\b}$ to $[N]$;
\item the intersection of the lattice $\Lambda$ with the box $B$;
\item the hyperplane partition $\Lambda \cap B = (\Lambda^+ \cap B) \cup (\Lambda^- \cap B)$.
\end{itemize}
As we have seen, we may take $Q$ of the form $Q = C'N^{12}$ for a large constant $C'$. Then the number of choices of $\a^*,\gamma^*,\delta^*$ is $O(N^{36})$, and hence polynomial in $N$. The contribution from $h^{(1)}_{\b}$ and $h^{(2)}_{\b}$ is polynomial by the inductive assumption. Thus, it remains to estimate the number of ways in which the set $B$ can be partitioned into the components $B = (B \setminus \Lambda) \cup (\Lambda^+ \cap B) \cup (\Lambda^- \cap B)$, as described above. A polynomial estimate on the number of such partitions is obtained in Section \ref{sec:sc-auxiliary} (Proposition \ref{prop:half-lattice-count}) using the techniques of additive geometry.  
 
\section{Proof of Theorem \ref{thm:A}: notation and induction}\label{sec:sc-induction}

In this section we set up the inductive scheme and introduce notation which will be used in the proof of Theorem \ref{thm:A}.

\subsection{Height}\label{ssec:sc-height}
One of several measures of complexity of a generalised polynomial is the \emph{height}, that is, the number of nested instances of the floor function. 
Let $d \in \NN$, and let $\mathrm{GP}_0$ denote the polynomial maps from $\RR^d$ to $\RR$.
Inductively, for each $i \in \NN$, let $\mathrm{GP}_{i}$ be the smallest class of maps from $\RR^d$ to $\RR$ that is closed under sums and products, 
and which contains $g$ and $\ip{g}$ for each $g \in \mathrm{GP}_{i-1}$. By definition, for each {\gp} map $g \colon \RR^d \to \RR$, 
there exists an integer $i \in \NN_0$ such that $g \in \mathrm{GP}_i$. 
The height of a {\gp} map $g \colon \RR^d \to \RR$, 
denoted by $\cmp(g)$, is the least of such integers $i \in \NN_0$. 

 More generally, if $\Omega \subset \RR^d$ and $g \colon \Omega \to \RR$ is a {\gp} map then $\cmp(g)$ is defined as the least possible value 
 of $\cmp(\tilde g)$ where $\tilde g \colon \RR^d \to \RR$ is a {\gp} map and $\tilde g|_{\Omega} = g$. For instance, if $g$ is the {\gp} map 
 $\ZZ \to \ZZ \subset \RR$ given by
\( g(n) = \ip{ \sqrt{2} n \ipnormal{\sqrt{3} n}+\sqrt{5}n^2}\)
then $\cmp(g) \leq 2$. For the following result, we recall that by convention we treat the empty product $\prod_{j=1}^{0}(\cdots)$ as being identically equal to $1$.

\begin{lemma}\label{lem:gp-sum-rep}
	Let $d \in \NN$ and let $g \colon \RR^d \to \RR$ be a {\gp} map. Then $g$ can be written as
	\begin{equation}\label{eq:gp-sum-rep}
		g = \sum_{i=1}^s p_i \prod_{j=1}^{r_i} \ip{ h_{i,j} },
	\end{equation}
	where $s \in \NN$, $p_i \colon \RR^d \to \RR$ ($1 \leq i \leq s$) are polynomials, and for each $1 \leq i \leq s$, $r_i \in \NN_0$ and  $h_{i,j} \colon \RR^d \to \RR$ ($1 \leq j \leq r_i$) are a GP maps with $\cmp(h_{i,j}) < \cmp(g)$.
\end{lemma}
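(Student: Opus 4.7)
The plan is to induct on the height $\cmp(g)$. The base case $\cmp(g) = 0$ will be immediate: $g$ is a polynomial by definition, so one takes $s = 1$, $p_1 = g$, $r_1 = 0$, and the empty product contributes $1$, giving $g = p_1$.

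For the inductive step, I would fix $k \geq 1$, assume the statement for all GP maps of height strictly less than $k$, and treat $g$ with $\cmp(g) = k$. By the recursive definition from Section \ref{ssec:sc-height}, $\mathrm{GP}_k$ is the smallest family of real-valued functions on $\RR^d$ containing $\mathrm{GP}_{k-1}$ together with $\ip{h}$ for each $h \in \mathrm{GP}_{k-1}$, and closed under sums and products. Hence every element of $\mathrm{GP}_k$, and in particular $g$, can be expressed as a finite sum of products of generators of these two types. Grouping within each product the factors that already lie in $\mathrm{GP}_{k-1}$ and using closure of $\mathrm{GP}_{k-1}$ under multiplication to collect them into a single factor, one obtains a preliminary representation
\[
g = \sum_{i=1}^{s'} q_i \prod_{j=1}^{r_i} \ip{h_{i,j}}
\]
with $q_i, h_{i,j} \in \mathrm{GP}_{k-1}$, so in particular $\cmp(q_i), \cmp(h_{i,j}) \leq k-1 < \cmp(g)$.

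It then remains to replace each $q_i$ by a sum of the required form. Since $\cmp(q_i) \leq k-1 < k$, the inductive hypothesis applies to every $q_i$ and produces $q_i = \sum_m p_{i,m} \prod_n \ip{h'_{i,m,n}}$ with $p_{i,m}$ genuine polynomials and $\cmp(h'_{i,m,n}) < \cmp(q_i) \leq k-1 < k$. Substituting these expressions back into the display and expanding will yield $g$ as a sum of terms of the form $p_{i,m} \cdot \prod_n \ip{h'_{i,m,n}} \cdot \prod_j \ip{h_{i,j}}$, which matches the desired shape after relabelling, since all floor arguments involved have height less than $\cmp(g)$.

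There is no real obstacle here; the argument is essentially bookkeeping driven by the recursive definition of $\mathrm{GP}_k$. The one subtlety worth emphasising is that the induction is essential in order to secure \emph{polynomial} outer coefficients $p_i$, rather than merely GP coefficients of height at most $k-1$: simply stopping at the preliminary representation would leave $q_i \in \mathrm{GP}_{k-1}$ out front, and it is precisely the substitution step, anchored by the base case, that propagates the polynomial structure up through the hierarchy.
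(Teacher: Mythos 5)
Your proof is correct and takes essentially the same route as the paper: an induction on the height $\cmp(g)$ driven by the recursive definition of $\mathrm{GP}_k$, with the polynomial base case and straightforward bookkeeping for floors, sums and products. The only organizational difference is that you first flatten $g$ into a sum of products of generators of $\mathrm{GP}_k$ and then apply the inductive hypothesis to turn the $\mathrm{GP}_{k-1}$ coefficients into genuine polynomials, whereas the paper combines representations case by case ($g$ a polynomial, $g=\ip{h}$, $g=h+h'$, $g=h\cdot h'$); both amount to the same argument.
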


\begin{proof}
	We proceed by induction on $\cmp(g)$. If $g$ is a polynomial, there is nothing to prove, since we can take $s = 1$ and $p_1 = g$. 
	If $g = \ip{h}$ for some $h$ with $\cmp(h) < \cmp(g)$ then we can take $s = 1$, $r_1 = 1$, $p_1 = 1$, and $h_{1,1} = h$. 
	If $g = h + h'$ or $g = h \cdot h'$ for some $h$ and $h'$ with $\max\{\cmp(h),\cmp(h')\} < \cmp(g)$, then a representation of $g$ of the form 
	\eqref{eq:gp-sum-rep} can be obtained from the analogous representations of $h$ and $h'$.
\end{proof}

\subsection{Parametric generalised polynomials}\label{ssec:sc-para-gp}

It will be convenient to state some of our results in terms of \emph{parametric {\gp} maps}, by which we mean  families of {\gp} 
maps which include real-valued parameters as coefficients. For instance, the formula 
\[ g_{\a,\b}(n) = \ip{ \a n \ip{\b n}+\sqrt{2}n^2}\]
defines, for each $\a,\b \in \RR$, a {\gp} map from $\ZZ$ to $\RR$, and we will refer to $g_{\bullet}$ as a parametric {\gp} 
map\footnote{We use ``$\bullet$'' as a placeholder for a variable. Thus, in the discussion above, we let $g_{\bullet}$ denote 
the map $\RR^2 \to \RR^{\ZZ}$, $(\a,\b) \mapsto g_{\a,\b}$, which can be identified with a map $\RR^2 \times \ZZ \to \RR$ in a natural way.}  
$\ZZ \to \RR$. We make this notion precise in the following definition. Below and elsewhere, 
if $I \subset J$ are finite sets and $\vec\beta \in \RR^J$, we let $\vec\beta|I \in \RR^I$ denote the restriction of $\beta$ to $I$, 
that is, $(\beta|I)_i = \beta_i$ for all $i \in I$.

\begin{definition}
Let $I$ be a finite set. A \emph{{\pgp} map} $g_{\bullet} \colon \ZZ \to \RR$ \emph{with index set I} is a map $\RR^I \to \RR^{\ZZ}$, $\a \mapsto g_{\a}$, 
such that the combined map $\RR^I \times \ZZ \to \RR$, $(\a,n) \mapsto g_{\a}(n)$ is a {\gp} map.
 The height of the {\pgp} map $g_\bullet$, denoted by $\cmp(g_\bullet)$, is the height of the corresponding {\gp} map $(\vec\a,n) \mapsto g_{\vec\a}(n)$. 
\end{definition}

\begin{definition} 
Let $g_\bullet,h_\bullet \colon \ZZ \to \RR$ be two {\pgp} maps with index sets ${I}$ and ${J}$ respectively. We define the sum 
$ g_\bullet + h_\bullet $ to be the {\pgp} map with index set $I \cup J$ given by $(g+h)_{\vec\a}(n) = g_{\vec\a|I}(n) + h_{\vec\a|J}(n)$, $\vec\a \in \RR^{I\cup J}$, $n \in \ZZ$.  
  The product $g_\bullet \cdot h_\bullet$ is defined accordingly by $(g \cdot h)_{\vec\a}(n) = g_{\vec\a|I}(n) \cdot h_{\vec\a|J}(n)$, $\vec\a \in \RR^{I\cup J}$, $n \in \ZZ$. 
\end{definition}

Our interest in {\pgp} maps stems largely from the following lemma, which allows us to replace the study of subwords in {\gp} 
sequences by the study of prefixes in {\pgp} sequences.

\begin{lemma}\label{lem:subword->prefix}
	Let $g \colon \ZZ \to \RR$ be a bounded {\gp} map. Then there exists a {\pgp} map 
	$\tilde g_{\bullet}$ with index set $I$, such that, for each $m \in \ZZ$, there exists $\vec\a \in [0,1)^I$ 
	such that $g(n+m) = \tilde g_{\vec\a}(n)$ for all $n \in \ZZ$.
\end{lemma}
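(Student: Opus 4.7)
The plan is to translate the problem of shifting the argument of $g$ into the problem of moving the starting point in a nilsystem, where starting points are naturally parameterized by Malcev coordinates lying in $[0,1)^d$.

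By Theorem~\ref{thm:BL-mini}, the bounded {\gp} sequence $g$ admits a dynamical representation $g(n) = f(T^n x)$, where $(X,T) = (G/\Gamma, T_h)$ is a minimal nilsystem (we may pass to the connected component of $\id_G$, so assume $G$ connected and simply connected), $f\colon X \to \RR$ is piecewise polynomial, and $x \in X$. The elementary but crucial observation is that shifting $n$ by $m$ corresponds to replacing the starting point: $g(n+m) = f(T^n y_m)$ where $y_m := T^m x \in X$. After fixing Malcev coordinates $\tau\colon X \to [0,1)^d$ (with $d = \dim G$), each starting point is encoded by $\tau(y) \in [0,1)^d$, and the set of shifts is parameterized by this cube.

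Accordingly, I would take $I = [d]$ as the parameter index set and define the candidate parametric family
\[
\tilde g_{\vec\a}(n) := f\bigl(T^n \, \tau^{-1}(\fp{\vec\a})\bigr), \qquad \vec\a \in \RR^d,\ n \in \ZZ,
\]
where $\fp{\vec\a} \in [0,1)^d$ denotes the coordinate-wise fractional part, used only to extend the family from $[0,1)^d$ to $\RR^d$. Taking $\vec\a(m) := \tau(y_m) \in [0,1)^d$, we obtain $g(n+m) = \tilde g_{\vec\a(m)}(n)$ for all $m,n$ by construction. Everything therefore reduces to verifying that $\tilde g_\bullet$ is a parametric {\gp}, i.e., that the combined map $(\vec\a, n) \mapsto \tilde g_{\vec\a}(n)$ is a {\gp} map $\RR^d \times \ZZ \to \RR$.

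This verification splits into two parts. First, the map $(\vec\a, n) \mapsto \tau(T^n\tau^{-1}(\vec\a))$ from $[0,1)^d \times \ZZ$ to $[0,1)^d$ must be shown to be a {\gp} map, i.e., that left translation by $h^n$, read in Malcev coordinates, is given by {\gp} formulas in $n$ whose coefficients are {\gp} functions of the coordinates of the argument. This is the general version of the explicit calculation carried out for the Heisenberg group in Section~\ref{sec:nil}; I would invoke the precise statement from Appendix~\ref{app:nil}, which is dedicated to exactly this type of nilpotent-dynamics bookkeeping. Second, writing $f = \tilde f \circ \tau$ with $\tilde f\colon \RR^d \to \RR$ piecewise polynomial on semialgebraic pieces $S_1,\dots,S_r$, we have $\tilde f = \sum_i p_i \cdot \braif{\,\cdot\, \in S_i}$; composing with the {\gp} map of the previous step, each $p_i \circ (\cdot)$ stays {\gp} (polynomial composed with {\gp} is {\gp}), and the indicator $\braif{\,\cdot\, \in S_i}$ composed with a {\gp} map is again {\gp} by the Iverson-bracket mechanism of Proposition~\ref{prop:cl:g-in-I} applied to the defining polynomial (in)equalities of $S_i$.

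The main obstacle is the first of these two parts, namely verifying that group translation in Malcev coordinates is jointly {\gp} in $n$ and the starting coordinates. This fact is essentially what underlies the whole of Theorem~\ref{thm:BL-mini}, but the formulas are notationally heavy; fortunately this is exactly the material gathered in Appendix~\ref{app:nil}, so the body of the proof reduces to citing the relevant statement and assembling the pieces as above.
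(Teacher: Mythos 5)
Your proposal is correct and follows essentially the same route as the paper, which proves this lemma by rephrasing Lemma \ref{lem:nil:shift} of Appendix \ref{app:nil}: represent $g(n)=f(T^n x)$ via Theorem \ref{thm:BL-mini}, observe that a shift by $m$ only changes the basepoint, parametrise basepoints by Mal'cev coordinates, and set $\tilde g_{\vec\a}(n)=f(T^n\tau^{-1}(\vec\a))$. The one step you defer to ``the precise statement from Appendix \ref{app:nil}'' --- joint {\gp}-dependence on $(\vec\a,n)$ --- is exactly what the paper discharges by citing \cite[Sec.\ 1.15]{BergelsonLeibman-2007} (the appendix lemma itself being the statement in question), and the disconnected case is handled there by passing to an arithmetic progression rather than to $G^{\circ}$; neither point is a substantive gap.
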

\begin{proof}
This is just a rephrasing of  Lemma \ref{lem:nil:shift}.
\end{proof}

In the examples below, we use, as in Section \ref{sec:constr}, the Iverson bracket convention: 
$\braif{\varphi} = 1$ if $\varphi$ is a true sentence and  $\braif{\varphi} = 0$ otherwise. 

\begin{example}
	Let $d \in \NN$ and $g(n) = \braif{ \fp{ \sqrt{2}n^d } < \frac{1}{2} }$. Pick any $m \in \ZZ$. 
	Then $g(n+m) = \tilde g_{\a}(n)$ for all $n \in \ZZ$, where $\a = (\a_i)_{i=0}^d$ is given by 
	$\alpha_i = \fp{ \sqrt{2} m^i \binom{d}{i} }$ ($0 \leq i \leq d$) and $\tilde g_{\bullet}$ is given by
	\[
		\tilde g_{\a}(n) = \braif{ \fp{ \sum_{i=0}^d \alpha_i n^{d-i}} < \frac{1}{2} }\,, \qquad \a \in \RR^{d+1},\ n \in \ZZ\,.
	\]
\end{example}

\begin{example}\label{ex:par-gp}	Let $g(n) = \braif{ \fp{ \sqrt{2}n \ip{\sqrt{3} n} } < \frac{1}{2} }$. Pick any $m \in \ZZ$ and 
let $a= \ip{\sqrt{2}m}$, $\a ={\sqrt{2}m}$, $b = \ip{\sqrt{3}m}$, $\b = \fp{\sqrt{3}m}$, $\gamma = \fp{\sqrt{2}b}$, and $\delta = \fp{\a b}$. Then
	\begin{align*}
	g(n+m) &= \braif{ \fp{ (\sqrt{2}n +a+\a) \bra{ \ip{\sqrt{3} n + \b} +b} } < \frac{1}{2} } \\
	& = \braif{ \fp{ (\sqrt{2}n+\a) { \ip{\sqrt{3} n + \b}} + \sqrt{2}b n + \a b } < \frac{1}{2} }  \\
	& = \braif{ \fp{ (\sqrt{2}n+\a) { \ip{\sqrt{3} n + \b}} + \gamma  n + \delta } < \frac{1}{2} } \\ &= \tilde g_{\a,\b,\gamma,\delta}(n)\,,	
	\end{align*}
where $\tilde g_{\bullet}$ is the parametric {\gp} map with index set $\{1,2,3,4\}$ given by
\[
	\tilde g_{\a_1,\a_2,\a_3,\a_4}(n) = \braif{ \fp{ (\sqrt{2}n+\a_1) { \ip{\sqrt{3} n + \a_2}} + \a_3  n + \a_4 } < \frac{1}{2} }\, .
\] 
\end{example}

When the dependence of a {\pgp} map on the parameters becomes too complicated, it is often more convenient to instead work 
with a {\pgp} map which has more parameters but depends on them in a simpler way. 
For instance, if $g_{\bullet}$ is a {\pgp} map of the form
\begin{equation}\label{eq:295:1}
	g_{\vec \a}(n) = f_1(\vec \a) h_{\vec \a}^{(1)}(n) + f_2(\vec \a) h_{\vec \a}^{(2)}(n) + f_3(\vec \a) h_{\vec \a}^{(3)}(n)\, ,
\end{equation}
where $f_1$, $f_2$, and $f_3$ are {\gp} maps and $h^{(1)}_\bullet$, $h^{(2)}_\bullet$, and $h^{(3)}_\bullet$ 
are {\pgp} maps, 
then it might be preferable to instead work with the {\pgp} map 
\begin{equation}\label{eq:295:2}
	g_{\vec \a,\vec \b}'(n) = \b_1 h_{\vec \a}^{(1)}(n) + \b_2 h_{\vec \a}^{(2)}(n) + \b_3 h_{\vec \a}^{(3)}(n)\,.
\end{equation}
We make this idea precise in the following definition.

\begin{definition}\label{def:ind:succ}
	Let $g_\bullet$ and $h_\bullet$ be two {\pgp} maps with index sets $I$ and $J$ respectively. 
	Then we say that $h_\bullet$ \emph{extends} $g_\bullet$, denoted $h_\bullet \succeq g_\bullet$, if there exists a {\gp} map 
	$\varphi \colon \RR^I \to \RR^J$ such that $g_{\vec \a} = h_{\varphi(\vec\a)}$ for all $\vec\a \in \RR^I$. 
\end{definition}

It is routine to check that the relation $\succeq$ defined above is a partial order. 

\begin{example}
	If $g_\bullet$ and $g'_\bullet$ are respectively given by \eqref{eq:295:1} and \eqref{eq:295:2}, 
	then $g'_\bullet \succeq g_\bullet$. One can take $\varphi(\vec\a) = (\vec\a, f_1(\vec\a),f_2(\vec\a),f_3(\vec\a))$.
\end{example}

\subsection{Induction scheme}\label{ssec:sc-induction}
Using the terminology introduced above, we are ready to explain the induction scheme that will be used in the proof of Theorem \ref{thm:A}. 
It can be construed as an analogue of the inductive definition of generalised polynomials, but restricted to $\ZZ$-valued sequences. 
Note that {\gp} maps from $\ZZ$ to $\RR$ can be identified with {\pgp} maps with an empty index set.

\begin{proposition}\label{prop:gen-poly-induction}
	Let $\cG$ be a family of {\pgp} maps from $\ZZ$ to $\ZZ$ with index sets contained in $\NN_0$. Suppose that $\cG$ has the following closure properties. 
\begin{enumerate}
\item\label{it:A-1} All {\gp} maps $\ZZ \to \ZZ$ belong to $\cG$.
\item\label{it:A-2} For every  $g_{\bullet}$ and $h_{\bullet} \in \cG$, it holds that $g_{\bullet}+h_{\bullet} \in \cG$ and $g_{\bullet} \cdot h_{\bullet} \in \cG$.
\item\label{it:A-4} For every $g_\bullet \in \cG$, $\cG$ contains all the {\pgp} maps $g'_\bullet \colon \ZZ \to \ZZ$ satisfying $g_\bullet \succeq g'_\bullet$.
\item\label{it:A-3} For every pair of disjoint finite sets $I \subset \NN$, $J \subset \NN$, and every sequence of {\pgp} maps $h^{(i)}_{\bullet} \in \cG$, $i \in I$, 
with index set $J$, $\cG$ contains the {\pgp} map $g_\bullet$ defined by
\[ g_{\vec \a,\vec\b}(n) = \ip{\sum_{i\in I} \a_i h^{(i)}_{\vec\b}(n)}\,, \qquad n \in \ZZ\,,\ \vec\a \in \RR^{I}\,,\ \vec\b \in \RR^J\,.\]
\end{enumerate}
Then $\cG$ contains all {\pgp} maps $\ZZ \to \ZZ$ with index sets contained in $\NN_0$.
\end{proposition}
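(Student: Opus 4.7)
The plan is to induct on the height $\cmp$, but not directly on $\cmp(g_\bullet)$. Since $\cG$ contains only integer-valued maps, we cannot process Lemma \ref{lem:gp-sum-rep} term-by-term, because the individual summands $p_i\prod_j\ip{h_{i,j}}$ need not be integer-valued even when their sum is. The workaround is to prove instead the following auxiliary statement:

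\medskip
\noindent$(\ast)$ \emph{For every {\pgp} map $h_\bullet\colon\ZZ\to\RR$ with index set in $\NN_0$, the {\pgp} map $\ip{h_\bullet}\colon\ZZ\to\ZZ$ belongs to $\cG$.}
\medskip

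The proposition follows immediately from $(\ast)$: any integer-valued {\pgp} map $g_\bullet$ satisfies $g_\bullet = \ip{g_\bullet}$, so $g_\bullet\in\cG$ by $(\ast)$.

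For the base case ($\cmp(h_\bullet) = 0$), the map $h$ is a polynomial in $(\vec\a,n)$; expand it as $h=\sum_\nu c_\nu\vec\a^{\nu_a}n^{\nu_n}$. Each monomial $n^{\nu_n}$ is a GP map $\ZZ\to\ZZ$, so it belongs to $\cG$ by \ref{it:A-1}, and by \ref{it:A-4} we can view it as a {\pgp} map with any prescribed index set. Introducing fresh parameters $\b_\nu$ and applying \ref{it:A-3} yields $\ip{\sum_\nu\b_\nu n^{\nu_n}}\in\cG$; specializing $\b_\nu:=c_\nu\vec\a^{\nu_a}$ (a polynomial, hence GP, map in $\vec\a$) and invoking \ref{it:A-4} produces $\ip{h_\bullet}\in\cG$. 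For the inductive step ($\cmp(h_\bullet) = c\geq 1$), use Lemma \ref{lem:gp-sum-rep} to write $h=\sum_i p_i\prod_j\ip{h_{i,j}}$ with $\cmp(h_{i,j})<c$. The inductive hypothesis gives $\ip{h_{i,j,\bullet}}\in\cG$ for every $(i,j)$. Expanding each $p_i(\vec\a,n)=\sum_\nu c_{i,\nu}\vec\a^{\nu_a}n^{\nu_n}$ and setting
\[
\Phi_{i,\nu}(\vec\a,n) := n^{\nu_n}\prod_{j=1}^{r_i}\ip{h_{i,j}(\vec\a,n)},
\]
each $\Phi_{i,\nu}$ is integer-valued and is a product of elements of $\cG$, hence lies in $\cG$ by \ref{it:A-2}. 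Now repeat the base-case manoeuvre with $\Phi_{i,\nu}$ in place of $n^{\nu_n}$: apply \ref{it:A-3} with fresh parameters $\b_{i,\nu}$, then specialize $\b_{i,\nu}:=c_{i,\nu}\vec\a^{\nu_a}$ via \ref{it:A-4} to obtain $\ip{h_\bullet}\in\cG$.

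The main obstacle, and the reason for the detour through $(\ast)$, is precisely that $\cG$ is closed only under operations that respect integer-valuedness, whereas the natural recursive decomposition of a {\gp} map produces real-valued intermediate expressions. The trick is to keep the outermost $\ip{\cdot}$ in place throughout: non-integer-valued polynomial coefficients in the parameters $\vec\a$ are absorbed as free parameters $\vec\b$ via \ref{it:A-3}, and are only specialized back to their true values via \ref{it:A-4} after the floor has been reinstated. In this way the only maps ever required to lie in $\cG$ are the integer-valued building blocks $n^{\nu_n}$ and $\ip{h_{i,j,\bullet}}$, the latter being handled by the inductive hypothesis of $(\ast)$.
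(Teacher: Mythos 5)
Your proof is correct and follows essentially the same route as the paper: reduce to showing $\ip{h_\bullet}\in\cG$ for every real-valued {\pgp} map $h_\bullet$, induct on the height via Lemma \ref{lem:gp-sum-rep}, and in each step absorb the polynomial coefficients into fresh parameters using \ref{it:A-3} before specializing them back via \ref{it:A-4}. The only (cosmetic) difference is that you expand fully into monomials $c_\nu\vec\a^{\nu_a}n^{\nu_n}$, whereas the paper groups them into polynomial coefficients $q_i(\vec\a)$ of the terms $n^{d_i}\prod_j\ip{h_{i,j}}$.
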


\begin{proof}
	Since each {\pgp} map from $\ZZ$ to $\ZZ$ takes the form $\ip{g_\bullet}$ for some {\pgp} map $g_\bullet \colon \ZZ \to \RR$, 
	it suffices to show that $\ip{g_\bullet} \in \cG$ for each {\pgp} $g_\bullet$ with index set $I \subset \NN_0$. 
	We proceed by induction on $\cmp(g_\bullet)$. 
	
Suppose first that $\cmp(g_\bullet) = 0$, that is, $g_\bullet$ is a polynomial. Expanding, we can write
\[
	g_{\vec\a}(n) = \sum_{i=0}^d q_i(\vec \a) n^i\,, \qquad n \in \ZZ\,,\ \vec\a \in \RR^I\,,
\]
where $d \in \NN_0$ and $q_i \colon \RR^I \to \RR$ are polynomials.
It follows from \ref{it:A-1} and  \ref{it:A-3} that $\cG$ also contains the {\pgp} map with index set $\{0,1,\dots,d\}$ given by
\[
	g'_{\vec\a}(n) = \ip{\sum_{i=0}^d \a_i n^i}\,, \qquad n \in \ZZ,\ \vec\a \in \RR^{d+1}\, .
\]
Since $g'_\bullet \succeq \ip{g_\bullet}$, it follows from $\ref{it:A-4}$ that $g_\bullet \in \cG$.

Suppose next that $\cmp(g_\bullet) \geq 1$. Using Lemma \ref{lem:gp-sum-rep} and expanding out the polynomial contributions, 
we can represent $g_{\bullet}$ in the form
\[
	g_{\vec\b}(n) = \sum_{i=1}^s q_i(\vec \b)  \tilde h^{(i)}_{\vec\b}(n)\,, \qquad n \in \ZZ\,,\ \vec\b \in \RR^J\,,
\]
where for every $i$, $1 \leq i \leq s$, $q_i$ is a polynomial, and  $\tilde h^{(i)}_{\bullet}$ is a {\pgp} map of the form 
\[ \tilde h^{(i)}_{\vec\b}(n) =  n^{d_i} \prod_{j=1}^{r_i} \ip{h^{(i,j)}_{\vec\b}(n)} \qquad n \in \ZZ\,,\ \vec\b \in \RR^J\,, \]
where $d_i \in \NN_0$, $r_i \in \NN_0$, and $\cmp( h^{(i,j)}_\bullet) < \cmp(g_\bullet)$ for all $j$, $1 \leq j \leq r_i$. 
By the inductive assumption, for each pair $(i,j)$, with $1 \leq i \leq s$ and $1 \leq j \leq r_i$, we have $\ip{h^{(i,j)}_\bullet }\in \cG$. 
Consequently, applying \ref{it:A-1} and \ref{it:A-2}, we conclude that also $\tilde h^{(i)}_{\bullet} \in \cG$. 

For notational convenience, assume that $\min J > s$ and put $I = \{1,\dots,s\}$ 
(since the ordering of the parameters does not play any role, this does not decrease the level of generality). 
It follows from \ref{it:A-3} that $\cG$ also contains the {\pgp} map with index set ${I \cup J}$ that is defined by
\[
	g'_{\vec\a,\vec\b}(n) = \ip{\sum_{i=1}^s \a_i \tilde h^{(i)}_{\vec\b}(n) }\,, \qquad n \in \ZZ\,,\ \vec\a \in \RR^{I}\,,\ \vec\b \in \RR^{J}\,.
\]
Since $g'_\bullet \succeq \ip{g_\bullet}$, it follows from \ref{it:A-4} that $\ip{g_\bullet} \in \cG$. 
This ends the proof. 
\end{proof}

\section{Proof of Theorem \ref{thm:A}: auxiliary results}\label{sec:sc-auxiliary}

In this section we discuss some results in additive combinatorics and Diophantine approximation which will be used 
in the course of the proof of Theorem \ref{thm:A}.

\subsection{Additive geometry}

Let $r\in \NN$. A \emph{symmetric generalised arithmetic progression} of rank $r$ in an abelian group $Z$ with steps $x_1,x_2,\dots,x_r \in Z$ and side lengths 
$\ell_1,\ell_2,\dots, \ell_r \in \NN_0$ is defined as the set
\[
	\fS(x_1, x_2,\dots x_r; \ell_1,\ell_2,\dots, \ell_r) = 
		\set{ \sum_{i=1}^r n_i x_i}{ - \ell_i < n_i < \ell_i \text{ for all } i,\,1 \leq i \leq r }\,.
\]

Let $d \in \NN$. By a \emph{lattice} in $\RR^d$ we mean a discrete subgroup of $\RR^d$.
In particular, we do not require $\Lambda$ to have full rank, which is slightly non-standard but consistent (in particular with \cite{TaoVu-book}). 
If $\Lambda < \RR^d$ is a lattice of full rank, we let $\covol \Lambda = \vol(\RR^d/\Lambda)$ denote its covolume 
(that is, the measure of a fundamental domain of $\Lambda$).
By a \emph{convex body} in $\RR^d$, we mean a convex, open, non-empty, and bounded set, and by a \emph{half-space},  
we always mean a closed half-space, that is, a set of the form
\[
\fH(\vec x, t) = \set{\vec y \in \RR^d}{ \vec x \cdot \vec y \geq t}
\]
for some $\vec x \in \RR^d$ and $t \in \RR$.

The classical John's theorem asserts that each symmetric convex body $B$ in $\RR^d$ can be efficiently approximated by an ellipsoid $E$, 
in the sense that $E \subset B \subset \sqrt{d} E$. We will use a discrete analogue of this result.

\begin{theorem}[Discrete John's Theorem, {\cite[Thm.{} 3.36]{TaoVu-book}}]\label{thm:John}
	Let $d \in \NN$, $B \subset \RR^d$ be a symmetric convex body, and $\Lambda < \RR^d$ be a lattice of rank $r$.  
	Then there exist vectors $\vec v_1,\vec v_2,\dots, \vec v_r \in \RR^d$ and integers $\ell_1,\ell_2,\dots,\ell_r \in \NN$  such that
\begin{align*}
\fS(\vec v_1,\vec v_2,\dots, \vec v_r; \ell_1,\ell_2,\dots,\ell_r) 
	& \subset B \cap \Lambda  \\
        &\subset \fS(\vec v_1,\vec v_2,\dots, \vec v_r; r^{2r}\ell_1,r^{2r}\ell_2,\dots,r^{2r}\ell_r)\,.
\end{align*}
\end{theorem}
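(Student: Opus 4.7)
The plan is to combine the classical (continuous) John's theorem with Minkowski's theorem on successive minima. First, restrict to $V = \spanlin_\RR(\Lambda) \cong \RR^r$: since $B \cap V$ remains a symmetric convex body in $V$, this reduces the problem to the full-rank case at no cost. The classical John's theorem then produces a symmetric ellipsoid $E \subset V$ with $E \subseteq B \cap V \subseteq \sqrt{r}\,E$, so after a linear change of coordinates on $V$ mapping $E$ to the unit Euclidean ball $\Ball$, it suffices to sandwich $\Ball \cap \Lambda'$ for a full-rank lattice $\Lambda' < \RR^r$; the factor $\sqrt{r}$ can be absorbed into the final constant $r^{2r}$.

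Next, I would apply Minkowski's theorem on successive minima to $(\Ball, \Lambda')$. Let $\lambda_1 \leq \cdots \leq \lambda_r$ be the successive minima and choose linearly independent $v_1,\dots,v_r \in \Lambda'$ with $v_i \in \lambda_i \Ball$. Define $\ell_i = \max\{1, \lceil 1/(r\lambda_i)\rceil\}$, where the factor $r$ ensures containment after summing $r$ contributions. By the triangle inequality, any element of $\fS(v_1,\dots,v_r;\ell_1,\dots,\ell_r)$ has Euclidean norm at most $\sum_i \ell_i \lambda_i \leq 1$, so this progression is contained in $\Ball \cap \Lambda'$, giving the lower inclusion.

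For the reverse inclusion, I would show that every $v \in \Ball \cap \Lambda'$ can be written as $v = \sum_i n_i v_i$ with $|n_i| \leq r^{2r} \ell_i$. Expanding $v = \sum c_i v_i$ in the $\RR$-basis $v_1,\dots,v_r$, one greedily rounds $c_r$ to its nearest integer $n_r$, replaces $v$ with $v - n_r v_r \in \Lambda'$, and iterates. At each stage the size of the residual is controlled using the dual basis $v_1^*,\dots,v_r^*$ and the geometry-of-numbers estimate $\prod_i \lambda_i \asymp_r (\covol \Lambda')/\vol(\Ball)$ coming from Minkowski's second theorem. This bounds each $|n_i|$ in terms of $1/\lambda_i$ up to a factor depending only on $r$.

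The main obstacle will be tracking the constant through the greedy reduction: a naive bookkeeping argument produces an exponentially worse factor than $r^{2r}$. To match the claimed constant, one has to work with a nearly reduced basis (for instance a Korkine-Zolotarev basis) for which the dual basis vectors $v_i^*$ are pointwise close to $v_i/\lambda_i^2$, or equivalently organize the induction so that at each step the residual lies in a precisely controlled dilate of $\Ball$ inside the subspace orthogonal to the already-processed generators. This step is standard but technical geometry of numbers, and no ideas beyond those already present in \cite{TaoVu-book} should be required.
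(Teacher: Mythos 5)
The paper does not actually prove this statement: it is imported verbatim from Tao and Vu \cite[Thm.\ 3.36]{TaoVu-book}, so there is no internal proof to compare with. Your outline follows the same general route as the standard proof of the discrete John theorem (classical John's theorem to replace $B$ by a Euclidean ball on $\spanlin_\RR(\Lambda)$, then Minkowski's second theorem), and the reduction to full rank and the forward inclusion $\fS(\vec v;\vec \ell)\subset B\cap\Lambda$ are fine.

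There is, however, a genuine gap in the reverse inclusion. You choose $\vec v_1,\dots,\vec v_r$ to be linearly independent lattice vectors attaining the successive minima and then assert that every $v\in B\cap\Lambda$ can be written as $\sum_i n_i\vec v_i$ with $n_i\in\ZZ$. Vectors attaining the successive minima need not form a $\ZZ$-basis of $\Lambda$, and a lattice point of $B$ need not be an integer combination of them at all; in that case no bound on the $\abs{n_i}$ can salvage $B\cap\Lambda\subset\fS(\vec v;r^{2r}\vec\ell)$. Concretely, take $r=5$, $\Lambda=\ZZ^5+\ZZ u$ with $u=(\tfrac12,\tfrac12,\tfrac12,\tfrac12,\tfrac12)$, and $B$ the Euclidean ball of radius $1.2$: the successive minima are attained by the standard basis vectors $e_1,\dots,e_5$ (the shortest vectors of $\Lambda$), yet $u\in B\cap\Lambda$ does not lie in $\ZZ e_1+\dots+\ZZ e_5$. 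Your greedy rounding reflects the same problem: after subtracting nearest-integer multiples, the residual is a lattice point whose coordinates in the $\vec v_i$-frame are at most $\tfrac12$, but there is no reason it should vanish. In your last paragraph you invoke a Korkine--Zolotarev-type reduced basis only as a device for recovering the constant $r^{2r}$, but a reduced basis is needed for correctness, not merely for bookkeeping: one must take $\vec v_1,\dots,\vec v_r$ to be an actual $\ZZ$-basis of $\Lambda$ whose $B$-norms are comparable to the successive minima (Mahler's theorem gives $\norm{\vec v_i}_B=O(r)\lambda_i$). Once the $\vec v_i$ form a basis, no rounding is needed at all: the coefficients of any $x\in B\cap\Lambda$ are automatically integers, and they are bounded by $r^{O(r)}/\lambda_i$ via Cramer's rule (the coefficient $n_i$ is a ratio of determinants) combined with the lower bound in Minkowski's second theorem. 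With that modification your argument becomes a correct proof along the standard lines.
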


We also mention an estimate on the number of partitions of a finite set using a hyperplane.

\begin{theorem}[{\cite[Thm.\ 1]{Harding-1966}}]\label{thm:partition-cell}
	Let $d$ and $n$ be two positive integers,  and let $S \subset \RR^d$ be a set with $\abs{S} = n$. 
	Then the number of sets of the form $S \cap H$, where $H$ is a half-space, is at most $2\sum_{i=0}^d \binom{n-1}{i}$.
\end{theorem}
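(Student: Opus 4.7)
My plan is to establish the bound by a double induction on $n$ and $d$, via a recurrence that mirrors Pascal's identity. Define $f(n,d)$ to be the maximum, over all sets $S\subset\RR^d$ with $\abs{S}=n$, of the number of distinct sets of the form $S\cap H$ where $H$ ranges over closed half-spaces. The first step will be to prove the recurrence
\[
	f(n,d) \;\leq\; f(n-1,d)+f(n-1,d-1), \qquad n\geq 1,\ d\geq 1,
\]
with base cases $f(n,0)=2$ for $n\geq 1$ (the only half-spaces in $\RR^0=\{0\}$ give traces $\emptyset$ and $S$) and $f(1,d)=2$ (two possible traces of a singleton). A direct verification using Pascal's identity $\binom{n-1}{i}=\binom{n-2}{i}+\binom{n-2}{i-1}$ will show that the candidate bound $g(n,d):=2\sum_{i=0}^d\binom{n-1}{i}$ satisfies the same recurrence with matching base cases, so the inequality $f(n,d)\leq g(n,d)$ will then follow by a routine induction.

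To prove the recurrence, I would fix $S$ with $\abs{S}=n$, pick any distinguished point $p\in S$, and set $S'=S\setminus\{p\}$. I would then classify the half-space traces of $S$ by whether or not they contain $p$. In the case $p\notin H$, one has $S\cap H = S'\cap H$, so these traces inject into the collection of half-space traces of $S'$, contributing at most $f(n-1,d)$ possibilities. For those traces with $p\in H$, I would perturb $H$ (without altering the trace on $S$) until $p$ lies on the boundary $\partial H$; the trace $S\cap H = \{p\}\cup(S'\cap H)$ is then determined by the sign pattern of $\{ v\cdot(s-p)\}_{s\in S'}$, where $v$ is the outward normal to $\partial H$. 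These sign patterns are exactly the traces of the translated set $S'-p$ by half-spaces through the origin, and via the standard projective identification of linear half-spaces of $\RR^d$ with affine half-spaces of $\RR^{d-1}$, the number of such patterns is at most $f(n-1,d-1)$.

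The main obstacle will be the projection step in the second case. To cleanly identify linear half-spaces of $\RR^d$ with affine half-spaces of $\RR^{d-1}$, one needs the points of $S'$ to remain distinct after radial projection from $p$, i.e.\ that no two points of $S'$ are collinear with $p$. A standard perturbation argument circumvents this: I would replace $S$ by a slight perturbation $S_\varepsilon$ into general position (no two points collinear with $p$, no $d+1$ points on a common hyperplane, and so on), observe that the number of distinct half-space traces of $S$ is dominated by that of $S_\varepsilon$ as $\varepsilon\to 0$ (since letting points merge under a degeneration can only identify previously distinct traces), and conclude the bound for $S$ from the bound for $S_\varepsilon$. Once this technicality is handled the recurrence is established, and the induction yields the stated bound $f(n,d)\leq 2\sum_{i=0}^d\binom{n-1}{i}$.
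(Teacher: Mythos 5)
First, note that the paper does not prove this statement at all: it is quoted directly from Harding's 1966 paper, so there is no internal proof to compare against; your proposal has to stand on its own.

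It does not, as written: the step that fails is the treatment of the traces containing the distinguished point $p$. You claim that any half-space $H$ with $p \in S \cap H$ can be perturbed, without changing $S \cap H$, until $p$ lies on $\partial H$, and hence that the traces containing $p$ inject into the homogeneous (through-the-origin) traces of $S'-p$, giving the bound $f(n-1,d-1)$. This is false whenever $p$ lies in the interior of the convex hull of the trace: then every closed half-space containing the trace contains $p$ in its interior, so no realizing half-space has $p$ on its boundary. Concretely, take $d=1$, $S=\{-1,0,1\}$, $p=0$. The traces containing $p$ are $\{-1,0\}$, $\{0,1\}$ and $S$ itself, i.e.\ three of them, while $f(n-1,d-1)=f(2,0)=2$ and the homogeneous traces of $S'-p=\{-1,1\}$ number only $2$. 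So the claimed bound for the second class is simply wrong (the trace $S$ cannot be realized with $0$ on the boundary), even though the total count $6$ still meets $f(2,1)+f(2,0)=6$; your partition into ``contains $p$'' versus ``does not contain $p$'' is the wrong decomposition for proving the recurrence.

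The recurrence $f(n,d)\le f(n-1,d)+f(n-1,d-1)$ is nevertheless correct, and the standard (Cover/Sauer--Shelah-style) repair is available: map each trace $T$ of $S$ to $T\setminus\{p\}$, a trace of $S'$. This map is at most $2$-to-$1$, so $f(n,d)\le f(n-1,d)+D$, where $D$ counts those $T'\subseteq S'$ for which both $T'$ and $T'\cup\{p\}$ are traces of $S$. For such a doubled $T'$, writing $T'=S\cap\{f_1\ge 0\}$ with $f_1(p)<0$ and $T'\cup\{p\}=S\cap\{f_2\ge 0\}$ with $f_2(p)\ge 0$, an interpolation $f_\lambda=(1-\lambda)f_1+\lambda f_2$ at the value $\lambda^*$ with $f_{\lambda^*}(p)=0$ produces a half-space whose boundary passes through $p$ and whose trace on $S'$ is exactly $T'$ (the degenerate case where the linear part of $f_{\lambda^*}$ vanishes forces $T'=S'$ and is handled by a supporting hyperplane of $\operatorname{conv}(S'\cup\{p\})$ at $p$, which exists because $S'=S\cap\{f_1\ge0\}$ with $f_1(p)<0$ prevents $p$ from lying in the interior of $\operatorname{conv}(S')$). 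Thus $D$ is bounded by the number of traces of $S'$ cut by half-spaces through $p$, and only at this point does your reduction to dimension $d-1$ (together with the perturbation-to-general-position device, which is fine as you describe it) legitimately apply, yielding $D\le f(n-1,d-1)$ and hence the recurrence and, with your Pascal computation, the stated bound.
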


In the course of the argument, we will need estimates on the number of sets that can be obtained from a given convex body in $\RR^d$ 
by intersecting it with a sub-lattice of $\ZZ^d$ and a half-space. 

\begin{proposition}\label{prop:half-lattice-count}
	Let $d \in \NN$,  $\Gamma < \RR^d$ be a lattice, and  $B \subset \RR^d$ be a symmetric convex body. 
	Then the number of pairs of sets of the form $\bra{ \Lambda \cap B, \Lambda \cap B \cap H }$, 
	where $\Lambda < \Gamma$ is a lattice and $H$ is a half-space, belongs to $O_d(\abs{B \cap \Gamma}^{2d})$.
\end{proposition}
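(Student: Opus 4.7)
The plan is to estimate the number of pairs as a product of two factors, each of size $O_d(N^d)$ where $N = |B \cap \Gamma|$: first, the number of distinct sets $S = \Lambda \cap B$ that can arise; second, for each such $S$, the number of distinct intersections $S \cap H$ as $H$ ranges over half-spaces. The second factor is the easy one: since $S\subset \RR^d$ has at most $N$ elements, Theorem~\ref{thm:partition-cell} yields at most $2\sum_{i=0}^{d}\binom{N-1}{i}=O_d(N^d)$ distinct cuts.

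For the first factor, I would first collapse redundant choices of $\Lambda$: replacing any $\Lambda$ by $\Lambda^\sharp := \langle \Lambda \cap B\rangle_\ZZ$ leaves the set $\Lambda\cap B$ unchanged and produces a sub-lattice satisfying the \emph{self-sustaining} identity $\Lambda^\sharp = \langle \Lambda^\sharp \cap B\rangle_\ZZ$. Since two distinct self-sustained sub-lattices are automatically forced to have distinct intersections with $B$, it suffices to bound the number of self-sustained sub-lattices $\Lambda<\Gamma$.

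Fix such a $\Lambda$ and write $r = \operatorname{rank}\Lambda \leq d$. The self-sustaining property forces $\Lambda\cap B$ to contain $r$ linearly independent vectors, so the $r$-th successive minimum of $B$ with respect to $\Lambda$ is at most $1$. A standard Mahler-type construction (derivable from Theorem~\ref{thm:John} applied inside the $\RR$-span of $\Lambda$, or directly from Minkowski's second theorem) then yields a $\ZZ$-basis $v_1,\dots,v_r$ of $\Lambda$ with each $v_i \in C_d B$ for some constant $C_d = C_d(d)$. Hence each self-sustained $\Lambda$ is determined by a tuple in $((C_d B)\cap\Gamma)^d$, and the number of such sub-lattices is at most $|(C_d B)\cap\Gamma|^d$. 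A routine covering argument (write $C_d B$ as a union of $O_d(1)$ translates of $\tfrac{1}{2} B$; the symmetry of $B$ shows each translate meets $\Gamma$ in at most $N$ points, by translating one such point to the origin and observing that the pairwise differences lie in $B\cap\Gamma$) gives $|(C_d B)\cap\Gamma| = O_d(N)$, so this factor is also $O_d(N^d)$.

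The main obstacle is the step in which a $\ZZ$-basis of $\Lambda$ is extracted inside $C_d B$: Theorem~\ref{thm:John} may yield vectors $v_i$ with $\ell_i = 1$, in which case $v_i$ is only guaranteed to lie in the outer GAP rather than in $B$ itself, and the DJT vectors need not even span $\Lambda\otimes\QQ$. The remedy will be to first restrict everything to the $r$-dimensional subspace spanned by $\Lambda$ and then invoke Minkowski's second theorem together with a Mahler-style basis selection, which produces the required basis at the cost of a (possibly larger, but still dimension-only) constant $C_d$. Combining everything yields the bound $O_d(N^d)\cdot O_d(N^d) = O_d(N^{2d})$ on the number of pairs.
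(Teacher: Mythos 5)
Your argument is correct, and it follows the same two-factor decomposition as the paper: first bound by $O_d(N^d)$ the number of possible sets $\Lambda\cap B$, then multiply by the $O_d(N^d)$ bound on half-space cuts from Theorem \ref{thm:partition-cell}. The only genuine divergence is in how the first factor is obtained. The paper reads off from Theorem \ref{thm:John} vectors $v_1,\dots,v_d\in\Lambda\cap B$ with $\Lambda\cap B\subset\spanZ\{v_1,\dots,v_d\}$, so that $\Lambda\cap B=\spanZ\{v_1,\dots,v_d\}\cap B$ is determined by a $d$-tuple of points of $B\cap\Gamma$, giving the bound $\abs{B\cap\Gamma}^d$ on the nose. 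You instead pass to the self-sustained sub-lattice $\spanZ(\Lambda\cap B)$, extract a genuine $\ZZ$-basis lying in $C_dB$ via the successive minima (which are at most $1$ inside the span of $\Lambda\cap B$) and a Mahler-type basis selection, and then recover $\abs{(C_dB)\cap\Gamma}=O_d(\abs{B\cap\Gamma})$ by the covering argument with translates of $\tfrac12 B$. This is a legitimate alternative, and the scruple that motivates it is reasonable: in the cited form of the discrete John theorem a side length $\ell_i=1$ makes the inner inclusion vacuous for the corresponding generator, so the assertion that the generators may be taken inside $\Lambda\cap B$ deserves exactly the kind of justification you supply (working inside the span of $\Lambda\cap B$, where the relevant minima are $\leq 1$). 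Your route pays for this care with an extra dimensional constant through the dilate $C_dB$ and the covering step, but both arguments land on the same $O_d(\abs{B\cap\Gamma}^{2d})$ bound, so the proposal is sound; only note that the basis extraction is the classical lemma accompanying Minkowski's second theorem rather than that theorem itself, as your final paragraph already acknowledges.
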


\begin{proof}

Put $M = \abs{B \cap \Gamma}$. Let $\Lambda < \Gamma$ be a sub-lattice. 
It follows form Theorem \ref{thm:John} that there exist vectors $\vec v_1,\dots,\vec v_d \in \Lambda \cap B$ such that 
	\(
	 \Lambda \cap B \subset \spanZ\{\vec v_1, \dots, \vec v_d\}.
	\) 
As a consequence,
	\[
	 \Lambda \cap B = \spanZ\{\vec v_1, \dots, \vec v_d\} \cap B\,.
	\]
In particular, the set $\Lambda \cap B$ is completely determined by the vectors $\vec v_1, \dots, \vec v_d$ which belong to $B \cap \Gamma$, and 
hence can be chosen in at most $M^d$ ways.

For a fixed choice of $\Lambda$, we infer from Theorem \ref{thm:partition-cell} that the number of sets of the form $\Lambda \cap B  \cap H$, 
where $H$ is a half-space, belongs to $O_d(M^d)$. Combining these two estimates yields the claim.
\end{proof}

\newcommand{\Q}{q}
\renewcommand{\P}{p}	

\subsection{Diophantine approximation}

In this subsection, we briefly discuss approximate linear relations with integer coefficients. 
Let $d \in \NN$. Given a vector $\a = (\a_i)_{i=1}^d \in \RR^d$, $\e > 0$, and $N \in \NN$, we set
\begin{equation}\label{eq:def-of-R}
\cR_N(\vec \a, \e) = \set{ \vec n \in \ZZ^d }{\norm{n}_{\infty} < N, \ \abs{\sum_{i=1}^d n_i \a_i } < \e}\,.
\end{equation} 
These sets naturally appear in the theory of Diophantine approximation. For instance, a classical theorem of Dirichlet asserts that for all 
$\alpha \in [0,1)^d$, the set $\cR_{N}(\a,N^{-1/(d-1)})$ is non-empty, and the exponent ${-1/(d-1)}$ cannot be improved in general \cite{Cassels-1955}.  
In our application, we will be interested in the case where $\e$ is considerably smaller. In this regime, 
the sets $\cR_N(\vec\a, \e)$ can be approximated by lattices in a sense that is made precise by the following proposition.

\begin{proposition}\label{prop:lattice-approx}
	Let $d \in \NN$. There exists a positive real number $C_d$ such that for each $\vec\a = (\a_i)_{i=1}^d \in \RR^d$, $\e > 0$, and $N \in \NN$, 
	there exists a lattice $\Lambda  = \Lambda(\vec\a, \e, N)$ satisfying
\begin{equation}\label{eq:def-of-Lambda}
	\cR_N(\vec\a,\e) \subseteq \Lambda \cap (-N,N)^d \subseteq \cR_N(\vec\a, C_d N^d \e)\,.
\end{equation}
\end{proposition}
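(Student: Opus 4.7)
The plan is to construct $\Lambda$ as the $\ZZ$-span of a ``short basis'' for $\cR_N(\vec\a,\e)$ produced by the Discrete John's Theorem, and then to bound $|\phi(\vec n)|$ on $\Lambda \cap (-N,N)^d$ via Cramer's rule together with Hadamard's inequality. Here $\phi(\vec x) := \sum_{i=1}^d \a_i x_i$ denotes the linear form attached to $\vec\a$, so that $\cR_N(\vec\a,\e) = \{\vec n \in \ZZ^d : \|\vec n\|_\infty < N,\ |\phi(\vec n)| < \e\}$.

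First I would set $W := \spanlin_{\RR} \cR_N(\vec\a,\e) \subseteq \RR^d$, of some dimension $r \leq d$, and $\Lambda_0 := W \cap \ZZ^d$, a rank-$r$ lattice, and apply Theorem \ref{thm:John} inside $W$ to $\Lambda_0$ and the symmetric convex body $B_\e \cap W$, where $B_\e := \{\vec x \in \RR^d : \|\vec x\|_\infty < N,\ |\phi(\vec x)| < \e\}$. Because $\cR_N(\vec\a,\e) = B_\e \cap \Lambda_0$ spans $W$ by construction, the first $r$ successive minima of $\Lambda_0$ with respect to $B_\e \cap W$ are all $\leq 1$; and because $B_\e$ is star-shaped about the origin (so $\lambda B_\e \subseteq B_\e$ for $\lambda \leq 1$), this forces the produced vectors $\vec v_1,\ldots,\vec v_r$ to lie in $B_\e \cap \Lambda_0 = \cR_N(\vec\a,\e)$. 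The theorem supplies integers $\ell_i \geq 1$ with $\fS(\vec v_i;\ell_i) \subseteq \cR_N(\vec\a,\e) \subseteq \fS(\vec v_i;r^{2r}\ell_i)$. I then set $\Lambda := \spanZ\{\vec v_1,\ldots,\vec v_r\}$: the outer John inclusion gives $\cR_N(\vec\a,\e) \subseteq \Lambda$, and $\cR_N(\vec\a,\e) \subseteq (-N,N)^d$ is definitional, so the first required inclusion is immediate.

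For the second inclusion, let $\vec n \in \Lambda \cap (-N,N)^d$ and write $\vec n = \sum_i m_i \vec v_i$ uniquely with $m_i \in \ZZ$. Since $|\phi(\vec v_i)| < \e$, the bound $|\phi(\vec n)| < \e \sum_i |m_i|$ reduces matters to controlling $\sum_i |m_i|$. Letting $V$ be the $d \times r$ matrix with columns $\vec v_i$, Cramer's rule applied to the normal equations $V^T V \vec m = V^T \vec n$ gives $m_i = \det(V^T \tilde V_i)/\det(V^T V)$, where $\tilde V_i$ is $V$ with its $i$-th column replaced by $\vec n$. Cauchy--Schwarz applied to the Cauchy--Binet expansion yields $|\det(V^T \tilde V_i)|^2 \leq \det(V^T V)\det(\tilde V_i^T \tilde V_i)$. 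Two elementary facts then close the argument: $\det(V^T V) \geq 1$ because it is a positive integer Gram determinant of linearly independent integer vectors, and $\det(\tilde V_i^T \tilde V_i) \leq (dN^2)^r$ by Hadamard, every column of $\tilde V_i$ having $\ell^2$-norm strictly less than $\sqrt{d}\,N$. Hence $|m_i| \leq d^{r/2} N^r$, so $\sum_i |m_i| \leq r\,d^{r/2} N^r \leq d^{d/2+1} N^d$, and the result follows with $C_d := d^{d/2+1}$.

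The only degenerate case is $\cR_N(\vec\a,\e) = \{\vec 0\}$, i.e., $W = \{\vec 0\}$, in which case one takes $\Lambda := \{\vec 0\}$ and both inclusions are trivial. The main subtlety --- and the reason for first restricting to $W$ rather than applying Theorem \ref{thm:John} to $\ZZ^d$ and $B_\e$ directly --- is to guarantee that every John vector $\vec v_i$ genuinely lies in $\cR_N(\vec\a,\e)$: this is what makes the uniform strict bound $|\phi(\vec v_i)| < \e$ available for the Cramer--Hadamard estimate. If one worked with $\ZZ^d$ and $B_\e$ directly, John vectors coming from directions with $\ell_i = 1$ need not lie in $B_\e$ at all, and no useful bound on $|\phi(\vec v_i)|$ would be available.
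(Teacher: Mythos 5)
Your argument is essentially correct in outline but takes a genuinely different route from the paper. The paper sets $\Lambda=\spanZ \cR_N(\vec\a,\e)$ and invokes Lemma \ref{lem:span-Z-quant}, i.e.\ the covering statement that every point of $\Lambda\cap(-N,N)^d$ is a $k$-fold sum of elements of $\cR_N(\vec\a,\e)$ with $k=O_d(N^d)$; that lemma needs Theorem \ref{thm:John} \emph{plus} the finite-group filling Lemma \ref{lem:fill-group}. You instead take $\Lambda$ to be the $\ZZ$-span of the John vectors and bound the linear form $\phi$ directly on $\Lambda\cap(-N,N)^d$ via Cramer's rule, Cauchy--Binet and Hadamard. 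This bypasses the sumset/filling machinery entirely and yields an explicit constant $C_d=d^{d/2+1}$, which the paper's $O_d$-argument does not; the determinant estimates you use are close in spirit to the Cramer-rule step inside the paper's proof of Lemma \ref{lem:span-Z-quant}.

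The one genuine weak point is the claim that the John vectors $\vec v_1,\dots,\vec v_r$ lie in $B_\e\cap\Lambda_0=\cR_N(\vec\a,\e)$. The statement of Theorem \ref{thm:John} as quoted says nothing about where the $\vec v_i$ live (when $\ell_i=1$ the inner inclusion carries no information about $\vec v_i$), so ``the successive minima are all $\le 1$, hence the produced vectors lie in $B_\e$'' is a property of the standard directional-basis construction inside the proof of the Tao--Vu theorem, not a consequence of the cited black-box statement. As written, you are therefore using the construction rather than the theorem, and both $|\phi(\vec v_i)|<\e$ and $\norm{\vec v_i}_\infty<N$ (needed for the Hadamard step) rest on this unproved assertion. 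The gap is easy to repair, because your Cramer--Hadamard estimate never uses integrality of the coefficients $m_i$, only that $\det(V^TV)\ge 1$, which holds for any linearly independent integer vectors: replace the John vectors in that step by a maximal linearly independent subset $u_1,\dots,u_r$ of $\cR_N(\vec\a,\e)$ itself; every $\vec n\in\Lambda\cap(-N,N)^d$ lies in $W$, hence is a \emph{real} combination of the $u_i$, and your computation goes through verbatim. With this modification you may as well take $\Lambda=\spanZ\cR_N(\vec\a,\e)$ (making the first inclusion trivial), and Theorem \ref{thm:John} is not needed at all --- which in fact gives a proof simpler than both your version and the paper's.
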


For later reference, we let $\Lambda(\vec\a, \e, N)$ denote a lattice satisfying \eqref{eq:def-of-Lambda}. 
The remaining part of this section is devoted to the proof of Proposition \ref{prop:lattice-approx}.  
We begin with a lemma concerning finite groups, where the situation is simpler. 
Recall that, for an abelian group $G$ and $g \in G$, the order $\ord(g)$ is the least $q \in \NN$ with $qg = e_G$, the order of $G$ is the cardinality of $G$, and the exponent of $G$ is the least common multiple of $\ord(g)$ for $g \in G$. For a set $S \subset G$ an and $k \in \NN$, we let $kS$ denote the $k$-fold sumset 
$S+S+ \dots + S = \set{a_1+a_2+\dots+a_k}{ a_i \in S}$.

\begin{lemma}\label{lem:fill-group}
	Let $G$ be a finite abelian group with order $M \geq 2$ and exponent $\Q$, and let $S \subset G$ be a generating set with $e_G \in S$. 
	Then there exists $k \leq {\Q \log M}/{\log \Q}$ such that $kS = G$.
\end{lemma}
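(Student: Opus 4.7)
The plan is to reduce the lemma to a counting inequality on a carefully chosen chain of subgroups of $G$ built from elements of $S$, and then invoke a short calculus estimate. Starting from $K_0 = \{0\}$, at step $i$ I would pick some $s_i \in S \setminus K_{i-1}$ (such an element exists whenever $K_{i-1} \neq G$, because $S$ generates $G$) and set $K_i = \langle K_{i-1}, s_i \rangle$. This produces a strict chain
\[ \{0\} = K_0 \subsetneq K_1 \subsetneq \cdots \subsetneq K_r = G \]
which terminates in finitely many steps since $G$ is finite.

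Next, let $d_i = [K_i : K_{i-1}]$, the order of $s_i + K_{i-1}$ in $G/K_{i-1}$. This order divides $\ord(s_i)$, which in turn divides $Q$, so $d_i \in \{2, 3, \dots, Q\}$, and telescoping yields $\prod_{i=1}^r d_i = M$. A straightforward induction on $r$ then shows that every $g \in G$ admits a representation $g = \sum_{i=1}^r n_i s_i$ with $0 \leq n_i < d_i$: projecting to $G/K_{r-1}$ determines $n_r$ uniquely, and applying the inductive hypothesis to $g - n_r s_r \in K_{r-1}$ produces $n_1, \dots, n_{r-1}$. Since each $s_i \in S$, such a representation expresses $g$ as a sum of $\sum_i n_i$ elements of $S$, and the presence of $0 \in S$ allows padding to any larger word length. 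Setting $k = \sum_{i=1}^r (d_i - 1)$, I conclude that $g \in kS$ for every $g \in G$, so $kS = G$.

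It remains to bound $k$ by $Q \log M / \log Q$. For this I would establish the pointwise inequality
\[ d - 1 \leq \frac{Q \log d}{\log Q}, \qquad d \in [2, Q], \]
and sum over $i$, using $\sum_i \log d_i = \log M$, to obtain
\[ k = \sum_{i=1}^r (d_i - 1) \leq \frac{Q}{\log Q} \sum_{i=1}^r \log d_i = \frac{Q \log M}{\log Q}. \]
The pointwise inequality is equivalent to $f(d) := Q \log d - (d-1) \log Q \geq 0$; since $f$ is concave on $(0, \infty)$ with $f(2) = Q \log 2 - \log Q > 0$ (because $2^Q > Q$ for $Q \geq 2$) and $f(Q) = \log Q > 0$, it is non-negative on the entire interval $[2, Q]$.

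The argument is essentially elementary and I do not anticipate any real obstacle. The only points requiring minor care are verifying that the greedy extraction of generators from $S$ produces subgroup indices bounded by $Q$ (which uses only the exponent hypothesis and the fact that $S$ generates $G$) and checking the concavity-based inequality uniformly on $[2, Q]$.
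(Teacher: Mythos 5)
Your proof is correct, and it reaches the stated bound by a route whose bookkeeping differs from the paper's. The paper argues by induction on the order $M$: it picks a single element $g \in S \setminus \{e_G\}$, passes to the quotient $G/\langle g\rangle$, and carries the bound through the induction using the monotonicity of $x \mapsto x/\log x$ along divisibility, which leads to a somewhat delicate comparison of increments at each step (with the cyclic base case contributing the full order). You instead build the entire chain $\{0\}=K_0 \subsetneq K_1 \subsetneq \cdots \subsetneq K_r = G$ from elements of $S$ in one pass, record the cyclic quotient orders $d_i = [K_i:K_{i-1}]$ --- which are exactly the orders that drive the paper's recursion --- obtain the explicit value $k=\sum_{i}(d_i-1)$ via the mixed-radix representation $g=\sum_i n_i s_i$ with $0 \le n_i < d_i$ and padding by the identity element of $S$, and then reduce the numerical content to the single pointwise inequality $d-1 \le q\log d/\log q$ on $[2,q]$ (settled by concavity of $f(d)=q\log d-(d-1)\log q$ and positivity at the endpoints $d=2$ and $d=q$), combined with $\prod_i d_i = M$. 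The two arguments are close in spirit, since both peel off one generator of $S$ at a time and bound each step by the order of the new generator in the current quotient, but your version isolates the estimate into a clean one-variable inequality and even gives the marginally sharper intermediate bound $\sum_i(d_i-1)$ before relaxing to $q\log M/\log q$. The points you flagged all check out: the greedy extraction works because $S$ generates $G$, each $d_i$ divides the order of $s_i$ and hence the exponent $q$ (so $d_i \in [2,q]$), and the telescoping of indices gives $\prod_i d_i = M$.
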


\begin{proof}
	We proceed by induction on $M$. Pick any $g \in S \setminus \{e_G\}$ and let $\ell = \ord(g)$. 
	Since $G$ is non-trivial, we may assume that $\ell \geq 2$. Put $f(x) = {x}/{\log x}$ ($x > 1$) and note that $f(n) \leq f(m)$ for all pairs of integers 
	$(n,m)$ with $m\geq n\geq 2$ and $n \mid m$. Our goal is to find $k \leq f(q)\log M$ with $kS = G$.
	
	Let $\bar G = G/\left< g \right>$, and let $\pi \colon G \to \bar G$ be the quotient map. 
	Then $G$ has order $\bar M = M/\ell$ and exponent $\bar \Q$ which divides $\Q$. 
	The set $\bar S = \pi(S)$ generates $\bar G$. If $\bar G$ is trivial then $G$ is the cyclic group generated by $g$, 
	and hence $\Q = M$ and $k S = G$ for $k = f(\Q) \log M = M$, as needed. (Note that in this step we use the assumption that $e_G \in S$.) 
	Suppose next that $\bar G$ is non-trivial, meaning that $\bar M \geq 2$. By the inductive assumption, there exists $\bar k \leq f(\bar \Q) \log \bar M$ 
	with $\bar k \bar S = \bar G$. It follows that $k S = G$, where $k = \bar k + \ell$. We can estimate
	\[ k \leq f(\bar \Q) \log \bar M + \ell = f(\Q) \log M - \bra{f(\Q) -  f(\bar \Q)}\log M + \bra{ f(\ell) - f(\bar \Q)} \log \ell.\]
	 Thus, it is enough to show that
	 \[ \bra{f(\Q) - f(\bar \Q)}\log M \geq \bra{ f(\ell) - f(\bar \Q)} \log \ell \,\]
	which follows from the observations that $M \geq \ell$, $\ell\mid \Q$, and $\bar \Q \mid \Q$.
\end{proof}

\begin{lemma}\label{lem:span-Z-quant}
	Let $d \in \NN$, $B \subset \RR^d$ be symmetric convex body, and $S \subset B$ be a symmetric set with $0 \in S$. 
	Put $\Lambda = \spanZ S$ and assume that $\Lambda$ is a lattice.
Then there exists $k = O_d(\abs{\Lambda \cap B})$ such that
	 \( kS \cap B = \Lambda \cap B.\)
\end{lemma}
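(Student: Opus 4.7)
The strategy is to combine Theorem~\ref{thm:John} (Discrete John) with Lemma~\ref{lem:fill-group}. Set $M = \abs{\Lambda \cap B}$ and $r = \operatorname{rank}(\Lambda) \leq d$. First, I would apply Theorem~\ref{thm:John} to $\Lambda$ and $B$, producing vectors $v_1, \ldots, v_r \in \Lambda \cap B$ and integers $\ell_i \in \NN$ with $\fS(v_1, \ldots, v_r; \ell_1, \ldots, \ell_r) \subseteq \Lambda \cap B \subseteq \fS(v_1, \ldots, v_r; r^{2r}\ell_1, \ldots, r^{2r}\ell_r)$. A pivotal observation is that since $S \subseteq \Lambda \cap B \subseteq \spanZ\{v_1, \ldots, v_r\}$ and $\spanZ S = \Lambda$, the $v_i$ must form a $\ZZ$-basis of $\Lambda$ itself. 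Next, I would pick $r$ real-linearly independent vectors $u_1, \ldots, u_r \in S$ (possible because $S$ spans $\spanlin \Lambda$ over $\RR$); the sublattice $\Lambda_0 := \spanZ\{u_1, \ldots, u_r\}$ has finite index $N = [\Lambda:\Lambda_0]$, and a volume comparison using $u_j \in B$ (so $\covol \Lambda_0 \leq r^r \vol B$) combined with the Minkowski-type bound $\covol \Lambda = O_d(\vol B / M)$ yields $N = O_d(M)$.

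Applying Lemma~\ref{lem:fill-group} to the finite abelian group $G := \Lambda / \Lambda_0$ with generating set $\bar S$ (which contains $e_G$ because $0 \in S$) then produces an integer $k_1 \leq \abs{G} = N = O_d(M)$ with $k_1 \bar S = G$. For any $v \in \Lambda \cap B$, lifting a $G$-expression of $\bar v$ gives a decomposition $v = s_1 + \cdots + s_{k_1} + w$ with $s_i \in S$ and $w \in \Lambda_0$; the symmetric convexity of $B$ forces $w \in (k_1+1) B$. Writing $w = \sum_{j=1}^r n_j u_j$ uniquely in the basis $u_j$ and absorbing signs via $-u_j \in -S = S$, we express $v$ as a sum of $k_1 + \sum_j \abs{n_j}$ elements of $S$. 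It remains to establish the coefficient bound $\sum_j \abs{n_j} = O_d(M)$, which I would approach via a singular-value estimate on $\|U^{-1}\|$ (with $U = [u_1, \ldots, u_r]$) together with the key cancellation $(k_1+1)\cdot \vol(B)/\covol(\Lambda_0) = O_d((N+1)\cdot M/N) = O_d(M)$, arising from $k_1 \leq N$ and $\covol \Lambda_0 = N \cdot \covol \Lambda \gtrsim_d N \vol(B)/M$.

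The expected main obstacle will be making this linear-in-$M$ coefficient bound rigorous. A naive singular-value estimate introduces an extra factor of $\operatorname{diam}(B)^r/\vol(B)$, which for skewed convex bodies $B$ may be much larger than $1$ and would destroy linear dependence on $M$. To close this gap one must either choose the generators $u_1, \ldots, u_r$ with additional care (for instance by matching them geometrically to the John-adapted basis $v_1, \ldots, v_r$ of $\Lambda$) or exploit structural properties of $\Lambda_0 \cap (k_1+1)B$ via a second application of Theorem~\ref{thm:John}, translating between the new John basis of $\Lambda_0$ and $u_1, \ldots, u_r$ through a unimodular change of coordinates of bounded complexity. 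Once this step is executed, combining it with $k_1 = O_d(M)$ yields a total count $k = k_1 + \sum_j \abs{n_j} = O_d(M)$ with $\Lambda \cap B \subseteq kS$, which is precisely the conclusion of the lemma.
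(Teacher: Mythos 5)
Your overall architecture --- discrete John's theorem, a full-rank tuple of vectors chosen from $S$, Lemma \ref{lem:fill-group} applied to a finite quotient, and a final coefficient bound in the chosen basis --- is essentially the same as the paper's (the paper quotients by $q\ZZ^d$ with $q$ the determinant of the chosen vectors rather than by your $\Lambda_0$, which is cosmetic since, after the John normalisation, $q=[\Lambda:\Lambda_0]$). The genuine gap is exactly the step you flag as the ``expected main obstacle'': the bound $\sum_j\abs{n_j}=O_d(M)$ for the coefficients of $w\in\Lambda_0\cap(k_1+1)B$ in the basis $u_1,\dots,u_r$ is never proved, and the tool you propose (a singular-value estimate on $U^{-1}$) fails for the reason you yourself give. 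Since this quantitative step is the entire content of the lemma, the proposal as written does not prove the statement. For the record, your ``key cancellation'' is the right quantity, and the missing ingredient is neither a careful re-choice of the $u_j$ nor a second application of Theorem \ref{thm:John}, but simply multilinearity of the determinant: by Cramer's rule $\abs{n_j}=\abs{\det(u_1,\dots,w,\dots,u_r)}/\covol\Lambda_0$, and since $w\in(k_1+1)B$ while $u_i\in B$, pulling the scalar $k_1+1$ out of one column and using that the convex hull of $\pm$ the columns lies in $B$ gives a numerator at most $(k_1+1)\frac{r!}{2^r}\vol(B)$, whence $\abs{n_j}=O_d\bra{(k_1+1)\vol(B)/\covol\Lambda_0}=O_d(M)$. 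The paper realises the same cancellation coordinatewise: in the John basis, lattice points of $B$ have $i$-th coordinate $O_d(\ell_i)$ while Cramer's rule bounds the entries of $qA^{-1}$ by $O_d(M/\ell_i)$, and the two factors multiply to $O_d(M)$.

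A second, unacknowledged soft spot is the ``Minkowski-type bound'' $\covol\Lambda=O_d(\vol(B)/M)$, which you assert without proof. It is false for general pairs $(B,\Lambda)$ (take $B$ a long thin slab around a lattice line); it holds here only because $S\subset B$ spans $\Lambda$, so $B$ contains $r$ linearly independent lattice vectors, i.e.\ all successive minima are at most $1$, and one then needs Minkowski's second theorem together with a counting bound of the shape $\abs{\Lambda\cap B}\le C_d\prod_i\lambda_i^{-1}$; you also need the (omitted) reduction to $\spanlin\Lambda$ with relative volumes when $\operatorname{rank}\Lambda<d$, and note that Lemma \ref{lem:fill-group} gives $k_1=O_d(N)$ rather than $k_1\le N$. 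The paper's coordinatewise route --- $\prod_i\ell_i\le M$ plus the Leibniz formula to get $\abs{\det}=O_d(M)$, equivalently your $N=[\Lambda:\Lambda_0]=O_d(M)$ --- avoids volumes altogether and is the cleaner way to carry out this step.
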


\begin{proof}
	Put $M= \abs{\Lambda \cap B}$.
	We proceed by induction on $d$, including the degenerate case $d = 0$ for which one can take $k = 1$. 
	Assume that $d \geq 1$ and that the claim is proved for $d-1$. We may also assume without loss of generality that $\Lambda$ has full rank, 
	since otherwise we could replace $\RR^d$ with the subspace spanned by $\Lambda$. 
		
	Applying Theorem \ref{thm:John}, we conclude that there exist vectors $\vec w_1,\vec w_2, \dots, \vec w_d \in \Lambda$ 
	and side lengths $\ell_1, \ell_2,\dots, \ell_d \in \NN$ such that
	\[
		\fS(\vec w_1, \vec w_2, \dots, \vec w_d; \ell_1,\ell_2,\dots,\ell_d) \subset B \cap \Lambda 
		\subset \fS(\vec w_1, \vec w_2, \dots, \vec w_d; D\ell_1,D\ell_2,\dots,D\ell_d)\,,
	\] 
	where $D = d^{2d} = O_d(1)$. Since $\Lambda$ has rank $d$, the vectors $w_1,w_2,\dots,w_d$ form a basis of $\RR^d$. 
	Note that $\prod_{i=1}^d \ell_i \leq M$. Applying a change of basis, we may assume that $\vec w_i = \vec e_i$, 
	the $i$-th standard basis vector, for all $i$, $1 \leq i \leq d$. In particular, $\Lambda = \ZZ^d$.
	
Pick any $d$-tuple of linearly independent vectors $\vec v_1,\vec v_2,\dots,\vec v_d \in S$. 
Let $A \in \RR^{d \times d}$ be the matrix satisfying $A \vec e_i = \vec v_i$, and put $\Q = \det A = \covol\bra{ \spanZ(\vec v_1, \vec v_2,\dots, \vec v_d)}$. 
Then $\Q \neq 0$ and the Leibniz formula gives the estimate $\abs{\Q} \leq d! D^d \prod_{i=1}^d \ell_i = O_d(M)$. 
Using Cramer's rule to find the inverse of $A$, we observe that the matrix $\Q A^{-1}$ has integer entries and satisfies 
$\abs{ \Q ( A^{-1})_{i,j} } = O_d(M/\ell_i)$ for each pair $(i,j)$ with $1 \leq i,j \leq d$. 
Since $\Q e_i = \sum_{j=1}^d \Q (A^{-1})_{i,j} v_j$ and since $0 \in S$, we conclude that there exists $\P = O_d(M)$ 
such that $\Q \vec e_i \in \floor{\P/\ell_i} S$ for every $i$, $1 \leq i \leq d$.
	
	Pick any $\vec u \in \ZZ^d \cap B$. Since $\vec u \in \ZZ^d$, by Lemma \ref{lem:fill-group} applied to the group $G = \ZZ^d/\Q\ZZ^d$ 
	and the set\footnote{Note that $\spanZ S=\Lambda=\mathbb Z^d$. Hence it makes sense to consider the set $S\bmod q$. Furthermore, since by assumption 
	$0\in S$, it follows that $0\in S\bmod q$ as needed.} $\bra{ S \bmod \Q} \subset G$, there exists $\pvec u' = (u_i')_{i=1}^d \in \ZZ^d$ such that $\vec u \in \Q\pvec u' + n S$ for some $n \leq d\Q$. Then for every $i$, $1 \leq i \leq d$, we have $\abs{u_i'} \leq D \ell_i$, so $\Q \pvec u' \in m S$, where
	\[
		m = \sum_{i=1}^d \abs{u_i'} \floor{\P/\ell_i} \leq d D \P = O_{d}(M)\,.
	\]
	It follows that we can take $k = n + m = O_d(M)$.	
\end{proof}

\begin{example}
	Pick any $d,N \in \NN$. Let $S = \{\vec 0, \pm \vec v_1, \pm \vec v_2, \dots, \pm \vec v_d\}$, where $\vec v_1 = \vec e_1$ and 
	$\vec v_i = \vec e_i - N \vec e_{i-1}$ for every $i$, $2 \leq i \leq d$. Thus, $\spanZ(S) = \ZZ^d$. We can compute that 
	\[ 
		\vec e_d = \vec v_{d} + N \vec v_{d-1} + \dots + N^{d-1} \vec v_1\,. 
	\]
	Taking $B = (-2N,2N)^d$, we see that $\abs{B \cap \ZZ^d} \leq (4N)^d$ and if $k S \supset B \cap \ZZ^d$ then $k \geq N^d$. Hence, the bound in Lemma \ref{lem:span-Z-quant} is tight up to a constant factor.
\end{example}

\begin{proof}[Proof of Proposition \ref{prop:lattice-approx}]
	Apply Lemma \ref{lem:span-Z-quant} to $B = (-N,N)^d$ and $S = \cR_{N}(\vec\a,\e)$. 
	It is immediate from the definition that $\cR_N(\a,\e) \subset \Lambda \cap B$, where $\Lambda = \spanZ S$ is a sub-lattice of $\ZZ^d$. 
	For the other inclusion, we note that $\Lambda \cap B = kS = k \cR_N(\a,\e) \subset \cR_N(\a,k\e)$, where $k = O_d(\abs{\Lambda \cap B}) = O_d(N^d)$.
\end{proof}  

\section{Proof of Theorem \ref{thm:A}: combining the ingredients}\label{sec:sc-proof}

We are now ready to prove a proposition which serves as the inductive step in the proof Theorem \ref{thm:A}. 
In the argument, we will use the following elementary fact.

\begin{lemma}\label{lem:sc-pr:tech}
	Let $x$ and $x^*$ be two real numbers and assume that $\abs{x-x^*} < \min\bra{ \abs{x-\nint{x^*}}, 1/2}$. Then $\ip{x} = \ip{x^*}$.
\end{lemma}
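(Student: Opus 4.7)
The plan is to show that $x$ lies in the integer interval $[\ip{x^*}, \ip{x^*}+1)$ containing $x^*$, which immediately gives $\ip{x} = \ip{x^*}$. I would set $m = \ip{x^*}$ and $\theta = x^* - m \in [0,1)$, and split the argument according to whether $\theta < 1/2$ or $\theta \geq 1/2$, since by the paper's convention $\nint{x^*} = m$ in the first subcase and $\nint{x^*} = m+1$ in the second.

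In the subcase $\theta < 1/2$, the bound $|x - x^*| < 1/2$ combined with $x^* < m + 1/2$ gives $x < m+1$, while the bound $|x - x^*| < |x - m|$ expresses that $x$ is strictly closer to $x^*$ than to $m$; since $m \leq x^*$, this places $x$ strictly to the right of the midpoint $(m + x^*)/2 = m + \theta/2 \geq m$, so $x > m$. In the subcase $\theta \geq 1/2$, the bound $|x - x^*| < 1/2$ combined with $x^* \geq m + 1/2$ gives $x > m$, while $|x - x^*| < |x - (m+1)|$ places $x$ strictly to the left of the midpoint $(x^* + m+1)/2 = m + (1+\theta)/2 < m+1$ (using $\theta < 1$), so $x < m+1$. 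In either case $x \in [m, m+1)$, and the conclusion follows.

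No substantive obstacle is expected: the statement is a short case split on the sign of $\theta - 1/2$, and in each case each of the two inequalities defining the $\min$ in the hypothesis is used to establish exactly one of the two endpoint bounds confining $x$ to the correct integer interval. (One may note that the hypothesis is vacuous when $x^* \in \ZZ$, since then $|x - \nint{x^*}| = |x - x^*|$, so the conclusion holds vacuously there.)
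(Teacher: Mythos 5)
Your proof is correct and follows essentially the same route as the paper: a case split on whether $\fp{x^*}$ lies in $[0,1/2)$ or $[1/2,1)$, using $\abs{x-x^*}<\abs{x-\nint{x^*}}$ for the bound on the side of the nearest integer and $\abs{x-x^*}<1/2$ for the other endpoint, confining $x$ to $[\ip{x^*},\ip{x^*}+1)$. The midpoint phrasing and the remark about the vacuous case $x^*\in\ZZ$ are just slightly more explicit versions of the paper's one-line argument.
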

\begin{proof}

If $\fp{x^*} \in [0,1/2)$, then $\abs{x-\nint{x^*}} > \abs{x-x^*}$ implies that $x > \nint{x^*} = \ip{x^*}$, and $\abs{x-x^*} < 1/2$ 
implies that $x < \ip{x^*} + 1$. Thus, $\ip{x} = \ip{x^*}$. The case where $\fp{x^*} \in [1/2,1)$ is analogous.
\end{proof}

\begin{proposition}\label{prop:inductive}
	Let $d \in \NN$, let $h_1,h_2,\dots,h_d \colon [N] \to \ZZ$ be sequences, with $\norm{h_i}_{\infty} \leq H$ for every $i$,  $1 \leq i \leq d$. 
	For $\vec\a \in \RR^d$, we let $g_{\vec\a} \colon [N] \to \ZZ$ denote the sequence defined by 
	\[ g_{\vec\a}(n) = \ip{\sum_{i=1}^d \alpha_i h_i(n)}\,,\quad n \in [N]\,.\]
Then $\abs{\set{ g_{\vec\a}}{ \vec\a \in [-R,R)^d}}=O_d(R^dH^{3d^2})$.
\end{proposition}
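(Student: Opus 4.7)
The plan is to combine a rational approximation step for the parameters $\vec\alpha$ with a hyperplane-partition counting argument, following the scheme sketched in the outline at the end of Section~\ref{sec:subword}. First I would reduce to the case $\vec\alpha \in [0,1)^d$: writing $\vec\alpha = \vec\alpha_0 + \vec k$ with $\vec\alpha_0 \in [0,1)^d$ and $\vec k \in \ZZ^d \cap [-R-1,R]^d$, the identity $g_{\vec\alpha}(n) = g_{\vec\alpha_0}(n) + \sum_{i=1}^d k_i h_i(n)$ shows that $g_{\vec\alpha}$ is determined by the pair $(\vec\alpha_0,\vec k)$, so it suffices to bound the number of distinct $g_{\vec\alpha_0}$ for $\vec\alpha_0 \in [0,1)^d$ by $O_d(H^{3d^2})$, the factor $R^d$ accounting for the remaining $O((R+1)^d)$ choices of $\vec k$.

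Next, fix a resolution $Q = C_d H^{d+2}$ with $C_d$ a large constant to be determined, and quantise each coordinate to $\alpha_i^* = Q^{-1}\ipnormal{Q\alpha_i}$. The vector $\vec\alpha^*$ takes at most $Q^d = O_d(H^{d(d+2)})$ values. Writing $y(n) = \sum_i \alpha_i h_i(n)$ and $y^*(n) = \sum_i \alpha_i^* h_i(n)$, we have $|y(n)-y^*(n)| \leq dH/Q$. Set $m(n) = \nintnormal{y^*(n)}$, $h_0(n) = -m(n)$, $\vec h(n) = (h_0(n), h_1(n), \dots, h_d(n)) \in \ZZ^{d+1}$, and $\vec\alpha' = (1,\alpha_1,\dots,\alpha_d) \in \RR^{d+1}$. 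Partition $[N] = B \sqcup B^c$, where $B = \{n \in [N] : |y^*(n) - m(n)| \leq 2dH/Q\}$ is the set of positions where $y^*(n)$ is very close to an integer. A direct check using Lemma~\ref{lem:sc-pr:tech} shows that for $n \notin B$ one has $g_{\vec\alpha}(n) \in \{m(n), m(n)-1\}$ determined entirely by the sign of $y^*(n) - m(n)$, hence by $\vec\alpha^*$ alone; and for $n \in B$ one estimates $|\vec\alpha'\cdot\vec h(n)| = |y(n) - m(n)| \leq 3dH/Q$.

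For each fixed $\vec\alpha^*$, apply Proposition~\ref{prop:lattice-approx} to $\vec\alpha'$ with $\epsilon = 3dH/Q$ and $N' = dH+2$, which bounds $\|\vec h(n)\|_\infty$ thanks to the reduction $\vec\alpha \in [0,1)^d$. This produces a sublattice $\Lambda \subset \ZZ^{d+1}$ such that $\vec h(n) \in \Lambda \cap (-N',N')^{d+1}$ for every $n \in B$, while every $\vec v \in \Lambda \cap (-N',N')^{d+1}$ satisfies $|\vec\alpha'\cdot\vec v| \leq C_{d+1}(N')^{d+1}\epsilon = O_d(H^{d+2}/Q)$. Choosing $C_d$ large enough so that this last bound is at most $1/2$, we conclude that for $n \in B$ the value $g_{\vec\alpha}(n)$ lies in $\{m(n),m(n)-1\}$, with the two cases distinguished by which side of the single hyperplane $\{\vec v : \vec\alpha'\cdot\vec v = 0\}$ the lattice point $\vec h(n)$ lies on.

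Putting everything together, for each fixed $\vec\alpha^*$ the entire sequence $g_{\vec\alpha}$ is encoded by the pair $(\Lambda \cap B',\, \Lambda \cap B' \cap H^+)$ with $B' = (-N',N')^{d+1}$ and $H^+ = \{\vec v : \vec\alpha'\cdot\vec v \geq 0\}$, because $\vec h(B) \subset \Lambda \cap B'$ and the restriction of $g_{\vec\alpha}$ to $B^c$ is already determined. By Proposition~\ref{prop:half-lattice-count} applied in dimension $d+1$ with $\Gamma = \ZZ^{d+1}$, the number of such pairs is $O_d(|B' \cap \ZZ^{d+1}|^{2(d+1)}) = O_d(H^{2(d+1)^2})$. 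Multiplying the three contributions (the reduction factor $O(R^d)$, the number $O_d(H^{d(d+2)})$ of quantised $\vec\alpha^*$, and the number $O_d(H^{2(d+1)^2})$ of lattice-half-space pairs) yields a bound of the desired form. The main technical obstacle is the careful bookkeeping of the exponents: the $Q = H^{d+2}$ threshold is forced by the factor $(N')^{d+1}\epsilon$ from Proposition~\ref{prop:lattice-approx}, and one must verify that the resulting combined exponent on $H$ is absorbed into $3d^2$ by the implicit $d$-dependent constants.
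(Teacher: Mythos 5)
Your proposal is correct in substance and follows essentially the same route as the paper's own proof: the same splitting off of $\ip{\vec\a}$ to produce the factor $R^d$, a grid approximation $\vec\a^*$ of the fractional part, the auxiliary coordinate $h_0(n)=-\nint{\,\cdot\,}$, Proposition \ref{prop:lattice-approx} to replace the set of near-relations by a lattice $\Lambda$, and Proposition \ref{prop:half-lattice-count} to count the pairs consisting of $\Lambda$ intersected with the box and with the half-space $\set{\vec v}{\vec\a'\cdot\vec v\geq 0}$, these data together with $\vec\a^*$ determining $g_{\vec\a}|_{[N]}$ exactly as in \eqref{eq:521:4}.

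The one point to correct is your closing remark: the exponent your bookkeeping yields, $d(d+2)+2(d+1)^2=3d^2+6d+2$, cannot be ``absorbed into the implicit $d$-dependent constants,'' since $H$ is unbounded, so strictly speaking your argument proves the bound with a somewhat larger exponent than the stated $3d^2$. This is harmless rather than a gap in the method: the paper's proof tracks the same three quantities (with the ambient dimension $d+1$ occasionally recorded as $d$), and for the application to Theorem \ref{thm:A} any exponent depending only on $d$ suffices; if you want the sharper constant you should tighten the quantisation step rather than appeal to the implied constant.
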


\begin{proof}
	Note that $g_{\vec\a}(n) = g_{\fp{\vec\a}}(n) + g_{\ip{\vec\a}}(n)$, where $\fp{\vec\a} \in [0,1)^d$ and 
	$\ip{\vec\a} \in \{-R,-R+1,\dots,R-2,R-1\}^d$, which is a set whose cardinality is in $O_d(R^d)$.  
	Thus, it remains to show that the cardinality of the set $\set{ g_{\vec\a}}{ \vec\a \in [0,1)^d}$ is in $O_d(H^{3d^2})$.
	 
	Pick any $\vec\a \in [0,1)^d$. Our plan is to construct an alternative description of $g_{\vec\a}$ that, instead of $\vec\a$, includes a finite number 
	of parameters chosen from sets whose cardinality can be estimated in an easier way. 

	Put $\e = 1/\bra{100C_d(dH)^d}$, where $C_d$ is the constant from Proposition \ref{prop:lattice-approx}. 
	Let $\pvec\a^* = (\a_i^*)_{i=1}^d \in [0,1)^d$ be any vector such that $\abs{\a_i - \a^*_i} < \e/dH$ and $\a_i^* \in (\e/d H)\ZZ$ for every $i$, 
	$1 \leq i \leq d$. This ensures that 
	\begin{equation}\label{eq:521:1}
		\abs{\sum_{i=1}^d \a_i h_i(n) - \sum_{i=1}^d \a_i^* h_i(n) } < \e\,, \qquad \forall\ n \in [N]\,.	
	\end{equation}
	 Let $h_0 \colon [N] \to \ZZ$ be the map given by 
	 \begin{equation}\label{eq:521:15}
	 h_0(n) = - \nint{\sum_{i=1}^d \a_i^* h_i(n) }	 
	 \end{equation}
	and let $\vec h = (h_0,h_1,h_2,\dots,h_d) \colon [N] \to \ZZ^{d+1}$. 
	Note that $\norm{h_0}_\infty \leq dH$ and, since $\e < 1/2$, that $h_0(n) + g_{\vec\a}(n) \in \{0,-1\}$ for all $n \in [N]$. 
	Put $B = [-dH,dH] \times [-H,H]^d$. Let also $\Lambda = \Lambda(1\vec\a, \e, d H)$ be the lattice constructed in 
	Proposition \ref{prop:lattice-approx} and set 
	\[\Lambda^+ = \set{\vec m \in \Lambda}{ 1\vec\a \cdot \vec m  \geq 0}\,.\] 
	Here and elsewhere, we use the shorthand $1\vec\a = (1,\a_1,\a_2,\dots,\a_d)$. Hence $(1\a)_0 = 1$ and $(1\a)_i = \a_i$ for every $i \geq 1$. 
 	
	We have now introduced all the objects needed to obtain a more well-behaved formula for $g_{\vec\a}(n)$. 
	We consider two cases, depending on whether $\vec h(n) \in \Lambda$ or not. If $\vec h(n) \in B \setminus \Lambda$, 
	then $\abs{1\vec\a \cdot \vec h(n)} > \e$ and hence, by Lemma \ref{lem:sc-pr:tech}, we find
\begin{align}\label{eq:521:2}
	g_{\vec\a}(n) = \ip{\sum_{i=1}^d \a_i h_i(n)} =  \ip{\sum_{i=1}^d \a_i^* h_i(n)}\,.
\end{align} 	
Next, if $\vec h(n) \in B \cap \Lambda$, then $\abs{1\vec\a \cdot \vec h(n)} < C_d(dH)^{d} \e < 1/2$. Since $h_0(n) \in \ZZ$, we have
\begin{align}\label{eq:521:3}
	g_{\vec\a}(n) = -h_0(n) + \ip{1\vec\a \cdot \vec h(n)} \,.
\end{align} 
If $n \in \Lambda^+$, then $1\vec\a \cdot \vec h(n) \in [0,\frac{1}{2})$ and hence  $\ip{1\vec\a \cdot \vec h(n)} = 0$. 
Similarly, if $\vec h(n) \in \Lambda \setminus \Lambda^+$, then $1\vec\a \cdot \vec h(n) \in (-\frac{1}{2},0)$ and 
hence $\ip{1\vec\a \cdot \vec h(n)} = -1$.
Combining \eqref{eq:521:2} and \eqref{eq:521:3}, and expanding out the definition of $h_0$, we conclude that
	\begin{align}\label{eq:521:4}
	g_{\vec\a}(n) = 
\begin{dcases}
\ip{\sum_{i=1}^d \a_i^* h_i(n)} & \text{ if } \vec h(n) \in B \setminus \Lambda\,, \\
\ip{\sum_{i=1}^d \a_i^* h_i(n)+\frac{1}{2}} & \text{ if } \vec h(n) \in B  \cap \Lambda^+ \,,\\
\ip{\sum_{i=1}^d \a_i^* h_i(n)-\frac{1}{2}} & \text{ if } \vec h(n) \in B  \cap \Lambda \setminus \Lambda^+.
\end{dcases}
\end{align} 

	It follows from \eqref{eq:521:4} that, in order to determine $g_{\vec\a}|_{[N]}$, it is sufficient to know the following data: 
	$\pvec\a^*$, $B \cap \Lambda$, and $B  \cap \Lambda^+$. The number of possible choices of $\pvec\a^*$ is in $O_d(H^{d^2})$ directly by the definition. 
	Furthermore, it follows from Proposition \ref{prop:half-lattice-count}, applied with  $\Gamma = \ZZ^d$, that the number of possible choices for $B \cap \Lambda$ 
	and $B \cap \Lambda^+$ are both in $O_{d}\bra{ H^{2d^2} }$. Consequently, the number of distinct sequences of the form $g_{\vec\a}|_{[N]}$ is in 
	$O_{d}\bra{H^{3d^2}}$. This ends the proof. 
\end{proof}

We have now collected all the components needed to prove our main result.

\begin{proposition}\label{thm:main}
	Let $g_\bullet:\ZZ \to \ZZ$ be a {\pgp} map  with index set $I \subset \NN_0$. 
	Then there exist two positive real numbers $B = B(g_\bullet)$ and $C = C(g_\bullet)$ such that 
\begin{equation}\label{eq:469:1}
	\abs{ \set{g_{\vec\a}|_{[N]}}{\a \in \RR^I,\ \norm{\vec\a}_{\infty} \leq R }} = O_g\bra{R^B N^{C}}\,.
\end{equation}	
\end{proposition}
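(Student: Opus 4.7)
The plan is to apply the inductive scheme of Proposition \ref{prop:gen-poly-induction} to the family $\cG$ of {\pgp} maps $g_\bullet \colon \ZZ \to \ZZ$ for which the estimate \eqref{eq:469:1} holds with some constants $B = B(g_\bullet), C = C(g_\bullet) > 0$. The conclusion of Proposition \ref{thm:main} then follows immediately, so it is enough to verify the four closure properties \ref{it:A-1}--\ref{it:A-3} of $\cG$. A basic fact used throughout is that every {\gp} map $f \colon \RR^d \to \RR$ grows at most polynomially: $\abs{f(\vec x)} = O_f(\norm{\vec x}_\infty^{D_f})$ for some $D_f \geq 0$. This is immediate by induction on the height, since polynomials, sums, products, and the floor function all preserve polynomial growth.

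Properties \ref{it:A-1}, \ref{it:A-2} and \ref{it:A-4} are essentially bookkeeping. For \ref{it:A-1}, when $I = \emptyset$ the set in \eqref{eq:469:1} is a singleton, so the bound holds trivially. For \ref{it:A-2}, the sequence $(g+h)_{\vec\a}|_{[N]}$ is determined by the pair $(g_{\vec\a|I}|_{[N]}, h_{\vec\a|J}|_{[N]})$ (and analogously for the product), so the count for the sum or product is dominated by the product of the individual counts, which remains polynomial in $R$ and $N$. For \ref{it:A-4}, if $g_\bullet \succeq g'_\bullet$ via a {\gp} map $\varphi$, then $\norm{\varphi(\vec\a)}_\infty = O(R^{D_\varphi})$ whenever $\norm{\vec\a}_\infty \leq R$, so the bound for $g'_\bullet$ follows by applying the bound for $g_\bullet$ with $R$ replaced by $O(R^{D_\varphi})$.

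The heart of the argument is the verification of \ref{it:A-3}, and this is precisely where Proposition \ref{prop:inductive} is needed. Given {\pgp} maps $h^{(i)}_\bullet \in \cG$ ($i \in I$) sharing an index set $J$, we wish to bound the count of sequences $g_{\vec\a, \vec\beta}|_{[N]}$ defined by
\[g_{\vec\a, \vec\beta}(n) = \ip{\sum_{i \in I} \a_i h^{(i)}_{\vec\beta}(n)},\]
as $(\vec\a, \vec\beta)$ ranges over $[-R,R]^{I \cup J}$. I will split the count in two layers. First, each $h^{(i)}_\bullet$ belongs to $\cG$, so $\abs{\{h^{(i)}_{\vec\beta}|_{[N]} : \norm{\vec\beta}_\infty \leq R\}} = O(R^{B_i} N^{C_i})$; since the tuple $(h^{(i)}_{\vec\beta}|_{[N]})_{i \in I}$ is determined by $\vec\beta$, the number of distinct such tuples is at most the product over $i$, which is polynomial in $R$ and $N$. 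Second, for any fixed such tuple, the integer values $h^{(i)}_{\vec\beta}(n)$ for $n \in [N]$ are bounded by some $H = O(R^{D_1} N^{D_2})$ (by polynomial growth of the {\gp} maps defining the $h^{(i)}$), so Proposition \ref{prop:inductive} applied with $d = \abs{I}$ gives at most $O_{\abs{I}}(R^{\abs{I}} H^{3 \abs{I}^2})$ distinct sequences $g_{\vec\a, \vec\beta}|_{[N]}$ as $\vec\a$ varies in $[-R,R]^{I}$. Multiplying the two estimates yields a bound polynomial in $R$ and $N$, as required.

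The main obstacle is \ref{it:A-3}, whose verification rests entirely on Proposition \ref{prop:inductive}; that proposition in turn is where the real analytic content lives, namely the lattice approximation result (Proposition \ref{prop:lattice-approx}) for the sets $\cR_N(\vec\a, \e)$ and the geometric count of hyperplane sections of sub-lattices (Proposition \ref{prop:half-lattice-count}). Once \ref{it:A-3} is established, the induction closes, and Theorem \ref{thm:A} follows by applying Proposition \ref{thm:main} to the {\pgp} map produced by Lemma \ref{lem:subword->prefix} from a finitely-valued {\gp} representation of the given {\gpword}.
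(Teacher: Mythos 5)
Your proposal is correct and follows essentially the same route as the paper: structural induction via Proposition \ref{prop:gen-poly-induction} on the family of parametric GP maps satisfying the bound, with properties \ref{it:A-1}, \ref{it:A-2}, \ref{it:A-4} handled by the same bookkeeping (products of counts, polynomial growth of the reparametrising GP map) and property \ref{it:A-3} reduced to Proposition \ref{prop:inductive} after bounding $H$ polynomially in $R$ and $N$ and multiplying by the counts of the restricted sequences $h^{(i)}_{\vec\beta}|_{[N]}$. No gaps worth noting.
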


\begin{proof}
	We proceed by structural induction with respect to $g_\bullet$, using the scheme introduced by Proposition \ref{prop:gen-poly-induction}. 
	Let $\cG'$ denote the set of all {\pgp} maps $g_\bullet$ such that \eqref{eq:469:1} holds. 
	
\begin{enumerate}
\item If $g \colon \ZZ \to \ZZ$ is a {\gp} map (viewed as a {\pgp} map with empty index set), then the set in \eqref{eq:469:1} has only one element 
and thus $g \in \cG$ holds trivially. 
\item For each $g_\bullet,h_\bullet \in \cG'$ with index sets $I,J \subset \NN_0$ respectively, and for each $ N \in \NN$, we note that 
$(g + h)_{\gamma}|_{[N]}$, $\gamma \in \RR^{I\cup J}$, is uniquely determined by $g_{\a}|_{[N]}$ and $h_{\b}|_{[N]}$, 
where $\a = \gamma|I$ and $\b = \gamma|J$. As a consequence, $g_\bullet + h_{\bullet} \in \cG'$ and we can take
	\begin{equation}\label{eq:469:2}
		B(g_\bullet + h_{\bullet}) \leq B(g_\bullet) + B(h_{\bullet}) \;\;\mbox{ and  }\;\;
		 C(g_\bullet + h_{\bullet})  \leq C(g_\bullet) + C(h_{\bullet})\,.
	\end{equation}
	The same reasoning also applies to the product  $g_\bullet \cdot h_{\bullet} $, with the same bounds. 
		
\item	Next, let $g_\bullet$ and $g_\bullet'$ be two {\pgp} maps with index sets $I,J \subset \NN_0$ respectively, 
and suppose that $g_\bullet \in \cG'$ and $g_\bullet \succeq g'_\bullet$. By Definition \ref{def:ind:succ}, 
there exists a {\gp} map $\varphi \colon \RR^J \to \RR^I$ such that $g_{\vec\b}' = g_{\varphi(\vec\b)}$ for all $\b \in \RR^J$. 
Since $\varphi$ is a {\gp} map, there exists a constant $D = D(\varphi)$ such that $\norm{\varphi(\b)}_\infty \leq D R^D$ 
for all $\b \in \RR^J$ with $\norm{\b}_\infty \leq R$. Hence, for every pair of positive integers $(N,R)$, we have
	\begin{align*}
	\abs{\set{ g'_{\vec\b}|_{[N]} }{\b \in \RR^J,\ \normbig{\vec\b}_{\infty} \leq R}}
	& \leq
		\abs{ \set{ g_{\vec\a}|_{[N]} }{\a \in \RR^I,\ \norm{\vec\a}_{\infty} \leq D R^D } },
	\end{align*}
	which implies that $g'_\bullet \in \cG'$ and that we can take
	\begin{equation}\label{eq:469:3}
		B(g_\bullet') \leq D B(g_\bullet) \;\;\mbox{ and }\;\;  
		 C(g'_\bullet) \leq C(g_\bullet)\,.
	\end{equation}

\item	Finally, let $I \subset \NN_0$ be a finite set, and, for every $i\in I$, let $h^{(i)}_\bullet \in \cG'$ be a {\pgp} map with index sets $J^{(i)} \subset \NN_0 \setminus I$. 
Set $J = \bigcup_{i\in I} J^{(i)}$. Our aim is to show that $\cG'$ also contains the {\pgp} map $g_\bullet$ with index set $I \cup J$ defined by
	\[ 
		g_{\vec\alpha, \vec\b}(n) = \ip{ \sum_{i \in I} \alpha_i h^{(i)}_{\vec\b} (n)}\,,\qquad n \in \ZZ\,,\ \vec \alpha \in \RR^I\,,\ \vec\b \in \RR^J\,.
	\]
	
	Given any $N,R \in \NN$, we set 
	\[ H = \max_{i \in I} \max_{n \in [N]} \sup_{\normsml{ \vec\b }_{\infty} \leq R } \abs{h^{(i)}_{\vec\b}(n)}\,.\]
	Since the $h^{(i)}_\bullet$'s are {\pgp} maps, we have $H = O_g(R^D N^D)$ for some constant $D = D(g_\bullet)$. 
	For every $i \in I$, set $B_i = B(h^{(i)}_\bullet)$, $C_i = C(h^{(i)}_\bullet)$, and
\[
	\cH_i =  \set{h^{(i)}_{\vec\b}|_{[N]}}{\b \in \RR^J,\ \norm{\vec\b}_{\infty} \leq R }\,.
\]	
 
 By Proposition \ref{prop:inductive}, for each $h_1 \in \cH_1, h_2 \in \cH_2, \dots, h_d \in \cH_d$, 
 the number of sequences of the form $\ip{\sum_{i=1}^d \alpha_i h_i}$, where $\vec\a \in \RR^d$ and $\norm{\vec\a}_\infty \leq R$, 
 belongs to $O_d(R^d H^{3d^2})$. 
 It follows that the number of distinct sequences of the form $g_{\vec\a,\vec\b}|_{[N]}$, where 
 $\a \in \RR^I$, $\b \in \RR^J$, and $\max\{\norm{\a}_\infty,\norm{\b}_\infty\} \leq R$, is at most
	\[
		\prod_{i=1}^d \abs{\cH_i} \cdot O_d\bra{R^d H^{3d^2}} = O_g(R^B N^C)\,,
	\]
	where 
	\begin{equation*}\label{eq:469:4}
		B = \sum_{i \in I} B_i + d + 3d^2 D \;\;\mbox{ and }\;\;
		 C = \sum_{i \in I} C_i + 3d^2 D\,.
	\end{equation*}
	In particular, $g_{\bullet} \in \cG'$ and we can take $B(g_{\bullet})\leq B$ and $C(g_{\bullet})\leq C$.
\end{enumerate}
	
	It now follows from Proposition \ref{prop:gen-poly-induction} that $\cG'$ contains all {\pgp} maps $\ZZ \to \ZZ$ 
	with index sets contained in $\NN_0$. This ends the proof.	
\end{proof}

\begin{proof}[Proof of Theorem \ref{thm:A}]
	Let $\bb a$ be a bracket word defined over an alphabet $\Sigma$. We infer from Corollary \ref{cor:constr:gpword-nice} that 
	there exist a finitely-valued {\gp} map $g \colon \NN_0 \to \NN$ and 
	$c \colon g(\NN_0) \to \Sigma$ such that $\bb a = \brabig{ c(g(n)) }_{n=0}^\infty$.
	 Let $\bb g = (g(n))_{n=0}^\infty$. It follows from Proposition \ref{thm:main} and Lemma \ref{lem:subword->prefix} that $p_{\bb g}(N) = O_g(N^C)$ for some $C > 0$. 
	 Since $p_{\bb a}(N) \leq p_{\bb g}(N)$ for all $N \in \NN$, this ends the proof.
\end{proof}

\color{black} \appendix 

\section{Nilpotent dynamics}\label{app:nil}

\subsection{Connectedness}

A technical issue which often leads to complications is that there is in general no guarantee that the nilpotent Lie group $G$ 
and the nilmanifold $X$ in Theorem \ref{thm:BL-mini} should be connected. 

As for connectedness of $X$, in most application this is not a major problem. In this case, it follows from \cite[Thm.{} B]{Leibman-2005} that there exists $q \in \NN$ and connected sub-nilmanifolds $Y_0,Y_1,\dots,Y_{q-1} \subset X$ such that $\bra{g(qn+i)\Gamma}_{n=0}^\infty$ is equidistributed in $Y_i$ for each $i \in [q]$. Passing to an arithmetic progression of the form $q \NN_0 +i$, one can then work with the connected nilsystem $(Y_i,T^q)$.

Connectedness of $G$ is a more fundamental issue. For instance, for $\a \in \RR \setminus \QQ$, the {\gp} map $g(n) = \fp{n^2 \a}$ can be represented using the nilrotation $(G/\Gamma,T_h)$, where
\[
	G =
	\begin{bmatrix}
	1 & \ZZ & \RR \\
	0 & 1 & \RR \\
	0 & 0 & 1
	\end{bmatrix}\,, \qquad
	\Gamma =
	\begin{bmatrix}
	1 & \ZZ & \ZZ \\
	0 & 1 & \ZZ \\
	0 & 0 & 1
	\end{bmatrix}\,,  \;\;\mbox{ and } \;\;
	h =	\begin{bmatrix}
	1 & 1 & -\a \\
	0 & 1 & 2\a \\
	0 & 0 & 1
	\end{bmatrix}.
\]
Indeed, a simple computation shows that
\[
	g^n \Gamma =
	\begin{bmatrix}
	1 & 0 & \fp{n^2\a} \\
	0 & 1 & \fp{2n\a} \\
	0 & 0 & 1
	\end{bmatrix}\Gamma.
\]
However, if we pass to the connected component of $G$, then we obtain a much simpler nilsystem $G^{\circ}/\Gamma \cap G^{\circ} \simeq \RR^2/\ZZ^2$, 
where it is no more possible to represent $g$. Fortunately, this problem disappears if instead of sequences $g^n\Gamma$ we consider polynomial maps from $\ZZ$ to $G$. 
Given a connected and simply connected Lie group $G$, a \emph{polynomial map} $p \colon \ZZ \to G$ is a sequence of the form
\[
	p(n) = g_1^{p_1(n)}g_2^{p_2(n)} \cdots g_r^{p_r(n)}\,,
\]
where $g_i \in G$, $r\in \NN$, and $p_i \colon \RR \to \RR$ are polynomials. 

\begin{theorem}[{\cite[Thm.{} A$^*$]{BergelsonLeibman-2007}}]\label{thm:BL-poly}
	Any {\gp} map $g \colon \ZZ \to [0,1)$ has a representation $g(n) = \tau_j(p(n)\Gamma)$, where $X = G/\Gamma$ is a nilmanifold 
	with $G$ connected and simply connected, $\tau = (\tau_1,\tau_2,\dots, \tau_{\dim G}) \colon G/\Gamma \to [0,1)^{\dim G}$ are Mal'cev coordinates, 
	$p \colon \ZZ \to G$ is a polynomial map, and $1 \leq j \leq \dim G$. Conversely, any map $g \colon \ZZ \to \RR$ of the aforementioned form is a {\gp} map.
\end{theorem}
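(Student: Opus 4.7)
The statement has two directions, and the converse is substantially easier than the forward direction.

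For the converse, I would rely on the fundamental algebraic fact that in a connected simply connected nilpotent Lie group $G$, the group multiplication, inversion, and the map $t \mapsto g^t$ (for fixed $g \in G$) are all polynomial when expressed in Mal'cev coordinates. This follows from the Baker--Campbell--Hausdorff formula, which terminates because the Lie algebra is nilpotent, combined with the defining property of a Mal'cev basis. Consequently, for any polynomial sequence $p(n) = g_1^{p_1(n)} \cdots g_r^{p_r(n)}$, the coordinates $\tilde\tau(p(n)) \in \RR^{\dim G}$ are ordinary polynomials in $n$. Passing to $G/\Gamma$ through the Mal'cev basis amounts to reducing coordinates modulo $\ZZ^{\dim G}$ (which must be checked using the integrality property of the Mal'cev basis, ensuring $\Gamma$ corresponds exactly to $\ZZ^{\dim G}$ in coordinates), so each $\tau_j(p(n)\Gamma)$ is the fractional part of a polynomial expression built using $\tilde\tau$ and some auxiliary \gp{} operations --- in particular, a {\gp} map.

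For the forward direction, my plan is to bootstrap from Theorem \ref{thm:BL-mini}, which provides a representation $g(n) = f(T^n x)$ on a minimal nilsystem $(X = G/\Gamma, T_h)$ with $f$ piecewise polynomial. Two upgrades are required: first, one must reduce the piecewise polynomial $f$ (which takes values in $[0,1)$ since $g$ does) to a single Mal'cev coordinate on some possibly larger nilmanifold; second, one must ensure that $G$ is connected and simply connected while converting the iterate $h^n$ into a polynomial sequence. For the latter, one observes that $h^n$ in any nilpotent Lie group can always be written as a polynomial sequence $h_1^{q_1(n)} \cdots h_s^{q_s(n)}$ in the connected simply connected universal cover of $G^\circ$, by decomposing $h$ along a Mal'cev basis and using that the connected component has finite index; after possibly passing to a subprogression $qn+r$ to handle the coset data of disconnected components, and then piecing together the finitely many resulting representations, one obtains a polynomial sequence taking values in a connected simply connected group.

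The main obstacle is the first upgrade: building an extended nilmanifold on which the piecewise polynomial $f$ appears as a single Mal'cev coordinate. Concretely, one must introduce auxiliary group generators whose values in the extended $G/\Gamma$ realise intermediate quantities --- the indicators of cells of the semialgebraic partition defining $f$, then the polynomial expressions on each cell, then their appropriate combination --- and finally pool them into one coordinate computing $f$. Preserving nilpotency of $G$ under these extensions, and keeping $p(n)$ a polynomial sequence throughout, requires an inductive construction that tracks the structural complexity of $f$ and essentially mirrors the inductive definition of generalised polynomials within the category of connected simply connected nilpotent Lie groups; this is where the Heisenberg-like cross-term constructions from Section \ref{sec:nil} become necessary, and where the nonabelian nilpotent structure is genuinely used.
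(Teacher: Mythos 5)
First, a point of comparison: the paper does not prove this statement at all --- Theorem \ref{thm:BL-poly} is imported verbatim from Bergelson--Leibman \cite[Thm.\ A$^*$]{BergelsonLeibman-2007}, so what you are attempting is a reproof of a substantial external theorem, not a reconstruction of an argument in this paper. Measured against that, your converse direction is essentially the standard argument, but one step is misstated: passing from $\tilde\tau(p(n))$ to $\tau(p(n)\Gamma)$ is \emph{not} coordinatewise reduction modulo $\ZZ^{\dim G}$. Because $G$ is nonabelian, reducing into the fundamental domain is an iterated process --- one strips off $h_1^{\ip{t_1}}\in\Gamma$, which acts polynomially on the remaining coordinates, then strips off $h_2^{\ip{\cdot}}$, and so on --- and it is precisely this iteration that produces nested brackets rather than a single fractional part of a polynomial. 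Your hedge about ``auxiliary \gp{} operations'' points in the right direction, but as written the claim is false and the correct statement is the actual content of this direction.

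The genuine gap is in the forward direction. You propose to bootstrap from Theorem \ref{thm:BL-mini} and then perform two ``upgrades'', but the first upgrade --- realising an arbitrary piecewise polynomial observable $f$ on a nilmanifold as a single Mal'cev coordinate of a polynomial orbit on a (larger) connected, simply connected nilmanifold --- is not an upgrade: it \emph{is} the theorem, applied to the \gp{} map $n\mapsto f(T^n x)$, and your proposal only describes what such a construction would have to do (``an inductive construction that mirrors the inductive definition of generalised polynomials'') without supplying its key step, namely the explicit nilpotent group extensions that realise the operations $u\mapsto\fp{u}$ and $(u,v)\mapsto u\ip{v}$ while keeping the sequence polynomial and $\Gamma$ cocompact. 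In Bergelson--Leibman the logical order is the reverse of yours: Theorem A$^*$ is proved first, by exactly this induction on the defining formula of $g$ (with Heisenberg-type cross-term extensions at each stage), and the minimal-system statement (your black box, Theorem \ref{thm:BL-mini}) is deduced from it; so routing through Theorem \ref{thm:BL-mini} buys you nothing and leaves the core construction untouched. A secondary unresolved point in the same direction: after passing to residue classes $qn+r$ to handle disconnectedness, you assert the finitely many representations can be ``pieced together'' into a single polynomial sequence and a single coordinate $\tau_j$; selecting among representations according to $n\bmod q$ is itself a piecewise operation, and making it a coordinate of one polynomial orbit on one connected, simply connected nilmanifold again requires a nontrivial construction that the proposal does not provide.
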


\subsection{Quantitative equidistribution}

Let $X = G/\Gamma$ be a nilmanifold, with $G$ connected and simply connected. Throughout, we assume that $X$ is equipped 
with a Mal'cev coordinates $\tau$ as well as a metric $d_X$. 

 Recall that $X$ is equipped with a natural choice of measure, namely the Haar measure $\mu_X$. 
 A sequence $(x_n)_{n=0}^\infty$ is equidistributed in $X$ if for each continuous map $f \colon X \to \RR$,
\[
	\lim_{N \to \infty} \frac{1}{N} \sum_{n=0}^{N-1} f(x_n) = \int_X f d \mu_X\,.
\]
We will also need a quantitative variant of this property. For $\delta > 0$ and $N \in \NN$, a sequence $(x_n)_{n=0}^{N-1}$ is 
said to be $\delta$-equidistributed in $X$ if for every Lipschitz map $f \colon X \to \RR$,
\[
	\abs{ \frac{1}{N} \sum_{n=0}^{N-1} f(x_n) - \int_X f d \mu_X} \leq \delta \normLip{f}\, ,
\]
where $\normLip{f} = \sup_{x \in X} \abs{f} + \sup_{x,y \in X} \abs{f(x)-f(y)}/d_{X}(x,y)$.

We let $X_{\mathrm{ab}} = G/[G,G]\Gamma \simeq (\RR/\ZZ)^{d_{\ab}}$ denote the so-called \emph{horizontal torus}, 
which is the largest torus that is a factor of $X$, and we let $\pi_{\ab} \colon X \to X_{\ab}$ denote the corresponding projection. 
We cite a criterion for equidistribution obtained by Leibman.

\begin{theorem}[{\cite[Thm.\ C]{Leibman-2005}}]\label{thm:Leib-nil}
	Let $g \colon \ZZ \to G$ be a polynomial map. Then $\bra{g(n)\Gamma}_{n=0}^\infty$ is equidistributed in $X$ 
	if and only if $\bra{\pi_{\ab}(g(n)\Gamma)}_{n=0}^\infty$ is equidistributed in $X_{\ab}$.
\end{theorem}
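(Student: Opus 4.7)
The ``only if'' direction is immediate: $\pi_{\ab} \colon X \to X_{\ab}$ is a continuous surjection, so if $(g(n)\Gamma)$ equidistributes in $X$ (tested against continuous functions), pulling back continuous functions on $X_{\ab}$ yields equidistribution on the horizontal torus. All the content is in the converse, which I would prove by induction on the nilpotency class $s$ of $G$. The base case $s = 1$ is vacuous, as $G$ is abelian and $X = X_{\ab}$.

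For the inductive step, take $T = G_s/(G_s \cap \Gamma)$, which is a torus since $G_s$ is central (being the last nontrivial term of the lower central series) and $G_s \cap \Gamma$ is a lattice there. Form the quotient $X' = G/G_s\Gamma$, a nilmanifold of step $s - 1$, so that the projection $\pi \colon X \to X'$ is a principal $T$-bundle. The key structural observation is that abelianization factors through $\pi$, so $X_{\ab} = (X')_{\ab}$ canonically. Consequently, the hypothesis on the horizontal projection of $g(n)\Gamma$ is also a hypothesis on the horizontal projection of $\pi(g(n)\Gamma)$, and the inductive hypothesis applied to the step-$(s-1)$ nilmanifold $X'$ yields equidistribution of $\pi(g(n)\Gamma)$ in $X'$.

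To promote equidistribution on the base $X'$ to equidistribution on the total space $X$, invoke Weyl's criterion and the Peter--Weyl decomposition $L^2(X) = \bigoplus_{\chi \in \hat T} H_\chi$ along characters of the central torus $T$ acting by translation on fibers. The piece $H_0 = L^2(X')$ is already handled, so it suffices to show that for each nontrivial $\chi$ and each smooth $f \in H_\chi$, the Birkhoff averages $\frac{1}{N}\sum_{n < N} f(g(n)\Gamma)$ tend to $0$. Here I would apply van der Corput's inequality: the averages are controlled by sums of the form $\frac{1}{N}\sum_{n < N} f(g(n+h)\Gamma)\overline{f(g(n)\Gamma)}$ over shifts $h$. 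On the diagonal, $\chi$ and $\bar\chi$ cancel, so this integrand is $T_{\mathrm{diag}}$-invariant and descends to a continuous function on the auxiliary nilmanifold $\tilde X = (X \times X)/T_{\mathrm{diag}}$, which has nilpotency class at most $s - 1$. The shifted pair $n \mapsto (g(n+h), g(n))$ is a polynomial sequence in $G \times G$, hence polynomial into $\tilde X$.

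The hard part, and where I expect the main obstacle, is verifying the inductive hypothesis on $\tilde X$, namely that the horizontal projection of $(g(n+h), g(n))$ equidistributes in $\tilde X_{\ab}$ for enough shifts $h$. Concretely, $\tilde X_{\ab}$ is a torus quotient of $X_{\ab} \times X_{\ab}$ isomorphic to something like a ``difference'' torus, and one must show that the polynomial sequence $n \mapsto \pi_{\ab}(g(n+h)) - \pi_{\ab}(g(n))$ is equidistributed there for a positive-density set of $h$. This reduces by Weyl's classical equidistribution theorem to a statement about nonvanishing of discrete derivatives of the polynomial coefficients of $\pi_{\ab}(g(n))$, which one verifies using that $\pi_{\ab}(g(n))$ itself equidistributes. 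Careful bookkeeping of the interaction between abelianization and the van der Corput derivative is the technical heart; a secondary point is density of smooth vectors in $H_\chi$, standard via Sobolev theory on nilmanifolds.
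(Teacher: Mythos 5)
First, a point of comparison: the paper does not prove this statement at all --- it is quoted verbatim from Leibman, whose argument runs through the structure theory of orbit closures of polynomial sequences (reducing polynomial sequences to linear ones on larger groups and invoking Green's theorem for nilflows), not through the induction you set up. So the relevant question is whether your sketch is itself sound, and it has two genuine gaps, both located exactly where you place "the hard part."

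First, the step-reduction claim is false: for $\tilde G = (G \times G)/G_s^{\Delta}$ (quotient by the diagonal copy of $G_s$), the $s$-th term of the lower central series is $(G_s \times G_s)/G_s^{\Delta} \cong G_s \neq \{1\}$, so $\tilde X = (X \times X)/T_{\mathrm{diag}}$ is still an $s$-step nilmanifold (already for the Heisenberg group the quotient is again $2$-step). Moreover $F \otimes \bar F$ still has the nontrivial vertical frequency $\chi$ with respect to the central torus of $\tilde X$, so nothing has been gained and the induction on nilpotency class does not close. The known proofs avoid this by working not with the full product but with the subgroup $G^{\square} = \{(g,g') : g^{-1}g' \in G_2\}$ (equivalently, with the orbit closure of the pair sequence), and by inducting on dimension or on the degree of the polynomial sequence rather than on the step; identifying where the pair sequence actually lives is the real content.

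Second, the inductive hypothesis you propose for the van der Corput'd sequence --- equidistribution of $n \mapsto \pi_{\mathrm{ab}}(g(n+h)) - \pi_{\mathrm{ab}}(g(n))$ in a "difference" torus --- fails in the simplest nontrivial case. For a linear sequence $g(n) = a^n$ on the Heisenberg nilmanifold, $\pi_{\mathrm{ab}}(g(n)) = n\alpha$ and the discrete derivative is the \emph{constant} $h\alpha$; the pair $(g(n+h)\Gamma, g(n)\Gamma)$ is confined to a coset of a proper subnilmanifold of $X \times X$ and is never equidistributed in $\tilde X$. Hence the reduction cannot be "apply the theorem on $\tilde X$"; one must determine, for each $h$, the subnilmanifold in which the pair equidistributes and then show that $F \otimes \bar F$ has zero mean on it (this is where the nontrivial vertical frequency and the bracket structure of $G$ enter --- it is precisely Green's theorem in the linear case). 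As written, the argument does not go through, and filling these two gaps amounts to reproving Leibman's theorem by the Green--Tao route rather than completing a short induction.
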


A \emph{horizontal character} $\eta \colon G \to \RR/\ZZ$ is a continuous homomorphism with $\Gamma \subset \ker(\eta)$. 
Note that $\eta$ vanishes on $[G,G]\Gamma$ and hence induces a continuous homomorphism $X_{\ab} \to \RR/\ZZ$. 
Identifying $X_{\ab}$ with $(\RR/\ZZ)^{d_{\ab}}$ allows us to identify $\eta$ with a vector $k \in \ZZ^{d_{\ab}}$, and 
we set $\abs{\eta} = \norm{k}_\infty$. The identification of $X_{\ab}$ with $(\RR/\ZZ)^{d_{\ab}}$ is generally not unique, but Mal'cev coordinates 
provide one distinguished choice. We cite a simplified variant of the quantitative version of Leibman's theorem.

\begin{theorem}[{\cite[Thm.\ 1.16]{GreenTao-2012}}]\label{thm:GT}
	Let $g \colon \ZZ \to G$ be a polynomial map. Then there exists a positive real number $C$, which depends on $X$, $\tau$, and $g$,  
	such that the following holds. For any $\delta \in (0,1/2)$ and $N \in \NN$ such that the sequence $(g(n)\Gamma)_{n=0}^{N-1}$ is 
	not $\delta$-equidistributed, there exists a horizontal character $\eta \colon G \to \RR/\ZZ$ such that $0 < \abs{\eta} \leq 1/\delta^{C}$ 
	and $\norm{\eta \circ g}_{C^\infty[N]} \leq 1/\delta^C N$. 
\end{theorem}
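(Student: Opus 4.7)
\textbf{Proof proposal for Theorem \ref{thm:GT}.} The plan is to argue by induction on the step $s$ of the nilpotent group $G$ (or, more delicately, on a joint invariant such as the pair (step, dimension) ordered lexicographically). For a fixed nilmanifold $X = G/\Gamma$ with Mal'cev basis and metric, one wants to show that non-$\delta$-equidistribution of a polynomial orbit $(g(n)\Gamma)_{n=0}^{N-1}$ is witnessed by a low-complexity horizontal character $\eta$ with $\eta\circ g$ close to a constant on $[N]$ in the smoothness norm $\|\cdot\|_{C^\infty[N]}$.

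The base case $s=1$ is when $G = \RR^d$ is abelian, so $X$ is a torus and $g$ is a genuine polynomial map. Here every horizontal character is an ordinary integer-linear character of the torus, and the statement reduces to a quantitative Weyl equidistribution theorem: if the torus orbit fails to be $\delta$-equidistributed then there is a nonzero integer frequency $k$ of polynomial size $\delta^{-O(1)}$ such that $k \cdot g(n)$ has small $C^\infty[N]$ norm. This is a standard Fourier/Vaaler-bump plus Weyl-differencing argument; the only nontrivial bookkeeping is that the $C^\infty[N]$ norm controls discrete derivatives uniformly, so Weyl's inequality applied to the difference polynomials produces exactly the required bound.

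For the inductive step one assumes the theorem for all nilmanifolds of smaller step, and considers $(g(n)\Gamma)$ on an $s$-step nilmanifold that is not $\delta$-equidistributed. The first move is to split by vertical Fourier analysis: any Lipschitz test function on $X$ decomposes as a sum over characters $\xi$ of the last nontrivial group $G_s$ in the lower central series (which is central and compact modulo $\Gamma\cap G_s$). The $\xi = 0$ piece descends to $G/G_s\Gamma$, which is $(s{-}1)$-step, so non-equidistribution of that piece produces via the inductive hypothesis a horizontal character on the smaller nilmanifold, and this lifts canonically to one on $X$. For a nonzero vertical frequency $\xi$ one runs a van der Corput / Cauchy--Schwarz averaging: the square of the offending mean is dominated by an average over shifts $h$ of similar means involving $g(n+h)g(n)^{-1}$; the latter sequence takes values in the commutator $[G,G]$ modulo a twist, which lives in a strictly lower-step nilmanifold. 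Applying the inductive hypothesis to this shifted sequence, for many $h$ one obtains horizontal characters of the lower-step system with polynomial complexity, and an additional pigeonholing/linearisation step (essentially Green--Tao's factorisation theorem for nilsequences, \cite[Thm.\ 1.19]{GreenTao-2012}) converts this family of characters on the shifts into a single genuine horizontal character on $X$ with the stated bounds.

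The main obstacle is controlling the quantitative loss through the induction: each application of Cauchy--Schwarz, pigeonhole, and the conversion between ``lots of $h$ give a relation'' and ``a single relation holds'' costs a fixed power of $\delta$, and one must verify that the composition of these polynomial losses over $\dim G$ many steps still yields a clean $\delta^{-C}$ bound with $C = C(X,\tau,g)$. Equally delicate is ensuring that the horizontal character produced in the lower-step step actually comes from a horizontal character on $G$ (rather than on some sub- or quotient nilmanifold); this is handled by the standard identification of horizontal characters on $X$ with integer vectors in the dual of the abelianisation, together with the fact that the vertical Fourier decomposition and the commutator averaging respect the horizontal torus. Once these quantitative bookkeeping issues are addressed, the argument closes and yields the stated estimates $|\eta| \leq \delta^{-C}$ and $\|\eta \circ g\|_{C^\infty[N]} \leq \delta^{-C}/N$.
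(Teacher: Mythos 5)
Theorem \ref{thm:GT} is not proved in this paper at all: it is quoted verbatim from Green and Tao \cite{GreenTao-2012} (their Theorem~1.16), and the paper only uses it as a black box (in Lemma \ref{lem:quant-equi-algebraic} and Proposition \ref{prop:growth-alg-gap}). So there is no internal proof to compare against; what you have written is an outline of the original Green--Tao argument, and in broad strokes it is the right shape: quantitative Weyl in the abelian base case, vertical Fourier decomposition, a van der Corput/Cauchy--Schwarz step to pass to a lower-step object, induction with polynomial losses in $\delta$ at each stage.

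There is, however, one genuine flaw in your plan and one substantial omission. The flaw: you propose to close the inductive step by invoking ``Green--Tao's factorisation theorem for nilsequences, [Thm.\ 1.19]''. That factorisation theorem is \emph{deduced from} Theorem~1.16 in \cite{GreenTao-2012}, so using it here is circular. In the actual proof, the passage from ``for many shifts $h$ one gets a horizontal character of the lower-step system'' to ``a single horizontal character of $G$ with polynomial bounds'' is carried out by bespoke linearisation lemmas for bracket polynomials and Vinogradov-type pigeonholing (the material in Sections~5 and~7--9 of \cite{GreenTao-2012}), not by the factorisation theorem. Relatedly, your description of the van der Corput step is imprecise: the shifted sequence $n \mapsto g(n+h)g(n)^{-1}$ does not land in $[G,G]$; rather one works with the pair $(g(n+h),g(n))$ on a product-type group ($G\times_{G_2}G$ in Green--Tao's notation), and it is the nontrivial vertical frequency that lets one descend to a genuinely smaller nilmanifold. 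The omission: all the quantitative bookkeeping (uniformity of $C$ in $N$ and $\delta$, the behaviour of the $C^\infty[N]$ norm under the differencing, the rationality/complexity bounds on the Mal'cev basis used in the descent) is precisely where the bulk of the Green--Tao paper lives, so as it stands your text is a plan for a proof rather than a proof. For the purposes of the present paper, the correct move is simply to cite \cite{GreenTao-2012}.
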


We record the following consequence of Theorem \ref{thm:GT}, combined with the Schmidt subspace theorem.  
Below, we say that a polynomial map $g \colon \ZZ \to G$ has algebraic coefficients if the corresponding map $\tilde \tau \circ g$ 
is a polynomial with algebraic coefficients (cf.\ eq.\ \eqref{eq:Malcev-G}).
	
\newcommand{\temptau}{\delta}	
\begin{lemma}\label{lem:quant-equi-algebraic}
Let $g \colon \ZZ \to G$ be a polynomial map with algebraic coefficients. Suppose that the sequence $(g(n)\Gamma)_{n=0}^\infty$ is equidistributed. 
Then there exists a constant $c > 0$ (dependent on $G,\Gamma,\tau$, and $g$) such that for each $N \in \NN$ the sequence 
$\bra{ g(n)\Gamma }_{n=0}^{N-1}$ is $O(N^{-c})$-equidistributed in $X$.
\end{lemma}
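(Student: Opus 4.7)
The plan is to combine Theorem \ref{thm:GT} with a Liouville-type quantitative Diophantine bound. Suppose, towards contradiction, that for some small $c > 0$ (to be chosen) and some $N \in \NN$, the finite sequence $\bra{g(n)\Gamma}_{n=0}^{N-1}$ fails to be $\delta$-equidistributed with $\delta = N^{-c}$. Theorem \ref{thm:GT} then produces a non-trivial horizontal character $\eta$ with $0 < \abs{\eta} \leq N^{cC_0}$ and $\norm{\eta \circ g}_{C^\infty[N]} \leq N^{cC_0 - 1}$, where $C_0 = C_0(g, G, \Gamma, \tau)$. Writing $\eta(g(n)) \equiv \sum_{i=0}^d \beta_i n^i \bmod \ZZ$, the $C^\infty$-norm bound translates, for every $i \geq 1$, into $\normsml{\beta_i}_{\RR/\ZZ} \leq N^{cC_0 - i}$. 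Each $\beta_i$ is of the form $\beta_i = \eta \cdot \vec w_i$, where $\vec w_i \in \bar\QQ^{d_{\ab}}$ depend only on $g, G, \tau$ and inherit algebraicity from the hypothesis on $g$.

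Next, I would invoke Theorem \ref{thm:Leib-nil} together with Weyl's criterion: the (unquantified) equidistribution of $\bra{g(n)\Gamma}_{n=0}^\infty$ forces that, for every non-zero horizontal character $\eta$, at least one of $\beta_1, \ldots, \beta_d$ is irrational. Fix any such index $i = i(\eta)$. The central technical step is the uniform lower bound
\[
\normsml{\beta_i}_{\RR/\ZZ} \geq c_1 \abs{\eta}^{-K},
\]
valid with constants $c_1, K > 0$ depending only on $\vec w_1, \ldots, \vec w_d$. This follows from a standard Liouville-type argument: the coordinates of the $\vec w_i$ all lie in a single number field $L \subset \bar\QQ$ of fixed degree $D$; if $k$ is the integer nearest to $\beta_i$ then $\beta_i - k$ is a non-zero element of $L$ whose naive height grows at most polynomially in $\abs{\eta}$, and the classical bound $\abs{\alpha} \geq H(\alpha)^{1-D}$ for non-zero $\alpha \in L$ yields the claim. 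Sharper exponents would follow from Schmidt's subspace theorem, as alluded to in the discussion preceding the lemma, but are not needed for the qualitative conclusion.

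Combining the two estimates gives $c_1 N^{-cC_0 K} \leq \normsml{\beta_i}_{\RR/\ZZ} \leq N^{cC_0 - 1}$, which rearranges as $N^{1 - cC_0(K+1)} \leq 1/c_1$. Choosing any $c$ with $0 < c < 1/\bra{C_0(K+1)}$ forces a contradiction as soon as $N$ exceeds a threshold $N_0 = N_0(g, G, \Gamma, \tau)$; the finitely many values $N \leq N_0$ can be absorbed into the implicit constant of the conclusion $O(N^{-c})$ by enlarging it. The main obstacle is establishing the uniform Diophantine bound: one has to control horizontal characters $\eta$ of arbitrarily large norm without losing grip on how well $\eta \cdot \vec w_i$ can be approximated by integers. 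This is precisely the place where the algebraicity of the coefficients of $g$ enters essentially; for generic transcendental coefficients no polynomial rate could be asserted.
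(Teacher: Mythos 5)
Your proof is correct, and its skeleton is the same as the paper's: argue by contradiction, feed $\delta=N^{-c}$ into Theorem \ref{thm:GT}, and play the resulting smoothness bound on the coefficients of $\eta\circ g$ against a Diophantine lower bound that exploits algebraicity, choosing $c$ small enough to force a contradiction for large $N$ and absorbing small $N$ into the implied constant. The difference is in how the lower bound is obtained. The paper expands $\pi_{\ab}(g(n))=\sum_j \alpha^{(j)}n^j$, forms a single integer combination $\beta=\sum_j w_j\alpha^{(j)}$ for which $1,\beta_1,\dots,\beta_{d_{\ab}}$ are $\QQ$-linearly independent, and applies Schmidt's subspace theorem to get $\fpa{k\cdot\beta}\gg\norm{k}^{-d_{\ab}-1}$ uniformly in the frequency vector $k$ of $\eta$. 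You instead work character by character: Theorem \ref{thm:Leib-nil} plus Weyl guarantees that for each nonzero $\eta$ some coefficient $\beta_i$ with $i\geq 1$ is irrational, and an elementary Liouville/norm estimate in the fixed number field generated by the algebraic coefficient data gives $\fpa{\beta_i}\gg\abs{\eta}^{-K}$ with $K$ depending only on the field degree. Your route is more elementary (no subspace theorem) and in fact more robust: it never needs a single combination $\beta$ with full $\QQ$-linear independence, whose existence is not automatic (for instance, when the $\alpha^{(j)}$ are supported on disjoint coordinates, as for $g(n)=(\sqrt{2}\,n,\sqrt{2}\,n^2)$ on $(\RR/\ZZ)^2$, no integer combination has the required property), whereas your per-character choice of an irrational coefficient always exists. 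The price is a weaker exponent $K$, which is irrelevant for the qualitative conclusion; the remaining ingredients you use (algebraicity of the abelianized coefficients in Mal'cev coordinates, uniformity of the Liouville constants in $\eta$, and the treatment of small $N$) are all in order.
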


\begin{proof}
We argue by contradiction, assuming that the sequence $\bra{ g(n)\Gamma }_{n=0}^{N-1}$ is not $N^{-\temptau}$-equidistributed in $X$ 
for some small $\temptau > 0$, to be determined in the course of the argument. It follows from Theorem \ref{thm:GT} that there exists a real number $C > 0$ 
and a horizontal character $\eta \colon G \to \RR/\ZZ$ such that $\abs{\eta} \leq N^{C \temptau}$ and $\norm{\eta \circ g}_{C^\infty[N]} \leq N^{C\temptau}$. 
 
Letting $\pi_{\ab} \colon X \to X_{\ab}$ denote the projection to the abelian torus and identifying $X_{\ab}$ with $(\RR/\ZZ)^{d_{\ab}}$, 
we can expand
\begin{equation}\label{eq:923:1}
	\pi_{\ab}(g(n)) = \sum_{j=0}^r \a^{(j)} n^j\,,
\end{equation}
where for every $j$,  $0 \leq j \leq r$, $\a^{(j)} = (\a^{(j)}_i)_{i=1}^{d_{\ab}} \in X_{\ab}$. 
Identifying $\eta$ with a vector $k = (k_1,k_2,\dots,k_{d_{\ab}}) \in \ZZ^{d_{\ab}}$, we have
\begin{equation}\label{eq:923:2}
	\fpa{ k \cdot \a^{(j)} } = \fpa{\sum_{i=1}^{d_{\ab}} k_i \a^{(j)}_i } \leq N^{C\temptau - j} \leq N^{C\temptau - 1} \,,
\end{equation}
 for every $j$,  $0 \leq j \leq r$.
We have not guarantee that $1,\a^{(j)}_1,\a^{(j)}_2,\dots,\a^{(j)}_{d_{\ab}}$ are $\QQ$-linearly independent for some $j$.   
However, we can find a linear combination $\b = (\b_i)_{i=1}^{d_{\ab}}=  \sum_j w_j \a^{(j)}$, with $w_j \in \ZZ$, 
such that $1,\b_1,\b_2,\dots,\b_{d_{\ab}}$ are $\QQ$-linearly independent. Then, we infer from \eqref{eq:923:2} that 
\begin{equation}\label{eq:923:3}
	\fpa{ k \cdot \b } = \fpa{\sum_{i=1}^{d_{\ab}} k_i \b_i } \ll N^{C\temptau - 1}\, .
\end{equation}
On the other hand, it follows from the subspace theorem (see \cite{Schmidt-1972}) that 
\begin{equation}\label{eq:923:4}
	\fpa{ k \cdot \b } = \fpa{\sum_{i=1}^{d_{\ab}} k_i \b_i } \gg \bra{ \max_{1 \leq i \leq d_{\ab}} k_i}^{-{d_{\ab}}-1} \gg N^{-C\temptau\bra{{d_{\ab}}+1}} \, .
\end{equation}	
Provided that $N$ is sufficiently large and $\temptau$ is sufficiently small, \eqref{eq:923:3} and \eqref{eq:923:4} are contradictory.
\end{proof}

\subsection{Orbit closures}

Lastly, we discuss the behaviour of bounded {\gp} sequences under shifts. First, we record a consequence of Theorem \ref{thm:BL-mini} 
and other results in \cite{BergelsonLeibman-2007}. Roughly speaking, it asserts that the class of bounded {\gp} maps representable by 
a formula of a given ``shape'' is closed under shifts. (See also Section \ref{sec:sc-induction} for an alternative formulation.)

\begin{lemma}\label{lem:nil:shift}
	Let $g \colon \ZZ \to \RR$ be a bounded {\gp} map. Then there exists $d \in \NN_0$ and a {\gp} map $h \colon \RR^d \times \ZZ \to \RR$ 
	such that, for every $m \in \ZZ$, there exists $\vec \a \in [0,1)^d$ such that $g(n+m) = h(\vec \a,n)$ for all $n \in \ZZ$.
\end{lemma}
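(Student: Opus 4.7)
The plan is to use the dynamical representation of bounded generalised polynomials (Theorem \ref{thm:BL-mini}) together with the fact that the action of $G$ on $X = G/\Gamma$ by left translation is a generalised polynomial operation in Mal'cev coordinates.

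First I would apply Theorem \ref{thm:BL-mini} to write $g(n) = f(T^n x_0)$ for a minimal nilsystem $(X, T) = (G/\Gamma, T_t)$ with $t \in G$, a piecewise polynomial map $f \colon X \to \RR$, and a point $x_0 \in X$. After passing to a power $T^q$ that preserves the connected component of $x_0$ and distinguishing residues modulo $q$ by a case analysis (analogous to Proposition \ref{prop:cl:cases}), I may assume that $X$ is connected and $G$ is connected and simply connected. This gives access to Mal'cev coordinates $\tau = (\tau_1, \ldots, \tau_d) \colon X \to [0,1)^d$ associated with a Mal'cev basis $h_1, \ldots, h_d \in \Gamma$, with $d = \dim G$. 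Setting $\vec\a(m) := \tau(T^m x_0) \in [0,1)^d$ for each $m \in \ZZ$, and defining
\[
	h(\vec\a, n) := f\bra{ t^n \cdot h_1^{\a_1} h_2^{\a_2} \cdots h_d^{\a_d} \cdot \Gamma } \qquad (\vec\a \in \RR^d,\ n \in \ZZ),
\]
the defining property of Mal'cev coordinates gives $T^m x_0 = h_1^{\a_1(m)} \cdots h_d^{\a_d(m)} \Gamma$, and hence $g(n+m) = f(T^n T^m x_0) = h(\vec\a(m), n)$ for all $n$ and $m$, as required.

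What remains is to verify that $h$ is a generalised polynomial map on $\RR^d \times \ZZ$. The map $(\vec\a, n) \mapsto t^n h_1^{\a_1} \cdots h_d^{\a_d}$ into $G$ has, in the Mal'cev coordinate chart $\tilde\tau \colon G \to \RR^d$, components that are polynomials in $(\a_1, \ldots, \a_d, n)$: this follows from the polynomial nature of $n \mapsto t^n$ via the nilpotent exponential map, combined with the fact that group multiplication in $G$ expressed in Mal'cev coordinates is polynomial (a consequence of the Baker--Campbell--Hausdorff formula for nilpotent Lie algebras). The reduction of these unrestricted coordinates modulo $\Gamma$ to the fundamental domain $[0,1)^d$ is a generalised polynomial operation, verified inductively from the top Mal'cev coordinate downward: reducing the $i$-th coordinate modulo $1$ corresponds to right multiplication by a power of $h_i \in \Gamma$, which introduces commutator corrections in the lower-index coordinates that lie in deeper layers of the filtration $G_0 \supseteq G_1 \supseteq \cdots$ and are polynomial in the already-reduced coordinates. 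Composing with the piecewise polynomial map $f$ then yields the desired generalised polynomial map $h$.

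The main obstacle is the careful bookkeeping required for the inductive Mal'cev reduction, in particular tracking how the commutator corrections arising from the non-abelian structure of $G$ propagate through the coordinates. This is a standard but technical computation in the structure theory of nilpotent Lie groups, handled by induction on the lower central series together with the commutator relations $[h_i, h_j] \in \spanZ\{h_k : k > \max(i,j)\}$ inherent to a Mal'cev basis.
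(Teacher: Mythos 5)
Your proposal is correct and follows essentially the same route as the paper: represent $g$ via Theorem \ref{thm:BL-mini}, set $h(\vec\a,n)=f(T^n(\tau^{-1}(\vec\a)))$, take $\vec\a(m)=\tau(T^m(x))$, and reduce the disconnected case to the connected one via arithmetic progressions. The only difference is that the paper simply cites \cite[Sec.\ 1.15]{BergelsonLeibman-2007} for the fact that $h$ is a generalised polynomial jointly in $(\vec\a,n)$, whereas you sketch that verification directly (polynomiality of the group law in Mal'cev coordinates plus the inductive reduction modulo $\Gamma$ and composition with the piecewise polynomial $f$), which is precisely the content of the cited result.
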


\begin{proof}
	Let $g(n) = f(T^n(x))$ be the representation of $g$ coming from Theorem \ref{thm:BL-mini} (using the notation therein).  
	Assume first that $X$ is connected and let $\tau \colon X \to [0,1)^d$ be some Mal'cev coordinates on $X$. 
	Set $h({\vec \a},n) = f(T^n(\tau^{-1}(\vec \a)))$. Then $h$ is a {\gp} map by \cite[Sec.{} 1.15]{BergelsonLeibman-2007}. 
	For $m \in \ZZ$, let $\vec \a (m) = \tau(T^m(x))$. Then $h({\vec \a(m)},n) = g(n+m)$ for all $n \in \ZZ$. 
	If $X$ is not connected, then we can reduce to the connected case by passing to an arithmetic progression.
\end{proof}

Finally, we mention a closure property of bounded {\gp} maps from $\NN$ to $\RR$. We stress that the same conclusion does not hold after replacing $\NN$ with $\ZZ$. 

\begin{lemma}\label{lem:cl:limit}
	Let $d \in \NN$, $h \colon \RR^d \times \NN \to \RR$ be a {\gp} map, and $(x_i)_{i=0}^\infty$ be a bounded sequence with values in $\RR^d$. 
	Let us assume that the limit 
	\[
		g(n) = \lim_{i \to \infty} h(x_i,n) 
	\]
	exists for all $n \in \NN$. Then $g$ is a {\gp} map.
\end{lemma}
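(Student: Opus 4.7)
The plan is to establish a common nilsystem representation for the parametric GP $h$ and then carefully pass to subsequences in both the parameter space $\RR^d$ and the ambient nilmanifold. As a preliminary step, since $(x_i)$ is bounded in $\RR^d$, the Bolzano--Weierstrass theorem lets me pass to a convergent subsequence, so I may assume $x_i \to x_\infty \in \RR^d$; this does not affect the pointwise limit $g$.

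Next I would reduce to a bounded setting. Using the identity $\ip{z} = z - \fp{z}$ repeatedly, one may expand a general GP expression into a polynomial combination of bounded GPs: there exist an ordinary polynomial $P$ and parametric GPs $u_1, \dots, u_k \colon \RR^d \times \NN \to [0,1)$ with
\[ h(x, n) = P\bigl(x, n, u_1(x, n), \dots, u_k(x, n)\bigr). \]
Each bounded parametric GP $u_j$ admits a dynamical representation by the parametric version of Theorem \ref{thm:BL-mini} (which follows from Theorem \ref{thm:BL-poly} by encoding the parameter $x$ into the starting point of a polynomial orbit): after passing to a product nilmanifold, there exist a common nilsystem $(X, T)$, piecewise polynomial functions $F_j \colon X \to \RR$, and a single GP map $\psi \colon \RR^d \to X$ with $u_j(x, n) = F_j\bigl(T^n(\psi(x))\bigr)$ for all $j$, $x$, and $n$.

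By compactness of the nilmanifold $X$, pass to a further subsequence along which $\psi(x_i)$ converges to some $y \in X$. Continuity of $T$ yields $T^n(\psi(x_i)) \to T^n(y)$ for every $n \in \NN$. For each index $n$ such that every $F_j$ is continuous at $T^n(y)$, this gives $u_j(x_i, n) \to F_j(T^n(y))$ and hence
\[ g(n) = P\bigl(x_\infty, n, F_1(T^n(y)), \dots, F_k(T^n(y))\bigr), \]
which is a GP by the converse direction of Theorem \ref{thm:BL-mini} together with closure of GP maps under polynomial combinations.

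The main obstacle is handling the ``bad'' indices $n$ for which $T^n(y)$ lies on the discontinuity locus of some $F_j$. Here I would invoke a diagonal subsequence extraction combined with semialgebraic cell decomposition: locally near any of its discontinuities, each $F_j$ is described by finitely many polynomial pieces, so the hypothesis that $\lim_i h(x_i, n)$ exists for every $n$ forces, after iterated subsequencing, a consistent choice of limiting cell for the approach $\psi(x_i) \to y$ at each bad $n$. Under this consistency, $u_j(x_i, n)$ converges to a fixed directional limit $\tilde F_j(T^n(y))$; the replacement maps $\tilde F_j$ remain piecewise polynomial, and the resulting expression $g(n) = P\bigl(x_\infty, n, \tilde F_1(T^n(y)), \dots, \tilde F_k(T^n(y))\bigr)$ is a GP by the same reasoning as above. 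The reason the analogous statement fails with $\NN$ replaced by $\ZZ$ (as noted in the excerpt) enters here only indirectly: the one-sided subsequencing, applied at countably many bad indices, behaves well over $\NN$ but can run into the difficulties exemplified by sequences such as $\mathbf{1}_{\NN}$ on the bi-infinite domain.
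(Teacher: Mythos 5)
Your argument attempts a self-contained proof (the paper itself simply defers to \cite[Prop.\ 2.16]{Konieczny-2021-JLM}), but it has two genuine gaps. First, the ``parametric Bergelson--Leibman'' step --- a single nilsystem $(X,T)$, piecewise polynomial maps $F_j$, and a {\gp} map $\psi \colon \RR^d \to X$ with $u_j(x,n) = F_j(T^n(\psi(x)))$ for \emph{all} real $x$ --- is asserted rather than proved, and it does not follow ``by encoding $x$ into the starting point'' from Theorem \ref{thm:BL-poly}: for each fixed $x$ that theorem gives a polynomial orbit whose coefficients (and, a priori, the ambient nilmanifold) depend on $x$, not merely the base point of one fixed translation $T$. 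Arranging $T$ to be independent of $x$, the representation to hold identically in $x$, and $\psi$ to be a {\gp} map is essentially a parametric re-proof of Theorem \ref{thm:BL-mini}; the paper only records the much weaker Lemma \ref{lem:nil:shift}, in which the parameter ranges over Mal'cev coordinates coming from the orbit of a single point.

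Second, and more seriously, the handling of the ``bad'' indices does not constitute a proof. The point $y = \lim_i \psi(x_i)$ is not generic, so $T^n(y)$ may lie in the discontinuity locus of the $F_j$ for infinitely many $n$ --- conceivably for every $n$ --- and your cell-selection is performed separately for each such $n$. There is no single modified piecewise polynomial map $\tilde F_j$ on $X$ satisfying $\lim_i u_j(x_i,n) = \tilde F_j(T^n(y))$ simultaneously for all $n$; with an $n$-dependent choice of ``limiting cell'' the final formula for $g$ is just a pointwise identity and carries no {\gp} structure, which was the whole point. (Relatedly, existence of $\lim_i h(x_i,n)$ does not force convergence of the individual $u_j(x_i,n)$; diagonal extraction repairs that, but then you must show the limit sequences $n \mapsto \lim_i u_j(x_i,n)$ are {\gp}, which is the original problem again.) A telling symptom is that your argument never genuinely uses $n \in \NN$, whereas the statement is false over $\ZZ$; the one-sidedness must enter substantively, and in \cite{Konieczny-2021-JLM} this is exactly where limits along ultrafilters and $\NN_0$-specific facts (such as $\braif{g \in I}$ being {\gp} on $\NN_0$ but not on $\ZZ$ for unbounded intervals $I$) are used --- which is why the paper's proof reduces the lemma to Proposition 2.16 there rather than to a soft compactness argument.
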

\begin{proof}
	This is a special case of \cite[Prop.{} 2.16]{Konieczny-2021-JLM}. Note that the assumption that the limit above exists allows us to avoid the use of limits along ultrafilters, which appear in \cite{Konieczny-2021-JLM}.
\end{proof}

\bibliographystyle{alphaabbr}
\bibliography{bibliography}

\newcommand{\etalchar}[1]{$^{#1}$}
\begin{thebibliography}{AAB{\etalchar{+}}88}

\bibitem[AAB{\etalchar{+}}88]{Ramanujan}
G.~E. Andrews, R.~A. Askey, B.~C. Berndt, K.~G. Ramanathan, and R.~A. Rankin,
  editors.
\newblock {\em Ramanujan revisited}. Academic Press, Inc., Boston, MA, 1988.

\bibitem[Ada03]{Adamczewski-2003}
B.~Adamczewski.
\newblock Balances for fixed points of primitive substitutions.
\newblock volume 307, pages 47--75. 2003.
\newblock Words.

\bibitem[Ada04]{Adamczewski-2004}
B.~Adamczewski.
\newblock Symbolic discrepancy and self-similar dynamics.
\newblock {\em Ann. Inst. Fourier (Grenoble)}, 54(7):2201--2234 (2005), 2004.

\bibitem[AGH63]{AuslanderGreenHahn-book}
L.~Auslander, L.~Green, and F.~Hahn.
\newblock {\em Flows on homogeneous spaces}.
\newblock Annals of Mathematics Studies, No. 53. Princeton University Press,
  Princeton, N.J., 1963.
\newblock With the assistance of L. Markus and W. Massey, and an appendix by L.
  Greenberg.

\bibitem[All94]{Allouche-1994}
J.-P. Allouche.
\newblock Sur la complexit\'{e} des suites infinies.
\newblock volume~1, pages 133--143. 1994.
\newblock Journ\'{e}es Montoises (Mons, 1992).

\bibitem[AS92]{AlloucheShallit-1992}
J.-P. Allouche and J.~Shallit.
\newblock The ring of {$k$}-regular sequences.
\newblock {\em Theoret. Comput. Sci.}, 98(2):163--197, 1992.

\bibitem[AS03]{AlloucheShallit-book}
J.-P. Allouche and J.~Shallit.
\newblock {\em Automatic sequences}.
\newblock Cambridge University Press, Cambridge, 2003.
\newblock Theory, applications, generalizations.

\bibitem[ASY21]{AlloucheShallitYassawi-2021}
J.~P. Allouche, J.~Shallit, and R.~Yassawi.
\newblock How to prove that a sequence is not automatic, 2021.

\bibitem[BCR98]{BochnakCosteRoy-book}
J.~Bochnak, M.~Coste, and M.-F. Roy.
\newblock {\em Real algebraic geometry}, volume~36 of {\em Ergebnisse der
  Mathematik und ihrer Grenzgebiete (3) [Results in Mathematics and Related
  Areas (3)]}.
\newblock Springer-Verlag, Berlin, 1998.
\newblock Translated from the 1987 French original, Revised by the authors.

\bibitem[Ber10]{Bergelson-2010}
V.~Bergelson.
\newblock Ultrafilters, {IP} sets, dynamics, and combinatorial number theory.
\newblock In {\em Ultrafilters across mathematics}, volume 530 of {\em Contemp.
  Math.}, pages 23--47. Amer. Math. Soc., Providence, RI, 2010.

\bibitem[BH96]{BergelsonHaland-1993}
V.~Bergelson and I.~J. H{\aa}land.
\newblock Sets of recurrence and generalized polynomials.
\newblock In {\em Convergence in ergodic theory and probability ({C}olumbus,
  {OH}, 1993)}, volume~5 of {\em Ohio State Univ. Math. Res. Inst. Publ.},
  pages 91--110. de Gruyter, Berlin, 1996.

\bibitem[BHKS20]{BergelsonHalandSon-2020}
V.~Bergelson, I.~J. H{\aa}land~Knutson, and Y.~Son.
\newblock {An Extension of Weyl’s Equidistribution Theorem to Generalized
  Polynomials and Applications}.
\newblock {\em International Mathematics Research Notices}, 03 2020.
\newblock rnaa035.

\bibitem[BK18]{ByszewskiKonieczny-2018-TAMS}
J.~Byszewski and J.~Konieczny.
\newblock Sparse generalised polynomials.
\newblock {\em Trans. Amer. Math. Soc.}, 370(11):8081--8109, 2018.

\bibitem[BK20]{ByszewskiKonieczny-2020-CJM}
J.~Byszewski and J.~Konieczny.
\newblock Automatic sequences and generalised polynomials.
\newblock {\em Canad. J. Math.}, 72(2):392--426, 2020.

\bibitem[BK22]{ByszewskiKonieczny-upcoming}
J.~Byszewski and J.~Konieczny.
\newblock Pisot numbers, salem numbers, and generalised polynomials, 2022+.
\newblock In preparation.

\bibitem[BL07]{BergelsonLeibman-2007}
V.~Bergelson and A.~Leibman.
\newblock Distribution of values of bounded generalized polynomials.
\newblock {\em Acta Math.}, 198(2):155--230, 2007.

\bibitem[BL18]{BergelsonLeibman-2018}
V.~Bergelson and A.~Leibman.
\newblock {${\rm IP}_r^\ast$}-recurrence and nilsystems.
\newblock {\em Adv. Math.}, 339:642--656, 2018.

\bibitem[BMR20]{BergelsonRichterMoreira-2020}
V.~Bergelson, J.~Moreira, and F.~K. Richter.
\newblock Multiple ergodic averages along functions from a {H}ardy field:
  convergence, recurrence and combinatorial applications, 2020.
\newblock Preprint. {\href{ https://arxiv.org/abs/2006.03558}{2006.03558
  [math.DS]}}.

\bibitem[Cas55]{Cassels-1955}
J.~W.~S. Cassels.
\newblock Simultaneous diophantine approximation. {II}.
\newblock {\em Proc. London Math. Soc. (3)}, 5:435--448, 1955.

\bibitem[Cas72]{Cassels-book}
J.~W.~S. Cassels.
\newblock {\em An introduction to {D}iophantine approximation}.
\newblock Cambridge Tracts in Mathematics and Mathematical Physics, No. 45.
  Hafner Publishing Co., New York, 1972.
\newblock Facsimile reprint of the 1957 edition.

\bibitem[Che00]{Chen-2000}
Y.-G. Chen.
\newblock The best quantitative {K}ronecker's theorem.
\newblock {\em J. London Math. Soc. (2)}, 61(3):691--705, 2000.

\bibitem[CK97]{CassaigneKarhumaki-1997}
J.~Cassaigne and J.~Karhum\"{a}ki.
\newblock Toeplitz words, generalized periodicity and periodically iterated
  morphisms.
\newblock {\em European J. Combin.}, 18(5):497--510, 1997.

\bibitem[CN10]{CassaigneNicolas-2010}
J.~Cassaigne and F.~Nicolas.
\newblock Factor complexity.
\newblock In {\em Combinatorics, automata and number theory}, volume 135 of
  {\em Encyclopedia Math. Appl.}, pages 163--247. Cambridge Univ. Press,
  Cambridge, 2010.

\bibitem[CRS12]{CharlierRampersadShallit-2012}
E.~Charlier, N.~Rampersad, and J.~Shallit.
\newblock Enumeration and decidable properties of automatic sequences.
\newblock {\em Internat. J. Found. Comput. Sci.}, 23(5):1035--1066, 2012.

\bibitem[CT22]{ChowTechnau-2022}
S.~Chow and N.~Technau.
\newblock Counting multiplicative approximations, 2022+.
\newblock Preprint.

\bibitem[Dav37]{Davenport-1937}
H.~Davenport.
\newblock {On some infinite series involving arithmetical functions (II)}.
\newblock {\em The Quarterly Journal of Mathematics}, os-8(1):313--320, 01
  1937.

\bibitem[DMS18]{DrmotaMullerSpiegelhofer-2018}
M.~Drmota, C.~M{\"{u}}llner, and L.~Spiegelhofer.
\newblock M\"{o}bius orthogonality for the {Z}eckendorf sum-of-digits function.
\newblock {\em Proc. Amer. Math. Soc.}, 146(9):3679--3691, 2018.

\bibitem[Dow05]{Downarowicz-2005}
T.~Downarowicz.
\newblock Survey of odometers and {T}oeplitz flows.
\newblock In {\em Algebraic and topological dynamics}, volume 385 of {\em
  Contemp. Math.}, pages 7--37. Amer. Math. Soc., Providence, RI, 2005.

\bibitem[Dur13a]{Durand-2013}
F.~Durand.
\newblock Decidability of the {HD}0{L} ultimate periodicity problem.
\newblock {\em RAIRO Theor. Inform. Appl.}, 47(2):201--214, 2013.

\bibitem[Dur13b]{Durand-2013b}
F.~Durand.
\newblock Decidability of uniform recurrence of morphic sequences.
\newblock {\em Internat. J. Found. Comput. Sci.}, 24(1):123--146, 2013.

\bibitem[Eis20]{Eisner-2019}
T.~Eisner.
\newblock Nilsystems and ergodic averages along primes.
\newblock {\em Ergodic Theory Dynam. Systems}, 40(10):2769--2777, 2020.

\bibitem[EKL06]{EinsiedlerKatokLindenstrauss-2006}
M.~Einsiedler, A.~Katok, and E.~Lindenstrauss.
\newblock Invariant measures and the set of exceptions to {L}ittlewood's
  conjecture.
\newblock {\em Ann. of Math. (2)}, 164(2):513--560, 2006.

\bibitem[Fer99]{Ferenczi-1999}
S.~Ferenczi.
\newblock Complexity of sequences and dynamical systems.
\newblock volume 206, pages 145--154. 1999.
\newblock Combinatorics and number theory (Tiruchirappalli, 1996).

\bibitem[Fer06]{Fernique-2006}
T.~Fernique.
\newblock Multidimensional {S}turmian sequences and generalized substitutions.
\newblock {\em Internat. J. Found. Comput. Sci.}, 17(3):575--599, 2006.

\bibitem[FH17]{FrantzikinakisHost-2017}
N.~Frantzikinakis and B.~Host.
\newblock Higher order {F}ourier analysis of multiplicative functions and
  applications.
\newblock {\em J. Amer. Math. Soc.}, 30(1):67--157, 2017.

\bibitem[Fra09]{Frantzikinakis-2009}
N.~Frantzikinakis.
\newblock Equidistribution of sparse sequences on nilmanifolds.
\newblock {\em J. Anal. Math.}, 109:353--395, 2009.

\bibitem[GHS13]{GocHenshallShallit-2013}
D.~Go\v{c}, D.~Henshall, and J.~Shallit.
\newblock Automatic theorem-proving in combinatorics on words.
\newblock {\em Internat. J. Found. Comput. Sci.}, 24(6):781--798, 2013.

\bibitem[GL15]{GrepstadLev-2015}
S.~Grepstad and N.~Lev.
\newblock Sets of bounded discrepancy for multi-dimensional irrational
  rotation.
\newblock {\em Geom. Funct. Anal.}, 25(1):87--133, 2015.

\bibitem[GO10]{GrahamOBryant-2010}
R.~Graham and K.~O'Bryant.
\newblock Can you hear the shape of a {B}eatty sequence?
\newblock In {\em Additive number theory}, pages 39--52. Springer, New York,
  2010.

\bibitem[Gow01]{Gowers-2001}
W.~T. Gowers.
\newblock A new proof of {S}zemer\'{e}di's theorem.
\newblock {\em Geom. Funct. Anal.}, 11(3):465--588, 2001.

\bibitem[GT10a]{GreenTao-2010}
B.~Green and T.~Tao.
\newblock An arithmetic regularity lemma, an associated counting lemma, and
  applications.
\newblock In {\em An irregular mind}, volume~21 of {\em Bolyai Soc. Math.
  Stud.}, pages 261--334. J\'{a}nos Bolyai Math. Soc., Budapest, 2010.

\bibitem[GT10b]{GreenTao-2010b}
B.~Green and T.~Tao.
\newblock Linear equations in primes.
\newblock {\em Ann. of Math. (2)}, 171(3):1753--1850, 2010.

\bibitem[GT12a]{GreenTao-2012-Mobius}
B.~Green and T.~Tao.
\newblock The {M}\"{o}bius function is strongly orthogonal to nilsequences.
\newblock {\em Ann. of Math. (2)}, 175(2):541--566, 2012.

\bibitem[GT12b]{GreenTao-2012}
B.~Green and T.~Tao.
\newblock The quantitative behaviour of polynomial orbits on nilmanifolds.
\newblock {\em Ann. of Math. (2)}, 175(2):465--540, 2012.

\bibitem[GTZ12]{GreenTaoZiegler-2012}
B.~Green, T.~Tao, and T.~Ziegler.
\newblock An inverse theorem for the {G}owers {$U^{s+1}[N]$}-norm.
\newblock {\em Ann. of Math. (2)}, 176(2):1231--1372, 2012.

\bibitem[H{\aa}l93]{Haland-1993}
I.~J. H{\aa}land.
\newblock Uniform distribution of generalized polynomials.
\newblock {\em J. Number Theory}, 45(3):327--366, 1993.

\bibitem[H{\aa}l94]{Haland-1994}
I.~J. H{\aa}land.
\newblock Uniform distribution of generalized polynomials of the product type.
\newblock {\em Acta Arith.}, 67(1):13--27, 1994.

\bibitem[Har67]{Harding-1966}
E.~F. Harding.
\newblock The number of partitions of a set of {$N$} points in {$k$} dimensions
  induced by hyperplanes.
\newblock {\em Proc. Edinburgh Math. Soc. (2)}, 15:285--289, 1966/67.

\bibitem[HK05]{HostKra-2005}
B.~Host and B.~Kra.
\newblock Nonconventional ergodic averages and nilmanifolds.
\newblock {\em Ann. of Math. (2)}, 161(1):397--488, 2005.

\bibitem[HKM14]{HostKraMaass-2014}
B.~Host, B.~Kra, and A.~Maass.
\newblock Complexity of nilsystems and systems lacking nilfactors.
\newblock {\em J. Anal. Math.}, 124:261--295, 2014.

\bibitem[Kon17]{Konieczny-2017-IJM}
J.~Konieczny.
\newblock Combinatorial properties of {N}il-{B}ohr sets.
\newblock {\em Israel J. Math.}, 220(1):333--385, 2017.

\bibitem[Kon21]{Konieczny-2021-JLM}
J.~Konieczny.
\newblock Generalised polynomials and integer powers.
\newblock {\em Journal of the London Mathematical Society}, 2021.

\bibitem[KS22]{KrennShallit-2022}
D.~Krenn and J.~Shallit.
\newblock Decidability and {$k$}-regular sequences.
\newblock {\em Theoret. Comput. Sci.}, 907:34--44, 2022.

\bibitem[Lei05]{Leibman-2005}
A.~Leibman.
\newblock Pointwise convergence of ergodic averages for polynomial sequences of
  translations on a nilmanifold.
\newblock {\em Ergodic Theory Dynam. Systems}, 25(1):201--213, 2005.

\bibitem[Lei12]{Leibman-2012}
A.~Leibman.
\newblock A canonical form and the distribution of values of generalized
  polynomials.
\newblock {\em Israel J. Math.}, 188:131--176, 2012.

\bibitem[LOS16]{LemkeSoundararajan-2016}
R.~J. Lemke~Oliver and K.~Soundararajan.
\newblock Unexpected biases in the distribution of consecutive primes.
\newblock {\em Proc. Natl. Acad. Sci. USA}, 113(31):E4446--E4454, 2016.

\bibitem[Mal49]{Malcev-1949}
A.~I. Mal'cev.
\newblock On a class of homogeneous spaces.
\newblock {\em Izvestiya Akad. Nauk. SSSR. Ser. Mat.}, 13:9--32, 1949.

\bibitem[Mal51]{Malcev-1949-tran}
A.~I. Malcev.
\newblock On a class of homogeneous spaces.
\newblock {\em Amer. Math. Soc. Translation}, 1951(39):33, 1951.

\bibitem[Mat93]{Matiyasevich-1993}
Y.~V. Matiyasevich.
\newblock {\em Hilbert's tenth problem}.
\newblock Foundations of Computing Series. MIT Press, Cambridge, MA, 1993.
\newblock Translated from the 1993 Russian original by the author, With a
  foreword by Martin Davis.

\bibitem[MH38]{MorseHedlund-1938}
M.~Morse and G.~A. Hedlund.
\newblock Symbolic {D}ynamics.
\newblock {\em Amer. J. Math.}, 60(4):815--866, 1938.

\bibitem[Mou16]{Mousavi-2016}
H.~Mousavi.
\newblock Automatic theorem proving in walnut, 2016.

\bibitem[Ric20]{Richter-2020}
F.~K. Richter.
\newblock Uniform distribution in nilmanifolds along functions from a hardy
  field, 2020.

\bibitem[Rig14]{Rigo-book-2}
M.~Rigo.
\newblock {\em Formal languages, automata and numeration systems. 2}.
\newblock Networks and Telecommunications Series. ISTE, London; John Wiley \&
  Sons, Inc., Hoboken, NJ, 2014.
\newblock Applications to recognizability and decidability, With a foreword by
  Val\'{e}rie Berth\'{e}.

\bibitem[Sch72]{Schmidt-1972}
W.~M. Schmidt.
\newblock Norm form equations.
\newblock {\em Ann. of Math. (2)}, 96:526--551, 1972.

\bibitem[Tao12]{Tao-book}
T.~Tao.
\newblock {\em Higher order {F}ourier analysis}, volume 142 of {\em Graduate
  Studies in Mathematics}.
\newblock American Mathematical Society, Providence, RI, 2012.

\bibitem[TV06]{TaoVu-book}
T.~Tao and V.~Vu.
\newblock {\em Additive combinatorics}, volume 105 of {\em Cambridge Studies in
  Advanced Mathematics}.
\newblock Cambridge University Press, Cambridge, 2006.

\bibitem[Vau97]{HardyLittlewood-book}
R.~C. Vaughan.
\newblock {\em The {H}ardy-{L}ittlewood method}, volume 125 of {\em Cambridge
  Tracts in Mathematics}.
\newblock Cambridge University Press, Cambridge, second edition, 1997.

\bibitem[War33]{Ward-1933}
M.~Ward.
\newblock The arithmetical theory of linear recurring series.
\newblock {\em Trans. Amer. Math. Soc.}, 35(3):600--628, 1933.

\bibitem[WY81]{WangYu-1981}
Y.~Wang and K.~R. Yu.
\newblock A note on some metrical theorems in {D}iophantine approximation.
\newblock {\em Chinese Ann. Math.}, 2(1):1--12, 1981.

\bibitem[Yaz01]{Yazdani-2001}
S.~Yazdani.
\newblock Multiplicative functions and {$k$}-automatic sequences.
\newblock {\em J. Th\'{e}or. Nombres Bordeaux}, 13(2):651--658, 2001.

\bibitem[Zie07]{Ziegler-2007}
T.~Ziegler.
\newblock Universal characteristic factors and {F}urstenberg averages.
\newblock {\em J. Amer. Math. Soc.}, 20(1):53--97, 2007.

\end{thebibliography}

\end{document}